\newtheorem{thm}{Theorem}
\newtheorem{cor}[thm]{Corollary}
\newtheorem{lem}[thm]{Lemma}
\newtheorem{prop}[thm]{Proposition}
\newtheorem{defn}[thm]{Definition}
\theoremstyle{definition}
\newtheorem{rem}{Remark}
\newtheorem{examp}{Example}
\newcommand{\rr}{\mathbb{R}}
\newcommand{\ee}{\varepsilon}
\newcommand{\nn}{\mathbb{N}}
\newcommand{\ttt}{\mathcal{T}}
\newcommand{\ff}{\mathcal{F}}
\newcommand{\g}{\mathcal{G}}
\numberwithin{thm}{section}
\begin{document}

\title[Higher Order Spreading Models]{Higher Order Spreading Models}
\author{S. A. Argyros}
\address{National Technical University of Athens, Faculty of Applied Sciences,
Department of Mathematics, Zografou Campus, 157 80, Athens, Greece}
\email{sargyros@math.ntua.gr}
\author{V. Kanellopoulos}
\address{National Technical University of Athens, Faculty of Applied Sciences,
Department of Mathematics, Zografou Campus, 157 80, Athens, Greece}
\email{bkanel@math.ntua.gr}
\author{K. Tyros}
\address{Department of
Mathematics, University of Toronto, Toronto, Canada, M5S 2E4}
\email{ktyros@math.toronto.edu}

\maketitle

\begin{abstract} We introduce the higher order spreading models
associated to a Banach space $X$. Their definition is based on
$\ff$-sequences $(x_s)_{s\in\ff}$ with $\ff$  a regular thin family
and the plegma families. We show that the higher order spreading
models of a Banach space $X$ form an increasing  transfinite
hierarchy $(\mathcal{SM}_\xi(X))_{\xi<\omega_1}$. Each
$\mathcal{SM}_\xi (X)$ contains all spreading models generated by
$\ff$-sequences $(x_s)_{s\in\ff}$ with order of $\ff$ equal to
$\xi$. We also provide a study of the fundamental properties of the
hierarchy.
\end{abstract}

\footnotetext[1]{2010 \textit{Mathematics Subject Classification}:
Primary 46B03, 46B06, 46B25, 46B45, Secondary 05D10}

\footnotetext[2]{Keywords: Spreading models, Ramsey theory, thin
families, plegma families}

\section{Introduction} Spreading models have been invented by A.
Brunel and L. Sucheston \cite{BS} in the middle of 70's   and since
then  have a constant presence in the evolution of the Banach space
theory. Recall that a sequence $(e_n)_{n}$ in a seminormed space
$(E,\|\cdot\|_*)$ is called a  spreading model of the space $X$ if
there exists a sequence $(x_n)_{n}$ in $X$ which is \textit{Schreier
almost isometric} to $(e_n)_{n}$, that is for some
 null sequence $(\delta_n)_{n}$ of positive reals
we have
 \begin{equation}\label{wsp}\Bigg|\Big\|\sum_{i=1}^ka_ix_{n_i}\Big\|
 -\Big\|\sum_{i=1}^ka_ie_i\Big\|_*\Bigg|<\delta_{k},\end{equation}
for every $k \leq n_1<\ldots<n_k$  and $(a_i)_{i=1}^k\in[-1,1]^k$.
We also say that the sequence $(x_n)_n$ which satisfies (\ref{wsp})
generates $(e_n)_n$ as a spreading model. By an iterated use of
Ramsey's theorem  \cite{R}, Brunel and Sucheston proved that every
bounded sequence  in a Banach space $X$ has a subsequence generating
a spreading model.

 It is easy to see that any sequence $(e_n)_{n}$
satisfying (\ref{wsp})  is  spreading\footnote{A sequence
$(e_n)_{n}$ in a seminormed space $(E,\|\cdot\|_*)$ is called
spreading if for every $n\in\nn$, $k_1<\ldots<k_n$ in $\nn$ and
$a_1,\ldots,a_n\in\rr$ we have that $\|\sum_{j=1}^na_j
e_j\|_*=\|\sum_{j=1}^n a_j e_{k_j}\|_*$}. The importance of the
spreading models arise from the fact that they connect in an
asymptotic manner the structure of an arbitrary Banach space $X$ to
the corresponding one of spaces generated by spreading sequences.
 The definition of the
spreading model resembles the  finite representability\footnote{A
Banach space $Y$ is finitely representable in $X$ if for every
finite dimensional subspace $F$ of $Y$ and every $\ee>0$ there
exists $T:F\to Y$ bounded linear injection such that
$\|T\|\cdot\|T^{-1}\|<1+\ee$.} of the space generated by the
sequence $(e_n)_{n}$ into the space $(X,\|\cdot\|)$. However there
exists a significant difference between the two concepts. Indeed in
the frame of the finite representability there are two classical
achievements: Dvoretsky's theorem  \cite{D} asserting that $\ell^2$
is finitely representable in every Banach space $X$ and also
Krivine's theorem \cite{Kr} asserting that for every linearly
independent sequence $(x_n)_{n}$ in $X$ there exists a $1\leq p\leq
\infty$ such that $\ell^p$ is block finitely representable in the
subspace generated by $(x_n)_n$. On the other hand E. Odell and Th.
Schlumprecht \cite{O-S} have shown that there exists a reflexive
space $X$ admitting no $\ell^p$ as a spreading model. Thus the
spreading models of a space $X$ lie strictly between the finitely
representable spaces in $X$ and the spaces that are isomorphic to a
subspace of $X$.

The spreading models associated to a Banach space $X$ can be
considered as a cloud of Banach spaces, including many members
with regular structure, surrounding the space $X$ and offering
information concerning the local structure of $X$ in an asymptotic
manner. Our aim is to enlarge that cloud and to fill in the gap
between spreading models and the spaces which are finitely
representable in $X$. More precisely we extend the
Brunel--Sucheston concept of a spreading model and we show that
under the new definition the spreading models associated to a
Banach space $X$ form a whole hierarchy of classes of spaces
indexed by the countable ordinals. The first class of this
hierarchy is the classical spreading models. The initial step of
this extension  has already been done in \cite{AKT} where  the
class of $k$-spreading models was defined for every positive
integer $k$. The transfinite extension   introduced in the present
paper requires analogue ingredients that we are about to describe.

 The first one is the
 $\ff$-sequences; that is sequences
 of the form  $(x_s)_{s\in\ff}$ where the index set $\ff$ is a regular thin family  of finite subsets of $\nn$
 (see Definition \ref{defn regular thin}). Typical examples of
 such families are the $k$-element subsets of $\nn$  and also
 the maximal elements of the $\xi^{th}$-Schreier family $\mathcal{S}_\xi$ (see \cite{AA}).
 A subsequence of
$(x_s)_{s\in\ff}$  is a restriction of the $\ff$-sequence on an
infinite subset of $\nn$, i.e. it is of the form
$(x_s)_{s\in\ff\upharpoonright M}$ where $\ff\upharpoonright
M=\ff\cap [M]^{<\infty}$. Among others we study the convergence of
$\ff$-sequences in a topological space $(X,\mathcal{T})$. In this
setting we show that when the closure of $(x_s)_{s\in\ff}$ in
$(X,\mathcal{T})$ is a compact metrizable space then we can always
restrict to an infinite subset $M$  of $\nn$ where the subsequence
$(x_s)_{s\in\ff\upharpoonright M}$ is \textit{subordinated}; that is
if  $\widehat{\ff}=\{t\in [\nn]^{<\infty}: \exists s\in \ff  \
\text{such that} \ t \ \text{is an initial segment of} \ s\}$  then
there exists a continuous map
$\varphi:\widehat{\mathcal{F}}\upharpoonright M\to X$ with
$\varphi(s)=x_s$ for every $s\in\ff\upharpoonright M$ (see
Definition \ref{defsub} and Theorem \ref{Create subordinated}).

 The second ingredient   is the \textit{plegma families}
which extend the corresponding one in  \cite{AKT}. Roughly
speaking a plegma family is a sequence $(s_1,\ldots,s_l)$ of non
empty finite subsets of $\nn$ where the first elements  of
$s_1,...,s_l$ are in increasing order and they lie before their
second elements which are also in increasing order and so on (see
Definition \ref{defn plegma}). Here the plegma families do not
necessarily include sets of equal size.

The $\ff$-sequences and the plegma families are the key components
for the definition of the higher order spreading models which goes
as follows. Given an $\ff$-sequence $(x_s)_{s\in\ff}$ in a Banach
space $X$ and a sequence $(e_n)$ in a seminormed space
$(E,\|\cdot\|_*)$ we will say that $(x_s)_{s\in\ff}$
\textit{generates} $(e_n)_n$ \textit{as an}
$\ff$-\textit{spreading model} if for some
 null sequence $(\delta_n)_{n}$ of positive reals
we have
 \begin{equation}\label{wspp}\Bigg|\Big\|\sum_{i=1}^ka_ix_{s_i}\Big\|
 -\Big\|\sum_{i=1}^ka_ie_i\Big\|_*\Bigg|<\delta_{k},\end{equation}
for every $(a_i)_{i=1}^k\in[-1,1]^k$ and every plegma family
$(s_i)_{i=1}^k$ in $\ff$ with $k \leq \min s_1$. Note  that the
$\ff$-sequences $(x_s)_{s\in\ff}$  generate a higher order spreading
model just as the ordinary sequences $(x_n)_n$ do in the classical
definition. Moreover since the family of all $k$-element subsets of
$\nn$ is a regular thin family, the above definition extends   the
classical definition of the spreading model as well as   the one of
$k$-spreading models given in \cite{AKT}.

Brunel--Sucheston's theorem \cite{BL} is extended in the frame of
the bounded $\ff$-sequences $(x_s)_{s\in\ff}$ in a Banach space
$X$. Namely,  every bounded $\ff$-sequence  in $X$  contains a
subsequence $(x_s)_{s\in\ff\upharpoonright M}$ generating an
$\ff$-spreading model. The proof is based on the fact that plegma
families with elements in a regular thin family satisfy strong
Ramsey properties. It is notable that the concept of the
$\ff$-spreading model is independent of the particular family
$\ff$ and actually depends only on the order\footnote{The
\textit{order} of $\ff$, denoted by $o(\ff)$ is a countable
ordinal which measures the complexity $\ff$ (see Section 2 for the
precise definition).  For example the family of $k$-element
subsets of $\nn$ has order $k$, while the $\xi^{th}$-Schreier has
order $o(\mathcal{S}_\xi)=\omega^\xi$} of the family $\ff$.
Namely, if $(e_n)_n$ is an $\ff$-spreading model then it is also a
$\g$-spreading model for every regular thin family $\g$ with
$o(\g)\geq o(\ff)$. This fact allow us to classify all the
$\ff$-spreading models of a Banach space $X$ as   an increasing
transfinite hierarchy of the form
$(\mathcal{SM}_\xi(X))_{\xi<\omega_1}$. Let's point out that the
$\xi$-spreading models of $X$ have a weaker asymptotic relation to
the space $X$ as $\xi$ increases to $\omega_1$.

The infinite graphs with vertices from a regular thin family and
edges the plegma pairs is the  key for   the proof of  the above
results. Specifically, it is  shown that if $\g$ and $\ff$ are two
regular thin families with  $o(\g)\geq o(\ff)$ then there exist an
infinite subset $M$ of $\nn$ and a \text{plegma preserving} map
$\varphi:\g\upharpoonright M\to \ff$ (that is
$(\varphi(s_1),\varphi(s_2))$ is a plegma pair in $\ff$ whenever
$(s_1,s_2)$ is plegma pair in $\g\upharpoonright M$). Moreover, it
is also shown that such an embedding is forbidden if we wish to go
from families of lower to families of higher order. More
precisely, if $o(\ff)<o(\g)$ then for every $M\in[\nn]^\infty$ and
$\varphi:\ff\upharpoonright M\to\g$ there exists $L\in[M]^\infty$
such that for every plegma pair $(s_1,s_2)$ in $\ff\upharpoonright
L$ neither $(\phi(s_1),\phi(s_2))$ nor $(\phi(s_2),\phi(s_1))$ is
a plegma pair in $\g$ (see Theorems \ref{plegma preserving maps}
and \ref{non plegma preserving maps}).

The paper is organized as follows. In Section 2 we review some basic
facts concerning families of finite subsets of $\nn$ and we define
the regular thin families. In Section 3 we study the plegma families
and their properties. In Section 4 we introduce the definition of
the higher order spreading models. In Section 5 we deal with
$\ff$-sequences $(x_s)_{s\in\ff}$  in a general topological space.
Finally, in Section 6 we study the $\ff$-sequences which generate
several classes of spreading sequences as  spreading models. In this
last section we show that several well known results concerning the
classical spreading models remain valid in the higher order setting.
For instance, we show that a subordinated, seminormalized and weakly
null $\ff$-sequence generates an unconditional spreading model.

The present paper is an updated version of the first part of
\cite{AKT1}. The second part which deals with certain examples
will be presented elsewhere.

\subsection{Preliminary notation and definitions} By
$\nn=\{1,2,\ldots\}$ we denote the set of all positive integers.
Throughout the paper we shall identify strictly increasing sequences
in $\nn$ with their corresponding range i.e. we view every strictly
increasing sequence in $\nn$ as a subset of $\nn$ and conversely
every subset of $\nn$ as the sequence resulting from the increasing
ordering of its elements. We will use capital letters as $L,M,N,...$
to denote infinite subsets  and lower case letters as $s,t,u,...$ to
denote finite subsets of $\nn$. For every infinite subset $L$ of
$\nn$, $[L]^{<\infty}$ (resp. $[L]^\infty$) stands for the set of
all finite (resp. infinite) subsets of $L$. For an
$L=\{l_1<l_2<...\}\in [\nn]^\infty$ and a positive integer
$k\in\nn$, we set $L(k)=l_k$. Similarly, for a finite subset
$s=\{n_1<..<n_m\}$ of $\nn$ and  for $1\leq k\leq m$ we set
$s(k)=n_k$. For an $L=\{l_1<l_2<...\}\in [N]^\infty$ and a finite
subset $s=\{n_1<..<n_m\}$ (resp. for an infinite subset
$N=\{n_1<n_2<...\}$ of $\nn$), we set
$L(s)=\{l_{n_1},...,l_{n_m}\}=\{L(s(1)),...,L(s(m))\}$ (resp.
$L(N)=\{l_{n_1},l_{n_2},...\}=\{L(N(1)), L(N(2)),...\}$).

For $s\in[\nn]^{<\infty}$ by $|s|$ we denote the cardinality of $s$.
For $L\in[\nn]^\infty$ and $m\in\nn$, we denote by $[L]^m$ the set
of all $s\in[L]^{<\infty}$ with $|s|=m$. Also for every nonempty
$s\in[\nn]^{<\infty}$ and $1\leq k\leq |s|$ we set
$s|k=\{s(1),\ldots,s(k)\}$ and $s|0=\varnothing$. Moreover, for
$s,t\in[\nn]^{<\infty}$, we write $t\sqsubseteq s$ (resp.
$t\sqsubset s$) to denote that $t$ is an initial (resp.
\textit{proper} initial) segment of $s$. Also, for every
$s,t\in[\nn]^{<\infty}$ we write $t<s$ if either at least one of
them is the empty set, or $\max t<\min s$.

Concerning Banach space theory, although the notation that we follow
is the standard one,  we present for the sake of completeness some
basic concepts that we will need. Let $X$ be a Banach space.
 We say that a sequence $(x_n)_{n}$ in $X$ is bounded
(resp. seminormalized) if there exists $M>0$ (resp. $M_1,M_2>0)$
such that $\|x_n\|\leq M$ (resp. $M_1\leq \|x_n\|\leq M_2$) for all
$n\in\nn$. The sequence $(x_n)_n$ is  called Schauder basic if there
exists a constant $C\geq 1$ such that
\begin{equation}\Big\|\sum_{n=1}^k a_nx_n\Big\|\leq
C\Big\|\sum_{n=1}^m a_nx_n\Big\|,\end{equation} for every $
k\leq m$ in $\nn$ and $a_1,...,a_m\in\rr$. Finally, we say that
$(x_n)_{n}$ is  $C$-unconditional,  if for every $m\in\nn$,
$F\subseteq \{1,...,m\}$ and $a_1,...,a_m\in\rr$, it holds that
\begin{equation}\Big\|\sum_{n\in F} a_nx_n\Big\|\leq
C\Big\|\sum_{n=1}^m
a_nx_n\Big\|.\end{equation}

\section{Regular thin families}
In this section we define  the regular thin families of finite
subsets of $\nn$ and we study their basic properties. The
definition is based on two well known concepts, namely that of
regular families traced back to \cite{AA} and that of thin
families defined in \cite{N-W} and extensively studied in
\cite{Pd-R} and \cite{LT}.
\subsection{On families of finite subsets of $\nn$}
We start with  a review of the basic concepts concerning families
of finite subsets of $\nn$. For a more detailed exposition the
reader can refer to \cite{AT}.
\subsubsection{Ramsey properties of families of finite subsets of
$\nn$} For a family  $\ff\subseteq [\nn]^{<\infty}$ and $L\in
[\nn]^\infty$, we set \begin{equation}\ff\upharpoonright L=\{s\in
\ff:\;s\subseteq L\}=\ff\cap [L]^{<\infty}.\end{equation}

 Recall
the following terminology from \cite{G3}. Let
$\ff\subseteq[\nn]^{<\infty}$ and $M\in[\nn]^\infty$. We say that
$\ff$ is \textit{large} in $M$ if for every $L\in[M]^\infty$,
$\ff\upharpoonright L$ is nonempty. We say that $\ff$ is
\textit{very large} in $M$ if for every $L\in[M]^\infty$ there
exists $s\in\ff$ such that $s\sqsubseteq L$. The following is a
restatement  ( see \cite{G3}) of a well known theorem of C. St. J.
A. Nash-Williams \cite{N-W} and F. Galvin and K. Prikry \cite{G-P}.
\begin{thm} \label{Galvin prikry}
Let $\ff\subseteq[\nn]^{<\infty}$ and $M\in[\nn]^\infty$. If $\ff$
is large in $M$ then there exists $L\in[M]^\infty$ such that $\ff$
is very large in $L$.
\end{thm}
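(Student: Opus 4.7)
The plan is to deduce the equivalence from the open-set Ramsey property that is the heart of the Nash-Williams / Galvin-Prikry partition theorem; roughly speaking, the statement unpacks as "if $\ff$ contains a subset of every tail, then after a further thinning $\ff$ contains an \emph{initial segment} of every tail", and open-set Ramsey is exactly what converts unordered occurrences into initial occurrences.

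First I would introduce the auxiliary set
\begin{equation*}
\mathcal{A}=\{L\in[\nn]^\infty:\exists s\in\ff \text{ with } s\sqsubseteq L\}.
\end{equation*}
Identifying $[\nn]^\infty$ with a subspace of the Cantor space, one checks that $\mathcal{A}=\bigcup_{s\in\ff}\{L:s\sqsubseteq L\}$ is a countable union of basic clopen sets, hence open. The classical Nash-Williams / Galvin-Prikry open partition theorem then yields an $L_0\in[M]^\infty$ such that either $[L_0]^\infty\subseteq\mathcal{A}$ or $[L_0]^\infty\cap\mathcal{A}=\varnothing$.

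In the first alternative, every $L'\in[L_0]^\infty$ has an initial segment in $\ff$, which is by definition the assertion that $\ff$ is very large in $L_0$; taking $L=L_0$ finishes the proof. In the second alternative I would use largeness to produce a contradiction: since $L_0\in[M]^\infty$, there exists $s\in\ff$ with $s\subseteq L_0$. Enumerating $L_0=\{l_1<l_2<\cdots\}$ and writing $s=\{l_{i_1}<\cdots<l_{i_k}\}$, set
\begin{equation*}
L'=\{l_{i_1},l_{i_2},\ldots,l_{i_k}\}\cup\{l_{i_k+1},l_{i_k+2},\ldots\}\in[L_0]^\infty.
\end{equation*}
Then $s\sqsubseteq L'$, so $L'\in\mathcal{A}$, contradicting $[L_0]^\infty\cap\mathcal{A}=\varnothing$; the second alternative therefore cannot occur.

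The genuine substance of the theorem is packed into the open partition statement used in the second step, which is the reference to Nash-Williams and Galvin-Prikry; once that is granted, the main (and only nontrivial) obstacle here is the observation that an \emph{unordered} occurrence $s\subseteq L_0$ can always be promoted to an \emph{initial} occurrence by reshuffling the tail of $L_0$ past $\max s$, which is what the explicit construction of $L'$ accomplishes. All other steps — checking that $\mathcal{A}$ is open and interpreting $[L_0]^\infty\subseteq\mathcal{A}$ as very-largeness — are purely definitional.
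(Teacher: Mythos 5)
Your proof is correct. Note, though, that the paper does not actually prove Theorem~\ref{Galvin prikry} at all: it states it as a restatement of the Nash-Williams/Galvin-Prikry theorem and simply cites Gowers' handbook chapter and the original sources. Your derivation from the open partition theorem is the standard way to obtain precisely that restatement: the set $\mathcal{A}=\bigcup_{s\in\ff}\{L: s\sqsubseteq L\}$ is open (a union of basic clopen cylinders in $[\nn]^\infty$), the partition dichotomy applied inside $[M]^\infty$ either yields $[L_0]^\infty\subseteq\mathcal{A}$, which is exactly very-largeness of $\ff$ in $L_0$, or $[L_0]^\infty\cap\mathcal{A}=\varnothing$, which you correctly rule out by taking $s\in\ff$ with $s\subseteq L_0$ (largeness) and promoting it to an initial segment by deleting the elements of $L_0$ that lie between the elements of $s$. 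The construction $L'=s\cup\{l\in L_0: l>\max s\}$ does belong to $[L_0]^\infty$ and has $s$ as an initial segment, so the contradiction is genuine. Nothing is missing; you have supplied a proof the paper omitted.
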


\subsubsection{The order of a family of finite subsets of $\nn$}
Let $\ff\subseteq [\nn]^{<\infty}$ be  a nonempty family of finite
subsets of $\nn$. The \textit{order} of $\ff\subseteq
[\nn]^{<\infty}$ is defined as follows (see also \cite{Pd-R}).
First, we assign to $\ff$ its ($\sqsubseteq-$)\textit{closure},
i.e. the set
  \begin{equation}\widehat{\ff}=\{t\in[\nn]^{<\infty}:\exists  s\in\ff\text{ with }t\sqsubseteq
  s\},\end{equation}
which is a tree under the initial segment ordering. If
$\widehat{\ff}$ is ill-founded (i.e. there exists an infinite
sequence $(s_n)_{n}$ in $\widehat{\ff}$ such that $s_n\sqsubset
s_{n+1}$) then we set $o(\ff)=\omega_1$. Otherwise for every maximal
element $s$ of $\widehat{\ff}$ we set $o_{\widehat{\ff}}(s)=0$ and
recursively for every $s$ in $\widehat{\ff}$ we define
\begin{equation}
o_{\widehat{\ff}}(s)=\sup\{o_{\widehat{\ff}}(t)+1:t\in\widehat{\ff}\text{
and }s\sqsubset t\}.\end{equation} The order of $\ff$ denoted by
$o(\ff)$ is defined to be the ordinal
$o_{\widehat{\ff}}(\varnothing)$. For instance
$o(\{\varnothing\})=0$ and $o([\nn]^k)=k$, for every $k\in\nn$.

For every $n\in \nn$, we define
\begin{equation}\ff_{(n)}=\{s\in[\nn]^{<\infty}:n<s\text{ and }\{n\}\cup
s\in\ff\},\end{equation} where $n<s$ means that either $s=\varnothing$
or $n<\min s$. It is easy to see that for every nonempty family
$\ff\subseteq [\nn]^{<\infty}$ we have that
\begin{equation}\label{eq1}
  o(\ff)=\sup\{o(\ff_{(n)})+1:n\in\nn\}.
\end{equation}

\subsubsection{Regular families} A family $\mathcal{R}\subseteq
[\nn]^{<\infty}$  is said to be \textit{hereditary} if for every
$s\in\ff$ and $t\subseteq s$ we have that  $t\in\ff$ and
\textit{spreading} if for every $n_1<...<n_k$ and  $m_1<...<m_k$
with  $n_1\leq m_1$, ..., $n_k\leq m_k$, we have that
$\{m_1,...,m_k\}\in \mathcal{R}$ whenever
$\{n_1,...,n_k\}\in\mathcal{R}$. Also $\mathcal{R}$ is called
\textit{compact} if the set $\{\chi_s\in\{0,1\}^\nn:\;s\in
\mathcal{R}\}$ of characteristic functions of the members of
$\mathcal{R}$, is a closed subset of $\{0,1\}^\nn$ under the product
topology.

A family $\mathcal{R}$ of finite subsets of $\nn$ will be called
regular if it is compact, hereditary and spreading. Notice that for
every regular family $\mathcal{R}$,
$\widehat{\mathcal{R}}=\mathcal{R}$ and $\mathcal{R}_{(n)}$ is also
regular for every $n\in\nn$. Moreover, using equation (\ref{eq1})
and by induction on the order of $\mathcal{R}$  we easily get the
following.
\begin{prop}\label{orderreg}
Let $\mathcal{R}$ be a regular family. Then
$o(\mathcal{R}\upharpoonright L)=o(\mathcal{R})$, for every $L\in
[\nn]^\infty$.
\end{prop}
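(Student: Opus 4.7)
I would prove both inequalities separately. Since $\mathcal{R}$ is hereditary, $\widehat{\mathcal{R}}=\mathcal{R}$, and likewise $\widehat{\mathcal{R}\upharpoonright L}=\mathcal{R}\upharpoonright L$, so in both cases $o(\cdot)$ is simply the rank at the root $\varnothing$ of a well-founded tree under the initial-segment order $\sqsubseteq$. The inclusion $\mathcal{R}\upharpoonright L\subseteq\mathcal{R}$ then presents $\mathcal{R}\upharpoonright L$ as a rooted subtree of $\mathcal{R}$, and a one-line transfinite induction on rank gives $o(\mathcal{R}\upharpoonright L)\le o(\mathcal{R})$.

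For the reverse inequality I would follow the hint and induct on $\alpha=o(\mathcal{R})$, using (\ref{eq1}). The base case $\alpha=0$ reduces to $\mathcal{R}=\{\varnothing\}=\mathcal{R}\upharpoonright L$. For the inductive step, I unpack $(\mathcal{R}\upharpoonright L)_{(n)}$: for $n\notin L$ this family is empty (any $\{n\}\cup s$ it contained would have to lie in $L$), so it contributes nothing to the supremum in (\ref{eq1}); for $n\in L$ one sees directly that $(\mathcal{R}\upharpoonright L)_{(n)}=\mathcal{R}_{(n)}\upharpoonright L$. Since $\mathcal{R}_{(n)}$ is itself regular and has order strictly below $\alpha$, the inductive hypothesis yields $o(\mathcal{R}_{(n)}\upharpoonright L)=o(\mathcal{R}_{(n)})$. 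Plugging into (\ref{eq1}) for both $\mathcal{R}\upharpoonright L$ and $\mathcal{R}$, the reverse inequality reduces to
\[\sup\{o(\mathcal{R}_{(n)})+1:n\in\nn\}\;\le\;\sup\{o(\mathcal{R}_{(n)})+1:n\in L\}.\]

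The main (and really the only substantive) obstacle is to verify this, and here the spreading property enters. I would show that $n\mapsto o(\mathcal{R}_{(n)})$ is non-decreasing in $n$. Given $n\le m$, the shift $s=\{k_1<\cdots<k_\ell\}\mapsto s+(m-n):=\{k_1+(m-n),\dots,k_\ell+(m-n)\}$ sends $\mathcal{R}_{(n)}$ into $\mathcal{R}_{(m)}$: indeed $\{n\}\cup s\in\mathcal{R}$, and the component-wise bumps $n\mapsto m$, $k_i\mapsto k_i+(m-n)$ are all upward, so spreading yields $\{m\}\cup(s+(m-n))\in\mathcal{R}$. The shift map is clearly injective and preserves $\sqsubseteq$, so $o(\mathcal{R}_{(n)})\le o(\mathcal{R}_{(m)})$. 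Because $L$ is infinite, for each $n\in\nn$ we can pick some $m\in L$ with $m\ge n$, and monotonicity then delivers the displayed inequality. This closes the induction and completes the proof.
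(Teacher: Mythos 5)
Your proof is correct and is essentially a careful write-up of the argument the paper only sketches (``using equation (\ref{eq1}) and by induction on the order of $\mathcal{R}$ we easily get the following''). The one substantive ingredient the paper leaves implicit — that $n\mapsto o(\mathcal{R}_{(n)})$ is non-decreasing, which is exactly where the spreading hypothesis is used — you identify and prove cleanly via the shift embedding, and the reduction of the problem to that monotonicity via $(\mathcal{R}\upharpoonright L)_{(n)}=\mathcal{R}_{(n)}\upharpoonright L$ for $n\in L$ (and $=\varnothing$ otherwise) is exactly right.
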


Exploiting the method of  \cite{Pd-R}  we have the next result.
\begin{prop}
For every $\xi<\omega_1$ there exists a regular family
$\mathcal{R}_\xi$ with $o(\mathcal{R}_\xi)=\xi$. \end{prop}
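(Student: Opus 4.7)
The plan is to build the families by transfinite induction on $\xi<\omega_1$, with an explicit construction at the base, successor, and limit steps, verifying at each stage that the candidate family is hereditary, spreading, compact, and has the correct order. Take $\mathcal{R}_0=\{\varnothing\}$. For the successor step, given a regular $\mathcal{R}_\zeta$ of order $\zeta$, I would set
\[
\mathcal{R}_{\zeta+1}\;=\;\{\varnothing\}\,\cup\,\bigl\{\{n\}\cup s:\;n\in\nn,\;s\in\mathcal{R}_\zeta,\;n<s\bigr\}.
\]
Hereditariness and the spreading property transfer directly from $\mathcal{R}_\zeta$, and compactness follows from a pointwise-convergence argument: if $s_k\to t$ in $2^\nn$ with $t\neq\varnothing$, then eventually $\min s_k=\min t$, so after peeling off the common first element, compactness of $\mathcal{R}_\zeta$ places $t\setminus\{\min t\}$ in $\mathcal{R}_\zeta$. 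The order is read off from the identity $(\mathcal{R}_{\zeta+1})_{(n)}=\mathcal{R}_\zeta\upharpoonright(n,\infty)$, which by Proposition \ref{orderreg} has order $\zeta$; equation (\ref{eq1}) then gives $o(\mathcal{R}_{\zeta+1})=\zeta+1$.

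For a limit ordinal $\xi$, fix a sequence $\xi_n\nearrow\xi$ with $\xi_n<\xi$ and define
\[
\mathcal{R}_\xi\;=\;\{\varnothing\}\,\cup\,\bigcup_{n\in\nn}\bigl\{s\in\mathcal{R}_{\xi_n}:\;n\leq\min s\bigr\}.
\]
The threshold $n\leq\min s$ in the $n$th slice is the key feature that makes hereditariness and spreading pass through: any nonempty subset, or spread image, of an $s$ lying in the $n$th slice still lies in $\mathcal{R}_{\xi_n}$ and still has minimum $\geq n$. Compactness is again a pointwise-limit argument: for $s_k\to t\neq\varnothing$, once $\min s_k=\min t$ the parameters $n_k$ with $s_k\in\mathcal{R}_{\xi_{n_k}}$ satisfy $n_k\leq\min t$, so they are confined to a finite set, and passing to a subsequence with a fixed $n$ reduces compactness of $\mathcal{R}_\xi$ to that of $\mathcal{R}_{\xi_n}$.

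The main, and essentially only, subtle point is showing $o(\mathcal{R}_\xi)=\xi$. For the lower bound, observe that for each $m$ the set $\mathcal{R}_{\xi_m}\upharpoonright\{m,m+1,\ldots\}$ sits inside $\mathcal{R}_\xi$ and has order $\xi_m$ by Proposition \ref{orderreg}, so $o(\mathcal{R}_\xi)\geq\sup_m\xi_m=\xi$. For the upper bound, unwinding the definition gives
\[
(\mathcal{R}_\xi)_{(k)}\;=\;\bigcup_{m\leq k}(\mathcal{R}_{\xi_m})_{(k)}.
\]
Here I would invoke the elementary fact (proved by a short induction on the rank function, using that in a hereditary family every initial segment of a member is itself a member) that the order of a finite union of hereditary families equals the maximum of their orders. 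Since $o((\mathcal{R}_{\xi_m})_{(k)})+1\leq o(\mathcal{R}_{\xi_m})=\xi_m\leq\xi_k$ for $m\leq k$, this gives $o((\mathcal{R}_\xi)_{(k)})+1\leq\xi_k$, and feeding into (\ref{eq1}) yields $o(\mathcal{R}_\xi)\leq\sup_k\xi_k=\xi$, completing the induction.
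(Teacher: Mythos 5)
Your construction is identical to the paper's in all three cases — the same base $\{\varnothing\}$, the same successor family $\{\{n\}\cup s : n\in\nn,\ s\in\mathcal{R}_\zeta,\ n<s\}$, and the same limit family $\bigcup_n \mathcal{R}_{\zeta_n}\upharpoonright[n,\infty)$. The paper dismisses all the verification as ``easy to check,'' whereas you carry it out; your checks of hereditariness, spreading, and compactness are correct, your observation that $(\mathcal{R}_{\zeta+1})_{(n)}=\mathcal{R}_\zeta\upharpoonright(n,\infty)$ cleanly reads off the successor order from Proposition \ref{orderreg} and (\ref{eq1}), and the limit-order argument (lower bound by inclusion of shifted copies, upper bound via $(\mathcal{R}_\xi)_{(k)}=\bigcup_{m\leq k}(\mathcal{R}_{\xi_m})_{(k)}$ together with the fact that the order of a finite union of hereditary families is the maximum of the orders) is sound — the union-of-orders fact is correct precisely because any $\sqsubseteq$-chain in $\mathcal{A}\cup\mathcal{B}$ whose top element lies in, say, $\mathcal{A}$ is entirely contained in $\mathcal{A}$ by hereditariness. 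Your explicit insertion of $\{\varnothing\}$ in the successor and limit definitions is a small but genuine repair of the paper's formulas, which as written do not contain the empty set and hence are not literally hereditary.
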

\begin{proof} For $\xi=0$ we set
$\mathcal{R}_0=\{\varnothing\}$. We proceed by induction on
$\xi<\omega_1$. Assume that for some
 $\xi<\omega_1$ and for each $\zeta<\xi$ we have defined a regular
 family $\mathcal{R}_\zeta$ with $o(\mathcal{R}_\zeta)=\zeta$. If
 $\xi$ is a successor ordinal, i.e. $\xi=\zeta+1$, then we set
 \[\mathcal{R}_{\xi}=\Big\{\{n\}\cup
 s:\;n\in\nn,\;s\in\mathcal{R}_\zeta\;\text{and}\;n<
 s\Big\}.\]
 If $\xi$ is a limit ordinal, then we choose a strictly increasing
 sequence $(\zeta_n)_{n}$ such that $\zeta_n\to\xi$ and we
 set
 \[\mathcal{R}_\xi=\bigcup_{n}\Big\{s\in\mathcal{R}_{\zeta_n}:\min
 s\geq
 n\Big\}=\bigcup_{n}\mathcal{R}_{\zeta_n}\upharpoonright[n,+\infty).\]
It is easy to check  that $\mathcal{R}_\xi$ is a regular family with
$o(\mathcal{R}_\xi)=\xi$ for all $\xi<\omega_1$.
 \end{proof}

We will need  some combinatorial properties of regular families. To
this end we give  the following definition.

For every $\mathcal{R}\subseteq [\nn]^{<\infty}$ and
$L\in[\nn]^\infty$, let
\begin{equation} L(\mathcal{R})=\{L(s):\;s\in \mathcal{R}\}.\end{equation}
Notice that $o(\mathcal{R})=o(L(\mathcal{R}))$ and if $\mathcal{R}$
is compact (resp. hereditary) then
 $L(\mathcal{R})$ is also compact (resp. hereditary).
It is also easily verified that if  $\mathcal{R}$ is  spreading then
$L_1(\mathcal{R})\subseteq L_2(\mathcal{R})$, for every
$L_1\subseteq L_2$ in $[\nn]^\infty$ and more generally,
\begin{equation}\label{eqlem}L_1(\mathcal{R}_{(k)})\subseteq L_2(\mathcal{R}_{(k)}),\end{equation} for
every $k\in \nn$ and $L_1,L_2\in[\nn]^\infty$ satisfying
$\{L_1(j):j> k\}\subseteq \{L_2(j):j> k\}$ (where
$L(\mathcal{R}_{(k)})=\{L(s):s\in\mathcal{R}_{(k)}\}$).

\begin{prop}\label{embending a family F in G by a shifting where o(F)<o(G)}
Let $\mathcal{R},\mathcal{S}$ be regular families of finite subsets
of $\nn$ with $o(\mathcal{R})\leq o(\mathcal{S})$. Then for every
$M\in[\nn]^\infty$ there exists $L\in [M]^\infty$ such that
$L(\mathcal{R})\subseteq\mathcal{S}$.
\end{prop}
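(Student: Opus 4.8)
The plan is to induct on $o(\sss)$, and within that on $o(\mathcal{R})$. The base case $o(\sss)=0$ forces $\sss=\{\varnothing\}$ (the only regular family of order $0$), hence also $o(\mathcal{R})=0$ and $\mathcal{R}=\{\varnothing\}$, so $L(\mathcal{R})=\{\varnothing\}\subseteq\sss$ for every $L$. For the inductive step I would like to reduce to the ``sections'' $\mathcal{R}_{(n)}$ and $\sss_{(n)}$, using the identity $o(\ff)=\sup\{o(\ff_{(n)})+1:n\in\nn\}$ from (\ref{eq1}). The idea is that if we can, for each $n$, drop into an appropriate tail $M_n$ of $M$ and find $L_n\in[M_n]^\infty$ with $L_n(\mathcal{R}_{(n)})\subseteq\sss$ (or into a suitable section of $\sss$), then a diagonal choice of $L$ should patch these together into a single $L$ with $L(\mathcal{R})\subseteq\sss$, because a nonempty $s\in\mathcal{R}$ decomposes as $s=\{n\}\cup s'$ with $s'\in\mathcal{R}_{(n)}$.

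More concretely, I would argue as follows. Fix $M$. For each $n$, since $\mathcal{R}$ is regular we have $o(\mathcal{R}_{(n)})<o(\mathcal{R})\le o(\sss)$. There are two cases. If $o(\mathcal{R}_{(n)})<o(\sss_{(m)})$ for some (equivalently, for all large) $m$, then the inductive hypothesis on $o(\sss)$ applied to the regular families $\mathcal{R}_{(n)}$ and $\sss_{(m)}$ (both regular, with strictly smaller ``$\sss$-order'') gives, inside any prescribed tail of $M$, an infinite set on whose image $\mathcal{R}_{(n)}$ lands in $\sss_{(m)}$; prepending the appropriate element then lands $\{n'\}\cup s'$ in $\sss$. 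If instead $o(\sss_{(m)})\le o(\mathcal{R}_{(n)})$ for all $m$ — which, combined with $o(\mathcal{R}_{(n)})<o(\sss)=\sup_m(o(\sss_{(m)})+1)$, is the genuinely awkward configuration — one wants to still use the induction on $o(\mathcal{R})$: here $o(\mathcal{R}_{(n)})<o(\mathcal{R})$, so the inductive hypothesis (this time on the $\mathcal{R}$-parameter, with $\sss$ unchanged) applied to $\mathcal{R}_{(n)}$ and $\sss$ yields, in any tail of $M$, an $L'$ with $L'(\mathcal{R}_{(n)})\subseteq\sss$. Either way, for every $n$ and every tail of $M$ we can find an infinite subset handling the level-$n$ sets. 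A careful diagonalization — choosing, recursively, the $n$-th element of $L$ far out in a nested sequence of such infinite sets, and using the monotonicity (\ref{eqlem}) of $L\mapsto L(\mathcal{R}_{(k)})$ under passing to sets with the same tail — produces a single $L\in[M]^\infty$ that works simultaneously at all levels, and hence $L(\mathcal{R})\subseteq\sss$.

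The main obstacle is bookkeeping the diagonalization correctly: when we put $L(k)$ in place, the set $L(s)$ for $s\in\mathcal{R}$ with $\min s=k$ depends on $L(j)$ for $j\ge k$, i.e.\ on elements of $L$ not yet chosen, so one must arrange at stage $k$ that \emph{all} future choices stay inside an infinite set $N_k$ on which $N_k(\mathcal{R}_{(k)})\subseteq\sss_{(L(k))}$ (in the first case) resp.\ $N_k(\mathcal{R}_{(k)})\subseteq\sss$ (in the second); the spreading/monotonicity property (\ref{eqlem}) is exactly what guarantees this survives further shrinking. Matching the sections of $\sss$ correctly to the ordinal inequalities — in particular handling limit $o(\sss)$ where no single $\sss_{(m)}$ dominates — is where the two-fold induction (on $o(\sss)$ first, then on $o(\mathcal{R})$) does the real work, and I expect the write-up to spend most of its effort there; the rest is a routine nested-intersection argument using Proposition~\ref{orderreg} to keep orders stable under restriction.
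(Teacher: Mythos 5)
Your diagonalization plan (shrinking through a nested sequence $M = L_0 \supset L_1 \supset \cdots$ with $L_j(\mathcal{R}_{(j)}) \subseteq \mathcal{S}_{(l_j)}$, then picking $L=\{l_j\}_j$ and invoking the monotonicity (\ref{eqlem})) is exactly the paper's, and that part is correct. But your ``genuinely awkward configuration'' is a red herring, and the way you propose to handle it would not work.

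First, the awkward case is vacuous. You have $o(\mathcal{R}_{(n)}) < o(\mathcal{R}) \le o(\mathcal{S}) = \sup_m\bigl(o(\mathcal{S}_{(m)})+1\bigr)$. Since $o(\mathcal{R}_{(n)})$ sits strictly below this supremum, there is some $m$ with $o(\mathcal{R}_{(n)}) < o(\mathcal{S}_{(m)})+1$, i.e.\ $o(\mathcal{R}_{(n)}) \le o(\mathcal{S}_{(m)})$. (You asked for strict inequality $o(\mathcal{R}_{(n)}) < o(\mathcal{S}_{(m)})$ in your ``case 1'', which is why your ``case 2'' looked nonempty; but the inductive hypothesis only needs $\le$, and with $\le$ the dichotomy collapses.) Moreover, since $\mathcal{S}$ is spreading, $m$ can be pushed forward to lie in whatever tail of $M$ you need. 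So the paper gets by with a single induction on $o(\mathcal{R})$ — no outer induction on $o(\mathcal{S})$, no case split.

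Second, even if the awkward case were real, your handling of it is flawed. Getting $L'(\mathcal{R}_{(n)}) \subseteq \mathcal{S}$ is strictly weaker than what you need. To place $L(s)$ for $s = \{n\}\cup s'$ with $s'\in\mathcal{R}_{(n)}$ into $\mathcal{S}$, you need $\{L(n)\}\cup L(s') \in \mathcal{S}$; knowing only that $L(s') \in \mathcal{S}$ does not give this, because regular families are hereditary (closed downward) and spreading, not closed under prepending a smaller element. The whole point of landing in a \emph{section} $\mathcal{S}_{(l)}$ rather than in $\mathcal{S}$ itself is that $u\in\mathcal{S}_{(l)}$ means $\{l\}\cup u\in\mathcal{S}$ — this is precisely the hook that lets the recursion prepend the element $l_k=L(k)$ and conclude $L(\mathcal{R})_{(l_k)}\subseteq\mathcal{S}_{(l_k)}$ for every $k$. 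So you should drop case 2 entirely: observe that a dominating section $\mathcal{S}_{(l)}$ always exists, and run the single induction on $o(\mathcal{R})$ with the diagonalization you already described.
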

\begin{proof}
If  $o(\mathcal{R})= 0$, i.e. $\mathcal{R}=\{\varnothing\}$, then
the conclusion trivially holds.  Suppose that for some
$\xi<\omega_1$ the proposition is true for every regular families
$\mathcal{R}',\mathcal{S}'\subseteq[\nn]^{<\infty}$ such that
$o(\mathcal{R}')<\xi$ and $o(\mathcal{R}')\leq o(\mathcal{S}')$. Let
$\mathcal{R}$, $\mathcal{S}$ be regular with $o(\mathcal{R})=\xi$
and let $M\in [\nn]^\infty$. By  equation (\ref{eq1}) we have that
$o(\mathcal{R}_{(1)})<o(\mathcal{R})$. Hence
$o(\mathcal{R}_{(1)})<o(\mathcal{S})$ and so there is some
$l_1\in\nn$ such that $o(\mathcal{R}_{(1)})\leq
o(\mathcal{S}_{(l_1)})$. Since  $\mathcal{S}$ is spreading we have
that $o(\mathcal{S}_{(l_1)})\leq o(\mathcal{S}_{(n)})$ for all
$n\geq l_1$ and therefore we may suppose that $l_1\in M$. Since
$\mathcal{R}_{(1)}$ and $\mathcal{S}_{(l_1)}$ are regular families,
by  our inductive hypothesis there is  $L_1\in [M]^\infty$ such that
$L_1(\mathcal{R}_{(1)})\subseteq \mathcal{S}_{(l_1)}$.

Proceeding in the same way we construct a strictly increasing
sequence $(l_j)_j$ in $M$ and a decreasing sequence $M=L_0\supset
L_1\supset ...$ of infinite subsets of $M$ such that
 (i) $l_{j+1}\in  L_j$, (ii) $l_{j+1}> L_j(j)$ and (iii)
$L_j(\mathcal{R}_{(j)})\subseteq \mathcal{S}_{(l_j)}$,
  for all $j\geq 1$.

We set  $L=\{l_j\}_{j}$ and we  claim that
$L(\mathcal{R})\subseteq\mathcal{S}$. Indeed, by the above
construction we have that for every $k\in\nn$,
$\{L(j)\}_{j>k}\subseteq \{ L_k(j)\}_{j> k}$. Therefore by
(\ref{eqlem}) and (iii) above,  we get that
\begin{equation}
\label{eq2}L(\mathcal{R}_{(k)})\subseteq
L_k(\mathcal{R}_{(k)})\subseteq \mathcal{S}_{(l_k)}.
\end{equation} It is easy to see
that $L(\mathcal{R}_{(k)})=L(\mathcal{R})_{(l_k)}$ and so by
(\ref{eq2}) we have that $L(\mathcal{R})_{(l_k)}\subseteq
\mathcal{S}_{(l_k)}$. Since this holds for every $k\in\nn$, we
conclude that $L(\mathcal{R})\subseteq \mathcal{S}$.
\end{proof}
The next corollary is an immediate consequence.
\begin{cor}
  Let $\mathcal{R},\mathcal{S}$ be regular families of finite subsets of $\nn$ with $o(\mathcal{R})= o(\mathcal{S})$.
  Then for every $M\in[\nn]^\infty$ there exists $L\in [M]^\infty$ such that $L(\mathcal{R})\subseteq\mathcal{S}$ and $L(\mathcal{S})\subseteq\mathcal{R}$.
\end{cor}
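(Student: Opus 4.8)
The plan is to apply Proposition \ref{embending a family F in G by a shifting where o(F)<o(G)} twice, once in each direction, and then glue the two conclusions together using the monotonicity of the operation $N\mapsto N(\mathcal{R})$ for a spreading family $\mathcal{R}$.

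First, since $o(\mathcal{R})=o(\mathcal{S})$ we in particular have $o(\mathcal{R})\leq o(\mathcal{S})$, so Proposition \ref{embending a family F in G by a shifting where o(F)<o(G)} applied to the pair $(\mathcal{R},\mathcal{S})$ and the set $M$ yields some $L_1\in[M]^\infty$ with $L_1(\mathcal{R})\subseteq\mathcal{S}$. Next, using the reverse inequality $o(\mathcal{S})\leq o(\mathcal{R})$, I would apply the same proposition to the pair $(\mathcal{S},\mathcal{R})$, now with $L_1$ in the role of $M$, obtaining $L\in[L_1]^\infty$ with $L(\mathcal{S})\subseteq\mathcal{R}$.

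It then remains to verify that the first inclusion persists after passing from $L_1$ to the smaller set $L$. Here one uses the remark preceding Proposition \ref{embending a family F in G by a shifting where o(F)<o(G)}: since $\mathcal{R}$ is spreading, $L'(\mathcal{R})\subseteq L''(\mathcal{R})$ whenever $L'\subseteq L''$. Applying this with $L'=L$ and $L''=L_1$ gives $L(\mathcal{R})\subseteq L_1(\mathcal{R})\subseteq\mathcal{S}$. Hence $L$ simultaneously satisfies $L(\mathcal{R})\subseteq\mathcal{S}$ and $L(\mathcal{S})\subseteq\mathcal{R}$, which is the assertion. There is no real obstacle in this argument; the only point requiring a moment's attention is the \emph{direction} of monotonicity in the spreading property — a smaller index set produces a smaller image — which is precisely what allows the first inclusion to be inherited by the refinement $L$.
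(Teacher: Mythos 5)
Your argument is correct and is exactly the ``immediate consequence'' the paper has in mind: apply Proposition \ref{embending a family F in G by a shifting where o(F)<o(G)} in each direction, then use the spreading-monotonicity $L\subseteq L_1\Rightarrow L(\mathcal{R})\subseteq L_1(\mathcal{R})$ to see that the first inclusion survives the passage to the smaller set $L$.
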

\subsubsection{Thin families} A family $\ff$ of finite subsets of
$\nn$ is called \textit{thin} if there do not exist  $s, t$ in
$\ff$ such that $s$ is a proper initial segment of $t$. The
following result is contained in \cite{N-W} and \cite{Pd-R}. Since
it plays a crucial role in the sequel for the sake of completeness
we present its proof.
\begin{prop}\label{th2}
Let $\ff\subseteq[\nn]^{<\infty}$ be a thin family. Then for every
finite partition $\ff=\cup_{i=1}^k \ff_i$, ($k\geq 2$) of $\ff$ and
every $M\in[\nn]^\infty$ there exist $L\in[M]^\infty$ and $1\leq
i_0\leq k$ such that $\ff\upharpoonright L\subseteq\ff_{i_0}$.
\end{prop}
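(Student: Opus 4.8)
The plan is to reduce to the case $k=2$ and then invoke Theorem~\ref{Galvin prikry}. First I would record two trivial closure facts that will be used repeatedly: any subfamily of a thin family is thin, and if $\ff$ is thin and $L\in[\nn]^\infty$ then $\ff\upharpoonright L$ is thin (in both cases, a proper initial segment inclusion among members of the smaller family would already violate thinness of $\ff$). Granting the statement for $k=2$, the general case follows by induction on $k$: given a thin family $\ff=\bigcup_{i=1}^k\ff_i$ with $k\geq 3$, apply the $k=2$ case to the two-piece partition $\ff=\ff_1\cup(\ff_2\cup\cdots\cup\ff_k)$ to get $L_1\in[M]^\infty$ such that either $\ff\upharpoonright L_1\subseteq\ff_1$ (and we are done) or $\ff\upharpoonright L_1\subseteq\ff_2\cup\cdots\cup\ff_k$. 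In the latter case $\ff\upharpoonright L_1$ is again a thin family, now partitioned into the $k-1$ pieces $\ff_i\upharpoonright L_1$ ($2\leq i\leq k$), so the inductive hypothesis applied to this family and to $L_1$ yields $L\in[L_1]^\infty$ and some $i_0\in\{2,\ldots,k\}$ with $\ff\upharpoonright L=(\ff\upharpoonright L_1)\upharpoonright L\subseteq\ff_{i_0}$.

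For the base case $k=2$, write $\ff=\ff_1\cup\ff_2$ and split according to whether $\ff_1$ is large in $M$. If $\ff_1$ is not large in $M$, then by definition there is $L\in[M]^\infty$ with $\ff_1\upharpoonright L=\varnothing$, hence $\ff\upharpoonright L\subseteq\ff_2$ and $i_0=2$ works. If $\ff_1$ is large in $M$, then by Theorem~\ref{Galvin prikry} there is $L\in[M]^\infty$ in which $\ff_1$ is \emph{very large}, and I claim that for this $L$ we have $\ff\upharpoonright L\subseteq\ff_1$, so $i_0=1$ works.

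To prove the claim, suppose for contradiction that some $s\in\ff_2$ satisfies $s\subseteq L$. Let $N\in[L]^\infty$ be obtained by concatenating $s$ with the elements of $L$ greater than $\max s$, so that $s\sqsubseteq N$. Since $\ff_1$ is very large in $L$, there is $t\in\ff_1$ with $t\sqsubseteq N$. Then $s$ and $t$ are both initial segments of $N$, hence comparable; since $s,t\in\ff$ and $\ff$ is thin, neither inclusion $s\sqsubseteq t$ nor $t\sqsubseteq s$ can be proper, so $s=t$, contradicting $\ff_1\cap\ff_2=\varnothing$. This proves the claim and the proposition. The whole argument is bookkeeping on top of Theorem~\ref{Galvin prikry}; the only step needing care is this last one, where thinness is exactly what forbids an element of $\ff_2$ from sitting inside $L$ as an initial segment of, or an extension of, an element of $\ff_1$, thereby forcing the two to coincide and contradicting that the decomposition is a partition. (One could instead run the same dichotomy as a direct Nash--Williams-style recursion, but citing the stated theorem keeps the proof short.)
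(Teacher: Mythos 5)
Your proof is correct and takes essentially the same approach as the paper's: reduce to $k=2$ by induction, dichotomize on whether $\ff_1$ is large in $M$, and in the large case apply Theorem~\ref{Galvin prikry} to make $\ff_1$ very large and then use thinness to force any $s\in\ff\upharpoonright L$ to coincide with the member of $\ff_1$ that is an initial segment of a common $N\sqsupseteq s$. The only cosmetic difference is that you frame the last step as a contradiction with disjointness of $\ff_1$ and $\ff_2$, whereas the paper argues directly that $s\in\ff_1$ (which also covers the case of a not-necessarily-disjoint cover); your derivation of $s=t\in\ff_1$ already gives the direct conclusion, so nothing essential changes.
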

\begin{proof} It suffices to  show the result only for  $k=2$ since the general case follows easily by induction.
 So let  $\ff=\ff_1\cup \ff_2$
and $M\in [\nn]^\infty$. Then either there is $L\in [M]^\infty$ such
that  $\ff_1\upharpoonright L=\varnothing$ or $\ff_1$ is large in
$M$. In the first case it is clear that  $\ff\upharpoonright
L\subseteq \ff_2$. In the second case by Theorem \ref{Galvin prikry}
there is $L\in [M]^\infty$ such that $\ff_1$ is very large in $L$.
We claim that $\ff\upharpoonright L\subseteq\ff_1$. Indeed, let
$s\in\ff\upharpoonright L$. We choose    $N\in [L]^\infty$ such that
$s\sqsubseteq N$ and let $t\sqsubseteq N$ such that $t\in \ff_1$.
Then $s,t$ are $\sqsubseteq$-comparable members of  $\ff$ and since
$\ff$ is thin $s=t\in\ff_1$. Therefore $\ff\upharpoonright
L\subseteq \ff_1$.
\end{proof}
\subsection{Regular thin families} We are now ready to introduce the
main concept of this section.
\begin{defn}\label{defn regular thin} A family
$\ff$ of finite subsets of $\nn$ will be called regular thin if (a)
$\ff$ is thin and (b) the $\sqsubseteq$-closure $\widehat{\ff}$ of
$\ff$ is a regular family.
\end{defn}
 The next lemma
allow us to construct regular thin families from regular ones. We
will use the following notation. For a  family $\mathcal{R}\subseteq
[\nn]^{<\infty}$ we set
\begin{equation}\mathcal{M}(\mathcal{R})=\{s\in\mathcal{R}:\;s\;\text{is}\;\sqsubseteq\text{-maximal
    in}\;\mathcal{R}\}.\end{equation}
Notice that a family  $\ff\subseteq [\nn]^{<\infty}$ is thin if and
only if $\ff=\mathcal{M(\widehat{\ff}})$.
\begin{lem}\label{maximal}
Let $\mathcal{R}$ be a regular family. Then the family
$\mathcal{M}(\mathcal{R})$ is  thin and satisfies
$\widehat{\mathcal{M}(\mathcal{R})}=\mathcal{R}$. Therefore
$\mathcal{M}(\mathcal{R})$ is regular thin with
$o(\mathcal{M}(\mathcal{R}))=o(\mathcal{R})$.
\end{lem}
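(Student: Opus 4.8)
The plan is to check the three assertions in turn; the only point requiring real work is the identity $\widehat{\mathcal{M}(\mathcal{R})}=\mathcal{R}$, and even that reduces to a single use of compactness. First I would dispose of thinness, which holds essentially by definition: if $s,t\in\mathcal{M}(\mathcal{R})$ with $s\sqsubset t$, then $t$ itself witnesses that $s$ is not $\sqsubseteq$-maximal in $\mathcal{R}$, contradicting $s\in\mathcal{M}(\mathcal{R})$. Hence no member of $\mathcal{M}(\mathcal{R})$ is a proper initial segment of another, i.e. $\mathcal{M}(\mathcal{R})$ is thin.

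Next, for the equality $\widehat{\mathcal{M}(\mathcal{R})}=\mathcal{R}$, the inclusion $\widehat{\mathcal{M}(\mathcal{R})}\subseteq\mathcal{R}$ is immediate from $\mathcal{M}(\mathcal{R})\subseteq\mathcal{R}$ together with the already noted fact that $\widehat{\mathcal{R}}=\mathcal{R}$ for a regular (hence hereditary) family. For the reverse inclusion I would argue by contradiction: suppose some $s\in\mathcal{R}$ is not an initial segment of any $\sqsubseteq$-maximal element of $\mathcal{R}$. Then $s$ itself is not $\sqsubseteq$-maximal, so there is $s_1\in\mathcal{R}$ with $s\sqsubset s_1$; since $s\sqsubseteq s_1$ and $s$ is by assumption not an initial segment of a maximal element, $s_1$ is again not maximal, and repeating the choice produces an infinite $\sqsubset$-chain $s=s_0\sqsubset s_1\sqsubset s_2\sqsubset\cdots$ in $\mathcal{R}$. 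Passing to characteristic functions, $\chi_{s_n}\to\chi_t$ in $\{0,1\}^\nn$ where $t=\bigcup_n s_n$ is infinite, hence $t\notin\mathcal{R}$ although $\mathcal{R}$ is closed, contradicting compactness of $\mathcal{R}$. (Equivalently: compactness makes $\widehat{\mathcal{R}}$ well-founded, so $o_{\widehat{\mathcal{R}}}$ is an ordinal-valued rank strictly decreasing along $\sqsubset$, which again forbids an infinite $\sqsubset$-chain.) Therefore every $s\in\mathcal{R}$ is an initial segment of some element of $\mathcal{M}(\mathcal{R})$, i.e. $s\in\widehat{\mathcal{M}(\mathcal{R})}$.

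Finally I would combine these: $\mathcal{M}(\mathcal{R})$ is thin and its $\sqsubseteq$-closure $\widehat{\mathcal{M}(\mathcal{R})}=\mathcal{R}$ is a regular family by hypothesis, so $\mathcal{M}(\mathcal{R})$ is regular thin by Definition \ref{defn regular thin}; and since the order of a family is by definition the rank of the root in its $\sqsubseteq$-closure, $o(\mathcal{M}(\mathcal{R}))=o_{\widehat{\mathcal{M}(\mathcal{R})}}(\varnothing)=o_{\widehat{\mathcal{R}}}(\varnothing)=o(\mathcal{R})$. The only genuine obstacle is the reverse inclusion $\mathcal{R}\subseteq\widehat{\mathcal{M}(\mathcal{R})}$, which really does need compactness of $\mathcal{R}$: a merely hereditary and spreading family could contain infinite $\sqsubset$-chains and hence members lying below no $\sqsubseteq$-maximal set.
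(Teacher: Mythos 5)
Your proof is correct and takes essentially the same approach as the paper: both reduce the content of the lemma to showing that every $s\in\mathcal{R}$ extends to a $\sqsubseteq$-maximal element, which the paper attributes to compactness in a single phrase and you spell out via the infinite $\sqsubset$-chain and the limit point $\chi_t$ with $t$ infinite. The thinness claim and the order identity $o(\mathcal{M}(\mathcal{R}))=o(\widehat{\mathcal{M}(\mathcal{R})})=o(\mathcal{R})$ are handled identically.
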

\begin{proof}
Since $\mathcal{M}(\mathcal{R})\subseteq \mathcal{R}$ and
$\mathcal{R}$ is hereditary, we have that
$\widehat{\mathcal{M}(\mathcal{R})}\subseteq \mathcal{R}$. To show
that $\mathcal{R}\subseteq \widehat{\mathcal{M}(\mathcal{R})}$
notice that for every $s\in\mathcal{R}$ there exists a $t\in
\mathcal{M}(\mathcal{R})$ such that $s\sqsubseteq t$, otherwise
$\mathcal{R}$ would not be compact. Hence
$\widehat{\mathcal{M}(\mathcal{R})}=\mathcal{R}$ and  clearly
$\mathcal{M}(\mathcal{R})$ is thin. Thus $\mathcal{M}(\mathcal{R})$
is regular thin. Finally, by the definition of the order, we have
$o(\mathcal{M}(\mathcal{R}))=o(\widehat{\mathcal{M}(\mathcal{R})})$
and hence $o(\mathcal{M}(\mathcal{R}))=o(\mathcal{R})$.
\end{proof}
\begin{cor}
For  every $\xi<\omega_1$ there is a regular thin family $\ff_\xi$
with $o(\ff_\xi)=\xi$. \end{cor}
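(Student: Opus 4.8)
The plan is to simply combine the two results that immediately precede this corollary in the text. We already have at our disposal the Proposition guaranteeing, for each $\xi<\omega_1$, a regular family $\mathcal{R}_\xi$ with $o(\mathcal{R}_\xi)=\xi$ (constructed by the transfinite recursion on $\xi$: $\mathcal{R}_0=\{\varnothing\}$, a ``shift by one element'' at successors, and a diagonal union over an increasing sequence $\zeta_n\to\xi$ at limits). We also have Lemma \ref{maximal}, which tells us that passing to the $\sqsubseteq$-maximal elements of a regular family produces a regular thin family of the same order.

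So the argument I would give is: fix $\xi<\omega_1$, let $\mathcal{R}_\xi$ be the regular family of order $\xi$ provided by the Proposition, and set $\ff_\xi=\mathcal{M}(\mathcal{R}_\xi)$. By Lemma \ref{maximal}, $\ff_\xi$ is thin, $\widehat{\ff_\xi}=\mathcal{R}_\xi$ is regular, and $o(\ff_\xi)=o(\mathcal{M}(\mathcal{R}_\xi))=o(\mathcal{R}_\xi)=\xi$. Hence $\ff_\xi$ is a regular thin family with the desired order, which is exactly the claim.

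There is no real obstacle here: the corollary is a one-line deduction, since all the work has already been carried out in establishing the existence of regular families of arbitrary countable order and in Lemma \ref{maximal}. The only thing to double-check is that Definition \ref{defn regular thin} (``$\ff$ thin and $\widehat{\ff}$ regular'') is precisely what Lemma \ref{maximal} delivers for $\mathcal{M}(\mathcal{R}_\xi)$ — which it is — so no additional verification is needed.

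\begin{proof}
Fix $\xi<\omega_1$ and let $\mathcal{R}_\xi$ be the regular family with $o(\mathcal{R}_\xi)=\xi$ given by the previous Proposition. Set $\ff_\xi=\mathcal{M}(\mathcal{R}_\xi)$. By Lemma \ref{maximal}, $\ff_\xi$ is regular thin and $o(\ff_\xi)=o(\mathcal{R}_\xi)=\xi$.
\end{proof}
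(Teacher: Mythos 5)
Your proof is correct and is exactly the paper's argument: take a regular family $\mathcal{R}_\xi$ of order $\xi$ from the preceding proposition and apply Lemma \ref{maximal} to $\mathcal{M}(\mathcal{R}_\xi)$. Nothing more to add.
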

\begin{proof}
Let $\xi<\omega_1$ and  $\mathcal{R}_\xi$ be  a regular family with
$o(\mathcal{R}_\xi)=\xi$. Then
$\mathcal{F}_\xi=\mathcal{M}(\mathcal{R}_\xi)$ is as desired.
\end{proof}
\begin{cor}
The map which sends $\ff$ to $\widehat{\ff}$ is a bijection between
the set of all regular thin families and the set of all regular
ones. Moreover, the inverse map sends each regular family
$\mathcal{R}$ to $\mathcal{M}(\mathcal{R})$.
\end{cor}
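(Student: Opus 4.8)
The plan is to exhibit an explicit two-sided inverse for the map $\Phi\colon\ff\mapsto\widehat{\ff}$, namely the map $\Psi\colon\mathcal{R}\mapsto\mathcal{M}(\mathcal{R})$, and then check that both composites are the identity. Almost every ingredient has already been recorded, so the argument is short.

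First I would check that $\Phi$ and $\Psi$ are well defined on the stated domains. For $\Phi$: if $\ff$ is regular thin then, by Definition \ref{defn regular thin}(b), $\widehat{\ff}$ is a regular family, so $\Phi$ does land in the set of regular families. For $\Psi$: if $\mathcal{R}$ is regular then Lemma \ref{maximal} tells us that $\mathcal{M}(\mathcal{R})$ is thin with $\widehat{\mathcal{M}(\mathcal{R})}=\mathcal{R}$ regular, hence $\mathcal{M}(\mathcal{R})$ is regular thin; so $\Psi$ lands in the set of regular thin families.

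Next I would verify $\Psi\circ\Phi=\mathrm{id}$. Let $\ff$ be regular thin. Since $\ff$ is in particular thin, the characterization noted just before Lemma \ref{maximal} gives $\ff=\mathcal{M}(\widehat{\ff})$, i.e. $\Psi(\Phi(\ff))=\mathcal{M}(\widehat{\ff})=\ff$. For the reverse composite $\Phi\circ\Psi=\mathrm{id}$: let $\mathcal{R}$ be regular; then Lemma \ref{maximal} directly yields $\Phi(\Psi(\mathcal{R}))=\widehat{\mathcal{M}(\mathcal{R})}=\mathcal{R}$. Hence $\Phi$ is a bijection with inverse $\Psi=\mathcal{M}(\cdot)$, which is exactly the assertion.

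There is essentially no hard step left: the only point requiring any care is the bookkeeping of well-definedness, and that is entirely supplied by Lemma \ref{maximal} together with the thinness characterization $\ff=\mathcal{M}(\widehat{\ff})$. In other words, the genuine content — the compactness argument showing $\mathcal{R}\subseteq\widehat{\mathcal{M}(\mathcal{R})}$ — was already dispatched in the proof of Lemma \ref{maximal}, so this corollary is a pure formality built on top of it.
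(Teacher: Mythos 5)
Your proof is correct and takes essentially the same route as the paper: both deduce the bijectivity from Lemma \ref{maximal} together with the observation that a thin family satisfies $\ff=\mathcal{M}(\widehat{\ff})$. Your write-up simply spells out the two composites $\Psi\circ\Phi$ and $\Phi\circ\Psi$ explicitly, which the paper leaves implicit.
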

\begin{proof}
  By the definition of regular thin families, the map $\ff\to\widehat{\ff}$ sends each
  regular thin family to a regular one. By Lemma
  \ref{maximal} we get that the map is 1-1,
   onto and the inverse
  map sends each regular
  family $\mathcal{R}$ to $\mathcal{M}(\mathcal{R})$.
\end{proof}

\begin{rem}
If $\ff$ is a regular thin family with $o(\ff)=k<\omega$, then it is
easy to see that there exists $n_0$ such that $\ff\upharpoonright
[n_0,\infty)=\{s\in[\nn]^k:\min s\geq n_0\}$. Therefore, for each
$k<\omega$, the family $[\nn]^k$ is essentially the unique regular
thin family of order $k$. However this does not remain valid for
regular thin families of order $\xi\geq \omega$.  For instance for
every unbounded increasing map $f:\nn\to\nn$ the family
$\ff=\{s\in[\nn]^{<\infty}: |s|=f(\min s)\}$, is a regular thin
family of order $\omega$.
\end{rem}
\begin{lem}\label{wx} Let $\mathcal{R}$ be a regular family and $L\in[\nn]^\infty$.
 Then
$\mathcal{M}(\mathcal{R})\upharpoonright
L=\mathcal{M}(\mathcal{R}\upharpoonright
 L)$ and setting  $\mathcal{M}=\mathcal{M}(\mathcal{R})$, $\widehat{\mathcal{M}\upharpoonright
L}=\mathcal{R}\upharpoonright
 L$ and $o(\mathcal{M}\upharpoonright L)=o(\mathcal{R})$.
\end{lem}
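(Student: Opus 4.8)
The plan is to prove the three assertions in order; the set equality $\mathcal{M}(\mathcal{R})\upharpoonright L=\mathcal{M}(\mathcal{R}\upharpoonright L)$ is the substantial one, and the remaining two will follow quickly from it together with Lemma \ref{maximal} and Proposition \ref{orderreg}.

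First I would prove $\mathcal{M}(\mathcal{R})\upharpoonright L=\mathcal{M}(\mathcal{R}\upharpoonright L)$. The inclusion $\subseteq$ is immediate: if $s\in\mathcal{M}(\mathcal{R})$ and $s\subseteq L$, then $s\in\mathcal{R}\upharpoonright L$, and any $t\in\mathcal{R}\upharpoonright L$ with $s\sqsubset t$ would be a member of $\mathcal{R}$ properly extending $s$, contradicting maximality of $s$ in $\mathcal{R}$. For $\supseteq$, let $s\in\mathcal{M}(\mathcal{R}\upharpoonright L)$ and suppose toward a contradiction that $s$ is not $\sqsubseteq$-maximal in $\mathcal{R}$, so there is $u\in\mathcal{R}$ with $s\sqsubset u$. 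Put $k=|s|$ and $n=u(k+1)$, so $n>\max s$, and by heredity of $\mathcal{R}$ we get $s\cup\{n\}\in\mathcal{R}$. Since $L$ is infinite we may pick $l\in L$ with $l\geq n>\max s$; then $s\cup\{l\}\subseteq L$, $s\sqsubset s\cup\{l\}$, and comparing $\{s(1),\dots,s(k),n\}$ with $\{s(1),\dots,s(k),l\}$ coordinatewise and using that $\mathcal{R}$ is spreading yields $s\cup\{l\}\in\mathcal{R}\upharpoonright L$, contradicting maximality of $s$ in $\mathcal{R}\upharpoonright L$. Hence $s\in\mathcal{M}(\mathcal{R})$, and since $s\subseteq L$ we are done.

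Next, with $\mathcal{M}=\mathcal{M}(\mathcal{R})$, I would show $\widehat{\mathcal{M}\upharpoonright L}=\mathcal{R}\upharpoonright L$. If $t\sqsubseteq s$ with $s\in\mathcal{M}\upharpoonright L$, then $t\subseteq s\subseteq L$ and, by Lemma \ref{maximal}, $t\in\widehat{\mathcal{M}}=\mathcal{R}$, so $t\in\mathcal{R}\upharpoonright L$; this gives $\subseteq$. For the reverse inclusion I would first note that $\mathcal{R}\upharpoonright L$ is compact: it is the image of the compact set $\{\chi_s:s\in\mathcal{R}\}$ under the continuous coordinatewise multiplication map $\chi\mapsto\chi\cdot\chi_L$, since $s\mapsto s\cap L$ maps $\mathcal{R}$ into $\mathcal{R}$ (heredity) and fixes the members of $\mathcal{R}\upharpoonright L$. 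Then, exactly as in the proof of Lemma \ref{maximal}, compactness of $\mathcal{R}\upharpoonright L$ implies that every $t\in\mathcal{R}\upharpoonright L$ is a $\sqsubseteq$-initial segment of some $\sqsubseteq$-maximal element $s$ of $\mathcal{R}\upharpoonright L$; by the first part $s\in\mathcal{M}(\mathcal{R}\upharpoonright L)=\mathcal{M}\upharpoonright L$, hence $t\in\widehat{\mathcal{M}\upharpoonright L}$. Finally, the order of a family depends only on its $\sqsubseteq$-closure, so $o(\mathcal{M}\upharpoonright L)=o(\widehat{\mathcal{M}\upharpoonright L})=o(\mathcal{R}\upharpoonright L)$, which equals $o(\mathcal{R})$ by Proposition \ref{orderreg}.

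The main obstacle is the nontrivial inclusion $\mathcal{M}(\mathcal{R}\upharpoonright L)\subseteq\mathcal{M}(\mathcal{R})\upharpoonright L$: the delicate point is to invoke the spreading property while holding the common initial segment $s$ fixed and moving only the single new coordinate up into $L$, which is precisely what makes the ``heredity, then spreading'' step valid. A secondary technical point is the compactness of $\mathcal{R}\upharpoonright L$, which must be checked before one is entitled to extend a given $t\in\mathcal{R}\upharpoonright L$ to a maximal element lying inside $L$.
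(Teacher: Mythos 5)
Your proof is correct and follows essentially the same route as the paper's: in each part the inclusions are established in the same way, with the key step for $\mathcal{M}(\mathcal{R}\upharpoonright L)\subseteq\mathcal{M}(\mathcal{R})\upharpoonright L$ being the ``spread a proper extension back into $L$'' trick, and the $\supseteq$ direction for $\widehat{\mathcal{M}\upharpoonright L}=\mathcal{R}\upharpoonright L$ relying on compactness of $\mathcal{R}\upharpoonright L$. You merely make two steps more explicit than the paper does --- the one-coordinate heredity-then-spreading maneuver, and the verification that $\mathcal{R}\upharpoonright L$ is compact --- which is a matter of level of detail rather than a different argument.
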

\begin{proof}
  It is easy to see that $\mathcal{M}(\mathcal{R})\upharpoonright L\subseteq \mathcal{M}(\mathcal{R}\upharpoonright
  L)$. To show the converse inclusion let $s\in \mathcal{M}(\mathcal{R}\upharpoonright
  L)$ and assume that  $s\notin \mathcal{M}(\mathcal{R})$. Since
  $\mathcal{R}\upharpoonright
  L\subseteq \mathcal{R}$,
  $s\in \mathcal{R}$ and therefore there exists some $t\in
  \mathcal{M}(\mathcal{R})$ with $s\sqsubset t$. Since
  $\mathcal{R}$ is spreading this yields that exists
  $t'\in\mathcal{R}\upharpoonright L$ with $s\sqsubset t'$. Thus
  $s\notin \mathcal{M}(\mathcal{R}\upharpoonright L)$, a contradiction.
  Therefore $s\in \mathcal{M}(\mathcal{R})$. Since $s\in
  [L]^{<\infty}$, we have that $s\in\mathcal{M}(\mathcal{R})
  \upharpoonright L$. Therefore
  $\mathcal{M}(\mathcal{R}\upharpoonright
  L)= \mathcal{M}(\mathcal{R})\upharpoonright
  L$.

Since $\mathcal{M}\upharpoonright
L=\mathcal{M}(\mathcal{R})\upharpoonright L\subseteq
\mathcal{R}\upharpoonright L$ and $\mathcal{R}\upharpoonright L$ is
hereditary, we have that $\widehat{\mathcal{M}\upharpoonright
L}\subseteq \mathcal{R}\upharpoonright L$. Conversely, let $s\in
\mathcal{R}\upharpoonright L$. Since $\mathcal{R}\upharpoonright L$
is compact there is $t\in\mathcal{M}(\mathcal{R}\upharpoonright
L)=\mathcal{M}(\mathcal{R})\upharpoonright L$ with $s\sqsubseteq t$.
Hence $s\in\widehat{\mathcal{M}\upharpoonright L}$ and
$\widehat{\mathcal{M}\upharpoonright L}=\mathcal{R}\upharpoonright
 L$.

 Finally, $o(\mathcal{M}\upharpoonright L)=o(\widehat{\mathcal{M}\upharpoonright
L})=o(\mathcal{R}\upharpoonright L)=o(\mathcal{R})$, where the last
equality follows by Proposition \ref{orderreg}.
\end{proof}
 \begin{cor}\label{fact every regular is large}
 Let $\ff$ be a regular thin family and $L\in[\nn]^\infty$. Then $\ff\upharpoonright L=\mathcal{M}(\widehat{\ff}\upharpoonright
 L)$, $\widehat{\ff\upharpoonright L}=\widehat{\ff}\upharpoonright
 L$ and $o(\ff\upharpoonright L)=o(\ff)$.
\end{cor}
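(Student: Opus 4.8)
The plan is to derive everything directly from Lemma \ref{wx} after making the right identifications. Since $\ff$ is regular thin, condition (b) of Definition \ref{defn regular thin} says that $\mathcal{R}:=\widehat{\ff}$ is a regular family, so Lemma \ref{wx} is applicable to this $\mathcal{R}$ and to the given $L\in[\nn]^\infty$. Moreover, by the observation recorded just before Lemma \ref{maximal}, a family is thin precisely when it coincides with the $\sqsubseteq$-maximal elements of its closure; hence $\ff=\mathcal{M}(\widehat{\ff})=\mathcal{M}(\mathcal{R})$. With these identifications, writing $\mathcal{M}=\mathcal{M}(\mathcal{R})=\ff$, the three conclusions of Lemma \ref{wx} become literal restatements of the three assertions to be proved.

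Concretely, I would first use the first part of Lemma \ref{wx}, $\mathcal{M}(\mathcal{R})\upharpoonright L=\mathcal{M}(\mathcal{R}\upharpoonright L)$, which reads $\ff\upharpoonright L=\mathcal{M}(\widehat{\ff}\upharpoonright L)$; this is the first claimed identity. Next, the second part of Lemma \ref{wx}, $\widehat{\mathcal{M}\upharpoonright L}=\mathcal{R}\upharpoonright L$, gives $\widehat{\ff\upharpoonright L}=\widehat{\ff}\upharpoonright L$, the second identity. Finally, the last part of Lemma \ref{wx} yields $o(\ff\upharpoonright L)=o(\mathcal{M}\upharpoonright L)=o(\mathcal{R})=o(\widehat{\ff})=o(\ff)$, where the last equality is just the definition of $o(\ff)$ as $o(\widehat{\ff})$.

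There is essentially no obstacle here: the only point requiring attention is verifying that the hypotheses of Lemma \ref{wx} are genuinely met, i.e. that $\widehat{\ff}$ is a regular family and that $\ff=\mathcal{M}(\widehat{\ff})$; both are built into the definition of a regular thin family and the characterization of thinness. Once these are in place, the corollary follows with no further computation.
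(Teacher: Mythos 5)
Your proposal is correct and follows exactly the paper's argument: set $\mathcal{R}=\widehat{\ff}$, note that thinness gives $\ff=\mathcal{M}(\widehat{\ff})$, and read off the three conclusions from Lemma \ref{wx}. The paper's own proof is just a terser version of the same two-line reduction.
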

\begin{proof} Since $\ff$ is thin we have that
$\ff=\mathcal{M}(\widehat{\ff})$.  Setting $\mathcal{R}=
\widehat{\ff}$ in  Lemma \ref{wx} the result follows.
\end{proof}
\begin{cor}\label{Galvin Pricley for regular thin}
Let $\ff$ be a regular thin family. Then for every
$M\in[\nn]^\infty$ there exists $L\in[M]^\infty$ such that
$\ff\upharpoonright L$ is very large in $L$.
\end{cor}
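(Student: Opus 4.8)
The plan is to deduce the statement from the Nash--Williams/Galvin--Prikry dichotomy already recorded as Theorem~\ref{Galvin prikry}: if $\ff$ is large in $M$, then $\ff$ is very large in some $L\in[M]^\infty$. So it suffices to establish two things. First, that a regular thin family $\ff$ is large in every $M\in[\nn]^\infty$, i.e.\ $\ff\upharpoonright L\neq\varnothing$ for all $L\in[M]^\infty$. Second, that ``$\ff$ very large in $L$'' coincides with the assertion of the corollary, namely ``$\ff\upharpoonright L$ very large in $L$''. The second point is immediate: if $N\in[L]^\infty$ and $s\in\ff$ with $s\sqsubseteq N$, then $s\subseteq N\subseteq L$, so $s\in\ff\cap[L]^{<\infty}=\ff\upharpoonright L$; the reverse implication is trivial since $\ff\upharpoonright L\subseteq\ff$.

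For the first point, fix $L\in[M]^\infty$. Here I would invoke Corollary~\ref{fact every regular is large}, which gives $\widehat{\ff\upharpoonright L}=\widehat{\ff}\upharpoonright L$. Since $\widehat{\ff}$ is a regular, hence hereditary, family and $\ff\neq\varnothing$, we have $\varnothing\in\widehat{\ff}$, so $\varnothing\in\widehat{\ff}\upharpoonright L=\widehat{\ff\upharpoonright L}$. In particular $\widehat{\ff\upharpoonright L}\neq\varnothing$, and since $\widehat{\mathcal{G}}=\varnothing$ precisely when $\mathcal{G}=\varnothing$, this forces $\ff\upharpoonright L\neq\varnothing$. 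As $L\in[M]^\infty$ was arbitrary (and $[M]^\infty\subseteq[\nn]^\infty$), $\ff$ is large in $M$.

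Combining the two points: given $M\in[\nn]^\infty$, $\ff$ is large in $M$ by the first point, Theorem~\ref{Galvin prikry} yields $L\in[M]^\infty$ with $\ff$ very large in $L$, and the second point upgrades this to $\ff\upharpoonright L$ being very large in $L$. I do not anticipate a serious obstacle, since the spreading and compactness packed into the word ``regular'' are exactly what keep $\ff\upharpoonright L$ from degenerating to $\varnothing$. If one preferred not to cite Corollary~\ref{fact every regular is large}, the delicate step would instead be an argument by hand: take $s$ to be $\sqsubseteq$-maximal among the initial segments of $L$ lying in $\widehat{\ff}$ (this exists because $\widehat{\ff}$ is well founded), and show $s\in\ff$; if $s$ had a proper extension $s\cup\{n\}\in\widehat{\ff}$ with $n>\max s$, one would use that $\widehat{\ff}$ is spreading together with the infinitude of $L$ to pick $m\in L$ with $m\geq n$, whence $s\cup\{m\}\in\widehat{\ff}\upharpoonright L$ properly extends $s$ --- a contradiction. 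That spreading-plus-infinitude move is where any difficulty hides.
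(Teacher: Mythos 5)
Your proof is correct and follows essentially the same route as the paper: establish that $\ff$ is large via the identity $\widehat{\ff\upharpoonright L}=\widehat{\ff}\upharpoonright L$ from Corollary~\ref{fact every regular is large} (which encodes the spreading property), then apply Theorem~\ref{Galvin prikry}. The one small extra you add is spelling out that ``$\ff$ very large in $L$'' and ``$\ff\upharpoonright L$ very large in $L$'' coincide, a step the paper treats as immediate.
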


\begin{proof}
If $\ff$ is regular thin then since $\widehat{\ff}$ is spreading,
$\widehat{\ff}\upharpoonright N$ is nonempty for every $N\in
[\nn]^\infty$. Since $\widehat{\ff}\upharpoonright
N=\widehat{\ff\upharpoonright N}$, we get that $\ff\upharpoonright
N$ is nonempty too, i.e. $\ff$ is large in $\nn$. Therefore, by
Theorem \ref{Galvin prikry}, for every $M\in[\nn]^\infty$ there
exists $L\in[M]^\infty$ such that $\ff\upharpoonright L$ is very
large in $L$.
\end{proof}

 \begin{defn}For two families $\ff,\g$ of finite subsets of $\nn$, we write
 $\ff\sqsubseteq\g$  (resp. $\ff\sqsubset\g)$ if every element in $\ff$ has
an extension  (resp. proper extension) in $\g$ and
 every element in $\g$ has an  initial (resp. proper initial) segment in $\ff$.\end{defn}
  The following proposition is a
 consequence of a more general result from  \cite{GI}.

  \begin{prop}\label{corollary by Gasparis}
    Let $\ff,\g\subseteq[\nn]^{<\infty}$ be regular thin families with $o(\ff)<o(\g)$.
    Then for every $M\in[\nn]^\infty$ there exists
    $L\in[M]^\infty$ such that $\ff\upharpoonright L\sqsubset\g\upharpoonright L$.
  \end{prop}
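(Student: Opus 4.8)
The plan is to reduce the statement to a clean ``which front comes first'' dichotomy between $\ff$ and $\g$, settled by the partition theorem for thin families (Proposition~\ref{th2}), and then to exclude the unwanted alternative by an order comparison. First I would fix a good ambient set. Applying Corollary~\ref{Galvin Pricley for regular thin} to $\ff$ inside $M$, and then to $\g$ inside the infinite set so produced, we obtain $L_0\in[M]^\infty$ such that both $\ff\upharpoonright L_0$ and $\g\upharpoonright L_0$ are very large in $L_0$; since very-largeness in a set passes to all infinite subsets, this survives every later refinement of $L_0$. On such a set, each $N\in[L_0]^\infty$ has (uniquely, by thinness) an initial segment in $\ff$ and an initial segment in $\g$, and these are the objects that let one move between the two families in both directions.

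Next I would partition $\g\upharpoonright L_0=\g_1\cup\g_2$, where $\g_1=\{t\in\g\upharpoonright L_0:\ \exists s\in\ff,\ s\sqsubset t\}$ and $\g_2$ is its complement. By Proposition~\ref{th2} there are $L\in[L_0]^\infty$ and $i_0\in\{1,2\}$ with $\g\upharpoonright L\subseteq\g_{i_0}$. The crux is to show $i_0=2$ is impossible. Assume $\g\upharpoonright L\subseteq\g_2$ and fix $t\in\g\upharpoonright L$; extend $t$ to some $N\in[L]^\infty$ and, using that $\ff\upharpoonright L$ is very large in $L$, pick $s\in\ff$ with $s\sqsubseteq N$. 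Then $s$ and $t$ are $\sqsubseteq$-comparable; since $t\in\g_2$ forbids $s\sqsubset t$, we get $t\sqsubseteq s$, hence $t\in\widehat{\ff\upharpoonright L}$ (as $s\in\ff\upharpoonright L$). Thus $\widehat{\g\upharpoonright L}\subseteq\widehat{\ff\upharpoonright L}$, and as $\widehat{\g\upharpoonright L}$ is a $\sqsubseteq$-downward closed subset of the well-founded tree $\widehat{\ff\upharpoonright L}$, a routine induction on the rank function yields $o(\g\upharpoonright L)\le o(\ff\upharpoonright L)$. By Corollary~\ref{fact every regular is large} this reads $o(\g)\le o(\ff)$, contradicting $o(\ff)<o(\g)$.

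Hence $\g\upharpoonright L\subseteq\g_1$, and it remains to verify $\ff\upharpoonright L\sqsubset\g\upharpoonright L$. That every $t\in\g\upharpoonright L$ has a proper initial segment in $\ff\upharpoonright L$ is immediate from the definition of $\g_1$ (such a segment automatically lies in $L$). Conversely, given $s\in\ff\upharpoonright L$ extend it to $N\in[L]^\infty$; very-largeness of $\g\upharpoonright L$ provides $t\in\g\upharpoonright L$ with $t\sqsubseteq N$, and $\g\upharpoonright L\subseteq\g_1$ provides $s'\in\ff$ with $s'\sqsubset t$; since $s'$ and $s$ are comparable initial segments of $N$ lying in the thin family $\ff$, they coincide, so $s=s'\sqsubset t$ and $t$ is a proper extension of $s$ inside $\g\upharpoonright L$. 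I expect the only genuine obstacle to be the impossibility of $i_0=2$: the idea is to see that the alternative ``no element of $\g$ on $L$ has a proper $\ff$-initial segment'' forces the whole tree $\widehat{\g\upharpoonright L}$ to sit inside $\widehat{\ff\upharpoonright L}$, which then clashes with $o(\ff)<o(\g)$; everything else is bookkeeping with initial segments and the order-preservation of Corollary~\ref{fact every regular is large}.
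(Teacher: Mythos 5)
Your proof is correct and follows essentially the same route as the paper's: pass to a set where both families are very large, partition $\g$ by whether an element has a proper initial segment in $\ff$, apply Proposition~\ref{th2}, and rule out the second alternative by the order comparison $o(\g\upharpoonright L)\le o(\ff\upharpoonright L)$ via Corollary~\ref{fact every regular is large}. You simply make explicit the final bookkeeping (that the good alternative indeed yields $\ff\upharpoonright L\sqsubset\g\upharpoonright L$), which the paper leaves implicit.
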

  \begin{proof}
    By Corollary \ref{Galvin Pricley for regular thin} we have that there exists $L_1\in[M]^\infty$
    such that both $\ff,\g$ are very large in $L_1$. So for every $L\in[L_1]^\infty$ and
    every $t\in\g\upharpoonright L$ there exists $s\in\ff\upharpoonright L$ such that $s,t$ are comparable.

    Let $\g_1$ be the set of all elements of $\g$ which have a proper initial segment in $\ff$ and $\g_2=\g\setminus\g_1$.
    By Proposition \ref{th2} there exist $i_0\in\{1,2\}$ and $L\in[L_1]^\infty$
    such that $\g\upharpoonright L\subseteq \g_{i_0}$. It suffices to show
    that $i_0=1$. Indeed, if $i_0=2$ then for every $t\in G\upharpoonright L$ there is $s\in\ff$ such that $t\sqsubseteq s$.
    This in conjunction with  Corollary \ref{fact every regular is large} yields that $o(\g)=o(\g\upharpoonright L)\leq o(\ff)$ which is a contradiction.
  \end{proof}
  A similar but weaker result holds when  $o(\ff)=o(\g)$.
  \begin{prop}\label{px21}
 Let $\ff,\g\subseteq[\nn]^{<\infty}$ be regular thin families with $o(\ff)=o(\g)$.
 Then there exists $L_0\in[\nn]^\infty$ such that for every $M\in[\nn]^\infty$ there exists
 $L\in[L_0(M)]^\infty$ such that $L_0(\ff)\upharpoonright L\sqsubseteq\g\upharpoonright L$.
\end{prop}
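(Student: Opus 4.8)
The plan is to deduce the statement from Proposition~\ref{embending a family F in G by a shifting where o(F)<o(G)} together with the Galvin--Prikry theorem (Theorem~\ref{Galvin prikry}). Since $\ff$ and $\g$ are regular thin, $\widehat{\ff}$ and $\widehat{\g}$ are regular families with $o(\widehat{\ff})=o(\ff)=o(\g)=o(\widehat{\g})$, so applying Proposition~\ref{embending a family F in G by a shifting where o(F)<o(G)} with $\mathcal{R}=\widehat{\ff}$, $\mathcal{S}=\widehat{\g}$ and $M=\nn$ produces an $L_0\in[\nn]^\infty$ with $L_0(\widehat{\ff})\subseteq\widehat{\g}$; in particular $L_0(\ff)\subseteq L_0(\widehat{\ff})\subseteq\widehat{\g}$. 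Regarding $L_0$ as the increasing enumeration $\nn\to L_0$, it is an order isomorphism, hence it preserves $\sqsubseteq$ and sends $\sqsubseteq$-incomparable sets to $\sqsubseteq$-incomparable sets, so $L_0(\ff)$ is a thin family; moreover $L_0(\ff)\upharpoonright L_0(P)=L_0(\ff\upharpoonright P)$ for every $P\in[\nn]^\infty$, and this is nonempty since a regular thin family is large in $\nn$ (as noted in the proof of Corollary~\ref{Galvin Pricley for regular thin}). Hence $L_0(\ff)$ is large in every infinite subset of $L_0$.

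Next, I would fix $M\in[\nn]^\infty$ and set $M_0=L_0(M)\in[L_0]^\infty$. Both $\g$ and $L_0(\ff)$ are large in $M_0$: the former because every regular thin family is large in $\nn$, the latter by the previous paragraph since $M_0\subseteq L_0$. Applying Theorem~\ref{Galvin prikry} first to $L_0(\ff)$ in $M_0$ and then to $\g$ in the set thus obtained, and using that being very large in a set passes to all its infinite subsets, I get an $L\in[M_0]^\infty=[L_0(M)]^\infty$ in which both $L_0(\ff)$ and $\g$ are very large. This is the set required by the proposition.

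Finally, I would check the two halves of $L_0(\ff)\upharpoonright L\sqsubseteq\g\upharpoonright L$ directly. Given $s\in L_0(\ff)\upharpoonright L$, extend $s$ to some $N\in[L]^\infty$ with $s\sqsubseteq N$; since $\g$ is very large in $L$ there is $t\sqsubseteq N$ with $t\in\g$, so $t\in\g\upharpoonright L$, and $s,t$ are $\sqsubseteq$-comparable. If $t\sqsubseteq s$, then choosing $s'\in\g$ with $s\sqsubseteq s'$ (possible as $s\in\widehat{\g}$) gives $t\sqsubseteq s\sqsubseteq s'$ with $t,s'\in\g$, whence $t=s'=s$ by thinness of $\g$; so in every case $s$ has the extension $t$ in $\g\upharpoonright L$. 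Conversely, given $t\in\g\upharpoonright L$, extend $t$ to some $N\in[L]^\infty$; since $L_0(\ff)$ is very large in $L$ there is $s\sqsubseteq N$ with $s\in L_0(\ff)$, and $s,t$ are comparable. The case $t\sqsubset s$ cannot occur, for $s\in\widehat{\g}$ furnishes $s'\in\g$ with $s\sqsubseteq s'$, so $t\sqsubset s'$ with $t,s'\in\g$, contradicting thinness; hence $s\sqsubseteq t$, and since $s\subseteq t\subseteq L$ we have $s\in L_0(\ff)\upharpoonright L$, the desired initial segment of $t$. The only point needing care is the bookkeeping that $L_0(\ff)$ inherits the relevant properties of a regular thin family on subsets of $L_0$, namely thinness, largeness, and $L_0(\ff)\subseteq\widehat{\g}$; once this is secured, the Galvin--Prikry reduction and the comparability arguments are routine, so I do not anticipate a genuine obstacle.
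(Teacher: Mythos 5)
Your proposal is correct and follows essentially the same route as the paper's proof: invoke Proposition~\ref{embending a family F in G by a shifting where o(F)<o(G)} to obtain $L_0$ with $L_0(\widehat{\ff})\subseteq\widehat{\g}$, apply Theorem~\ref{Galvin prikry} to make both $L_0(\ff)$ and $\g$ very large in some $L\in[L_0(M)]^\infty$, and conclude $L_0(\ff)\upharpoonright L\sqsubseteq\g\upharpoonright L$. You simply spell out the final comparability/thinness verification that the paper leaves implicit, and that verification is correct.
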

\begin{proof}
By Proposition \ref{embending a family F in G by a shifting where
o(F)<o(G)} there exists  $L_0\in[\nn]^\infty$ such that
$L_0(\widehat{\ff})\subseteq\widehat{\g}$.
 Let $M\in [\nn]^\infty$. Notice that $L_0(\ff)$ and $\g$ are large in $L_0(M)$. Hence  by
Theorem \ref{Galvin prikry} there exists $L\in [L_0(M)]^\infty$ such
that $L_0(\ff)$ and $\g$ are  very large in $N$. Since
$L_0(\widehat{\ff})\subseteq\widehat{\g}$, we conclude that
$L_0(\ff)\upharpoonright L\sqsubseteq \g\upharpoonright L$.
\end{proof}
Technically the above  two propositions are incorporated in one as
 follows.
\begin{cor}\label{cx21}
 Let $\ff,\g\subseteq[\nn]^{<\infty}$ be regular thin families with $o(\ff)\leq o(\g)$.
 Then there exists $L_0\in[\nn]^\infty$ such that for every $M\in[\nn]^\infty$ there exists
 $L\in[L_0(M)]^\infty$ such that $L_0(\ff)\upharpoonright L\sqsubseteq\g\upharpoonright L$.
\end{cor}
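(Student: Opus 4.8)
The plan is to reduce the statement to the two propositions that immediately precede it, according to whether $o(\ff)=o(\g)$ or $o(\ff)<o(\g)$. In the first case there is literally nothing to do: Proposition \ref{px21} is exactly the assertion of the corollary when the orders coincide, so one invokes it with the same witness $L_0$ (the shift supplied by Proposition \ref{embending a family F in G by a shifting where o(F)<o(G)}).

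In the case $o(\ff)<o(\g)$ I would take $L_0=\nn$, so that $L_0(\ff)=\ff$ and $L_0(M)=M$ for every $M\in[\nn]^\infty$. Fix $M\in[\nn]^\infty$. Proposition \ref{corollary by Gasparis} yields $L\in[M]^\infty$ with $\ff\upharpoonright L\sqsubset\g\upharpoonright L$. Since every proper extension is an extension and every proper initial segment is an initial segment, $\ff\upharpoonright L\sqsubset\g\upharpoonright L$ implies $\ff\upharpoonright L\sqsubseteq\g\upharpoonright L$, that is, $L_0(\ff)\upharpoonright L\sqsubseteq\g\upharpoonright L$, which is what is required.

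The only point worth recording is that the choice of $L_0$ genuinely has to depend on which of the two cases occurs: in the strict case one may (and does) take $L_0=\nn$, because Proposition \ref{corollary by Gasparis} already produces a set $L$ on which $\ff$ sits properly below $\g$ without any preliminary shift; in the equality case one cannot in general dispense with the shift $L_0$, since $\widehat{\ff}\subseteq\widehat{\g}$ may fail even when $o(\ff)=o(\g)$, and it is precisely to arrange $L_0(\widehat{\ff})\subseteq\widehat{\g}$ that Proposition \ref{embending a family F in G by a shifting where o(F)<o(G)} is used inside the proof of Proposition \ref{px21}. I do not expect any genuine obstacle: the corollary is purely a bookkeeping amalgamation of the two preceding results, and the only ``content'' is the trivial observation that $\sqsubset$ refines $\sqsubseteq$.
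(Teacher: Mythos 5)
Your proof is correct and follows essentially the same two-case reduction as the paper: for $o(\ff)<o(\g)$ take $L_0=\nn$ and invoke Proposition \ref{corollary by Gasparis}, and for $o(\ff)=o(\g)$ invoke Proposition \ref{px21} directly. The extra remarks about why $\sqsubset$ refines $\sqsubseteq$ and why the shift cannot be dropped in the equality case are accurate but add nothing beyond what the paper's short proof already assumes.
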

\begin{proof}
If $o(\ff)< o(\g)$, we set $L_0=\nn$. Then $L_0(\ff)=\ff$ and
$L_0(M)=M$ and the conclusion follows by Proposition \ref{corollary
by Gasparis}. If $o(\ff)=0(\g)$  the result is immediate by
Proposition \ref{px21}.
\end{proof}
\section{Plegma families}\label{section admissibility} In this
section we introduce  the notion of   plegma families initially
appeared in \cite{AKT} for $k$-subsets of $\nn$. Here we do not
assume that the members of a plegma family are necessarily of the
same cardinality.
\subsection{Definition and basic properties}
We begin by stating  the definition of a plegma family.
  \begin{defn}\label{defn plegma}
    Let $l\in\nn$ and $s_1,...,s_l$ be nonempty finite subsets of $\nn$.
    The $l-$tuple $(s_j)_{j=1}^l$ will be called a \textit{plegma}
  family   if the following are satisfied.
    \begin{enumerate}
      \item[(i)] For every $i,j\in \{1,...,l\}$ and $k\in\nn$ with $i<j$ and $k\leq\min(|s_i|,|s_j|)$, we have that $s_i(k)<s_j(k)$.
      \item[(ii)] For every $i,j\in \{1,...,l\}$ and $k\in\nn$ with $k\leq \min (|s_i|,|s_j| -1)$, we have that $s_i(k)<s_j(k+1)$.
    \end{enumerate}
  \end{defn}
  For instance a pair $(\{n_1\},\{n_2\})$ of singletons is plegma iff
  $n_1<n_2$ and
   a pair of doubletons $(\{n_1,m_1\},\{n_2,m_2\})$ is plegma iff $n_1<n_2<m_1<m_2$.
More generally for two non empty $s,t\in [\nn]^{<\infty}$ with
$|s|\leq |t|$ the pair   $(s,t)$ is a plegma pair iff
$s(1)<t(1)<s(2)<t(2)<...<s(|s|)<t(|s|)$.  Of course  the situation
is more involved when the size of a plegma family is large.

Below we gather together some stability properties of plegma
families. We omit the proof as it is a direct application of the
definition.
   \begin{prop}\label{newq1}
Let  $(s_j)_{j=1}^l$ be a  family of finite subsets of $\nn$. Then
the following are satisfied.
\begin{enumerate}
\item[(i)] If  $(s_j)_{j=1}^l$ is a plegma family then
$(s_{j_m})_{m=1}^k$ is also a plegma family, for every $1\leq
k\leq l$ and $1\leq j_1<\ldots<j_k\leq l$.\item[(ii)] The family
$(s_j)_{j=1}^l$ is a plegma family iff $(s_{j_1},s_{j_2})$ is a
plegma pair, for every $1\leq j_1<j_2\leq l$. \item[(iii)] If
$(s_j)_{j=1}^l$ is a plegma family then $(t_j)_{j=1}^l$ is also
a plegma family, whenever $\varnothing\neq t_j\sqsubseteq s_j$
for $1\leq j\leq l$.
\item[(iv)] If $(s_j)_{j=1}^l$ is a plegma family then
$(L(s_j))_{j=1}^l$ is also a plegma family, for every
$L\in[\nn]^\infty$.
\end{enumerate}
\end{prop}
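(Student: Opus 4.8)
The plan is to check each of the four assertions directly against Definition \ref{defn plegma}; items (i), (iii) and (iv) are immediate, and essentially all of the (minor) work sits in the converse implication of (ii). Throughout I use that, under the identifications fixed in the preliminaries, $s(k)$ denotes the $k$-th element of $s$ in increasing order, so that the two defining conditions of a plegma family are statements about these enumerations.

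For (i), passing from $(s_j)_{j=1}^l$ to a subfamily $(s_{j_m})_{m=1}^k$ with $j_1<\cdots<j_k$ merely restricts the set of index pairs over which the two defining conditions are required, and $m<m'$ forces $j_m<j_{m'}$; hence both conditions for the subfamily are special cases of those for the original family. For (iii), since $t_j$ is a nonempty initial segment of $s_j$ we have $|t_j|\le|s_j|$ and $t_j(k)=s_j(k)$ for all $k\le|t_j|$; the one point to note is that $k\le\min(|t_i|,|t_j|-1)$ implies $k+1\le|t_j|\le|s_j|$, so that $t_j(k+1)=s_j(k+1)$, and then the inequalities needed for $(t_j)_{j=1}^l$ are literally those already known for $(s_j)_{j=1}^l$. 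For (iv), when the infinite set $L$ is viewed as a strictly increasing function $\nn\to\nn$ we have $|L(s_j)|=|s_j|$ and $(L(s_j))(k)=L(s_j(k))$, so every inequality of the form $s_i(k)<s_j(k)$ or $s_i(k)<s_j(k+1)$ is preserved upon applying $L$ to both sides, by strict monotonicity of $L$.

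The substance of the proposition is (ii). The forward direction is the case $k=2$ of item (i). For the converse, suppose $(s_{j_1},s_{j_2})$ is a plegma pair for every $1\le j_1<j_2\le l$ and verify the two defining conditions for $(s_j)_{j=1}^l$. The first condition, for $i<j$ and $k\le\min(|s_i|,|s_j|)$, is exactly the first defining condition applied to the plegma pair $(s_i,s_j)$. For the second condition, fix $i,j$ and $k\le\min(|s_i|,|s_j|-1)$: if $i=j$ the inequality $s_i(k)<s_i(k+1)$ merely records that the elements of $s_i$ are enumerated increasingly; if $i<j$ it is the second defining condition for the plegma pair $(s_i,s_j)$; and if $i>j$ it follows from the second defining condition applied to the plegma pair $(s_j,s_i)$ \emph{with the roles of the two indices interchanged}, which gives precisely $s_i(k)<s_j(k+1)$. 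The one place where a little care is needed is to notice that the second condition of Definition \ref{defn plegma} quantifies over \emph{all} ordered pairs of indices $i,j$ (not only those with $i<j$), so a single plegma pair $\{a,b\}$ already supplies both $a(k)<b(k+1)$ and $b(k)<a(k+1)$; with that observation in hand the entire argument reduces to index bookkeeping, and there is no genuine obstacle.
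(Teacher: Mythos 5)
Your verification is correct, and since the paper explicitly omits the proof (``a direct application of the definition''), what you wrote is precisely the routine case-check the authors had in mind. Your one substantive observation — that the second clause of Definition \ref{defn plegma} is not restricted to $i<j$, so a plegma pair already encodes both inequalities needed when checking condition (ii) with $i>j$ in the converse of item (ii) — is the right thing to flag, and everything else is the straightforward index bookkeeping the paper expects the reader to supply.
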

For every  family  $\ff\subseteq [\nn]^{<\infty}$ and $l\in\nn$ we
denote by $\text{{Plm}}_l(\ff)$ the set of all $(s_j)_{j=1}^l$
such that $s_1,...,s_l\in\ff$ and $(s_j)_{j=1}^l$ is a plegma
family. We also set $\text{{Plm}}(\ff)=\bigcup_{l=1}^\infty
\text{{Plm}}_l(\ff)$. Our mai aim is to  show that for every
$l\in\nn$, $\text{{Plm}}_l(\ff)$ is a Ramsey family. To this end
we need some preparatory lemmas.

\begin{lem} \label{lemma increasing length of plegma}
   Let $\ff$ be a regular thin family and $l\in\nn$. Then for every
 $(s_j)_{j=1}^l\in \text{{Plm}}_l(\ff)$ we have that
$|s_1|\leq\ldots\leq |s_l|$.
 \end{lem}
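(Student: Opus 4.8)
The plan is to argue by contradiction. Suppose the chain of inequalities fails; then there are indices $1\le a<b\le l$ with $|s_a|>|s_b|$, and I will manufacture from these a proper $\sqsubseteq$-extension of $s_b$ lying in $\widehat{\ff}$. This is impossible: since $\ff$ is thin we have $\ff=\mathcal{M}(\widehat{\ff})$, so $s_b\in\ff$ is $\sqsubseteq$-maximal in $\widehat{\ff}$ and admits no proper extension there.

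The first step is to read off the needed inequalities from Definition \ref{defn plegma}. Write $m=|s_b|$, so that $|s_a|\ge m+1$. Clause (i) applied to the ordered pair $(a,b)$ (here $a<b$) gives $s_a(k)<s_b(k)$ for every $k\le\min(|s_a|,|s_b|)=m$; clause (ii) applied to the ordered pair $(b,a)$ gives $s_b(k)<s_a(k+1)$ for every $k\le\min(|s_b|,|s_a|-1)=m$, the minimum being equal to $m$ precisely because $|s_a|-1\ge m$. Concatenating these produces the interleaving
\[
s_a(1)<s_b(1)<s_a(2)<s_b(2)<\cdots<s_a(m)<s_b(m)<s_a(m+1).
\]

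The second step uses that $\widehat{\ff}$ is a regular, hence hereditary and spreading, family. Since $s_a\in\ff$, its initial segment $t:=s_a|(m+1)$ belongs to $\widehat{\ff}$, and $t$ lists as $s_a(1)<\cdots<s_a(m+1)$. Put $u:=\{s_b(1),\dots,s_b(m),s_a(m+1)\}$; the interleaving shows that $u$ lists as $s_b(1)<\cdots<s_b(m)<s_a(m+1)$, hence $|u|=m+1$, and the $k$-th entry of $u$ dominates the $k$-th entry of $t$ for each $k\le m+1$ — with strict inequality $s_a(k)<s_b(k)$ for $k\le m$ and equality for $k=m+1$. The spreading property of $\widehat{\ff}$ then forces $u\in\widehat{\ff}$. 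But $s_b\sqsubset u$, contradicting the $\sqsubseteq$-maximality of $s_b$ in $\widehat{\ff}$. Hence $|s_a|\le|s_b|$ whenever $a<b$, which is exactly the assertion $|s_1|\le\cdots\le|s_l|$ (the case $l\le 1$ being vacuous).

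I do not anticipate a genuine obstacle: the only point requiring care is the bookkeeping with the two clauses of the plegma definition — in particular invoking clause (ii) for the pair $(b,a)$ in the correct order and checking that the resulting index ranges are non-vacuous even when $s_b$ is a singleton — after which the conclusion is a short appeal to heredity and spreading of $\widehat{\ff}$ together with the identification $\ff=\mathcal{M}(\widehat{\ff})$ of thin families.
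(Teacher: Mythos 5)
Your proof is correct and follows essentially the same strategy as the paper: assume $|s_a|>|s_b|$, manufacture a proper $\sqsubseteq$-extension of $s_b$ inside $\widehat{\ff}$ by the spreading property, and contradict thinness. The only cosmetic differences are that the paper first reduces to pairs via Proposition \ref{newq1} and then extends $s_2$ to a set of size $|s_1|$ by appending arbitrary large elements, whereas you append the single element $s_a(m+1)$ (pinned down via clause (ii) of Definition \ref{defn plegma}) — a slightly more economical extension, but the same underlying idea.
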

 \begin{proof}
   By (ii) of Proposition \ref{newq1} it suffices to show the conclusion  for $l=2$.
   Assume on the contrary that there exists a plegma pair $(s_1,s_2)$
   in $\ff$ with $|s_1|>|s_2|$. We pick $s\in[\nn]^{<\infty}$ such
   that $|s|=|s_1|$, $s_2\sqsubset s$ and $s(|s_2|+1)>\max s_1$.
   By the definition of the plegma family, we have that
   for every $1\leq k\leq|s_2|$, $s_1(k)<s_2(k)=s(k)$. Hence, for
   every $1\leq k\leq |s_1|$, we have that $s_1(k)\leq s(k)$. By
   the spreading property of $\widehat{\ff}$ we get that
   $s\in\widehat{\ff}$. But since $s_2$ is a proper initial segment of
   $s$ we get that $s_2\not\in\ff$, which is a contradiction.
 \end{proof}

  \begin{lem}\label{abmissibility1-1union and union thin proposition}    Let $\ff$ be a thin family of finite subsets of $\nn$
  and
$l\in\nn$.  Let   $(s_j)_{j=1}^l,(t_j)_{j=1}^l$ in
    $\text{Plm}_l(\ff)$ with $|s_1|\leq\ldots\leq |s_l|$,
    $|t_1|\leq\ldots\leq|t_l|$ and $\cup_{j=1}^l s_j\sqsubseteq\cup_{j=1}^l
    t_j$. Then
     $(s_j)_{j=1}^l=(t_j)_{j=1}^l$ and consequently $\cup_{j=1}^l s_j=\cup_{j=1}^l t_j$.
  \end{lem}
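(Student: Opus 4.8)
The plan is to argue by induction on $l$, after first describing the increasing enumeration of the union of a plegma family with non-decreasing lengths. So let $(s_j)_{j=1}^l$ be a plegma family with $d_1\leq\cdots\leq d_l$, where $d_j=|s_j|$, and put $u=\bigcup_{j=1}^l s_j$. For $1\leq m\leq d_l$ set $a_m=\min\{j:d_j\geq m\}$, so that the sets of length $\geq m$ are exactly $s_{a_m},\dots,s_l$ and $a_1=\cdots=a_{d_1}=1$. From clauses (i) and (ii) of the definition of a plegma family one checks that all the numbers $s_j(m)$ are pairwise distinct and that the increasing enumeration of $u$ consists of the ``level $m$'' blocks $s_{a_m}(m)<s_{a_m+1}(m)<\cdots<s_l(m)$, concatenated in increasing order of $m$ (every entry of level $m$ precedes every entry of level $m+1$ by clause (ii) applied with $i=l$). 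Hence $|u|=\sum_{j=1}^l d_j$, and since each of the levels $1,\dots,d_1$ has exactly $l$ entries, for every $1\leq m\leq d_1$ the entry $s_1(m)$ occupies position $(m-1)l+1$ of $u$, i.e. $s_1(m)=u\big((m-1)l+1\big)$. All of this applies verbatim to $(t_j)_{j=1}^l$, with $v=\bigcup_{j=1}^l t_j$ and $e_j=|t_j|$.

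The first step is to show $s_1=t_1$. Since $u\sqsubseteq v$ and $|u|\geq l$, we get $s_j(1)=u(j)=v(j)=t_j(1)$ for $1\leq j\leq l$. If $d_1>e_1$, then for each $1\leq m\leq e_1$ the position $(m-1)l+1$ is at most $(e_1-1)l+1<d_1 l\leq|u|$, so $s_1(m)=u((m-1)l+1)=v((m-1)l+1)=t_1(m)$; this makes $t_1$ a proper initial segment of $s_1$, contradicting that $\ff$ is thin. Therefore $d_1\leq e_1$, and the same computation now gives $s_1(m)=t_1(m)$ for all $1\leq m\leq d_1$, i.e. $s_1\sqsubseteq t_1$; thinness of $\ff$ forces $s_1=t_1$ (and $e_1=d_1$).

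The second step is to peel off the first set and induct. Since the coordinates of a plegma family are pairwise distinct, $\bigcup_{j=2}^l s_j=u\setminus\{s_1(1),\dots,s_1(d_1)\}$, which by the position formula is obtained from $u$ by deleting the entries in the positions $P:=\{(m-1)l+1:1\leq m\leq d_1\}\subseteq\{1,\dots,d_1l\}\subseteq\{1,\dots,|u|\}$. Because $t_1=s_1$, exactly the same position set $P$ is deleted from $v$ to produce $\bigcup_{j=2}^l t_j$; as $u(i)=v(i)$ for all $i\leq|u|\leq|v|$, it follows that the increasing enumeration of $\bigcup_{j=2}^l s_j$ is an initial segment of that of $\bigcup_{j=2}^l t_j$, i.e. $\bigcup_{j=2}^l s_j\sqsubseteq\bigcup_{j=2}^l t_j$. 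Now $(s_j)_{j=2}^l$ and $(t_j)_{j=2}^l$ are plegma families in $\ff$ with non-decreasing lengths by Proposition \ref{newq1}(i), so the inductive hypothesis (for $l-1$ in place of $l$) gives $(s_j)_{j=2}^l=(t_j)_{j=2}^l$; together with $s_1=t_1$ this proves the statement, the base case $l=1$ being just thinness of $\ff$ ($s_1\sqsubseteq t_1$ with $s_1,t_1\in\ff$ forces $s_1=t_1$).

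The part requiring care is the combinatorial bookkeeping in the level decomposition of $\bigcup_j s_j$ — in particular the fact that the first $d_1$ levels each have exactly $l$ entries, which is what pins down the positions of $s_1$ inside $u$ and makes both $s_1=t_1$ and the reduction $\bigcup_{j\geq2}s_j\sqsubseteq\bigcup_{j\geq2}t_j$ go through.
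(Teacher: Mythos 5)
Your proof is correct and is essentially the paper's argument: both rest on the level/block decomposition of $\bigcup_j s_j$ and the resulting position formula $s_m(j)=\big(\bigcup_{i\geq m}s_i\big)\big((j-1)(l-m+1)+1\big)$, followed by an appeal to thinness. The only cosmetic difference is that you peel off $s_1=t_1$ and induct on $l$, while the paper fixes $l$ and inducts on the index $m$ within the tuple; you are in fact a bit more explicit than the paper in justifying that the remaining unions stay $\sqsubseteq$-comparable (the ``same positions deleted'' step), which the paper dispatches with ``by our assumptions $s\sqsubseteq t$''.
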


\begin{proof}
  Suppose that for some $1\leq m\leq l$ we have that $(s_i)_{i<m}=(t_i)_{i<m}$. We will show that $s_m=t_m$.
  Let $s=\cup_{j=m}^l s_j$ and $t=\cup_{j=m}^l t_j$. Then by our assumptions  $s\sqsubseteq t$.
  Moreover since $|s_m|\leq\ldots\leq|s_l|$ and $|t_m|\leq\ldots\leq|t_l|$, we easily conclude that
  $s_m(j)=s\big((j-1)(l-m+1)+1\big)$,  for all $1\leq j\leq|s_m|$ and similarly $t_m(j)=t\big((j-1)(l-m+1)+1\big)$,
  for all $1\leq j\leq|t_m|$. Hence, as $s\sqsubseteq t$, we get that  for all $1\leq j\leq \min\{|t_m|, |s_m|\},$ $s_m(j)=t_m(j)$.
  Therefore $s_m$ and $t_m$ are $\sqsubseteq$-comparable. Since  $\mathcal{F}$  is thin we have $s_m=t_m$.
  By induction on $1\leq m\leq l$, we obtain that  $s_j=t_j$ for every $1\leq j\leq l$.
\end{proof}

\begin{thm} \label{ramseyforplegma}
Let $M$ be an infinite subset of $\nn$, $l\in\nn$  and $\ff$ be a
regular thin family. Then for every finite partition
$\text{{Plm}}_l(\ff\upharpoonright M)=\cup_{i=1}^p \mathcal{P}_i$,
there exist $L\in[M]^\infty$ and $1\leq i_0\leq p$ such that
$\text{{Plm}}_l(\ff\upharpoonright L)\subseteq \mathcal{P}_{i_0}$.
\end{thm}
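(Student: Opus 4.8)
The plan is to reduce the statement to the Nash--Williams partition theorem for thin families (Proposition~\ref{th2}) by encoding each plegma family by the union of its members. Concretely, set
\[
\g=\Big\{\textstyle\bigcup_{j=1}^l s_j:\ (s_j)_{j=1}^l\in\text{Plm}_l(\ff)\Big\}.
\]
Since $\ff$ is regular thin, Lemma~\ref{lemma increasing length of plegma} guarantees that every $(s_j)_{j=1}^l\in\text{Plm}_l(\ff)$ satisfies $|s_1|\leq\cdots\leq|s_l|$, so Lemma~\ref{abmissibility1-1union and union thin proposition} applies: whenever $u,u'\in\g$ with $u\sqsubseteq u'$, writing $u=\bigcup_j s_j$ and $u'=\bigcup_j t_j$ we obtain $(s_j)_{j=1}^l=(t_j)_{j=1}^l$, hence $u=u'$. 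This shows at once that $\g$ is \emph{thin} and that the map $\Phi$ sending $u\in\g$ to the unique plegma family with union $u$ is a well-defined bijection $\Phi\colon\g\to\text{Plm}_l(\ff)$; moreover, since each $s_j$ is contained in $\bigcup_j s_j$, for every $N\in[\nn]^\infty$ the map $\Phi$ restricts to a bijection between $\g\upharpoonright N$ and $\text{Plm}_l(\ff\upharpoonright N)$.

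Next I would take care that the homogeneous set produced at the end still carries a nonempty set of plegma families. By Corollary~\ref{Galvin Pricley for regular thin} fix $L_1\in[M]^\infty$ with $\ff\upharpoonright L_1$ very large in $L_1$; then, directly from the definition of ``very large'', $\ff$ is very large in every $L\in[L_1]^\infty$. For such an $L=\{m_1<m_2<\cdots\}$, put $A_j=\{m_j,m_{j+l},m_{j+2l},\dots\}$ for $1\le j\le l$, and choose $s_j\sqsubseteq A_j$ with $s_j\in\ff$ (possible since $A_j\in[L_1]^\infty$). A routine check against Definition~\ref{defn plegma}, using $s_j(k)=m_{j+(k-1)l}$ and $1\le i<j\le l$, shows $(s_j)_{j=1}^l$ is a plegma family in $\ff\upharpoonright L$. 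Hence $\text{Plm}_l(\ff\upharpoonright L)\neq\varnothing$, equivalently $\g\upharpoonright L\neq\varnothing$, for every $L\in[L_1]^\infty$.

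Finally I would transport the partition and conclude. We may assume $p\ge 2$ (if $p=1$ take $L=L_1$). Let $\g'=\g\upharpoonright L_1$, which is thin, and for $1\le i\le p$ set $\g'_i=\{u\in\g':\Phi(u)\in\mathcal{P}_i\}$; since $\Phi$ maps $\g'$ bijectively onto $\text{Plm}_l(\ff\upharpoonright L_1)\subseteq\text{Plm}_l(\ff\upharpoonright M)$, the sets $\g'_1,\dots,\g'_p$ form a partition of $\g'$. Applying Proposition~\ref{th2} to this partition of the thin family $\g'$ (with $M'=L_1$) yields $L\in[L_1]^\infty$ and $1\le i_0\le p$ with $\g'\upharpoonright L\subseteq\g'_{i_0}$. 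As $L\subseteq L_1$ we have $\g'\upharpoonright L=\g\upharpoonright L$, so applying $\Phi$ gives $\text{Plm}_l(\ff\upharpoonright L)=\Phi(\g\upharpoonright L)\subseteq\mathcal{P}_{i_0}$, and by the previous paragraph this set is nonempty.

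\emph{Main obstacle.} The genuinely nontrivial content---that passing to unions sends plegma families injectively and order-faithfully onto the members of a thin family---is already isolated in Lemmas~\ref{lemma increasing length of plegma} and \ref{abmissibility1-1union and union thin proposition}, so what remains is essentially careful bookkeeping: first shrinking $M$ to a set on which $\ff$ is very large (so that ``homogeneous'' is not vacuously true), and then matching the given partition of $\text{Plm}_l(\ff\upharpoonright M)$ with an honest finite partition of the thin family $\g$ before invoking Proposition~\ref{th2}.
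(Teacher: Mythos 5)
Your proof is correct and follows essentially the same route as the paper's: encode each plegma $l$-tuple by the union of its members, use Lemmas~\ref{lemma increasing length of plegma} and \ref{abmissibility1-1union and union thin proposition} to see that this yields a bijection onto a thin family, and then invoke Proposition~\ref{th2}. The extra paragraph verifying that $\text{Plm}_l(\ff\upharpoonright L)\neq\varnothing$ (by first passing to a set on which $\ff$ is very large) is a sensible safeguard against a vacuous conclusion, though the statement as written only asserts an inclusion and so does not strictly require it; the paper omits this step.
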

\begin{proof}
  Let
$\mathcal{U}=\{\cup_{j=1}^l
   s_j:(s_j)_{j=1}^l\in\text{{Plm}}_l(\ff\upharpoonright M)\}$.
By Lemmas \ref{lemma increasing length of plegma} and
\ref{abmissibility1-1union and union thin proposition} we get that
$\mathcal{U}$ is thin and the map $\Phi:
\text{{Plm}}_l(\ff\upharpoonright M)\to\mathcal{U} $ sending each
plegma $l$-tuple $(s_j)_{j=1}^l$ with $s_i\in\ff\upharpoonright M$
 for $1\leq i\leq l$, to its union $\cup_{j=1}^l s_j$,  is
 an onto  bijection.
We set $\mathcal{U}_i=\Phi(\mathcal{P}_i)$, for $1\leq j\leq p$.
Then $\mathcal{U}=\cup_{i=1}^p \mathcal{U}_i$ and  by Proposition
\ref{th2} there exist $j_0$ and $L\in[M]^\infty$ such that
$\mathcal{U}\upharpoonright L\subseteq _{i_0}\mathcal{U}_{i_0}$ or
equivalently $\text{{Plm}}_l(\ff\upharpoonright L)\subseteq
\mathcal{P}_{j_0}$.
\end{proof}
\subsection{Plegma paths}
In this subsection we introduce the definition of the plegma paths
in finite subsets of $\nn$ and we present some of  their
properties. Such paths will be used in the next subsection for the
study of plegma preserving maps.

\begin{defn}
  Let $k\in\nn$ and $s_0,...,s_k$ be nonempty finite subsets of $\nn$.
  We will say that $(s_j)_{j=0}^k$ is a \textit{plegma path} \textit{of length} $k$
  \textit{from} $s_0$ \textit{to} $s_k$, if for every $0\leq j\leq k-1$, the pair $(s_j,s_{j+1})$ is
  plegma. Similarly, a sequence $(s_j)_{j}$
  of nonempty finite subsets of $\nn$
  will be called  an infinite plegma path if for every
  $j\in\nn$ the pair $(s_j,s_{j+1})$ is plegma.
\end{defn}
The next simple lemma is very useful for the following.
\begin{lem}\label{lemma conserning the length of the plegma path}
 Let $(s_0,\ldots,s_{k-1},s)$ be a plegma path
  of length $k$ from $s_0$ to $s$ such that $s_0<s$. Then \[k\geq \min\{|s_i|:0\leq i\leq
  k-1\}.\]
\end{lem}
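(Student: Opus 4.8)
The plan is to argue by contradiction: suppose $k<\min\{|s_i|:0\le i\le k-1\}$, so that $|s_i|\ge k+1$ for every $0\le i\le k-1$, while $|s|=|s_k|\ge 1$ since $s$ is nonempty. The strategy is to propagate, along the path, the constraint imposed by each consecutive plegma pair, and arrive at an inequality contradicting $s_0<s$.

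The only fact I need from Definition~\ref{defn plegma} is a consequence of condition~(ii): if $(u,v)$ is a plegma pair (in that order), then $v(p)<u(p+1)$ whenever $p\le|v|$ and $p+1\le|u|$ (this is (ii) with $v$ in the role of the ``second'' set and $u$ in that of the ``first''). Applying this to the plegma pair $(s_{k-j-1},s_{k-j})$ for $0\le j\le k-1$ gives
\[
s_{k-j}(j+1)<s_{k-j-1}(j+2),
\]
and the standing size hypotheses make every index legitimate: for $0\le j\le k-1$ we have $k-j-1\le k-1$, so $|s_{k-j-1}|\ge k+1\ge j+2$; and $|s_{k-j}|\ge j+1$, either because $1\le k-j\le k-1$ (then $|s_{k-j}|\ge k+1$) or because $j=0$ (then $|s_k|\ge 1$). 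Thus each position appearing lies inside the corresponding set, and each invocation of (ii) stays within its stated range $p\le\min(|v|,|u|-1)$.

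Chaining these inequalities for $j=0,1,\dots,k-1$ yields
\[
\min s=s_k(1)<s_{k-1}(2)<s_{k-2}(3)<\cdots<s_1(k)<s_0(k+1).
\]
Since $|s_0|\ge k+1$ we have $s_0(k+1)\le\max s_0$, hence $\min s<\max s_0$. But $s_0<s$ with $s_0,s$ nonempty means precisely $\max s_0<\min s$, a contradiction. Therefore $k\ge\min\{|s_i|:0\le i\le k-1\}$.

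The one point demanding care is the index bookkeeping in the chain: verifying that every term $s_{k-j}(j+1)$ is defined and that every step is a bona fide instance of~(ii). This is exactly where the hypothesis $|s_i|\ge k+1$ for $i\le k-1$ is used in full; without it the chain would break at the first step $j$ for which $|s_{k-j-1}|\le j+1$, but then $\min\{|s_i|:0\le i\le k-1\}\le|s_{k-j-1}|\le j+1\le k$, so the conclusion would already hold (and $s_0<s$ is precisely what forbids the chain from ever reaching the top). I expect no other difficulty.
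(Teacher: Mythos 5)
Your proposal is correct and follows exactly the same route as the paper's proof: argue by contradiction from $k<\min_i|s_i|$, chain the instances of condition~(ii) along consecutive plegma pairs to obtain $s(1)<s_{k-1}(2)<\cdots<s_1(k)<s_0(k+1)$, and observe that this contradicts $\max s_0<\min s$. You have simply spelled out the index bookkeeping that the paper leaves implicit.
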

\begin{proof}
  Suppose that $k< \min\{|s_i|:0\leq i\leq k-1\}$. Then $s(1)<s_{k-1}(2)<s_{k-2}(3)<\ldots<s_1(k)<s_0(k+1)$, which contradicts that $s_0<s$.
\end{proof}
For a family $\ff\subseteq
  [\nn]^{<\infty}$ a \textit{plegma path in $\ff$} is a (finite or infinite) plegma
  path which consists of elements of $\ff$. It is easy to verify the existence of infinite plegma paths in $\ff$ whenever  $\ff$ is very large
   in an infinite subset $L$ of $\nn$.
   In particular, let $s\in\ff\upharpoonright L$ satisfying
   the next property: for every $j=1,...,|s|-1$ there exists $l\in L$ such that $s(j)<l<s(j+1)$.
   Then it is straightforward that there exists $s'\in\ff\upharpoonright L$ such that the pair $(s,s')$ is
    plegma and moreover $s'$ shares the same property with $s$. Based on this one can built an infinite
     plegma path in $\ff$  of elements having the above property.

     The above  remarks motivate the following definition.
For every $\ff\subseteq [\nn]^{<\infty}$  and $L\in[\nn]^\infty$, we
set\begin{equation}\ff\upharpoonright\upharpoonright
L=\Big{\{}s\in\ff\upharpoonright L:\forall 1\leq j\leq |s|-1\
\exists \ l\in L\text{ with }s(j)<l<s(j+1)\Big{\}}.\end{equation}

The proof of the next lemma follows the same lines with the one of
 Lemma \ref{wx}.
\begin{lem}\label{wq}
 Let $\ff$ be a regular thin family and $L\in[\nn]^\infty$. Then
 $\widehat{\ff\upharpoonright\upharpoonright L}=\widehat{F}\upharpoonright\upharpoonright
 L$, i.e. $s\in\ff\upharpoonright\upharpoonright L$ iff $s$ is   $\sqsubseteq$-maximal
 in $\widehat{F}\upharpoonright\upharpoonright
 L$.
 \end{lem}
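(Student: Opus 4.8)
The statement to prove is Lemma~\ref{wq}: for a regular thin family $\ff$ and $L\in[\nn]^\infty$, we have $\widehat{\ff\upharpoonright\upharpoonright L}=\widehat{\ff}\upharpoonright\upharpoonright L$; equivalently $s\in\ff\upharpoonright\upharpoonright L$ iff $s$ is $\sqsubseteq$-maximal in $\widehat{\ff}\upharpoonright\upharpoonright L$. The plan is to imitate the proof of Lemma~\ref{wx}, establishing the two inclusions $\widehat{\ff\upharpoonright\upharpoonright L}\subseteq\widehat{\ff}\upharpoonright\upharpoonright L$ and $\widehat{\ff}\upharpoonright\upharpoonright L\subseteq\widehat{\ff\upharpoonright\upharpoonright L}$ separately, and then deriving the ``maximality'' reformulation.

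\textbf{First inclusion.} I would start from $s\in\widehat{\ff\upharpoonright\upharpoonright L}$, so $s\sqsubseteq s'$ for some $s'\in\ff\upharpoonright\upharpoonright L$. Then $s\subseteq L$ and, since $\widehat{\ff}$ is hereditary (being regular), $s\in\widehat{\ff}$. To see $s\in\widehat{\ff}\upharpoonright\upharpoonright L$, I must check the gap condition: for every $1\leq j\leq|s|-1$ there is $l\in L$ with $s(j)<l<s(j+1)$. But $s$ is an initial segment of $s'$, so $s(j)=s'(j)$ and $s(j+1)=s'(j+1)$ for all such $j$, and the gap condition for $s'\in\ff\upharpoonright\upharpoonright L$ gives exactly the required $l\in L$. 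Hence $s\in\widehat{\ff}\upharpoonright\upharpoonright L$.

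\textbf{Second inclusion.} Now take $s\in\widehat{\ff}\upharpoonright\upharpoonright L$; so $s\in\widehat{\ff}$, $s\subseteq L$, and $s$ satisfies the gap condition relative to $L$. Since $s\in\widehat{\ff}$ there is $t\in\ff$ with $s\sqsubseteq t$. The point is to replace $t$ by an element of $\ff\upharpoonright\upharpoonright L$ that still extends $s$. Here I would use the spreading property of $\widehat{\ff}$: enumerate $L\cap(\max s,\infty)=\{l_1<l_2<\cdots\}$, and note that using the gap condition on $s$ one can build, inside $L$, an element $s''$ with $s\sqsubseteq s''$, $s''\in\ff\upharpoonright\upharpoonright L$. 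Concretely, since $s$ already has ``witnessing'' elements of $L$ strictly between consecutive entries, one extends $s$ greedily within $L$, always leaving room for a further element of $L$ between consecutive chosen entries (this is the construction sketched in the paragraph preceding Lemma~\ref{wq}), and stops upon hitting a $\sqsubseteq$-maximal element of $\widehat{\ff}$, i.e. an element of $\ff$; compactness of $\widehat{\ff}$ guarantees the process terminates. The spreading property ensures each truncation of this greedily built set stays in $\widehat{\ff}$, since its entries dominate the corresponding entries of an initial segment of $t$. Thus $s\in\widehat{\ff\upharpoonright\upharpoonright L}$.

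\textbf{Reformulation and expected obstacle.} Once the set equality is established, the ``maximality'' phrasing follows: an element $s$ of $\widehat{\ff}\upharpoonright\upharpoonright L$ is $\sqsubseteq$-maximal there iff it has no proper extension in $\widehat{\ff}\upharpoonright\upharpoonright L=\widehat{\ff\upharpoonright\upharpoonright L}$, and since $\ff$ is thin this happens precisely when $s\in\ff\upharpoonright\upharpoonright L$ (an element of a thin family has no proper extension inside its own closure, and conversely any $\sqsubseteq$-maximal element of $\widehat{\ff\upharpoonright\upharpoonright L}$ lies in $\ff\upharpoonright\upharpoonright L$, using that $\ff=\mathcal{M}(\widehat{\ff})$ and Corollary~\ref{fact every regular is large}). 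The main technical point I expect to need care with is the second inclusion: one must verify that the greedy extension of $s$ within $L$ actually lands in $\ff$ (not just in $\widehat{\ff}$) and preserves the gap condition at every stage — this is where the spreading property and compactness of $\widehat{\ff}$ are both essential, exactly as in Lemma~\ref{wx}, so the argument should go through with only routine modifications.
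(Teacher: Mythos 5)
Your first inclusion and the passage to the ``maximality'' reformulation via $\ff=\mathcal{M}(\widehat{\ff})$ are correct and in the spirit the paper intends (the paper gives no explicit proof, deferring to Lemma~\ref{wx}). The second inclusion is right in outline, but the justification is not airtight. You rely on two claims: (a) spreading keeps each greedily built set inside $\widehat{\ff}$ ``since its entries dominate the corresponding entries of an initial segment of $t$''; (b) compactness forces the process to stop ``upon hitting a $\sqsubseteq$-maximal element of $\widehat{\ff}$''. Claim (a) only certifies $\widehat{\ff}$-membership for extensions of length at most $|t|$; once you reach length $|t|$ the resulting set dominates $t$ coordinatewise but need not lie in $\ff$ (for a Schreier-type family such a set can easily have a proper extension in $\widehat{\ff}$), so you are not done. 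Claim (b) conflates two things: compactness does force termination, but ``cannot extend by the particular next element you picked'' is weaker than ``$\sqsubseteq$-maximal in $\widehat{\ff}$'' --- a larger candidate might have kept you inside $\widehat{\ff}$.

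Both points are fixed by refreshing the witness at each stage: if $s_j\notin\ff$, take any proper extension $u_j\in\widehat{\ff}$ of $s_j$ and choose $l\in L$ with $l\geq u_j(|s_j|+1)$, $l>\max s_j$, and some $l'\in L$ strictly between $\max s_j$ and $l$; then $s_{j+1}=s_j\cup\{l\}\in\widehat{\ff}\upharpoonright\upharpoonright L$ by spreading, and nontermination would yield an infinite $\sqsubseteq$-increasing chain in $\widehat{\ff}$, contradicting compactness. Cleaner still, and literally ``along the same lines as Lemma~\ref{wx}'': prove directly that $\ff\upharpoonright\upharpoonright L=\mathcal{M}(\widehat{\ff}\upharpoonright\upharpoonright L)$. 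Indeed, if $s$ is $\sqsubseteq$-maximal in $\widehat{\ff}\upharpoonright\upharpoonright L$ but $s\notin\ff$, pick $t\in\ff$ with $s\sqsubset t$ and use spreading to see that $s\cup\{l\}\in\widehat{\ff}\upharpoonright\upharpoonright L$ for a suitably large $l\in L$ (large enough to leave a gap after $\max s$ and to dominate $t(|s|+1)$), contradicting maximality. Since $\widehat{\ff}\upharpoonright\upharpoonright L$ is compact and hereditary, the argument of Lemma~\ref{maximal} then yields $\widehat{\ff\upharpoonright\upharpoonright L}=\widehat{\ff}\upharpoonright\upharpoonright L$ without any greedy chain.
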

 We are now ready to present the main result of this subsection.  In terms of graph theory it states
that in the (directed) graph with vertices the elements of
$\ff\upharpoonright\upharpoonright L$ and edges the plegma pairs
$(s,t)$ in $\ff\upharpoonright\upharpoonright L$, the distance
between two vertices $s_0$ and $s$ with $s_0<s$ is equal to the
cardinality of $s_0$.

\begin{thm}\label{accessing everything with plegma path of length |s_0|}
  Let $\ff$ be a regular thin family and $L\in[\nn]^\infty$. Assume
  that $\ff$ is very large in $L$.
  Then for every $s_0,s\in\ff\upharpoonright\upharpoonright L$ with $s_0<s$
   there exists a plegma path $(s_0,\ldots,s_{k-1},s)$ in $\ff\upharpoonright\upharpoonright L$
  of length $k=|s_0|$ from $s_0$ to $s$. Moreover  $k=|s_0|$ is
  the minimal length of a plegma path in $\ff\upharpoonright\upharpoonright L$ from $s_0$ to $s$.
\end{thm}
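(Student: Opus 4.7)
The plan is to handle minimality directly from the preceding two lemmas and then construct a path of length exactly $k=|s_0|$ using the very largeness of $\ff$ in $L$. \textbf{Minimality:} given any plegma path $(s_0,\dots,s_{k-1},s)$ in $\ff\upharpoonright\upharpoonright L$, each consecutive pair $(s_j,s_{j+1})$ is a plegma pair in $\ff$, so Lemma \ref{lemma increasing length of plegma} gives $|s_0|\le|s_1|\le\cdots\le|s_{k-1}|$; hence $\min\{|s_i|:0\le i\le k-1\}=|s_0|$ and Lemma \ref{lemma conserning the length of the plegma path} yields $k\ge|s_0|$.

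For existence, write $s_0=\{a_1<\cdots<a_k\}$ and $s=\{b_1<\cdots<b_m\}$. Since $s_0\in\ff\upharpoonright\upharpoonright L$, I pick $t_j\in L\cap(a_j,a_{j+1})$ for $j=1,\dots,k-1$. For each $i=1,\dots,k-1$ I define the infinite subset of $L$
\[
N_i=\{t_i,t_{i+1},\dots,t_{k-1}\}\cup\{b_1,\dots,b_m\}\cup\bigl(L\cap(b_m,\infty)\bigr),
\]
and let $s_i$ be the unique element of $\ff$ with $s_i\sqsubseteq N_i$, provided by very largeness. Also set $N_k=\{b_1,\dots,b_m\}\cup(L\cap(b_m,\infty))$, whose $\ff$-prefix is $s$ (of length $m$).

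The crucial step is the size estimate $k\le|s_i|\le m$. For the lower bound, $N_i|k=\{t_i,\dots,t_{k-1},b_1,\dots,b_i\}$ is pointwise strictly greater than $s_0$ (since $t_{i+j-1}>a_{i+j-1}\ge a_j$ for $j\le k-i$ and every $b_\ell>a_k\ge a_j$ for $j>k-i$); spreading of $\widehat{\ff}$ then gives $N_i|k\in\widehat{\ff}$, so $|s_i|\ge k$. For the upper bound, $N_i$ is obtained from $N_k$ by prepending the $k-i$ elements $t_i,\dots,t_{k-1}$ (all smaller than $b_1=\min N_k$), so $N_i(j)<N_k(j)$ for every $j$; by spreading, $\{j:N_i|j\in\widehat{\ff}\}\subseteq\{j:N_k|j\in\widehat{\ff}\}$, and hence $|s_i|\le m$. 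Consequently $s_i$ has the explicit form $\{t_i,\dots,t_{k-1},b_1,\dots,b_{|s_i|-k+i}\}\subseteq\{t_i,\dots,t_{k-1}\}\cup\{b_1,\dots,b_m\}$. With this, verifying that $(s_{i-1},s_i)$ is plegma reduces to the interleaving $a_j<t_j<a_{j+1}$, the bound $t_{k-1}<a_k<b_1$, and the monotonicity of the $b_j$; and $s_i\in\ff\upharpoonright\upharpoonright L$ holds because between any two consecutive elements of $s_i$ some $a_{j+1}$, or $a_k$, or an $L$-element supplied by $s\in\ff\upharpoonright\upharpoonright L$, lies in the gap. The main obstacle is the size control via spreading; without it, $s_i$ could extend past $b_m$ into the tail $L\cap(b_m,\infty)$ and the plegma conditions with $s$ would break down.
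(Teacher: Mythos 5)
Your proposal is correct, but it takes a genuinely different route from the paper. The paper proves a slightly stronger statement (replacing $s_0$ by any $t\in\widehat{\ff}\upharpoonright\upharpoonright L$, the path elements beyond $t$ lying in $\ff\upharpoonright\upharpoonright L$) and argues by induction on $|t|$: given $t$ of length $k+1$ it forms a shifted $t_0\in\widehat{\ff}\setminus\ff$ of length $k$, invokes the inductive hypothesis to get a path from $t_0$ to $s$, and then splices in a new first step $(t,s_0)$. You instead build the whole path in one stroke: you write down the telescoping infinite sets $N_i=\{t_i,\dots,t_{k-1}\}\cup s\cup(L\cap(\max s,\infty))$ and take $s_i$ to be the unique $\ff$-prefix of $N_i$, so that the $s_i$ have a completely explicit form and the plegma conditions reduce to the interleavings $a_j<t_j<a_{j+1}$ and $t_{k-1}<a_k<b_1$. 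Your identification of the crux --- the two-sided size bound $k\le|s_i|\le m$ obtained by comparing $N_i$ against $s_0$ (from below) and against $N_k$ (from above) using the spreading property of $\widehat{\ff}$ together with thinness of $\ff$ --- is exactly right; without the upper bound $|s_i|\le m$ the prefix $s_i$ could spill past $\max s$ and the plegma check against $s$ would fail. The direct construction buys explicitness and avoids carrying along elements of $\widehat{\ff}\setminus\ff$, whereas the paper's induction is a bit more uniform in that it treats the first step of the path by the same mechanism as all the others. The minimality half of your argument (Lemma \ref{lemma increasing length of plegma} to get $|s_0|=\min|s_i|$, then Lemma \ref{lemma conserning the length of the plegma path}) coincides with the paper's.
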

\begin{proof}
By Lemmas \ref{lemma conserning the length of the plegma path} and
\ref{lemma increasing length of plegma} every plegma path in $\ff$
from $s_0$ to $s$ is of length at least $|s_0|$. Therefore for
$s_0<s$ a  plegma path of the form $(s_0,\ldots,s_{k-1},s)$ with
$s_0,...,s_{k-1},s\in\ff$ and $k=|s_0|$ certainly is of minimal
length.

We will actually prove a slightly more general result.  Namely we
will show that for every $t$ in
$\widehat{\ff}\upharpoonright\upharpoonright L$ and
$s\in\ff\upharpoonright\upharpoonright L$ with $t<s$ there exists a
plegma path of length $|t|$ from $t$ to $s$ such that all its
elements except perhaps   $t$ belong to
$\ff\upharpoonright\upharpoonright L$.

For the proof we will use induction on the length of $t$. The case
$|t|=1$ is trivial, since for every $s\in [\nn]^{<\infty}$ with
$t<s$ the pair $(t,s)$ is already a plegma path of length 1 from $t$
to $s$. Suppose that for some $k\in\nn$ the above holds for all $t$
in $\widehat{\ff}\upharpoonright\upharpoonright L$ with $|t|=k$.

Let $t\in \widehat{\ff}\upharpoonright\upharpoonright L$ with
$|t|=k+1$
  and $s\in\ff\upharpoonright\upharpoonright L$ with $t<s$. Then there exist
  $n_1<n_2<\ldots<n_{k+1}$ in $\nn$ such that $n_{j}-n_{j-1}>1$, for  $2\leq j\leq k$ and $t=\{L(n_j):1\leq j\leq
  k+1\}$. We set $t_0=\{L(n_j-1):2\leq j\leq k+1\}$. Since $n_j-1>n_{j-1}$ we have that $t_0$ is
  of equal cardinality and pointwise strictly greater than $t\setminus \{\text{max}\ t\}$. Hence, since $\widehat{\ff}$ is spreading,
  we have  $t_0\in\widehat{\ff}\upharpoonright\upharpoonright L$ and
  moreover $t_0$ cannot be a $\sqsubseteq$-maximal element
  of $\widehat{\ff}\upharpoonright\upharpoonright
  L$.  By Lemma \ref{wq} we have that $\ff\upharpoonright\upharpoonright
  L$ is the set of all $\sqsubseteq$-maximal elements of $\widehat{\ff}\upharpoonright\upharpoonright
  L$. Therefore, we conclude that $t_0\in \widehat{\ff}\setminus
  \ff$.
Thus, since $|t_0|=k$, by the inductive
  hypothesis, there exists a plegma path $(t_0,s_1,\ldots,s_{k-1},s)$
  of length $k=|t_0|$ from $t_0$ to $s$ with
  all $s_1,\ldots,s_{k-1},s$  in $\ff\upharpoonright\upharpoonright L$.

  Let $l=|s_1|$. Since $(t_0,s_1)$ is a plegma pair with $t_0\in\widehat{F}\setminus \ff$ and
  $s_1\in\mathcal{F}$, arguing as in Lemma \ref{lemma increasing length of plegma}, we see that $l\geq k+1$.
  Moreover since $s_1\in \ff\upharpoonright\upharpoonright L$,  there exist   $m_1<\ldots<m_l$ in $\nn$
  such that $m_j-m_{j-1}>1$ and $s_1=\{L(m_j):1\leq j\leq l\}$. Notice that  $n_2\leq m_1< n_3\leq m_2< ...< n_k\leq m_{k-1}<m_{k+1}-1$.

  We set $w=t_0\cup\{L(m_j-1):k+1\leq j\leq l\}$ and let
$L'\in[L]^\infty$ such that $w$ is an initial segment of $L'$.
Notice that $|w|=l$. Since $\ff$ is very large in $L$ there exists
$s_0\in \ff$ with $s_0$ initial segment of $L'$. Using again that
$\widehat{\ff}$ is spreading it is  shown that $|t_0|<|s_0|\leq l$
and therefore  $t_0\sqsubset s_0\sqsubseteq w$.

It is easy to check that  $(t,s_0)$ and $(s_0,s_1)$ are plegma
pairs. Hence the sequence $(t,s_0,\ldots,s_{k-1},s)$ is a plegma
path of length $k+1$ from $t$ to $s$ with
$s_0,\ldots,s_{k-1},s\in\ff\upharpoonright\upharpoonright L$. The
proof of the inductive step as well as of the theorem is complete.
\end{proof}

We close this section by presenting an  application of the above
theorem.  We start with the following definition.

   Let $X$ be a set, $M\in[\nn]^\infty$,  $\ff\subseteq
[\nn]^{<\infty}$ and $\varphi:\ff\to X$.
    We will say that $\varphi$ is \textit{hereditarily nonconstant} in $M$ if
    for every $L\in[M]^\infty$ the restriction of $\varphi$ on
    $\ff\upharpoonright L$ is nonconstant. In particular if $M=\nn$ then we will simply  say that $\varphi$ is
    hereditarily nonconstant.

  \begin{lem}\label{lemma making a hereditary nonconstant function, nonconstant on plegma pairs}
    Let $\ff$ be a regular thin family, $X$ be a set  and
    $\varphi:\ff\to X$ be hereditarily nonconstant.
    Then for every $N\in[\nn]^\infty$ there exists $L\in [N]^\infty$ such that for every plegma pair
    $(s_1,s_2)$ in $\ff\upharpoonright L$,  $\varphi(s_1)\neq\varphi(s_2)$.
  \end{lem}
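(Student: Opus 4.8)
The plan is to reduce the statement, via the Ramsey property for plegma pairs, to a dichotomy, and then to eliminate the ``bad'' alternative using the plegma--path connectivity of $\ff\upharpoonright\upharpoonright L$.

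First, given $N\in[\nn]^\infty$, I would use Corollary~\ref{Galvin Pricley for regular thin} to pass to some $M\in[N]^\infty$ with $\ff$ very large in $M$; note that ``very large in $M$'' is inherited by every infinite subset of $M$. I then split $\text{Plm}_2(\ff\upharpoonright M)$ into $\mathcal{P}_1=\{(s_1,s_2)\in\text{Plm}_2(\ff\upharpoonright M):\varphi(s_1)=\varphi(s_2)\}$ and $\mathcal{P}_2=\text{Plm}_2(\ff\upharpoonright M)\setminus\mathcal{P}_1$, and apply Theorem~\ref{ramseyforplegma} to obtain $L\in[M]^\infty$ and $i_0\in\{1,2\}$ with $\text{Plm}_2(\ff\upharpoonright L)\subseteq\mathcal{P}_{i_0}$. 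If $i_0=2$, then $L$ is exactly the set required by the lemma; so the whole task is to show that $i_0=1$ leads to a contradiction.

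Assume $i_0=1$, that is, $\varphi(s_1)=\varphi(s_2)$ for every plegma pair $(s_1,s_2)$ in $\ff\upharpoonright L$. I would derive a contradiction with hereditary nonconstancy by producing $L''\in[L]^\infty$ such that the restriction of $\varphi$ to $\ff\upharpoonright L''$ is constant. Put $L''=\{L(2k-1):k\in\nn\}$, the odd--indexed elements of $L$. A one--line spacing check gives $\ff\upharpoonright L''\subseteq\ff\upharpoonright\upharpoonright L$: if $s\in\ff\upharpoonright L''$ and $s(j)=L(2a_j-1)$ with $a_1<\dots<a_{|s|}$, then for each admissible $j$ the point $L(2a_j)\in L$ satisfies $s(j)<L(2a_j)<s(j+1)$. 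Since $\ff$ is very large in $M$, it is very large in $L''$, so for any $s,s'\in\ff\upharpoonright L''$ there is $s_0\in\ff$ which is an initial segment of $\{l\in L'':l>\max(s\cup s')\}$; hence $s_0\in\ff\upharpoonright L''\subseteq\ff\upharpoonright\upharpoonright L$ and $\max(s\cup s')<\min s_0$, in particular $s<s_0$ and $s'<s_0$. By Theorem~\ref{accessing everything with plegma path of length |s_0|} there is a plegma path in $\ff\upharpoonright\upharpoonright L$ from $s$ to $s_0$, and likewise one from $s'$ to $s_0$. Every edge of such a path is a plegma pair in $\ff\upharpoonright L$, so under $i_0=1$ the value of $\varphi$ is constant along each path; hence $\varphi(s)=\varphi(s_0)=\varphi(s')$. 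Thus $\varphi$ is constant on $\ff\upharpoonright L''$, contradicting that $\varphi$ is hereditarily nonconstant.

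The crux — and the only genuinely delicate point — is the gap between $\ff\upharpoonright L$ and $\ff\upharpoonright\upharpoonright L$. Theorem~\ref{accessing everything with plegma path of length |s_0|} connects two elements of $\ff$ by a plegma path only inside $\ff\upharpoonright\upharpoonright L$, which is properly smaller than $\ff\upharpoonright L$ in general (``consecutive'' pairs are excluded from it), so one cannot directly transport $\varphi$--equalities along plegma pairs across all of $\ff\upharpoonright L$. The two observations above resolve this: restricting $\ff$ to an ``every other element'' subset $L''$ of $L$ already forces every member to have the required interior gaps, and ``very large'' guarantees members of $\ff\upharpoonright\upharpoonright L$ arbitrarily far out along $L$, which is precisely what makes the connectivity theorem applicable to the given $s,s'$. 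The degenerate cases ($o(\ff)=0$, or $\ff\upharpoonright L''$ reduced to a single point) need no separate treatment, since in each of them $\varphi$ is automatically constant on some $\ff\upharpoonright L''$, so the hypothesis that $\varphi$ is hereditarily nonconstant cannot hold and the statement is vacuously true.
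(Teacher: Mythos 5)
Your proof is correct and follows essentially the same route as the paper's: Ramsey for plegma pairs (Theorem~\ref{ramseyforplegma}) reduces the problem to ruling out the alternative where $\varphi$ is constant on plegma pairs, and this is eliminated by passing to an ``every other element'' subset of $L$ so that the relevant sets land in $\ff\upharpoonright\upharpoonright L$ and Theorem~\ref{accessing everything with plegma path of length |s_0|} becomes applicable, forcing $\varphi$ to be constant on a further $\ff$-subsequence. The only cosmetic differences are that you take odd-indexed elements and connect an arbitrary pair $s,s'$ through a common far-out $s_0$, whereas the paper takes even-indexed elements and connects everything to one fixed initial-segment $s_0$ at the start of $L_0$.
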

  \begin{proof}
    By Theorem \ref{ramseyforplegma} there exists an $L\in [N]^\infty$
    such that either $\varphi(s_1)\neq\varphi(s_2)$, for all plegma pairs
    $(s_1,s_2)$ in $\ff\upharpoonright L$, or $\varphi(s_1)=\varphi(s_2)$ for all plegma pairs
    $(s_1,s_2)$ in $\ff\upharpoonright L$. The second alternative is excluded.
    Indeed, suppose that  $\varphi(s_1)=\varphi(s_2)$, for every plegma pair
    $(s_1,s_2)$ in $\ff\upharpoonright L$.
    By Corollary \ref{Galvin Pricley for regular thin} we may also assume that $\ff\upharpoonright L$ is very large in $L$.
    Let  $s_0$  be the unique   initial segment of  $L_0=\big{\{} L(2\rho):\rho\in\nn\big{\}}$ in
     $\ff\upharpoonright L$ and  let $k=|s_0|$.
     We set
    $L_0'=\big{\{} L(2\rho):\rho\in\nn\text{ and } \rho>k \big{\}}$. By
    Theorem
    \ref{accessing everything with plegma path of length |s_0|} for every
    $s\in\ff\upharpoonright L_0'$ there exist a plegma path $(s_0,s_1,\ldots,s_{k-1},s)$
    of length $k$ in $\ff\upharpoonright L$. Therefore for every $s\in\ff\upharpoonright L_0'$
    we have that $\varphi(s)=\varphi(s_{k-1})=\ldots=\varphi(s_1)=\varphi(s_0)$, which
    contradicts that $\varphi$ is hereditarily nonconstant.
  \end{proof}
\begin{prop}
  Let $\ff$ be a regular thin family, $M\in[\nn]^\infty$ and $\varphi:\ff\to \nn$
 be hereditarily non constant in $M$. Let also $g:\nn\to\nn$.
 Then there exists $N\in[M]^\infty$ such that for every plegma
pair $(s_1,s_2)$ in $\ff\upharpoonright N$,
$\varphi(s_2)-\varphi(s_1)>g(n)$, where $\min s_2=N(n)$.
\end{prop}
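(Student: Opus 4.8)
The plan is to reduce first to the case where $\varphi$ strictly increases along plegma pairs, and then to \emph{force the gap} by inserting long plegma paths between the two members of a plegma pair, the point being that along a plegma path $\varphi$ rises by at least one at every step.

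\emph{Step 1 (reduction).} I would apply Lemma \ref{lemma making a hereditary nonconstant function, nonconstant on plegma pairs} to get $L_0\in[M]^\infty$ on which $\varphi(s_1)\neq\varphi(s_2)$ for every plegma pair $(s_1,s_2)$ in $\ff\upharpoonright L_0$. Then, applying Theorem \ref{ramseyforplegma} to the partition of $\text{Plm}_2(\ff\upharpoonright L_0)$ according to the sign of $\varphi(s_2)-\varphi(s_1)$, pass to $L_1\in[L_0]^\infty$ on which the sign is constant; refining once more via Corollary \ref{Galvin Pricley for regular thin} I may also assume $\ff\upharpoonright L_1$ is very large in $L_1$, hence contains infinite plegma paths. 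Along such a path, consecutive members form plegma pairs in $\ff\upharpoonright L_1$, so a negative sign would yield a strictly decreasing sequence in $\nn$, which is impossible. Thus $\varphi(s_1)<\varphi(s_2)$ for every plegma pair $(s_1,s_2)$ in $\ff\upharpoonright L_1$. (Hereditary nonconstancy is inherited by restrictions, so nothing is lost in these steps.)

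\emph{Step 2 (sparse $N$).} Construct recursively a very sparse $N=\{n_1<n_2<\cdots\}\subseteq L_1$: at stage $r$, having chosen $n_1<\cdots<n_{r-1}$, pick $n_r$ in the appropriate tail of $L_1$ far enough beyond $n_{r-1}$ that the interval $(n_{r-1},n_r)$ contains sufficiently many elements of $L_1$ — at least $\max\{g(i):i\le r\}+1$, with enough further slack depending on the structure function $k\mapsto\sup\{l:\{k,\dots,k+l-1\}\in\widehat\ff\}$ — and also $n_r-n_{r-1}>g(r)$. The key claim is then: for every plegma pair $(s_1,s_2)$ in $\ff\upharpoonright N$ with $\min s_2=N(n)$ there is a plegma path $s_1=r_0,r_1,\dots,r_{g(n)},r_{g(n)+1}=s_2$ with $r_1,\dots,r_{g(n)}\in\ff\upharpoonright L_1$. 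Granting this, Step 1 gives $\varphi(s_2)\ge\varphi(s_1)+g(n)+1>\varphi(s_1)+g(n)$, as required.

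\emph{Step 3 (the claim, and the main obstacle).} Using the interleaving pattern $s_1(1)<s_2(1)<s_1(2)<s_2(2)<\cdots$ of the plegma pair (valid since $|s_1|\le|s_2|$ by Lemma \ref{lemma increasing length of plegma}), one builds $r_1,\dots,r_{g(n)}$ by coordinate-wise interpolation strictly between $s_1$ and $s_2$: in each relevant coordinate the interval is of the form $(N(\alpha),N(\beta))$ with $\beta\ge n$, so by the sparseness of $N$ inside $L_1$ it contains at least $g(n)$ elements of $L_1$, yielding strictly increasing coordinate sequences compatible with all plegma conditions. The delicate part is to ensure that each $r_t$ is a genuine element of $\ff$ (not merely of $\widehat\ff$) lying in $L_1$, and to absorb the possible growth of cardinality from $|s_1|$ to $|s_2|$ along the path. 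This is arranged by combining the spreading property of $\widehat\ff$ — a finite set squeezed coordinate-wise between two elements of $\ff$ of equal cardinality is again in $\ff$ — with the very-largeness of $\ff\upharpoonright L_1$, in the spirit of the proof of Theorem \ref{accessing everything with plegma path of length |s_0|}, letting the cardinalities of the $r_t$ increase gradually when needed while keeping the sparseness of $N$ large enough to leave the required room. I expect precisely this simultaneous bookkeeping — membership in $\ff$, the interleaving inequalities, and the calibration of the sparseness of $N$ against $g$ and the structure of $\ff$ — to be the principal difficulty of the proof.
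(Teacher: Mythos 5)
Your proposal is correct and follows the paper's proof essentially step for step: Steps~1 and~2 are precisely what the paper does (Ramsey reduction to the strictly increasing case via Theorem~\ref{ramseyforplegma} and Lemma~\ref{lemma making a hereditary nonconstant function, nonconstant on plegma pairs}, then choose $N\subseteq L$ sparse enough that $\{l\in L: N(n-1)<l<N(n)\}$ has at least $\max_{j\le n}g(j)$ elements), and the paper then inserts $g(n)$ intermediate members of $\ff\upharpoonright L$ exactly as you describe. The one place you over-engineer is Step~3: the calibration against the ``structure function'' $k\mapsto\sup\{l:\{k,\dots,k+l-1\}\in\widehat\ff\}$ and the extra clause $n_r-n_{r-1}>g(r)$ are unnecessary, and the ``principal difficulty'' you anticipate largely evaporates once you proceed as the paper implicitly does: for each $i=1,\dots,g(n)$ choose the coordinate template $\alpha_{i,k}$ to be the $i$-th element of $L$ in $(s_1(k),s_2(k))$ for $k\le|s_1|$, in $(s_2(k-1),s_2(k))$ for $|s_1|<k\le|s_2|$, and large thereafter; very-largeness of $\ff\upharpoonright L$ gives a unique initial segment $t_i\in\ff$ of each template, the interleaving inequalities then verify that $(s_1,t_1,\dots,t_{g(n)},s_2)$ is a plegma tuple (not merely a path), and Lemma~\ref{lemma increasing length of plegma} hands you $|s_1|\le|t_1|\le\cdots\le|s_2|$ for free afterwards, so no separate bookkeeping about cardinality growth is required.
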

\begin{proof}
  By Theorem \ref{ramseyforplegma} there exists $L\in[M]^\infty$
  such that one of following holds.
  \begin{enumerate}
    \item[(i)] For every plegma pair
    $(s_1,s_2)$ in $\ff\upharpoonright L$, we have
    $\varphi(s_1)=\varphi(s_2)$.
    \item[(ii)] For every plegma pair
    $(s_1,s_2)$ in $\ff\upharpoonright L$, we have
    $\varphi(s_1)>\varphi(s_2)$.
    \item[(iii)] For every plegma pair
    $(s_1,s_2)$ in $\ff\upharpoonright L$, we have
    $\varphi(s_1)<\varphi(s_2)$.
  \end{enumerate}
  Since  $\varphi$
 is hereditarily non constant in $M$, by Lemma \ref{lemma making a hereditary nonconstant function, nonconstant on plegma pairs}
 case (i) is
  excluded.   Similarly case (ii) cannot occur since otherwise,
  $(\varphi(s_n))_{n}$ would form a strictly decreasing sequence
  in $\nn$ whenever $(s_n)_n$ is an infinite  plegma path in $\ff\upharpoonright L$. Therefore, case (iii)
  holds. We choose $N\in[L]^\infty$ such that for every $n\geq2$,
  we have that \[\Big|\Big\{l\in L:N(n-1)<l<N(n)\Big\}\Big|\geq\max_{j\leq n}g(j).\]
  Let $(s_1,s_2)$ be a plegma pair in $\ff\upharpoonright N$ and let $n\in\nn$ such that  $\min s_2=N(n)$.
   Notice for every $1\leq k\leq |s_1|$, we
  have  \[\Big|\Big\{l\in L:s_1(k)<l<s_2(k)\Big\}\Big|\geq
  g(n).\] Similarly for every $|s_1|<k\leq |s_2|$, we have
  \[\Big|\Big\{l\in L:s_2(k-1)<l<s_2(k)\Big\}\Big|\geq
  g(n).\]
  The above  yield that there exist
  $t_1,\ldots,t_{g(n)}\in\ff\upharpoonright L$ such that the
  $(g(n)+2)$-tuple $(s_1,t_1,\ldots,t_{g(n)},s_2)$ is plegma. Hence $\varphi(s_2)-\varphi(s_1)>g(n)$.
\end{proof}
  \begin{cor}\label{Proposition combinatorial for SSD}
    Let $\ff$ be a regular thin family, $M\in[\nn]^\infty$ and $\varphi:\ff\to \nn$
 be hereditarily non constant in $M$.
Then  there exists $N\in[M]^\infty$ such that for every plegma pair
$(s_1,s_2)$ in $\ff\upharpoonright N$ we have
$\varphi(s_2)-\varphi(s_1)>1$.
\end{cor}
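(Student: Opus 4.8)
The statement is the special case $g\equiv 1$ of the Proposition immediately preceding it. So the plan is simply to invoke that Proposition with the constant function.

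\medskip

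\emph{Step 1.} Define $g:\nn\to\nn$ by $g(n)=1$ for every $n\in\nn$; this is a legitimate instance of the arbitrary map $g:\nn\to\nn$ allowed in the hypothesis of the preceding Proposition.

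\emph{Step 2.} Apply that Proposition to the regular thin family $\ff$, the infinite set $M$, the hereditarily nonconstant map $\varphi:\ff\to\nn$, and this $g$. It yields $N\in[M]^\infty$ such that for every plegma pair $(s_1,s_2)$ in $\ff\upharpoonright N$ one has $\varphi(s_2)-\varphi(s_1)>g(n)$, where $n\in\nn$ is the index with $\min s_2=N(n)$.

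\emph{Step 3.} Since $g(n)=1$ for every $n$, the inequality $\varphi(s_2)-\varphi(s_1)>g(n)$ reads $\varphi(s_2)-\varphi(s_1)>1$ for every plegma pair $(s_1,s_2)$ in $\ff\upharpoonright N$, which is exactly the desired conclusion.

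\medskip

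There is no real obstacle here: the only point to note is that the dependence on $n$ in the Proposition is harmless because we are feeding it a constant $g$, so the bound becomes uniform. (Alternatively, one could re-run the Ramsey trichotomy of Theorem \ref{ramseyforplegma} together with Lemma \ref{lemma making a hereditary nonconstant function, nonconstant on plegma pairs} and Theorem \ref{accessing everything with plegma path of length |s_0|} directly, exactly as in the proof of the preceding Proposition, but this merely reproves a particular case of what has just been established.)
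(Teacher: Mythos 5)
Your proof is correct and is exactly the intended argument: the corollary follows immediately from the preceding proposition by taking $g\equiv 1$, which is why the paper states it without proof.
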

\subsection{Plegma preserving maps}
 Let $\ff\subseteq[\nn]^{<\infty}$ and $\varphi:\ff\to
  [\nn]^{<\infty}$. We will say that the map $\varphi$ is \textit{plegma
  preserving} if $(\varphi(s_1), \varphi(s_2))$ is a plegma pair whenever $(s_1,s_2)$ is a plegma pair in
  $\ff$.

\begin{lem}\label{ququ}  Let $\ff\subseteq[\nn]^{<\infty}$  and
  $\varphi:\ff\to[\nn]^{<\infty}$.
 If $\varphi$ is plegma preserving then for every  $l\in\nn$ and $(s_j)_{j=1}^l\in \emph{\text{Plm}}(\ff)$
we have that $(\varphi(s_j))_{j=1}^l$ is a plegma $l$-tuple.
\end{lem}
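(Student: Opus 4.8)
The plan is to reduce the assertion about $l$-tuples to the case of pairs, which is exactly what the plegma-preserving hypothesis controls, by means of the pairwise characterization of plegma families recorded in Proposition \ref{newq1}. Recall that part (i) of that proposition says every sub-tuple of a plegma family is again a plegma family, while part (ii) says an $l$-tuple is plegma precisely when each of its $2$-element sub-tuples is a plegma pair.

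Concretely, I would fix $l\in\nn$ and a plegma $l$-tuple $(s_j)_{j=1}^l\in\text{Plm}(\ff)$, and then consider arbitrary indices $1\leq j_1<j_2\leq l$. Applying Proposition \ref{newq1}(i) with $k=2$ to the $l$-tuple $(s_j)_{j=1}^l$ shows that $(s_{j_1},s_{j_2})$ is a plegma family; since $s_{j_1},s_{j_2}\in\ff$, this is a plegma pair in $\ff$. The hypothesis that $\varphi$ is plegma preserving then gives that $(\varphi(s_{j_1}),\varphi(s_{j_2}))$ is a plegma pair.

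Since $j_1<j_2$ were arbitrary, every $2$-element sub-tuple of $(\varphi(s_j))_{j=1}^l$ is a plegma pair, so Proposition \ref{newq1}(ii) yields that $(\varphi(s_j))_{j=1}^l$ is a plegma $l$-tuple, which is the desired conclusion.

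There is essentially no obstacle here: the lemma is a purely formal consequence of the fact that ``plegma'' is a pairwise property (Proposition \ref{newq1}(ii)) together with the fact that plegma-ness is inherited by sub-tuples (Proposition \ref{newq1}(i)); the plegma-preserving hypothesis is used exactly once, on pairs, which is precisely its definition. One could instead argue directly from Definition \ref{defn plegma} by checking conditions (i) and (ii) for $(\varphi(s_j))_{j=1}^l$, but that route is more cumbersome and buys nothing, so I would not take it.
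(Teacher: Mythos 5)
Your proof is correct and follows exactly the same route as the paper: reduce to pairs via Proposition \ref{newq1}(i), apply the plegma-preserving hypothesis once to each pair, and reassemble via Proposition \ref{newq1}(ii). Nothing to add.
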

\begin{proof} Let $l\in\nn$ and $(s_j)_{j=1}^l$ be a plegma $l$-tuple in
$\ff$. Then for every $1\leq j_1<j_2\leq l$ we have that
$(s_{j_1},s_{j_2})$ is plegma and thus
$(\varphi(s_{j_1}),\varphi(s_{j_2}))$ is plegma. Hence, by (ii) of
Proposition \ref{newq1}, $(\varphi(s_j))_{j=1}^l$ is a plegma
$l$-tuple.
\end{proof}
\begin{prop}\label{bfk}
  Let $\ff$ be a regular thin family and
  $\varphi:\ff\to[\nn]^\infty$. Then for every $M\in[\nn]^\infty$
  there is $L\in[M]^\infty$ such that exactly one of the following holds.  \begin{enumerate}
    \item[(i)] The restriction of $\varphi$ on $\ff\upharpoonright L$
    is  plegma preserving.
    \item[(ii)] For every  $(s_1,s_2)\in\text{{Plm}}_2(\ff\upharpoonright L)$ neither
    $(\varphi(s_1),\varphi(s_2))$ nor $(\varphi(s_2),\varphi(s_1))$ is a plegma pair.
  \end{enumerate}
\end{prop}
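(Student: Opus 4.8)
The plan is to reduce the statement to a single application of the Ramsey property for plegma pairs (Theorem~\ref{ramseyforplegma} with $l=2$), after which only one of the three possible outcomes needs to be ruled out. First I would split $\text{Plm}_2(\ff\upharpoonright M)$ into three classes: $\mathcal{P}_1$ consisting of those plegma pairs $(s_1,s_2)$ for which $(\varphi(s_1),\varphi(s_2))$ is a plegma pair, $\mathcal{P}_2$ consisting of those for which $(\varphi(s_2),\varphi(s_1))$ is a plegma pair, and $\mathcal{P}_3$ the remaining ones. The only point needing a word in checking that this is genuinely a partition is that $\mathcal{P}_1$ and $\mathcal{P}_2$ are disjoint: if both $(u,v)$ and $(v,u)$ were plegma pairs, then condition (i) of Definition~\ref{defn plegma} applied to the first coordinates would give $u(1)<v(1)$ and $v(1)<u(1)$, a contradiction. (The degenerate situations $\varphi(s_1)=\varphi(s_2)$, or $\varphi(s_i)$ empty, all fall into $\mathcal{P}_3$ since they are not candidates for a plegma pair in either order, so they cause no trouble.)

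Next, by Theorem~\ref{ramseyforplegma} there are $L_1\in[M]^\infty$ and $i_0\in\{1,2,3\}$ with $\text{Plm}_2(\ff\upharpoonright L_1)\subseteq\mathcal{P}_{i_0}$, and by Corollary~\ref{Galvin Pricley for regular thin} I would pass to some $L\in[L_1]^\infty$ with $\ff\upharpoonright L$ very large in $L$; since $\ff\upharpoonright L\subseteq\ff\upharpoonright L_1$, the homogeneity $\text{Plm}_2(\ff\upharpoonright L)\subseteq\mathcal{P}_{i_0}$ is preserved. If $i_0=1$, then by definition the restriction of $\varphi$ to $\ff\upharpoonright L$ is plegma preserving, i.e. alternative (i) holds; if $i_0=3$, then alternative (ii) holds verbatim.

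The one case to eliminate is $i_0=2$, and this is where the argument carries a small amount of content. Because $\ff\upharpoonright L$ is very large in $L$, there is an infinite plegma path $(s_n)_n$ with all $s_n\in\ff\upharpoonright L$ (as observed in the discussion preceding the definition of $\ff\upharpoonright\upharpoonright L$; one may in fact take the $s_n$ in $\ff\upharpoonright\upharpoonright L$). For each $n$ the pair $(s_n,s_{n+1})$ is plegma, hence $(\varphi(s_{n+1}),\varphi(s_n))$ is a plegma pair, and in particular $\min\varphi(s_{n+1})<\min\varphi(s_n)$ by condition (i) of Definition~\ref{defn plegma}. Thus $(\min\varphi(s_n))_n$ would be a strictly decreasing sequence of positive integers, which is absurd; so $i_0=2$ cannot occur.

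Finally, to get the ``exactly one'', note that $\ff\upharpoonright L$ contains at least one plegma pair (again by very largeness of $\ff\upharpoonright L$ in $L$), and on any single plegma pair alternatives (i) and (ii) are mutually exclusive, since (i) asserts that $(\varphi(s_1),\varphi(s_2))$ is a plegma pair while (ii) asserts it is not. The main obstacle here is essentially bookkeeping: verifying that the three classes really partition $\text{Plm}_2(\ff\upharpoonright M)$ and that passing to the very-large refinement does not disturb homogeneity; the genuinely combinatorial inputs — the Ramsey property for plegma pairs and the existence of infinite plegma paths — are already available from the earlier part of this section.
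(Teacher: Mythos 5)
Your proof is correct and follows essentially the same route as the paper: partition $\text{Plm}_2$ into the plegma-preserving, plegma-reversing, and neutral classes, apply Theorem~\ref{ramseyforplegma}, and kill the reversing case with an infinite plegma path whose $\varphi$-images would yield a strictly decreasing sequence of minima. You are more careful about the bookkeeping than the paper (checking that the three classes partition $\text{Plm}_2$, refining to a very-large restriction so that plegma pairs exist, making ``exactly one'' explicit), and you silently fix a typo in the paper's argument, which writes $(\min s_n)_n$ where $(\min\varphi(s_n))_n$ is meant.
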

\begin{proof} Assume that there is  $M\in[\nn]^\infty$ such that for every   $L\in [M]^\infty$
 neither (i) nor (ii) holds true. Then by Theorem
\ref{ramseyforplegma} there exists $L\in
  [M]^\infty$ such that for every  $(s_1,s_2)\in\text{{Plm}}_2(\ff\upharpoonright
  N)$ we have that  $(\varphi(s_2),\varphi(s_1))$ is  plegma.
But this is impossible. Indeed, otherwise  for an infinite plegma
path $(s_n)_{n}$ in $\ff\upharpoonright N$ the sequence $(\min
s_n)_{n}$ would form a strictly decreasing infinite sequence in
$\nn$.
\end{proof}
For a family $\ff\subseteq [\nn]^{<\infty}$ and a plegma preserving
map $\varphi:\ff\to [\nn]^{<\infty}$ we will say that $\varphi$ is
\textit{normal} provided that  $|\varphi(s_1)|\leq|\varphi(s_2)|$
for every plegma pair $(s_1,s_2)$ in $\ff$  and
$|\varphi(s)|\leq|s|$ for every $s\in\ff$.

\begin{thm}\label{Properties of plegma preserving map}
  Let $\ff$ be a regular thin family, $M\in[\nn]^\infty$ and $\varphi:\ff\upharpoonright M\to[\nn]^{<\infty}$
  be a plegma preserving map.
  Then there exists $L\in[M]^\infty$ such that the restriction
  of $\varphi$ on $\ff\upharpoonright L$ is  a normal plegma preserving map.
\end{thm}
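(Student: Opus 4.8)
The plan is to produce $L$ in two stages: the first secures the cardinality monotonicity on plegma pairs, the second secures $|\varphi(s)|\le|s|$ pointwise; plegma-preservation is then inherited automatically by restriction.

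\emph{First stage.} I would $2$-color $\mathrm{Plm}_2(\ff\upharpoonright M)$ according to whether or not $|\varphi(s_1)|\le|\varphi(s_2)|$ and apply Theorem \ref{ramseyforplegma} to get a homogeneous $L_1\in[M]^\infty$. The bad color is impossible: if $|\varphi(s_1)|>|\varphi(s_2)|$ held for every plegma pair in $\ff\upharpoonright L_1$, then (shrinking via Corollary \ref{Galvin Pricley for regular thin} so that $\ff$ is very large, and using the construction of infinite plegma paths in $\ff\upharpoonright\upharpoonright\,\cdot$ described just before Lemma \ref{wq}) an infinite plegma path $(s_n)_n$ inside $\ff\upharpoonright L_1$ would give a strictly decreasing sequence of positive integers $|\varphi(s_n)|$. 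Hence on $\ff\upharpoonright L_1$ we have $|\varphi(s_1)|\le|\varphi(s_2)|$ for every plegma pair.

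\emph{Second stage.} Now partition the thin family $\ff\upharpoonright L_1$ into $\ff_1=\{s:|\varphi(s)|\le|s|\}$ and $\ff_2=\{s:|\varphi(s)|>|s|\}$ and apply Proposition \ref{th2} to obtain $L\in[L_1]^\infty$ with $\ff\upharpoonright L$ inside one of the two pieces; I must show that piece is $\ff_1$. Suppose not, so $|\varphi(s)|>|s|$ throughout $\ff\upharpoonright L$, and shrink $L$ further so that $\ff$ is very large in $L$. Fix $s_0\in\ff\upharpoonright\upharpoonright L$ and put $k=|s_0|$ (so $|\varphi(s_0)|>k\ge1$). The crucial observation is that $\varphi(s_0)\not<\varphi(s)$ for every $s\in\ff\upharpoonright\upharpoonright L$ with $s_0<s$: by Theorem \ref{accessing everything with plegma path of length |s_0|} there is a plegma path $(t_0,\dots,t_{k-1},t_k)$ of length exactly $k$ in $\ff\upharpoonright\upharpoonright L$ with $t_0=s_0$ and $t_k=s$; its $\varphi$-image is a plegma path of length $k$ from $\varphi(s_0)$ to $\varphi(s)$; since the cardinalities along the original path are nondecreasing (Lemma \ref{lemma increasing length of plegma} applied to consecutive pairs) and each $t_i\in\ff\upharpoonright L\subseteq\ff_2$, each of $\varphi(t_0),\dots,\varphi(t_{k-1})$ has cardinality $\ge k+1$ (as $|\varphi(t_i)|>|t_i|\ge k$), so if $\varphi(s_0)<\varphi(s)$ held, Lemma \ref{lemma conserning the length of the plegma path} would yield $k\ge k+1$. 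Granting this, I would take an infinite plegma path $(s_0,r_1,r_2,\dots)$ in $\ff\upharpoonright\upharpoonright L$: since $r_j(1)<r_{j+1}(1)$ for all $j$, eventually $s_0<r_j$, whence $\min\varphi(r_j)\le\max\varphi(s_0)$; but $(\varphi(s_0),\varphi(r_1),\varphi(r_2),\dots)$ is again an infinite plegma path, so $\varphi(r_j)(1)=\min\varphi(r_j)$ strictly increases and diverges — a contradiction. Thus $\ff\upharpoonright L\subseteq\ff_1$, and this $L\subseteq L_1$ meets all three requirements of a normal plegma preserving map.

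The part I expect to be delicate is exactly that crucial observation: plegma-preservation does not carry the separation relation ``$<$'' along, so one cannot transport $s_0<s$ directly to $\varphi(s_0)<\varphi(s)$ and invoke a length estimate. The device is to argue by contradiction through the \emph{minimal}-length path supplied by Theorem \ref{accessing everything with plegma path of length |s_0|}, which is too short (length $k$) to accommodate, via Lemma \ref{lemma conserning the length of the plegma path}, image sets all of cardinality $>k$; and then to defeat ``$\varphi(s_0)\not<\varphi(s)$ for all admissible $s$'' using that the minima along an infinite plegma path are unbounded. The remaining ingredients — that $\ff\upharpoonright\upharpoonright L\neq\varnothing$, that being very large passes to infinite subsets, and that all the families in sight are (regular) thin so the quoted results apply — are routine.
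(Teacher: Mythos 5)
Your proof is correct and follows essentially the same route as the paper's: Theorem \ref{ramseyforplegma} to impose monotonicity of $|\varphi(\cdot)|$ along plegma pairs, Proposition \ref{th2} for the pointwise dichotomy, and then Theorem \ref{accessing everything with plegma path of length |s_0|} together with Lemma \ref{lemma conserning the length of the plegma path} to rule out the bad alternative. The only difference is cosmetic: the paper directly exhibits a pair $s_0<s$ with $\varphi(s_0)<\varphi(s)$ (via a sufficiently long plegma path, a step it leaves implicit) before invoking the minimal-length estimate, while you take the contrapositive—first show no such pair can exist, then contradict that using the divergence of minima along an infinite plegma path—which is the same argument spelled out more carefully.
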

\begin{proof}
 By  Theorem \ref{ramseyforplegma} there exists $N\in [M]^\infty$ such that either
(a)  $|\varphi(s_1)|\leq|\varphi(s_2)|$, for every plegma pair
$(s_1,s_2)$ in $\ff\upharpoonright N$, or (b)
$|\varphi(s_1)|>|\varphi(s_2)|$, for every plegma pair $(s_1,s_2)$
in $\ff\upharpoonright N$.
  Alternative (b) cannot occur since otherwise for an infinite plegma path  $(s_n)_{n}$
   in $\ff\upharpoonright N$ the sequence $(|\varphi(s_n)|)_{n}$ would form  a strictly decreasing sequence in $\nn$.
   By  Theorem \ref{th2} there exists
    $L\in[N]^\infty$ such that either
(c) $|\varphi(s)|\leq|s|$, for every $s\in\ff\upharpoonright
    L$,  or (d) $|\varphi(s)|>|s|$, for every
  $s\in\ff\upharpoonright L$. We claim that
    (d) cannot hold true.  Indeed, since $\varphi$ on
$\ff\upharpoonright L$ is plegma preserving, using a plegma path of
enough large length, we may choose
   $s_0, s$ in $\ff\upharpoonright\upharpoonright L$ such that
    $\min s_0 <\min s$ and $\min \varphi(s_0)<\min \varphi(s)$.
   Let $k_0=|s_0|$. Then by
    Proposition \ref{accessing everything with plegma path of length
    |s_0|} there exists a plegma path $(s_i)_{i=0}^{k_0}$ in
    $\ff\upharpoonright\upharpoonright L$  from $s_0$ to
    $s=s_{k_0}$ of length $k_0$. By Lemma \ref{ququ}  $(\varphi(s_i))_{i=0}^{k_0}$ is also a plegma path of
     length $k_0$ from $\varphi(s_0)$ to $\varphi(s_{k_0})$ and
    by Lemma \ref{lemma conserning the length of the plegma path} we
have that
     \begin{equation}\label{b1}\min\{|\varphi(s_i)|:0\leq i\leq k_0-1\}\leq
     k_0.\end{equation}
     Moreover by Lemma \ref{abmissibility1-1union and union thin
     proposition}, we have
     $|s_0|\leq |s_1|\leq ... \leq |s_{k_0}|$.
Hence if  (d) holds true then
      \begin{equation}\min\{|\varphi(s_i)|:0\leq i\leq k_0-1\}> \min\{|s_i|:0\leq i\leq k_0-1\}\geq
      k_0,\end{equation}
      which  contradicts (\ref{b1}).

      Therefore we conclude that   (a) and (c) hold true i.e. the restriction of $\varphi$
on $\ff\upharpoonright L$ is a normal plegma preserving map.
\end{proof}
\subsection{Plegma preserving maps between thin families} In this
subsection we are concerned with the  question of the existence of
a plegma preserving map $\varphi:\g\to\ff$, where $\g$ and $\ff$
are regular thin families. We shall show that  such maps exist
only when $o(\g)\geq o(\ff)$. We start with the positive result.
\begin{thm}\label{plegma preserving maps}
  Let $\ff,\g$ be regular thin families with $o(\ff)\leq o(\g)$.
  Then
  for every $M\in[\nn]^\infty$ there is $N\in[\nn]^\infty$ and
  a plegma preserving map
  $\varphi:\g\upharpoonright N\to\ff\upharpoonright M$. Moreover, for every $l\in\nn$ and $t\in\g\upharpoonright N$, if
$\min t\geq N(l)$ then $\min\varphi(t)\geq M(l)$.
\end{thm}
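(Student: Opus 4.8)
The whole argument will be an application of Corollary \ref{cx21}. For the given pair $\ff,\g$ with $o(\ff)\le o(\g)$, that corollary furnishes an infinite set $L_0$ (depending only on $\ff$ and $\g$) such that for every $M\in[\nn]^\infty$ one may pick $L\in[L_0(M)]^\infty$ with $L_0(\ff)\upharpoonright L\sqsubseteq\g\upharpoonright L$. The point I would exploit is that the ``shift'' by $L_0$ is automatically undone: since the resulting $L$ sits inside $L_0(M)$, the $L_0$-preimage of any subset of $L$ is a subset of $M$. We may assume $o(\ff)\ge 1$.

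Concretely, I would fix such an $L_0$, and given $M$ apply Corollary \ref{cx21} to this $M$, setting $N:=L\in[L_0(M)]^\infty$, so that $L_0(\ff)\upharpoonright N\sqsubseteq\g\upharpoonright N$. Since $\ff$ is thin and $L_0\colon\nn\to L_0$ is a strictly increasing bijection, $L_0(\ff)$ is thin, hence so is the subfamily $L_0(\ff)\upharpoonright N$. By the definition of $\sqsubseteq$, every $t\in\g\upharpoonright N$ has an initial segment in $L_0(\ff)\upharpoonright N$; by thinness this initial segment $u(t)$ is unique, and it is nonempty because $\varnothing\notin L_0(\ff)$. Writing $u(t)=L_0(v(t))$ with $v(t)\in\ff$, I would put $\varphi(t):=v(t)$.

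Then three things must be checked. \emph{Codomain:} from $u(t)=L_0(v(t))\subseteq N\subseteq L_0(M)$ one gets $v(t)\subseteq M$, so $\varphi(t)\in\ff\upharpoonright M$. \emph{Plegma preservation:} if $(t_1,t_2)$ is a plegma pair in $\g\upharpoonright N$, then $\varnothing\neq u(t_i)\sqsubseteq t_i$ and Proposition \ref{newq1}(iii) gives that $(u(t_1),u(t_2))$ is a plegma pair; since $u(t_i)=L_0(\varphi(t_i))$ and every inequality in Definition \ref{defn plegma} is a comparison between elements of the sets involved, it is preserved by $L_0$ and reflected by $L_0^{-1}$, so $(\varphi(t_1),\varphi(t_2))$ is a plegma pair, i.e. $\varphi$ is plegma preserving. \emph{The ``moreover'':} if $t\in\g\upharpoonright N$ with $\min t\ge N(l)$, then $\min u(t)=\min t$ (as $u(t)$ is a nonempty initial segment of $t$) yields $L_0(\min\varphi(t))=\min t\ge N(l)$; and $N\subseteq L_0(M)$ gives $N(l)\ge (L_0(M))(l)=L_0(M(l))$, whence $\min\varphi(t)\ge M(l)$ since $L_0$ is increasing.

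The only step calling for an idea is the reduction to Corollary \ref{cx21} together with the observation that ``$L\in[L_0(M)]^\infty$'' is exactly what keeps $\varphi$ valued in $\ff\upharpoonright M$; everything after that is bookkeeping once one records two trivialities — a subfamily of a thin family is thin, and a strictly increasing bijection between subsets of $\nn$ carries plegma families to plegma families in both directions (the forward direction being Proposition \ref{newq1}(iv)).
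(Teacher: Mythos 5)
Your proof is correct and follows the same route as the paper's: both derive the result directly from Corollary \ref{cx21}, define $\varphi(t)$ via the unique $\ff$-preimage under $L_0$ of the initial segment of $t$ lying in $L_0(\ff)\upharpoonright N$, and verify codomain, plegma preservation, and the quantitative estimate by the same bookkeeping. Your write-up is a bit more explicit about why the initial segment is unique (thinness of $L_0(\ff)$) and about $L_0^{-1}$ reflecting plegma pairs, but there is no substantive difference from the paper's argument.
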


\begin{proof}
 Let  $M\in[\nn]^\infty$.   By Corollary \ref{cx21} there exists  $L_0\in[\nn]^\infty$
and  $N\in [L_0(M)]^\infty$ such that  $L_0(\ff)\upharpoonright
N\sqsubseteq \g\upharpoonright N$. Thus for every
$t\in\g\upharpoonright N$ there exists a unique  $s_t\in\ff$ such
that $L_0(s_t)\sqsubseteq t$. Moreover, $L_0(s_t)\sqsubseteq
t\subseteq N\subseteq L_0(M)$ and therefore $s_t\subseteq M$. We
define  $\varphi:\g\upharpoonright N\to\ff\upharpoonright M$, by
setting $\varphi(t)=s_t$.  To see that $\varphi$ is plegma
preserving, let $(t_1,t_2)$ be a plegma pair in $\g\upharpoonright
N$. Then $L_0(\varphi(t_i))\sqsubseteq t_i$, for $i\in\{1,2\}$ and
therefore by (iv) of Proposition \ref{newq1} $(L_0(\varphi(t_1),
(L_0(\varphi(t_2))$ and  $(\varphi(t_1), \varphi(t_2))$ are  also
plegma pairs. Hence $\varphi$ is plegma preserving.

Finally, let $l\in\nn$ and $t\in\g\upharpoonright N$ with $\min
t\geq N(l)$. Since $L_0(\varphi(t))\sqsubseteq t$, we have that
$\min L_0(\varphi(t))=\min t$ and therefore $L_0(\min
\varphi(t))=\min L_0(\varphi(t))=\min t\geq N(l)\geq
L_0(M)(l)=L_0(M(l))$. Hence $\min \varphi(t)\geq M(l)$.
\end{proof}
For the following  we shall need the next definition. Let
$\ff\subseteq[\nn]^{<\infty}$ and $L\in[\nn]^\infty$. We define
  \begin{equation} L^{-1}(\ff)=\Big{\{}
  t\in[\nn]^{<\infty}:L(t)\in\ff\Big{\}}.\end{equation}

 It is easy to see that for every family $\ff\subseteq [\nn]^{<\infty}$  and $L\in[\nn]^\infty$ the following are satisfied.
  \begin{enumerate}
    \item[(a)] If $\ff$ is very large in $L$ then the family $L^{-1}(\ff)$ is very large in $\nn$.
    \item[(b)] If $\ff$ is regular thin then so does the family $L^{-1}(\ff)$.
    \item[(c)] $o(L^{-1}(\ff))=o(\ff\upharpoonright L)$. In
    particular if $\ff$ is regular thin then
    $o(L^{-1}(\ff))=o(\ff)$.
  \end{enumerate}

\begin{lem}\label{firsttheoreminadm}
  Let $\ff$ be a regular thin family, $L\in[\nn]^\infty$  such that $\mathcal{F}$ is very large in
  $L$.
  Let $\varphi:\ff\upharpoonright L\to[\nn]^{<\infty}$ be a normal plegma preserving
  map. Let $\psi:L^{-1}(\ff)\to [\nn]^{<\infty}$ defined by $\psi(u)=\varphi(L(u))$ for every $u\in L^{-1}(\ff)$.
  Then $\psi$ is a normal plegma preserving map which in addition satisfies the following property. If
  $u\in L^{-1}(\ff)\upharpoonright\upharpoonright\nn$ and $w=\psi(u)$ then  $u(i)\leq w(i)$ for every $1\leq i\leq |w|$.
\end{lem}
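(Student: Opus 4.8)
The plan is to establish the two assertions in turn: that $\psi$ is a normal plegma preserving map (immediate) and the coordinatewise domination $u(i)\le w(i)$ (the substantive point), which I would prove by induction on $i$. Note first that, by (a) and (b) above, the family $\g:=L^{-1}(\ff)$ is regular thin and very large in $\nn$. If $(u_1,u_2)$ is a plegma pair in $\g$, then $L(u_1),L(u_2)\in\ff\upharpoonright L$ and, by (iv) of Proposition \ref{newq1}, $(L(u_1),L(u_2))$ is a plegma pair; hence $(\psi(u_1),\psi(u_2))=(\varphi(L(u_1)),\varphi(L(u_2)))$ is plegma, since $\varphi$ is plegma preserving. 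In the same way $|\psi(u)|=|\varphi(L(u))|\le|L(u)|=|u|$ for every $u\in\g$, and $|\psi(u_1)|=|\varphi(L(u_1))|\le|\varphi(L(u_2))|=|\psi(u_2)|$ for every plegma pair $(u_1,u_2)$ in $\g$, because $\varphi$ is normal. Thus $\psi$ is a normal plegma preserving map.

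For the additional property I would prove, by induction on $i\ge 1$, the statement $(\ast_i)$: for every $u\in\g\upharpoonright\upharpoonright\nn$ with $i\le|\psi(u)|$ one has $\psi(u)(i)\ge u(i)$; the lemma then follows by applying $(\ast_i)$ for each $1\le i\le|\psi(u)|$. The mechanism in both the base case and the inductive step is the same: since $\g$ is very large in $\nn$ and $u$ has gaps, one produces —as in the remarks preceding Lemma \ref{wq}— an auxiliary element $t\in\g\upharpoonright\upharpoonright\nn$ with prescribed first coordinates forming a plegma pair with $u$, pushes it through $\psi$, and reads off the conclusion from the defining inequalities of a plegma pair. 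For $(\ast_1)$ I would run a secondary induction on $u(1)$: if $u(1)=1$ there is nothing to prove; if $u(1)\ge 2$, choose $t\in\g\upharpoonright\upharpoonright\nn$ with $t(1)=u(1)-1$ and $(t,u)$ a plegma pair (possible because $u$ has gaps, so the remaining coordinates of $t$ are placed between successive elements of $u$, which also forces $t$ to have gaps). By Lemma \ref{lemma increasing length of plegma}, $|t|\le|u|$, so $(\psi(t),\psi(u))$ is a plegma pair with $|\psi(t)|\le|\psi(u)|$ and hence $\psi(t)(1)<\psi(u)(1)$; since $\psi(t)\ne\varnothing$ (members of plegma families are nonempty) and $t(1)=u(1)-1<u(1)$, the secondary inductive hypothesis gives $\psi(t)(1)\ge u(1)-1$, whence $\psi(u)(1)\ge u(1)$.

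For the inductive step, assume $(\ast_{i-1})$ with $i\ge 2$, and let $u\in\g\upharpoonright\upharpoonright\nn$ with $i\le|\psi(u)|$. Since $u\in\g\upharpoonright\upharpoonright\nn$ we have $u(i-1)<u(i)-1<u(i)$, so one can choose $t\in\g\upharpoonright\upharpoonright\nn$ with $(u,t)$ a plegma pair and $t(i-1)=u(i)-1$, placing the coordinates of $t$ of index $\le|u|-1$ in the successive gaps of $u$ (with the $(i-1)$-st equal to $u(i)-1$) and its remaining coordinates freely with gaps. Lemma \ref{lemma increasing length of plegma} gives $|u|\le|t|$ and normality of $\psi$ gives $|\psi(u)|\le|\psi(t)|$, so $i\le|t|$ and $i\le|\psi(t)|$; hence $t(i-1)$ and $\psi(t)(i-1)$ are defined, and $(\ast_{i-1})$ applied to $t$ yields $\psi(t)(i-1)\ge t(i-1)=u(i)-1$. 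Finally $(\psi(u),\psi(t))$ is a plegma pair with $|\psi(u)|\le|\psi(t)|$, so from its defining inequalities $\psi(t)(i-1)<\psi(u)(i)$; combining, $\psi(u)(i)>u(i)-1$, i.e. $\psi(u)(i)\ge u(i)$, which is $(\ast_i)$. Taking $w=\psi(u)$ gives the lemma.

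The point requiring the most care will be the two auxiliary constructions: given $u\in\g\upharpoonright\upharpoonright\nn$, producing an element of $\g\upharpoonright\upharpoonright\nn$ with prescribed small coordinates forming a plegma pair with $u$ on the appropriate side. This is exactly where the hypotheses "$\ff$ very large in $L$" (which, via (a), makes $\g=L^{-1}(\ff)$ very large in $\nn$) and "$u\in\g\upharpoonright\upharpoonright\nn$" are genuinely used: the gap condition both guarantees that the target value $u(i)-1$ lies strictly between $u(i-1)$ and $u(i)$ and lets the auxiliary set be taken again in $\g\upharpoonright\upharpoonright\nn$, so that the inductive hypothesis applies to it. The length bookkeeping (the inequalities between $|u|$, $|t|$ and between $|\psi(u)|$, $|\psi(t)|$) is then handled uniformly by Lemma \ref{lemma increasing length of plegma} together with the normality of $\psi$, and the rest is a direct reading of the definition of a plegma pair.
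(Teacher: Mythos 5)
Your proof is correct and takes essentially the same approach as the paper's: the normality and plegma preservation of $\psi$ follow by unwinding the definition of precomposition with $L$, and the coordinatewise bound is obtained by the same double induction (outer on the coordinate index, with a secondary induction on $u(1)$ for the first coordinate), in each step building the auxiliary element inside the gaps of $u$ with the relevant coordinate equal to $u(i)-1$ and reading off the conclusion from the plegma inequalities together with Lemma \ref{lemma increasing length of plegma}. The only difference from the paper is an immaterial index shift in the outer inductive step.
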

\begin{proof} It is easy to check that $\psi$ is a normal plegma preserving
  map. Therefore we pass to the proof of the property of $\psi$. First, by induction on $k=u(1)$, we shall show that
$u(1)\leq\psi(u)(1)$,  for all $u\in
L^{-1}(\ff)\upharpoonright\upharpoonright\nn$. Indeed, if $u(1)=1$
then obviously $\psi(u)(1)\geq 1=u(1)$. Suppose that for some
$k\in\nn$ and every $u\in
L^{-1}(\ff)\upharpoonright\upharpoonright\nn$ with $u(1)=k$ we have
that $\psi_1(u)(1)\geq k$. Let $u\in
L^{-1}(\ff)\upharpoonright\upharpoonright\nn$ with $u(1)=k+1$. Since
$L^{-1}(\ff)$ is regular thin and very large in $\nn$, we easily see
that there exists a unique $u'\in L^{-1}(\ff)$ with $u'\sqsubseteq
\{u(\rho)-1: 1\leq \rho\leq |u|\}$. Notice that $(u',u)$ is a plegma
pair in $L^{-1}(\ff)$ and $u'(1)=k$. Since $\psi$ is a normal plegma
preserving map we have that $(\psi(u'),\psi(u))$ is also a plegma
pair. Hence $\psi(u)(1)>\psi(u')(1)\geq u'(1)=k$, that is
$\psi(u)(1)\geq k+1=u(1)$.

Suppose now that for some $i\in\nn$ and  every $u\in
L^{-1}(\ff)\upharpoonright\upharpoonright\nn$ with $i\leq
|\psi(u)|$, $u(i)\leq\psi(u)(i)$. Let $u\in
L^{-1}(\ff)\upharpoonright\upharpoonright\nn$ with $i+1\leq
|\psi(u)|$. Since $L^{-1}(\ff)$ is  very large in $\nn$, there
exists $u'\in L^{-1}(\ff)\upharpoonright\upharpoonright\nn$ such
that $\{u(\rho)-1: 2\leq \rho\leq |u|\}\sqsubseteq u'$. Observe that
$(u,u')$ is plegma pair in $L^{-1}(\ff)$, $|u|\leq |u'|$ and
$u(i+1)=u'(i)+1$. Since $\psi$ is normal plegma preserving, we have
that $(\psi(u),\psi(u'))$ is also a plegma pair and in addition
$i+1\leq |\psi(u)|\leq|\psi(u')|$. Hence,
$\psi(u)(i+1)>\psi(u')(i)\geq u'(i)=u(i+1)-1$, that is
$\psi(u)(i+1)\geq u(i+1)$.
 By induction on $i\in\nn$ the proof  is complete.
\end{proof}

\begin{thm}\label{non plegma preserving maps}
    Let $\ff,\g$ be regular thin families with  $o(\ff)<o(\g)$ and
    $M\in [\nn]^\infty$.
    Then there is no plegma preserving map from $\ff\upharpoonright M$ to $\g$. More precisely for every
     $M\in[\nn]^\infty$ and $\varphi:\ff\upharpoonright M\to\g$ and there exists $L\in[M]^\infty$ such that for every
    plegma pair $(s_1,s_2)$ in $\ff\upharpoonright L$ neither $(\phi(s_1),\phi(s_2))$ nor $(\phi(s_2),\phi(s_1))$ is a plegma pair.
  \end{thm}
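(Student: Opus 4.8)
The plan is to reduce the statement to the nonexistence of a plegma preserving map and then to contradict such a map by one combinatorial observation. First I would apply Proposition \ref{bfk} to $\varphi$: for every $M$ there is $L\in[M]^\infty$ on which either the restriction of $\varphi$ to $\ff\upharpoonright L$ is plegma preserving, or the asserted ``neither--nor'' conclusion holds. Hence it suffices to prove that, when $o(\ff)<o(\g)$, the first alternative is impossible, i.e.\ that there is no plegma preserving map $\ff\upharpoonright L\to\g$ for any $L$ (the case $o(\ff)=0$ being vacuous). So assume such a map exists. I would then run the standard chain of refinements: by Theorem \ref{Properties of plegma preserving map} pass to an infinite set on which the map is a \emph{normal} plegma preserving map; by Corollary \ref{Galvin Pricley for regular thin} shrink further so that $\ff$ is very large in the resulting index set $L$; and then invoke Lemma \ref{firsttheoreminadm} with $\hh:=L^{-1}(\ff)$. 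This gives a regular thin family $\hh$, very large in $\nn$, with $o(\hh)=o(\ff)<o(\g)$, and a normal plegma preserving map $\psi:\hh\to\g$, $\psi(u)=\varphi(L(u))$, satisfying the domination inequality $u(i)\le\psi(u)(i)$ for every $u\in\hh\upharpoonright\upharpoonright\nn$ and every $1\le i\le|\psi(u)|$. Everything thus reduces to contradicting the existence of such a $\psi$.

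For the contradiction I would use $o(\hh)<o(\g)$ one more time: Proposition \ref{corollary by Gasparis} applied to $(\hh,\g)$ over the sparse set $\{2,4,6,\dots\}$ produces $N\in[\nn]^\infty$ whose successive elements differ by at least $2$ and with $\hh\upharpoonright N\sqsubset\g\upharpoonright N$. Fix $u\in\hh\upharpoonright N$ (nonempty, since $\hh$ is large); because $u\subseteq N$ is sparse, $u\in\hh\upharpoonright\upharpoonright\nn$, so the domination inequality applies to $u$. Choose $v\in\g\upharpoonright N$ with $u\sqsubset v$, and set $w=\psi(u)\in\g$, $m=|w|$. Normality gives $m\le|u|<|v|$, so $v|m$ is a \emph{proper} initial segment of $v\in\g=\mathcal{M}(\widehat{\g})$, hence $v|m$ is not $\sqsubseteq$-maximal in $\widehat{\g}$; and domination gives $w(i)\ge u(i)=v(i)$ for $1\le i\le m$, while $|w|=|v|m|$. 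The proof then closes with the elementary remark that, since $\widehat{\g}$ is spreading, any set of the same cardinality as a non-$\sqsubseteq$-maximal element $t$ of $\widehat{\g}$ which dominates $t$ coordinatewise is again non-$\sqsubseteq$-maximal in $\widehat{\g}$ — simply extend a one-step extension $t\cup\{x\}\in\widehat{\g}$ of $t$ to $w\cup\{x'\}$ with $x'=\max\{x,\max w+1\}$ and apply spreading. Taking $t=v|m$ yields $w\notin\mathcal{M}(\widehat{\g})=\g$, contradicting $w=\psi(u)\in\g$. (If $m=0$ the contradiction is immediate, as $w=\varnothing\in\g$ is impossible because $o(\g)\ge 1$.)

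The hard part, such as it is, is not any single estimate but the bookkeeping of the refinements: the domination inequality supplied by Lemma \ref{firsttheoreminadm} lives on $\hh=L^{-1}(\ff)$ and only on its $\upharpoonright\upharpoonright\nn$ part, so the application of Proposition \ref{corollary by Gasparis} must be performed for the pair $(\hh,\g)$ and over a sufficiently sparse set, in order that the element $u$ one uses simultaneously lies in $\hh\upharpoonright\upharpoonright\nn$ (where domination is available) and possesses a proper extension inside $\g$. The only genuinely new ingredient is the one-line spreading remark above; all the rest is assembled from the cited results.
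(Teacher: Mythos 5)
Your proof is correct and follows essentially the same route as the paper's: reduce via Proposition \ref{bfk} to the nonexistence of a plegma preserving map, normalize via Theorem \ref{Properties of plegma preserving map} and Corollary \ref{Galvin Pricley for regular thin}, transfer to $L^{-1}(\ff)$ with the domination inequality of Lemma \ref{firsttheoreminadm}, and then use Proposition \ref{corollary by Gasparis} (over a sparse set) to manufacture an element $u$ whose image $\psi(u)$ dominates a proper initial segment of a member of $\g$ and hence, by the spreading of $\widehat{\g}$, cannot be $\sqsubseteq$-maximal. The only cosmetic differences are that you perform the Proposition \ref{bfk} reduction at the outset rather than at the end, and you spell out the final ``spreading forces non-maximality'' step which the paper leaves as a one-liner.
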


  \begin{proof}
   Assume  that there exist $M\in[\nn]^\infty$ and  $\varphi:\ff\upharpoonright M\to\g$
    such that $\varphi$ is  plegma preserving.
    By Theorem \ref{Properties of plegma preserving map} there exists
  $L\in[M]^\infty$ such that the restriction
   of $\varphi$ on $\ff\upharpoonright L$ is a normal plegma preserving map.
  By Corollary \ref{Galvin Pricley for regular thin} we may also assume that
  $\ff$ is very large in $L$.
   Since $o(L^{-1}(\ff))=o(\ff)<o(\g)$ by
Proposition \ref{corollary by Gasparis} we have that there exists
$N\in[\nn]^\infty$ such that $L^{-1}(\ff)\upharpoonright N\sqsubset
\g\upharpoonright N$. We may  assume that $N(i+1)-N(i)>1$ and
therefore  $L^{-1}(\ff)\upharpoonright N\subseteq
L^{-1}(\ff)\upharpoonright\upharpoonright\nn$. Let
$\psi:L^{-1}(\ff)\to [\nn]^{<\infty}$
  defined by $\psi(u)=\varphi(L(u))$ for every $u\in L^{-1}(\ff)$.

  Pick $u_0\in
L^{-1}(\ff)\upharpoonright N$ and set $w_0=\psi(u_0)$. Since
$L^{-1}(\ff)\upharpoonright N\sqsubset \g\upharpoonright N$, we have
that $u_0\in\widehat{\g}\setminus \g$ and since  $\varphi$ takes
values in $\mathcal{G}$, we have that $w_0\in\g$. We are now ready
to derive a contradiction. Indeed, by Lemma \ref{firsttheoreminadm}
we have that $\psi$ is a normal plegma preserving map which implies
that $|w_0|\leq |u_0|$. Moreover, since $L^{-1}(\ff)\upharpoonright
N\subseteq L^{-1}(\ff)\upharpoonright\upharpoonright\nn$, we have
that $u_0\in L^{-1}(\ff)\upharpoonright\upharpoonright\nn$. Hence,
again by Lemma \ref{firsttheoreminadm}, we get that  $ u_0(i)\leq
w_0(i)$, for every $1\leq i\leq |w_0|$. Summarizing we have that
 $u_0\in \widehat{\g}\setminus \g$, $|w_0|\leq |u_0|$ and
$ u_0(i)\leq w_0(i)$, for every $1\leq i\leq |w_0|$. Since
$\widehat{\g}$ is spreding we conclude that $w_0\in
\widehat{\g}\setminus \g$, which is impossible. Therefore there is
no $M\in [\nn]^\infty$ and $\varphi:\ff\upharpoonright M\to \g$ such
that $\varphi$ is plegma preserving. By Proposition \ref{bfk}, we
get that for every $M\in [\nn]^\infty$ and
$\varphi:\ff\upharpoonright M\to \g$ there is $L\in[M]^\infty$ such
that for every plegma pair $(s_1,s_2)$ in $\ff\upharpoonright L$
neither $(\phi(s_1),\phi(s_2))$ nor $(\phi(s_2),\phi(s_1))$ is
plegma.
\end{proof}

\section{The hierarchy of spreading models}\label{Chapter 2}
In this section we  define  the class of the $\xi$-spreading models
of a Banach space $X$ for every countable ordinal $1\leq
\xi<\omega_1$. The definition is a transfinite extension of the
corresponding one  of  the finite order spreading models given in
\cite{AKT}.  The basic ingredients of this extension are the
concepts of the $\ff$-sequences in $X$, i.e. sequences of the form
$(x_s)_{s\in\mathcal{F}}$ with $x_s\in X$ for every
$s\in\mathcal{F}$ and the plegma families with members in $\ff$
where $\ff$ is a regular thin family.
\subsection{The $\ff$-spreading
models of a Banach space $X$} Let  $X$ be a Banach space and
$\ff\subseteq [\nn]^{<\infty}$ be a regular thin family. By the
term \emph{$\ff$-sequence} in $X$ we will mean  a map
$\varphi:\ff\to X$. An $\ff$-sequence in $X$ will be usually
denoted by $(x_s)_{s\in\ff}$, where $x_s=\varphi(s)$ for all
$s\in\ff$. Also, for every $M\in[\nn]^\infty$, the map $\varphi:
\ff\upharpoonright M\to X$ will be called an
\emph{$\ff$-subsequence} of $(x_s)_{s\in\ff}$ and will be denoted
by $(x_s)_{s\in\ff\upharpoonright M}$. An $\ff$-sequence
$(x_s)_{s\in \mathcal{F}}$ in $X$ will be called bounded (resp.
seminormalized) if there exists $C>0$ (resp. $0<c<C$) such that
$\|x_s\|\leq C$ (resp. $c\leq \|x_s\|\leq C$) for every $s\in\ff$.
\begin{lem}\label{ljy}
  Let $(x_s)_{s\in\ff}$ be a bounded
$\ff$-sequence  in  $X$. Let $k\in\nn$, $N\in[\nn]^\infty$ and
$\delta>0$. Then there exists $M\in[N]^\infty$ such that
  \begin{equation}\label{eqsz1}\Bigg{|}\Big{\|}\sum_{j=1}^ka_jx_{t_j}\Big{\|}-\Big{\|}\sum_{j=1}^ka_jx_{s_j}\Big{\|}\Bigg{|}\leq\delta\end{equation}
  for every
  $(t_j)_{j=1}^k, (s_j)_{j=1}^k \in\text{{Plm}}_k(\ff\upharpoonright M)$ and
  $a_1,...,a_k\in[-1,1]$.
\end{lem}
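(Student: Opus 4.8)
The plan is to use a finite-net argument over the compact parameter space $[-1,1]^k$ together with the Ramsey theorem for plegma families (Theorem \ref{ramseyforplegma}), exactly in the spirit of the classical Brunel--Sucheston extraction. Since $(x_s)_{s\in\ff}$ is bounded, say $\|x_s\|\le C$ for all $s\in\ff$, the map $(a_1,\dots,a_k)\mapsto \|\sum_{j=1}^k a_j x_{s_j}\|$ is $(Ck)$-Lipschitz on $[-1,1]^k$ for every plegma $k$-tuple $(s_j)_{j=1}^k$. Fix a finite $\delta/(3Ck)$-net $\{\bar a^{(1)},\dots,\bar a^{(p)}\}$ of $[-1,1]^k$; then it suffices to control the differences in \eqref{eqsz1} for the finitely many net points, at accuracy $\delta/3$, and the Lipschitz estimate transfers this to all $(a_j)_{j=1}^k\in[-1,1]^k$ at accuracy $\delta$.

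First I would handle one net point $\bar a=\bar a^{(i)}$. Consider the function $f_{\bar a}\colon \text{Plm}_k(\ff\upharpoonright N)\to [0,Ck]$ given by $f_{\bar a}\big((s_j)_{j=1}^k\big)=\big\|\sum_{j=1}^k a_j x_{s_j}\big\|$. Partition $[0,Ck]$ into finitely many intervals of length $<\delta/3$, thereby partitioning $\text{Plm}_k(\ff\upharpoonright N)$ into finitely many pieces according to which interval the value of $f_{\bar a}$ falls in. By Theorem \ref{ramseyforplegma} there is $N_1\in[N]^\infty$ on which $\text{Plm}_k(\ff\upharpoonright N_1)$ lies entirely in one piece, so that $f_{\bar a}$ varies by less than $\delta/3$ on all plegma $k$-tuples in $\ff\upharpoonright N_1$. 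Repeating this successively for $\bar a^{(1)},\dots,\bar a^{(p)}$ produces a decreasing chain $N\supseteq N_1\supseteq\cdots\supseteq N_p=:M$, and on $M$ every one of the functions $f_{\bar a^{(i)}}$ oscillates by less than $\delta/3$ over $\text{Plm}_k(\ff\upharpoonright M)$.

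Finally I would assemble the estimate. Given arbitrary $(t_j)_{j=1}^k,(s_j)_{j=1}^k\in\text{Plm}_k(\ff\upharpoonright M)$ and $(a_j)_{j=1}^k\in[-1,1]^k$, pick a net point $\bar a^{(i)}$ with $\|\bar a-\bar a^{(i)}\|_\infty<\delta/(3Ck)$. By the Lipschitz bound, replacing $\bar a$ by $\bar a^{(i)}$ changes each of $\|\sum a_j x_{t_j}\|$ and $\|\sum a_j x_{s_j}\|$ by less than $\delta/3$; and for the net point itself the two norms differ by less than $\delta/3$ by the choice of $M$. The triangle inequality then gives \eqref{eqsz1}. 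The only mild subtlety worth noting is that for the Ramsey step one wants $\text{Plm}_k(\ff\upharpoonright N)$ to be nonempty so the conclusion is not vacuous; this is guaranteed since $\ff$ is a regular thin family (its $\sqsubseteq$-closure is spreading, hence $\ff\upharpoonright L\ne\varnothing$ for all $L$, and one can build plegma $k$-tuples inside any $\ff\upharpoonright L$), but in any case the statement as written is about an inclusion of relations and holds trivially when there are no plegma $k$-tuples. I do not expect a genuine obstacle here; the work is entirely the bookkeeping of the finite-net reduction plus $p$ applications of Theorem \ref{ramseyforplegma}.
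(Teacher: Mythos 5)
Your proof is correct and follows essentially the same route as the paper's: a finite net of $[-1,1]^k$, one application of Theorem~\ref{ramseyforplegma} per net point to stabilize the norm up to $\delta/3$, and a Lipschitz/triangle-inequality argument to transfer to arbitrary coefficients. Your write-up is actually a bit cleaner than the paper's (which has a couple of typos in the net radius and range bound, writing $l$ where $k$ is meant); the substance is identical.
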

\begin{proof}
Let $(\textbf{a}_n)_{n=1}^{n_0}$ be a $\frac{\delta}{3l}-$net of
the unit ball of $(\rr^k,\|\cdot\|_\infty)$. Setting
$\textbf{a}_n=(a_1^n,...,a_k^n)$ for every $1\leq n\leq n_0$, we
inductively construct  $N=N_0\supseteq N_1\supseteq\ldots\supseteq
N_{n_0}$ satisfying
  \begin{equation}\label{wq1}\Bigg{|}\Big\|\sum_{j=1}^ka_j^nx_{t_j}\Big\|-\Big\|\sum_{j=1}^ka_j^nx_{s_j}\Big\|\Bigg{|}\leq\frac{\delta}{3},
  \end{equation}
for every $1\leq n\leq n_0$ and every
  $(s_j)_{j=1}^k,(t_j)_{j=1}^k\in\text{{Plm}}_k(\ff\upharpoonright
N_{n})$.

The inductive step of the construction has as follows. Suppose
that $N_0,\ldots,N_{n-1}$ have been constructed. Define
$g_n:\text{{Plm}}_k(\ff\upharpoonright N_{n-1})\to[0,lC]$ by
$g_n((s_j)_{j=1}^k)=\|\sum_{j=1}^k a_j^n x_{s_j}\|$. By dividing
the interval $[0,lC]$ into disjoint intervals of length
$\frac{\delta}{3}$ and applying Theorem \ref{ramseyforplegma},
there is  $N_n\subseteq N_{n-1}$ such that
$|g_n((t_j)_{j=1}^k)-g_n((s_j)_{j=1}^k)|\leq\frac{\delta}{3}$, for
every
$(t_j)_{j=1}^k,(s_j)_{j=1}^k\in\text{{Plm}}_k(\ff\upharpoonright
N_n)$.

We set $M= M(N_{n_0})$. By (\ref{wq1}) and taking into account that
$(\textbf{a}_n)_{n=1}^{n_0}$ is a $\frac{\delta}{3}-$net of the unit
ball of $(\rr^k, \|\cdot\|_\infty)$ it is easy to see that $L$ is as
desired.
\end{proof}

\begin{lem}\label{jh}
  Let $(x_s)_{s\in\ff}$ be a bounded
$\ff$-sequence  in  $X$. Let $l\in\nn$, $N\in[\nn]^\infty$ and
$\delta>0$. Then there exists $M\in[N]^\infty$ such that
  \begin{equation}\label{eqsz2}\Bigg{|}\Big{\|}\sum_{j=1}^ka_jx_{t_j}\Big{\|}-\Big{\|}\sum_{j=1}^ka_jx_{s_j}\Big{\|}\Bigg{|}\leq\delta\end{equation}
  for every $1\leq k\leq l$,
  $(t_j)_{j=1}^k, (s_j)_{j=1}^k \in\text{{Plm}}_k(\ff\upharpoonright M)$ and
  $a_1,...,a_k\in[-1,1]$.
\end{lem}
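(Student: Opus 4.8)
The plan is to obtain Lemma~\ref{jh} from Lemma~\ref{ljy} by a straightforward finite iteration over the finitely many values $k=1,\dots,l$, since for a single $k$ the required estimate is exactly what Lemma~\ref{ljy} provides.

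First I would set $M_0=N$ and then proceed inductively for $k=1,\dots,l$: given $M_{k-1}\in[\nn]^\infty$, apply Lemma~\ref{ljy} with the integer $k$, the infinite set $M_{k-1}$ and the same $\delta>0$ to produce $M_k\in[M_{k-1}]^\infty$ such that
\begin{equation*}
\Bigg|\Big\|\sum_{j=1}^k a_j x_{t_j}\Big\|-\Big\|\sum_{j=1}^k a_j x_{s_j}\Big\|\Bigg|\leq\delta
\end{equation*}
for every $(t_j)_{j=1}^k,(s_j)_{j=1}^k\in\text{Plm}_k(\ff\upharpoonright M_k)$ and every $a_1,\dots,a_k\in[-1,1]$. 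This yields a decreasing chain $N=M_0\supseteq M_1\supseteq\cdots\supseteq M_l$ of infinite subsets of $N$, and I would set $M=M_l$.

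It then remains to check that this single $M$ works simultaneously for all $1\leq k\leq l$. Fix such a $k$. Since $M=M_l\subseteq M_k$, we have $\ff\upharpoonright M\subseteq\ff\upharpoonright M_k$, and hence every plegma $k$-tuple with members in $\ff\upharpoonright M$ is a plegma $k$-tuple with members in $\ff\upharpoonright M_k$; that is, $\text{Plm}_k(\ff\upharpoonright M)\subseteq\text{Plm}_k(\ff\upharpoonright M_k)$. Therefore the inequality established at stage $k$ applies verbatim to all $(t_j)_{j=1}^k,(s_j)_{j=1}^k\in\text{Plm}_k(\ff\upharpoonright M)$, which is exactly the assertion of (\ref{eqsz2}).

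There is essentially no obstacle: the only point worth isolating is the monotonicity of $\text{Plm}_k(\cdot)$ under passing to a smaller index set, which is immediate from the definitions, together with the fact that $k$ ranges over the finite set $\{1,\dots,l\}$ so the iteration terminates. (Alternatively one could reprove the statement directly, running the net argument of Lemma~\ref{ljy} with one net of the unit ball of $(\rr^k,\|\cdot\|_\infty)$ for each $k\leq l$ and applying Theorem~\ref{ramseyforplegma} finitely many times, but the iteration above avoids repeating that argument.)
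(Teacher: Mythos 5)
Your proof is correct and takes exactly the approach the paper intends: the paper's proof of Lemma~\ref{jh} simply says ``It follows easily by an iterated use of Lemma~\ref{ljy},'' and your argument spells out that iteration, applying Lemma~\ref{ljy} once for each $k\in\{1,\dots,l\}$ to produce a nested chain $N=M_0\supseteq\cdots\supseteq M_l$ and taking $M=M_l$, with the (routine but correct) observation that $\text{Plm}_k(\ff\upharpoonright M)\subseteq\text{Plm}_k(\ff\upharpoonright M_k)$ carrying the estimate down to $M$.
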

\begin{proof}
It follows easily by an iterated use of   Lemma \ref{ljy}.
\end{proof}
\begin{lem} \label{B-Sp} Let  $(x_s)_{s\in\ff}$ be a bounded $\ff$-sequence in $X$. Then for every
  sequence $(\delta_n)_n$ of positive real numbers and  $N\in[\nn]^\infty$ there exists
$M\in[N]^\infty$ satisfying
\begin{equation}\label{pot}\Bigg{|}\Big\|\sum_{j=1}^k
a_jx_{t_j}\Big\|-\Big\|\sum_{j=1}^k
a_jx_{s_j}\Big\|\Bigg{|}\leq\delta_l,\end{equation} for every
$1\leq k\leq l$, $a_1,...,a_k\in[-1,1]$ and $(t_j)_{j=1}^k,
(s_j)_{j=1}^k \in\text{{Plm}}_k(\ff\upharpoonright M)$ such that
$s_1(1),t_1(1)\geq M(l)$.
\end{lem}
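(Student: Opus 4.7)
The plan is to derive this lemma from Lemma \ref{jh} by a standard diagonal argument indexed by $l\in\nn$. First I would apply Lemma \ref{jh} iteratively: starting from $N_0=N$, suppose $N_{l-1}\in[N]^\infty$ has been constructed. Invoke Lemma \ref{jh} with parameters $l$, $\delta_l$ and $N_{l-1}$ in place of $N$ to obtain $N_l\in[N_{l-1}]^\infty$ such that
\begin{equation*}
\Bigg|\Big\|\sum_{j=1}^k a_j x_{t_j}\Big\|-\Big\|\sum_{j=1}^k a_j x_{s_j}\Big\|\Bigg|\leq \delta_l
\end{equation*}
for every $1\leq k\leq l$, every $a_1,\dots,a_k\in[-1,1]$ and every pair of plegma $k$-tuples $(t_j)_{j=1}^k,(s_j)_{j=1}^k$ in $\ff\upharpoonright N_l$. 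This yields a decreasing chain $N=N_0\supseteq N_1\supseteq N_2\supseteq\cdots$.

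Next I would diagonalize. Define $M=\{m_l:l\in\nn\}$ recursively by choosing $m_l\in N_l$ with $m_l>m_{l-1}$ (possible since $N_l$ is infinite). Because $N_l\supseteq N_{l+1}\supseteq\cdots$, an immediate induction gives the key inclusion
\begin{equation*}
\{m_j:j\geq l\}\subseteq N_l\quad\text{for every }l\in\nn.
\end{equation*}
In particular $M\in[N]^\infty$ and $M(l)=m_l\in N_l$.

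Now to verify (\ref{pot}), fix $1\leq k\leq l$, scalars $a_1,\dots,a_k\in[-1,1]$ and plegma $k$-tuples $(s_j)_{j=1}^k,(t_j)_{j=1}^k$ in $\ff\upharpoonright M$ with $s_1(1),t_1(1)\geq M(l)$. Since a plegma family in $\ff\upharpoonright M$ consists of subsets of $M$, and $s_1(1)\geq M(l)$ (respectively $t_1(1)\geq M(l)$) forces every element of every $s_j$ (respectively $t_j$) to lie in $\{m_j:j\geq l\}\subseteq N_l$, both tuples are plegma $k$-tuples in $\ff\upharpoonright N_l$ with $k\leq l$. The defining property of $N_l$ then yields the required bound $\delta_l$.

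There is no real obstacle here: the only content is combining Lemma \ref{jh}, which handles a fixed $l$, with a diagonal extraction to handle all $l$ uniformly, and then translating the hypothesis $s_1(1),t_1(1)\geq M(l)$ into membership of the relevant tuples in $\ff\upharpoonright N_l$. The choice of indexing $M(l)\in N_l$ is precisely what makes this translation work.
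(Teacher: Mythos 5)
Your argument is correct and is precisely the ``standard diagonalization'' the paper invokes without detail: iterate Lemma \ref{jh} to get a decreasing chain $N_l$, pick $m_l\in N_l$ increasing, and observe that $s_1(1)\geq M(l)$ together with the plegma ordering forces all the $s_j$ (and $t_j$) into $\{m_j:j\geq l\}\subseteq N_l$. Nothing is missing.
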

\begin{proof} It is straightforward  by Lemma \ref{jh} and a standard diagonalization.
\end{proof}

 Hence, assuming  in the above lemma that $(\delta_n)_n$ is a null sequence, we get that  for
every $l\in\nn$  and every sequence
$\big((s_j^n)_{j=1}^l\big)_{n}$ of plegma $l-$tuples in
$\ff\upharpoonright M$ with $s_1^n(1)\to \infty$ the sequence
$(\|\sum_{j=1}^la_jx_{s_j}^n\|)_{n}$ is convergent   and  its
limit is independent from the choice of $((s_j^n)_{j=1}^l)_{n}$.
Actually, we may define a  seminorm $\|\cdot\|_*$ on $c_{00}(\nn)$
under which the natural Hamel basis $(e_n)_n$ satisfies
\begin{equation}\Bigg{|}\Big\|\sum_{j=1}^k
a_jx_{s_j}\Big\|-\Big\|\sum_{j=1}^k
a_je_j\Big\|_*\Bigg{|}\leq\delta_l,\end{equation} for all $1\leq
k\leq l$, $(a_i)_{i=1}^k$  in $[-1,1]$ and
$(s_j)_{j=1}^k\in\text{{Plm}}_k(\ff\upharpoonright M)$ with
$s_1(1)\geq M(l)$.

  Let's notice here that there  exist bounded
  $\ff$-sequences in Banach spaces such that no seminorm resulting
  from Lemma \ref{B-Sp} is a norm. For example this happens in the  case where
  $(x_s)_{s\in\ff}$ is constant.
  Moreover, even if the  $\|\cdot\|_* $ is a norm on
  $c_{00}(\nn)$, the sequence $(e_n)_{n}$ is not necessarily Schauder
  basic.

  We are now ready to give the definition of the $\ff$-spreading
  models of a Banach space $X$.

\begin{defn}\label{Definition of spreading model}
   Let   $X$ be a Banach space, $\ff$ be a regular thin family,
   $(x_s)_{s\in\ff}$ be an $\ff$-sequence in $X$. Let $(E,\|\cdot\|_*)$ be an infinite dimensional
   seminormed linear space with Hamel basis
   $(e_n)_{n}$. Also let  and $M\in[\nn]^\infty$ and $(\delta_n)_n$
   be a null sequence of positive real numbers.

  We will say that the $\ff$-subsequence
  $(x_s)_{s\in\ff\upharpoonright M}$ generates  $(e_n)_{n}$ as an
  $\ff$-spreading model \big(with respect to $(\delta_n)_n$\big) if for every $l\in \nn$, $1\leq k\leq l$, $(a_i)_{i=1}^k$  in $[-1,1]$ and
$(s_j)_{j=1}^k\in\text{{Plm}}_k(\ff\upharpoonright M)$
  with $s_1(1)\geq M(l)$, we have
  \begin{equation}\Bigg{|}\Big{\|}\sum_{j=1}^k a_j x_{s_j}\Big{\|}-\Big{\|}\sum_{j=1}^k a_j
  e_j\Big{\|}_* \Bigg{|}\leq\delta_l.\end{equation}
We will also say that $(x_s)_{s\in\ff}$ admits $(e_n)_{n}$ as an
$\ff$-spreading model
   if there exists  $M\in[\nn]^\infty$ such that  $(x_s)_{s\in\ff\upharpoonright M}$ generates  $(e_n)_{n}$ as an
  $\ff$-spreading model.

  Finally, for a subset $A$ of $X$, we will say that $(e_n)_{n}$ is
  an $\ff$-spreading model of $A$ if there exists an $\ff$-sequence $(x_s)_{s\in\ff}$ in $A$ which admits $(e_n)_{n}$ as
  an $\ff$-spreading model.
\end{defn}
The next remark is straightforward.
\begin{rem}\label{remark on the definition of spreading model}
Let $M\in[\nn]^\infty$ such that $(x_s)_{s\in\ff\upharpoonright
M}$ generates $(e_n)_{n}$ as an $\ff$-spreading model. Then the
following are satisfied.
  \begin{enumerate}
    \item[(i)] The sequence $(e_n)_{n}$ is spreading, i.e. for every $n\in\nn$, $k_1<\ldots<k_n$ in
$\nn$ and $a_1,\ldots,a_n\in\rr$ we have that $\|\sum_{j=1}^na_j
e_j\|_*=\|\sum_{j=1}^n a_j e_{k_j}\|_*$.
    \item[(ii)] For every $M'\in[M]^\infty$ we have that $(x_s)_{s\in\ff\upharpoonright M'}$ generates $(e_n)_{n}$ as an $\ff$-spreading model.
    \item[(iii)] For every  null sequence $(\delta'_n)_{n}$ of positive reals there exists $M'\in[M]^\infty$ such that
    $(x_s)_{s\in\ff\upharpoonright M'}$ generates $(e_n)_{n}$ as an $\ff$-spreading model with respect to $(\delta'_n)_{n}$.
  \end{enumerate}
\end{rem}
By Lemma \ref{B-Sp} we get  the following.
\begin{thm}
Let $\ff$ be a regular thin family and $X$ a Banach space. Then
every bounded $\ff$-sequence in $X$ admits an $\ff$-spreading
model. In particular for every bounded $\ff$-sequence
$(x_s)_{s\in\ff}$ in $X$ and every $N\in[\nn]^\infty$ there exists
$M\in[N]^\infty$ such that $(x_s)_{s\in\ff\upharpoonright M}$
generates $\ff$-spreading model.
\end{thm}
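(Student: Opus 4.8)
The plan is to read off the statement from Lemma~\ref{B-Sp}. Fix once and for all a null sequence $(\delta_n)_n$ of positive reals, and apply Lemma~\ref{B-Sp} to the bounded $\ff$-sequence $(x_s)_{s\in\ff}$ and to $N$, obtaining $M_0\in[N]^\infty$ for which the estimate \eqref{pot} holds. Since passing to an infinite subset of $M_0$ only strengthens \eqref{pot}, I would further shrink $M_0$, using Corollary~\ref{Galvin Pricley for regular thin}, to an $M\in[M_0]^\infty$ on which $\ff$ is very large; by the construction of infinite plegma paths described before Theorem~\ref{accessing everything with plegma path of length |s_0|}, $\ff\upharpoonright M$ then carries an infinite plegma path $(s_n)_n$, and (chaining the defining inequalities of plegma pairs along the path, as in the proof of Proposition~\ref{newq1}) every length-$k$ window $(s_n,s_{n+1},\ldots,s_{n+k-1})$ is a plegma $k$-tuple in $\ff\upharpoonright M$ whose first coordinate has minimum tending to $\infty$ with $n$. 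Thus for every $k\in\nn$ there exist sequences of plegma $k$-tuples in $\ff\upharpoonright M$ with $\min s_1^n\to\infty$.

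Next I would manufacture the seminormed space out of limits along $M$. For $k\in\nn$ and $(a_j)_{j=1}^k\in[-1,1]^k$, choose any sequence $\big((s_j^n)_{j=1}^k\big)_n$ of plegma $k$-tuples in $\ff\upharpoonright M$ with $\min s_1^n\to\infty$. Given $\ee>0$, pick $l\geq k$ with $\delta_l<\ee$; then $s_1^n(1)\geq M(l)$ for all large $n$, so \eqref{pot} shows $\big(\|\sum_{j=1}^k a_j x_{s_j^n}\|\big)_n$ is Cauchy, hence convergent, and interleaving two such sequences shows the limit is independent of the chosen plegma $k$-tuples. Extending by homogeneity, I define a functional $\|\cdot\|_*$ on $E:=c_{00}(\nn)$ with natural Hamel basis $(e_n)_n$ by
\[
\Big\|\sum_{j=1}^k a_j e_j\Big\|_* \;=\; \lim_n \Big\|\sum_{j=1}^k a_j x_{s_j^n}\Big\|.
\]
Absolute homogeneity and the triangle inequality hold for each $\|\sum_j a_j x_{s_j^n}\|$, and, evaluating the relevant finitely many vectors along a common sequence of plegma $k$-tuples (after rescaling their coefficients into $[-1,1]$), these properties pass to the limit; so $(E,\|\cdot\|_*)$ is an infinite-dimensional seminormed linear space (not necessarily a norm, cf.\ the remark following Lemma~\ref{B-Sp}).

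Finally I would verify the defining inequality of Definition~\ref{Definition of spreading model}. Fix $l\in\nn$, $1\leq k\leq l$, $(a_i)_{i=1}^k\in[-1,1]^k$ and a plegma $k$-tuple $(s_j)_{j=1}^k$ in $\ff\upharpoonright M$ with $s_1(1)\geq M(l)$. Choosing a sequence $\big((t_j^n)_{j=1}^k\big)_n$ of plegma $k$-tuples in $\ff\upharpoonright M$ with $\min t_1^n\to\infty$ (so $t_1^n(1)\geq M(l)$ for large $n$), \eqref{pot} applied with this $l$ gives
\[
\Bigg|\,\Big\|\sum_{j=1}^k a_j x_{s_j}\Big\| - \Big\|\sum_{j=1}^k a_j x_{t_j^n}\Big\|\,\Bigg| \;\leq\; \delta_l
\]
for all large $n$; letting $n\to\infty$ and using the definition of $\|\cdot\|_*$ yields $\big|\,\|\sum_{j=1}^k a_j x_{s_j}\| - \|\sum_{j=1}^k a_j e_j\|_*\,\big|\leq\delta_l$. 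Hence $(x_s)_{s\in\ff\upharpoonright M}$ generates $(e_n)_n$ as an $\ff$-spreading model with respect to $(\delta_n)_n$, which gives both assertions.

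The only points beyond bookkeeping are the existence, inside $\ff\upharpoonright M$, of plegma $k$-tuples with arbitrarily large minimum, and the independence of the limits above from the chosen tuples; both stem from the regular thin structure of $\ff$ through the Ramsey theorem for plegma families (Theorem~\ref{ramseyforplegma}) and the plegma-path analysis of Section~\ref{section admissibility}, already concentrated in Lemma~\ref{B-Sp}. With these in hand the argument is purely formal.
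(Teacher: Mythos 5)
Your overall plan coincides with the paper's: the paper derives the theorem directly from Lemma~\ref{B-Sp}, after the intervening discussion that explains how the limits along plegma tuples define the seminorm $\|\cdot\|_*$ on $c_{00}(\nn)$. Your construction of $\|\cdot\|_*$, the Cauchy argument, the interleaving to show independence of the choice of tuples, and the final verification against Definition~\ref{Definition of spreading model} all match the paper's intended reasoning and are sound.

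There is, however, one genuine error in the step where you produce plegma $k$-tuples in $\ff\upharpoonright M$ with first coordinate going to infinity. You claim that if $(s_n)_n$ is an infinite plegma path then every length-$k$ window $(s_n,\ldots,s_{n+k-1})$ is a plegma $k$-tuple, and you try to justify this by chaining plegma-pair inequalities along the path. This is false: a plegma path only requires \emph{consecutive} pairs to be plegma, whereas by Proposition~\ref{newq1}(ii) a plegma $k$-tuple requires \emph{all} pairs to be plegma, and plegmality of pairs is not transitive. For instance, in $\ff=[\nn]^2$ take $s_0=\{1,3\}$, $s_1=\{2,7\}$, $s_2=\{5,8\}$: both $(s_0,s_1)$ and $(s_1,s_2)$ are plegma pairs, so $(s_0,s_1,s_2)$ is a plegma path, but $(s_0,s_2)$ fails condition (ii) of Definition~\ref{defn plegma} since $s_2(1)=5>3=s_0(2)$; hence $(s_0,s_1,s_2)$ is not a plegma $3$-tuple. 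The fact you need is nevertheless true and easy to get directly from very largeness: given $k$ and $n$, write $M=\{m_1<m_2<\cdots\}$ and for $1\le j\le k$ let $s_j$ be the unique member of $\ff$ that is an initial segment of $\{m_{n+j},\,m_{n+k+j},\,m_{n+2k+j},\,\ldots\}$; a direct check against Definition~\ref{defn plegma} shows $(s_j)_{j=1}^k$ is a plegma $k$-tuple in $\ff\upharpoonright M$ with $\min s_1=m_{n+1}$, and letting $n\to\infty$ gives tuples with arbitrarily large minimum. With this replacement the rest of your argument goes through unchanged.
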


  \subsection{Spreading models of order $\xi$}
  In this subsection we show that Definition \ref{Definition of spreading
  model} is independent of the particular regular thin family
  $\ff$ and actually depends on the order of $\ff$. More precisely we have the following.
\begin{lem}\label{propxi}
  Let $\ff,\g$ be regular thin families with $o(\ff)\leq o(\g)$. Let $X$ be a
  Banach space and $(x_s)_{s\in\ff}$ be an
  $\ff$-sequence in $X$ which admits an
  $\ff$-spreading model $(e_n)_{n}$. Then there exists a $\g$-sequence
  $(w_t)_{t\in\g}$ with $\{w_t:t\in\g\}\subseteq\{x_s:s\in\ff\}$ and which admits  $(e_n)_{n}$ as a $\g$-spreading model.
\end{lem}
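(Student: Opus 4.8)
The plan is to transfer the $\ff$-spreading model structure along a plegma preserving map from a restriction of $\g$ into $\ff$, whose existence is guaranteed by Theorem \ref{plegma preserving maps}. First I would fix an $M\in[\nn]^\infty$ such that $(x_s)_{s\in\ff\upharpoonright M}$ generates $(e_n)_n$ as an $\ff$-spreading model, say with respect to a null sequence $(\delta_n)_n$. Applying Theorem \ref{plegma preserving maps} to the families $\ff,\g$ (note $o(\ff)\leq o(\g)$) and to the infinite set $M$, we obtain $N\in[\nn]^\infty$ and a plegma preserving map $\varphi:\g\upharpoonright N\to\ff\upharpoonright M$ with the extra property that $\min t\geq N(l)$ implies $\min\varphi(t)\geq M(l)$ for every $l\in\nn$ and $t\in\g\upharpoonright N$. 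I then define the $\g$-sequence $(w_t)_{t\in\g}$ by $w_t=x_{\varphi(t)}$ for $t\in\g\upharpoonright N$, and set $w_t$ to be an arbitrary element of $\{x_s:s\in\ff\}$ (say $w_t=x_{s_0}$ for some fixed $s_0\in\ff$) for $t\in\g\setminus(\g\upharpoonright N)$; this ensures $\{w_t:t\in\g\}\subseteq\{x_s:s\in\ff\}$.

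The key step is checking that $(w_t)_{t\in\g\upharpoonright N}$ generates $(e_n)_n$ as a $\g$-spreading model. Fix $l\in\nn$, $1\leq k\leq l$, scalars $(a_i)_{i=1}^k$ in $[-1,1]$, and a plegma $k$-tuple $(t_j)_{j=1}^k$ in $\g\upharpoonright N$ with $t_1(1)\geq N(l)$. By Lemma \ref{ququ}, since $\varphi$ is plegma preserving, $(\varphi(t_j))_{j=1}^k$ is a plegma $k$-tuple in $\ff\upharpoonright M$. Moreover $\min\varphi(t_1)\geq M(l)$ by the min-preserving property of $\varphi$, so the plegma $k$-tuple $(\varphi(t_j))_{j=1}^k$ satisfies exactly the hypothesis in the definition of an $\ff$-spreading model. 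Hence
\begin{equation*}
\Bigg|\Big\|\sum_{j=1}^k a_j w_{t_j}\Big\|-\Big\|\sum_{j=1}^k a_j e_j\Big\|_*\Bigg|
=\Bigg|\Big\|\sum_{j=1}^k a_j x_{\varphi(t_j)}\Big\|-\Big\|\sum_{j=1}^k a_j e_j\Big\|_*\Bigg|\leq\delta_l,
\end{equation*}
which is precisely the required inequality. Thus $(w_t)_{t\in\g\upharpoonright N}$ generates $(e_n)_n$ as a $\g$-spreading model, so $(w_t)_{t\in\g}$ admits $(e_n)_n$ as a $\g$-spreading model.

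The only real content here is making sure the plegma preserving map supplied by Theorem \ref{plegma preserving maps} simultaneously carries plegma $k$-tuples to plegma $k$-tuples (handled by Lemma \ref{ququ}) and respects the "min $\geq M(l)$" threshold condition appearing in Definition \ref{Definition of spreading model}; the second point is exactly the "moreover" clause of Theorem \ref{plegma preserving maps}, so no additional combinatorial work is needed. The extension of $(w_t)$ to all of $\g$ outside $\g\upharpoonright N$ is irrelevant to the spreading model (by Remark \ref{remark on the definition of spreading model}(ii), restricting to any infinite subset of $N$ is harmless), and only serves to make $(w_t)_{t\in\g}$ a genuine $\g$-sequence with range inside $\{x_s:s\in\ff\}$.
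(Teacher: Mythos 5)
Your proof is correct and follows the same route as the paper: apply Theorem \ref{plegma preserving maps} to get the plegma preserving map $\varphi:\g\upharpoonright N\to\ff\upharpoonright M$ with the min-threshold property, define $w_t=x_{\varphi(t)}$ (and arbitrarily off $\g\upharpoonright N$), and transport the $\ff$-spreading-model estimate along $\varphi$. The explicit appeal to Lemma \ref{ququ} is a small extra nicety the paper leaves implicit; otherwise the arguments are identical.
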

\begin{proof}
  Let $M\in[\nn]^\infty$ and $(\delta_n)\searrow0$ be such that $(x_s)_{s\in\ff\upharpoonright M}$ generates $(e_n)_n$ as an $\ff$-spreading model
  with respect to $(\delta_n)_n$.
  By Theorem \ref{plegma preserving maps} there exist $N\in[\nn]^\infty$ and
  a plegma preserving map
  $\varphi:\g\upharpoonright N\to\ff\upharpoonright M$ such that $\min\varphi(t)\geq M(l)$, for every
  $l\in\nn$ and $t\in\g\upharpoonright N$ with $\min t\geq N(l)$.
  For
  every $t\in\g\upharpoonright N$ let  $w_t=x_{\varphi(t)}$ and for every $t\in\g\setminus(\g\upharpoonright N)$
  let $w_t=x_{s_0}$ where $s_0$ is  an arbitrary element of $\ff$. We claim that $(w_t)_{t\in\g\upharpoonright N}$
  generates $(e_n)_{n}$ as a $\g$-spreading model with respect to $(\delta_n)_n$.

  Indeed, fix  $l\in\nn$, $1\leq k\leq l$, $(a_j)_{j=1}^k$ in
  $[0,1]$ and $(t_j)_{j=1}^k\in\g\upharpoonright N$ with $t_1(1)\geq
  N(l)$.  Let
  $s_j=\varphi(t_j)$, for all $1\leq j\leq k$. Then $(s_j)_{j=1}^k\in\text{Plm}_l(\ff\upharpoonright M)$
  and $s_1(1)\geq M(l)$. Therefore, since $(x_s)_{s\in\ff\upharpoonright M}$ generates $(e_n)_n$ as an $\ff$-spreading model
  with respect to $(\delta_n)_n$, we have
\[\Bigg{|}\Big{\|}\sum_{j=1}^k a_j
w_{t_j}\Big{\|}-\Big{\|}\sum_{j=1}^k a_j
  e_j\Big{\|}_* \Bigg{|} =\Bigg{|}\Big{\|}\sum_{j=1}^k a_j x_{s_j}\Big{\|}-\Big{\|}\sum_{j=1}^k a_j
  e_j\Big{\|}_* \Bigg{|}\leq\delta_l\]
  and the proof is complete.
\end{proof}

\begin{cor}\label{equalspr}
  Let $X$ be a Banach space, $A\subseteq X$ and $\ff,\g$ be regular thin families with $o(\ff)=o(\g)$.
  Then
  $(e_n)_{n}$ is an $\ff$-spreading model of $A$ iff $(e_n)_{n}$
  is a $\g$-spreading model of $A$.
\end{cor}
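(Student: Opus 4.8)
The plan is to deduce this directly from Lemma \ref{propxi}, using the fact that $o(\ff)=o(\g)$ gives \emph{both} inequalities $o(\ff)\le o(\g)$ and $o(\g)\le o(\ff)$, so the lemma can be applied in either direction. First I would assume that $(e_n)_{n}$ is an $\ff$-spreading model of $A$. By the definition this means there is an $\ff$-sequence $(x_s)_{s\in\ff}$ with $x_s\in A$ for every $s$, which admits $(e_n)_{n}$ as an $\ff$-spreading model. Since $o(\ff)\le o(\g)$, Lemma \ref{propxi} produces a $\g$-sequence $(w_t)_{t\in\g}$ which admits $(e_n)_{n}$ as a $\g$-spreading model and, crucially, satisfies $\{w_t:t\in\g\}\subseteq\{x_s:s\in\ff\}$. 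Because $\{x_s:s\in\ff\}\subseteq A$, the $\g$-sequence $(w_t)_{t\in\g}$ lies in $A$, and hence $(e_n)_{n}$ is a $\g$-spreading model of $A$.

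For the reverse implication I would simply interchange the roles of $\ff$ and $\g$: starting from a $\g$-sequence in $A$ admitting $(e_n)_{n}$ as a $\g$-spreading model and using $o(\g)\le o(\ff)$, Lemma \ref{propxi} yields an $\ff$-sequence in $A$ admitting $(e_n)_{n}$ as an $\ff$-spreading model. Combining the two directions gives the stated equivalence.

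There is essentially no genuine obstacle here; the whole point is that Lemma \ref{propxi} was deliberately phrased to keep track of the range of the new sequence ($\{w_t:t\in\g\}\subseteq\{x_s:s\in\ff\}$), which is exactly what lets one conclude membership in the \emph{same} set $A$ rather than in some larger set. The only thing to be careful about is to invoke the lemma once for each of the two inequalities $o(\ff)\le o(\g)$ and $o(\g)\le o(\ff)$, rather than trying to get both implications from a single application.
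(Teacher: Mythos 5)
Your proof is correct and is exactly the argument the paper intends (the corollary is stated without proof as an immediate consequence of Lemma \ref{propxi}). You correctly identify the key point: the lemma's conclusion $\{w_t:t\in\g\}\subseteq\{x_s:s\in\ff\}$ ensures the new sequence stays inside $A$, and applying the lemma once in each direction gives the equivalence.
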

The above permits us to give the following definition.
\begin{defn}
  Let $A$ be a subset of a Banach space $X$ and $\xi\geq 1$
  be a countable ordinal. We will say that $(e_n)_{n}$ is a $\xi$-spreading model of $A$ if there
  exists a regular thin family $\ff$ with $o(\ff)=\xi$ such that
  $(e_n)_{n}$ is an $\ff$-spreading model of $A$. The set of all $\xi$-spreading models of $A$ will be denoted by
  $\mathcal{SM}_\xi(A)$.
\end{defn}
 Notice that by Lemma \ref{propxi} we have \begin{equation}\mathcal{SM}_\zeta(A)\subseteq \mathcal{SM}_\xi(A),\end{equation} for every
$1\leq\zeta<\xi<\omega_1$.

The following  is an  extension of Example 1  in  \cite{AKT}. It
shows that for a given  $\xi<\omega_1$  and a regular thin family
$\g$ there exists a norm on $c_{00}(\g)$ such that setting
$A=\{e_s:s\in\g\}$ (where $(e_s)_{s\in\ff}$ is   the natural Hamel
basis of $c_{00}(\g)$), we have $\mathcal{SM}_\zeta(A)\subsetneq
\mathcal{SM}_\xi(A)$, for every $\zeta<\xi$.\\

\begin{examp}  Let  $1\leq \xi<\omega_1$, $\g$ be a regular thin family of
order $\xi$ and  $(e_s)_{s\in\g}$ be  the natural Hamel basis of
$c_{00}(\g)$. Let $(E,\|\cdot\|)$ be a Banach space with a
normalized spreading and $1$-unconditional
 basis $(e_n)_{n}$ which in addition is not
equivalent to the usual basis of $c_0$. Let $X_\g$ be the completion
of $c_{00}(\g)$ under the norm $\|\cdot\|_\ff$ defined by
\begin{equation}\label{nut}\|x\|_\g=\sup\Big{\{}\Big\|\sum_{i=1}^la_{t_i}e_i\Big\|:l\in\nn,(t_i)_{i=1}^l\in\text{{Plm}}_l(\g)\text{
and }l\leq t_1(1)\Big{\}},\end{equation}
 for every
$x=\sum_{t\in\g} a_t e_t\in c_{00}(\g)$.

Let $A=\{e_t:t\in\g\}$. It is easy to see that $(e_t)_{t\in\g}$
generates $(e_n)_n$ as an $\g$-spreading model, i.e. $(e_n)_{n}$
belongs to $\mathcal{SM}_\xi(A)$. Let $\zeta<\xi$. We claim that for
every  $(e'_n)_{n}\in\mathcal{SM}_\zeta(A)$ either $(e'_n)_n$ is
generated by a constant $\ff$-sequence with $o(\ff)=\zeta$ or it is
isometric to the usual basis of $c_0$. Thus
$(e_n)_n\notin\mathcal{SM}_\zeta(A)$.

Indeed, let  $(e'_n)_{n}\in\mathcal{SM}_\zeta(A)$. Then there exists
a regular thin family $\ff$ of order $\zeta$, an $\ff$-sequence
$(x_s)_{s\in\ff}$ in $A$ and $M\in[\nn]^\infty$ such that
$(x_s)_{s\in\ff\upharpoonright M}$ generates $(e'_n)_{n}$ as an
$\ff$-spreading model. Since $\{x_s\}_{s\in\ff}\subseteq  A$, we may
define $\varphi:\ff\upharpoonright M\to\g$ by choosing for each
$s\in\ff\upharpoonright M$ an element $\varphi(s)\in\mathcal{\g}$
satisfying $e_{\varphi(s)}=x_s$.

 Assume for the following that
$(e'_n)_{n}\in\mathcal{SM}_\zeta(A)$  is not generated by a
constant $\ff$-sequence with $o(\ff)=\zeta$. By part (ii) of
Remark \ref{remark on the definition of spreading model} we have
that $\varphi$ is hereditarily nonconstant and  by Lemma
\ref{lemma making a hereditary nonconstant function, nonconstant
on plegma pairs} there exists $N\in [M]^\infty$ such that for
every plegma pair $(s_1,s_2)$ in $\ff\upharpoonright N$,
$\varphi(s_1)\neq\varphi(s_2)$. Moreover  since $o(\ff)<o(\g)$ by
Theorem \ref{non plegma preserving maps} we have that  there
exists $L\in[N]^\infty$ such that for every plegma pair
$(s_1,s_2)$ in $\g\upharpoonright L$ neither
$(\varphi(s_1),\varphi(s_2))$, nor $(\varphi(s_2),\varphi(s_1))$
is a plegma pair. Therefore, by part (i) of Proposition
\ref{newq1}, we conclude that for every $1\leq k\leq l$,
$(s_j)_{j=1}^k\in\text{{Plm}}_k(\ff\upharpoonright L)$ and
$(t_i)_{i=1}^l\in\text{{Plm}}_l(\g)$, we must have
\begin{equation}\label{uy}\big|\big\{j\in\{1,\ldots,k\}:\varphi(s_j)\in\{t_i:1\leq i\leq
l\}\big\}\big|\leq1.\end{equation} Hence, for every  $k\in\nn$,
$a_1,\ldots,a_k\in\rr$ and
$(s_j)_{j=1}^k\in\text{{Plm}}_k(\ff\upharpoonright L)$, we have
\begin{equation}\Big\|\sum_{j=1}^ka_jx_{s_j}\Big\|_\g=\Big\|\sum_{j=1}^ka_je_{\varphi(s_j)}\Big\|_\g\stackrel{(\ref{nut}),(\ref{uy})}{=}\max_{1\leq
j\leq k}|a_j|,\end{equation} i.e.  $(e'_n)_{n}$ is isometric to
the usual basis of $c_0$.\end{examp}

 A natural question
arising from the above is the following.

\bigskip

\noindent\textit{\textbf{Question.} Let $X$ be a separable Banach
space. Is it true that there exists a countable ordinal $\xi$ such
that $\mathcal{SM}_\zeta(X)=\mathcal{SM}_\xi(X)$ for every
$\zeta>\xi$}?

\bigskip

The above question can be also stated in an isomorphic version, i.e.
whether every sequence in $\mathcal{SM}_\zeta(X)$ is equivalent to
some sequence in $\mathcal{SM}_\xi(X)$  and vice versa.

\begin{rem} In a forthcoming paper we will  provide examples establishing the hierarchy
of the higher order spreading models and also illustrating the
boundaries of the theory. Specifically we will show the following.
\begin{enumerate}
\item[(1)] For every countable limit ordinal $\xi$ there exist a
Banach space $X$ such that $\mathcal{SM}_\xi(X)$ properly includes
up to equivalence $\cup_{\zeta<\xi}\mathcal{SM}_\zeta(X)$.
\item[(2)]  There exist a  Banach space $X$
such that for every $\xi<\omega_1$  and every
$(e_n)_{n}\in\mathcal{SM}_\xi(X)$, the space $E$ generated by
$(e_n)_n$ does not contain any isomorphic copy of $c_0$ or
$\ell^p$ for all $1\leq p<\infty$.
\end{enumerate}
The above results require a deeper study of the structure of
$\ff$-sequences generating $\ell_1$-spreading models (see also
\cite{AKT} for the finite order case).
\end{rem}

\section{$\ff$-sequences in topological spaces}
Let $(X,\ttt)$ be a topological space and $\ff$ be a regular thin
family.  As we have already defined in the previous section, an
$\ff$-sequence in $X$ is any map of the form $\varphi:\ff\to X$ and
generally an $\ff$-subsequence in $X$ is any map of the form
$\varphi:\ff\upharpoonright M\to X$. In this section we will study
the topological properties of $\ff$-sequences. The particular case
where $\ff= [\nn]^k$, $k\in\nn$ had been studied in \cite{AKT}.

\subsection{Convergence of $\ff$-sequences} We introduce the
following natural definition of convergence of $\ff$-sequences.
\begin{defn}\label{defn convergence of f-sequences}
  Let $(X,\ttt)$ be a topological space, $\ff$ a regular thin family, $M\in[\nn]^\infty$, $x_0\in X$ and
  $(x_s)_{s\in\ff}$ an $\ff$-sequence in $X$. We will say that the
  $\ff$-subsequence $(x_s)_{s\in\ff\upharpoonright M}$ converges
  to $x_0$ if for every $U\in\ttt$ with $x_0\in U$ there exists
  $m\in \nn$ such that for every $s\in \ff\upharpoonright M$ with
  $\min s\geq M(m)$ we have that $x_s\in U$.
\end{defn}
 It is immediate that if an $\ff$-subsequence
  $(x_s)_{s\in\ff\upharpoonright M}$ in a topological space $X$ is
  convergent to some $x_0$, then every further $\ff$-subsequence
  is also convergent to $x_0$.
  Also notice that if
  $o(\ff)\geq 2$ then  the convergence of
  $(x_s)_{s\in\ff\upharpoonright M}$ does not in general imply that $\{x_s:s\in\ff\upharpoonright M\}$
   is a relatively compact subset  of $X$. For instance, let
  $(x_s)_{s\in [\nn]^2}$ be the $[\nn]^2$-sequence in $c_0$ defined by  $x_s=\sum_{i=s(1)}^{s(2)}e_i$,
   where $(e_i)_i$ is the usual basis of $c_0$.
  According to Definition  \ref{defn convergence of f-sequences} the
  $[\nn]^2$-sequence
  $(x_s)_{s\in [\nn]^2}$ weakly converges to zero   but
  $\overline{\{x_s :s\in [\nn]^2\}}^{w}=\{x_s: s \in [\nn]^2\}\cup \{0\}$
  which is not a weakly compact subset of  $c_0$.

\begin{prop}\label{arxi metaforas gia ff sequences}
  Let $(X,\ttt_X)$, $(Y,\ttt_Y)$ be two topological spaces and
  $f:Y\to X$ be a continuous map. Let $\ff$ be a regular thin
  family, $M\in[\nn]^\infty$ and $(y_s)_{s\in\ff}$ an
  $\ff$-sequence in $Y$. Suppose that the $\ff$-subsequence $(y_s)_{s\in\ff\upharpoonright
  M}$ is convergent to some $y\in Y$. Then the $\ff$-subsequence $(f(y_s))_{s\in\ff\upharpoonright
  M}$ is convergent to $f(y)$.
\end{prop}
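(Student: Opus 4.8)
The plan is to unwind the definition of convergence of $\ff$-sequences (Definition \ref{defn convergence of f-sequences}) and to feed it through the continuity of $f$ in exactly the same way one shows that continuous maps preserve convergence of ordinary sequences. No combinatorial input about regular thin families is needed here beyond the bookkeeping of the index $M(m)$.

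First I would fix an arbitrary open set $U\in\ttt_X$ with $f(y)\in U$. Since $f$ is continuous, the set $V=f^{-1}(U)$ is an open subset of $Y$ containing $y$. Applying the hypothesis that the $\ff$-subsequence $(y_s)_{s\in\ff\upharpoonright M}$ converges to $y$ to this open neighborhood $V$, Definition \ref{defn convergence of f-sequences} provides some $m\in\nn$ such that $y_s\in V$ for every $s\in\ff\upharpoonright M$ with $\min s\geq M(m)$.

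It then remains only to note that $y_s\in V=f^{-1}(U)$ is the same as $f(y_s)\in U$. Thus, for the very same $m$, we have $f(y_s)\in U$ for every $s\in\ff\upharpoonright M$ with $\min s\geq M(m)$, which is precisely the assertion that $(f(y_s))_{s\in\ff\upharpoonright M}$ converges to $f(y)$ in the sense of Definition \ref{defn convergence of f-sequences}. As $U$ was an arbitrary open neighborhood of $f(y)$, this completes the argument. There is no genuine obstacle: the only point requiring a moment's care is to keep the tail condition $\min s\geq M(m)$ and the restricted family $\ff\upharpoonright M$ aligned with the precise phrasing of the definition of convergence.
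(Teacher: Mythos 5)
Your proof is correct and follows essentially the same route as the paper's: pull back a neighborhood of $f(y)$ through $f$, apply Definition \ref{defn convergence of f-sequences} to get the tail index $m$, and push forward. The only cosmetic difference is that you use the preimage characterization $V=f^{-1}(U)$ of continuity whereas the paper phrases it via a neighborhood $U_Y$ with $f[U_Y]\subseteq U_X$; these are interchangeable.
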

\begin{proof}
  Let $U_X\in\ttt_X$, with $f(y)\in U_X$. By the continuity of $f$
  there exists $U_Y\in\ttt_Y$ such that $y\in U_Y$ and $f[U_Y]\subseteq
  U_X$. Since $(y_s)_{s\in\ff\upharpoonright
  M}$ is convergent to $y$, there exists $m\in\nn$ such that for every $s\in \ff\upharpoonright M$ with
  $\min s\geq M(m)$ we have that $y_s\in U_Y$ and therefore $f(y_s)\in f[U_Y]\subseteq
  U_X$.
\end{proof}

For the rest of this section we shall restrict to $\ff$-sequences in
metric spaces.
\begin{defn}\label{defn Chauchy of f-sequences}
  Let $(X,\rho)$ be a metric space, $\ff$ a regular thin family, $M\in[\nn]^\infty$ and
  $(x_s)_{s\in\ff}$ an $\ff$-sequence in $(X,\rho)$. We will say that the
  $\ff$-subsequence $(x_s)_{s\in\ff\upharpoonright M}$ is Cauchy
  if for every $\ee>0$ there exists $m\in\nn$ such that for every
  $s_1,s_2\in\ff\upharpoonright M$ with $\min s_1,\min s_2\geq
  M(m)$, we have that $\rho(x_{s_1},x_{s_2})<\ee$.
\end{defn}

\begin{prop}\label{remark on ff Cauchy-convergent sequences and their spreading model}
  Let $M\in[\nn]^\infty$,  $\ff$ be a regular thin family and  $(x_s)_{s\in\ff}$ be an $\ff$-sequence
  in a complete metric space $(X,\rho)$. Then  the $\ff$-subsequence $(x_s)_{s\in\ff\upharpoonright M}$ is Cauchy
  if and only if $(x_s)_{s\in\ff\upharpoonright M}$ is  convergent.
\end{prop}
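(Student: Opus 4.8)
The plan is to prove the two implications separately. The direction ``convergent $\Rightarrow$ Cauchy'' holds in any metric space and is immediate: assuming $(x_s)_{s\in\ff\upharpoonright M}$ converges to some $x_0\in X$, given $\ee>0$ apply Definition \ref{defn convergence of f-sequences} to the ball $B(x_0,\ee/2)$ to obtain $m$ such that $\rho(x_s,x_0)<\ee/2$ for all $s\in\ff\upharpoonright M$ with $\min s\geq M(m)$; for any two such $s_1,s_2$ the triangle inequality gives $\rho(x_{s_1},x_{s_2})<\ee$, which is exactly the condition of Definition \ref{defn Chauchy of f-sequences}.

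For the converse I would first extract an ordinary Cauchy sequence in $X$ and invoke completeness. Concretely, for each $n\in\nn$ pick $m_n$, which we may take strictly increasing in $n$, witnessing the $1/n$-Cauchy condition; then choose $t_n\in\ff\upharpoonright M$ with $\min t_n\geq M(m_n)$. The existence of such $t_n$ is the one point that is not entirely formal: it uses that $\ff$ is large, i.e. $\ff\upharpoonright N\neq\varnothing$ for every infinite $N$ (a consequence of $\widehat\ff$ being spreading, as recorded in the proof of Corollary \ref{Galvin Pricley for regular thin}), applied to $N=\{k\in M:k\geq M(m_n)\}$; any $s$ in this restriction satisfies $s\subseteq M$ and $\min s\geq M(m_n)$. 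By the Cauchy estimate, $(x_{t_n})_n$ is then a Cauchy sequence in $(X,\rho)$ in the usual sense (for $n'\geq n$ both $\min t_n$ and $\min t_{n'}$ exceed $M(m_n)$), so by completeness it converges to some $x_0\in X$.

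Finally I would verify that the full $\ff$-subsequence converges to this $x_0$. Given $\ee>0$, fix $n$ with $2/n<\ee$ and let $s\in\ff\upharpoonright M$ with $\min s\geq M(m_n)$; choosing $n'\geq n$ large enough that $\rho(x_{t_{n'}},x_0)<1/n$, and noting $\min t_{n'}\geq M(m_{n'})\geq M(m_n)$, the level-$n$ Cauchy estimate gives $\rho(x_s,x_{t_{n'}})<1/n$, whence $\rho(x_s,x_0)<2/n<\ee$; this is Definition \ref{defn convergence of f-sequences}. I do not expect any genuine obstacle in this argument — the only subtlety is ensuring the auxiliary elements $t_n$ of arbitrarily large minimum exist, which is why the largeness of a regular thin family is invoked.
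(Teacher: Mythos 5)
Your argument is correct and follows the same route as the paper's proof: extract an ordinary Cauchy sequence in $X$ by choosing elements of $\ff\upharpoonright M$ with arbitrarily large minimum, apply completeness, then use the $\ff$-Cauchy estimate and the triangle inequality to upgrade sequential convergence to convergence of the whole $\ff$-subsequence. The only difference is that you explicitly justify the existence of the auxiliary $t_n$ via largeness of $\ff$, a point the paper leaves implicit.
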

\begin{proof}
  If the $\ff$-subsequence $(x_s)_{s\in\ff\upharpoonright M}$ is  convergent, then it is straightforward that $(x_s)_{s\in\ff\upharpoonright M}$
  is Cauchy. Concerning the converse we have the following. Suppose that the $\ff$-subsequence $(x_s)_{s\in\ff\upharpoonright M}$
  is  Cauchy. Let $(s_n)_{n}$ be a sequence in $\ff\upharpoonright M$ such that $\min s_n\to \infty$.
  It is immediate that $(x_{s_n})_{n}$ forms a  Cauchy sequence in $X$. Since $(X, \rho)$ is complete, there exists $x\in  X$ such that
  the sequence $(x_{s_n})_{n}$ converges to $x$. We will show that the $\ff$-subsequence $(x_s)_{s\in\ff\upharpoonright M}$ converges to $x$.
  Indeed, let $\varepsilon>0$. Since $(x_s)_{s\in\ff\upharpoonright M}$
  is Cauchy, there exists $k_0\in\nn$ such that for every $t_1,t_2\in\ff \upharpoonright M$ with $\min t_1,\min t_2\geq M(k_0)$ we have that
  $\rho(x_{t_1},x_{t_2})<\frac{\varepsilon}{2}$. Since the sequence $(x_{s_n})_{n}$ converges to $x$ and $\min s_n\to \infty$,
  there exists $n_0\in\nn$ such that $\min s_{n_0}\geq M(k_0)$ and $\rho(x,x_{s_{n_0}})<\varepsilon/2$. Hence for every $s\in\ff\upharpoonright M$
  such that $\min s\geq M(k_0)$, we have that
  $\rho(x,x_s)\leq \rho(x, x_{s_{n_0}})+\rho(x_{s_{n_0}},
  x_s)<\varepsilon$
  and the proof is completed.
\end{proof}

\begin{lem}\label{Lemma epsilon near admissble yield Cauchy}
  Let $M\in[\nn]^\infty$,  $\ff$ be a regular thin family and $(x_s)_{s\in\ff}$ an $\ff$-sequence
  in a metric space $(X, \rho)$. Suppose that for every $\varepsilon>0$ and  $L\in[M]^\infty$ there exists a plegma pair
  $(s_1,s_2)$ in $\ff\upharpoonright L$ such that  $\rho(x_{s_1}, x_{s_2})<\varepsilon$. Then the  $\ff$-subsequence
   $(x_s)_{s\in\ff\upharpoonright M}$ has a  further Cauchy subsequence.
\end{lem}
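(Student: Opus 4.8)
The plan is to first use Ramsey theory to force \emph{all} plegma pairs to be close, then to transport closeness between arbitrary members of $\ff\upharpoonright N$ along plegma paths; the point on which everything hinges is that one must set up the construction with a \emph{summable} error sequence, not merely a null one.

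\smallskip

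\noindent\emph{Step 1 (making all plegma pairs close).} I would construct a decreasing sequence $M=L_0\supseteq L_1\supseteq L_2\supseteq\cdots$ of infinite subsets of $\nn$ such that, for each $n\geq 1$, $\ff$ is very large in $L_n$ and every plegma pair $(s_1,s_2)$ in $\ff\upharpoonright L_n$ satisfies $\rho(x_{s_1},x_{s_2})<2^{-n}$. Indeed, given $L_{n-1}$, partition $\text{Plm}_2(\ff\upharpoonright L_{n-1})$ into the pairs with $\rho(x_{s_1},x_{s_2})<2^{-n}$ and those with $\rho(x_{s_1},x_{s_2})\geq 2^{-n}$; by Theorem \ref{ramseyforplegma} there is an infinite $L\subseteq L_{n-1}$ homogeneous for this partition, and the hypothesis (applied with $\ee=2^{-n}$ to $L\subseteq M$) rules out the second class, so all plegma pairs in $\ff\upharpoonright L$ are $2^{-n}$-close; finally shrink $L$ by Corollary \ref{Galvin Pricley for regular thin} to obtain $L_n$ in which $\ff$ is very large.

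\smallskip

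\noindent\emph{Step 2 (diagonalisation and a buffer).} Choose $\widetilde N=\{\nu_1<\nu_2<\cdots\}$ with $\nu_j\in L_j$ for all $j$ and, after a further shrinking via Corollary \ref{Galvin Pricley for regular thin}, with $\ff$ very large in $\widetilde N$; this keeps $\{\widetilde N(i):i\geq j\}\subseteq L_j$ for every $j$, so any plegma pair $(s_1,s_2)$ in $\ff\upharpoonright\widetilde N$ with $\min s_1\geq\widetilde N(j)$ actually lies in $\ff\upharpoonright L_j$ and hence is $2^{-j}$-close. Put $N=\{\widetilde N(2i):i\in\nn\}$; then between consecutive elements of $N$ there is always an element of $\widetilde N$, so $\ff\upharpoonright N\subseteq\ff\upharpoonright\upharpoonright\widetilde N$. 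I claim $(x_s)_{s\in\ff\upharpoonright N}$ is Cauchy. Fix $\ee>0$, choose $m$ with $2^{-2m+2}<\ee$, and let $s_1,s_2\in\ff\upharpoonright N$ with $\min s_1,\min s_2\geq N(m)=\widetilde N(2m)$. Since $\ff$ is very large in $\widetilde N$, the family $\ff\upharpoonright\upharpoonright\widetilde N$ contains members of arbitrarily large minimum, so we may pick $t\in\ff\upharpoonright\upharpoonright\widetilde N$ with $s_1<t$ and $s_2<t$. By Theorem \ref{accessing everything with plegma path of length |s_0|}, applied in $\ff\upharpoonright\upharpoonright\widetilde N$, there is a plegma path $s_1=u_0,u_1,\dots,u_k=t$ in $\ff\upharpoonright\upharpoonright\widetilde N$ with $k=|s_1|$; the minima $\min u_0<\min u_1<\cdots$ are strictly increasing elements of $\widetilde N$, so $\min u_\ell\geq\widetilde N(2m+\ell)$, whence $(u_\ell,u_{\ell+1})$ is a plegma pair in $\ff\upharpoonright\widetilde N$ obeying the $2^{-(2m+\ell)}$-bound. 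Summing the geometric series,
\[
\rho(x_{s_1},x_t)\leq\sum_{\ell=0}^{k-1}\rho(x_{u_\ell},x_{u_{\ell+1}})<\sum_{\ell=0}^{\infty}2^{-(2m+\ell)}=2^{-2m+1},
\]
and symmetrically $\rho(x_{s_2},x_t)<2^{-2m+1}$, so $\rho(x_{s_1},x_{s_2})<2^{-2m+2}<\ee$, which is exactly the Cauchy condition for $(x_s)_{s\in\ff\upharpoonright N}$.

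\smallskip

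The main obstacle is the bookkeeping of Step 2: one must arrange \emph{simultaneously} that $\ff$ is very large in $\widetilde N$ (so that Theorem \ref{accessing everything with plegma path of length |s_0|} applies and far-out targets $t$ exist), that the filtration by the $L_j$ is respected along $\widetilde N$ (so the $2^{-j}$-bounds are inherited), and that \emph{every} member of $\ff\upharpoonright N$ — including those whose consecutive entries are consecutive in $N$, and which therefore lie in no plegma pair inside $\ff\upharpoonright N$ — belongs to $\ff\upharpoonright\upharpoonright\widetilde N$, the only place where the path theorem is available; passing from $\widetilde N$ to the thinned set $N$ is precisely what secures this last point. The second, conceptual, point to stress is the role of summability: if one only made the plegma pairs in $\ff\upharpoonright L_n$ be $1/n$-close, the estimate along a path of length $|s_1|$ would be of order $\log|s_1|$, which is unbounded as $\min s_1\to\infty$, and the argument would break down for families of infinite order.
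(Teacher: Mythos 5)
Your proof is correct and rests on the same pillars as the paper's: apply Theorem \ref{ramseyforplegma} together with the hypothesis to build a decreasing filtration $(L_n)_n$ along which every plegma pair has $\rho$-distance below a summable error sequence, diagonalise, pass to every other term to manufacture ``buffers'' guaranteeing membership in $\ff\upharpoonright\upharpoonright\widetilde N$, and then transport closeness along plegma paths supplied by Theorem \ref{accessing everything with plegma path of length |s_0|}. The one genuine structural deviation is the placement of the anchor. The paper fixes a single reference set $s_0$ depending only on $\ee$ (the unique $\ff$-initial segment of a suitable tail of $L$), observes that $s_0<s_1,s_2$ once the minima of $s_1,s_2$ are large enough, and connects $s_0$ to each $s_i$ by a plegma path of the \emph{fixed} length $|s_0|$, bounding the total error by the tail sum $\sum_{n\geq n_0}\varepsilon_n$. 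You instead choose, for each pair $(s_1,s_2)$, an auxiliary $t$ far beyond both and connect $s_1$ and $s_2$ to $t$ by paths of the \emph{variable} lengths $|s_1|$, $|s_2|$, then control the total error by the geometric tail $\sum_{\ell\geq 0}2^{-(2m+\ell)}$, which is uniform in the path length. Both variants exploit summability in an identical way, so the proofs are the same in substance; the paper's fixed anchor is marginally tidier (a single $s_0$ handles every pair), while your post-anchor version makes it especially transparent why a merely null error sequence would not suffice for families of infinite order, a point you are right to flag. Your additional shrinkings via Corollary \ref{Galvin Pricley for regular thin} to ensure that $\ff$ is very large along $\widetilde N$ are necessary to invoke Theorem \ref{accessing everything with plegma path of length |s_0|} and are a careful touch that the paper leaves implicit.
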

\begin{proof}
  Let $(\varepsilon_n)_{n}$ be a sequence of positive reals such that $\sum_{n=1}^\infty \varepsilon_n<\infty$. Using Theorem
  \ref{ramseyforplegma}, we inductively
  construct a decreasing sequence $(L_n)_{n}$ in $[M]^\infty$, such that for every $n\in\nn$ and for every plegma
  pair $(s_1,s_2)$ in $\ff\upharpoonright L_n$ we have that $\rho(x_{s_1}, x_{s_2})<\varepsilon_n$. Let $L'$ be
  a diagonalization of $(L_n)_{n}$, i.e. $L'(n)\in L_n$ for all $n\in\nn$, and $L=\{L'(2n):n\in\nn\}$.

  We claim that the $\ff$-subsequence
  $(x_s)_{s\in\ff\upharpoonright L}$ is Cauchy. Indeed let $\varepsilon>0$. There exists $n_0\in \nn$ such that
  $\sum_{n=n_0}^\infty\varepsilon_n<\frac{\varepsilon}{2}$. Let $s_0$ be the unique initial segment of $\{L(n):n\geq n_0\}$ in $\ff$.
  If $\max s_0=L(k)$ then we set $k_0=k+1$. Then for every $s_1,s_2\in\ff\upharpoonright L$ with $\min s_1, \min s_2\geq L(k_0)$,
  by Theorem \ref{accessing everything with plegma path of length |s_0|} there exist plegma paths $(s_j^1)_{j=1}^{|s_0|},
  (s_j^2)_{j=1}^{|s_0|}$ in $\ff\upharpoonright L'$ from $s_0$ to $s_1,s_2$ respectively. Then for $i=1,2$ we have that
  \[\rho(x_{s_0}, x_{s_i})\leq\sum_{j=0}^{|s_0|-1}\rho(x_{s_j^i},x_{s_{j+1}^i})<\sum_{j=0}^{|s_0|-1}\varepsilon_{n_0+j}<\frac{\varepsilon}{2}\]
  which implies that $\rho(x_{s_1}, x_{s_2})<\varepsilon$.
\end{proof}
\begin{defn} Let $\varepsilon>0$, $L\in[\nn]^\infty$,  $\ff$ be a regular thin family and $(x_s)_{s\in\ff}$ an $\ff$-sequence
   in a metric space $X$. We will say that the subsequence $(x_s)_{s\in\ff\upharpoonright L}$ is plegma $\varepsilon$-separated
    if for every plegma pair $(s_1,s_2)$ in $\ff\upharpoonright L$,  $\rho(x_{s_1}, x_{s_2})>\varepsilon$.
\end{defn}
The following proposition is actually a restatement of Lemma
\ref{Lemma epsilon near admissble yield Cauchy}.
\begin{prop}\label{Proposition equivalence of non Cauchy and separability}
  Let $M\in[\nn]^\infty$,  $\ff$ be a regular thin family and $(x_s)_{s\in\ff}$ an $\ff$-sequence
  in a metric space $X$. Then the  following are equivalent.
  \begin{enumerate}
    \item [(i)] The $\ff$-subsequence $(x_s)_{s\in\ff\upharpoonright M}$ has no  further Cauchy subsequence.
    \item [(ii)] For every $N\in [M]^\infty$ there exist $\varepsilon>0$ and $L\in [N]^\infty$  such that the
  subsequence $(x_s)_{s\in\ff\upharpoonright L}$ is plegma $\varepsilon$-separated.
  \end{enumerate}
\end{prop}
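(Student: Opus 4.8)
The plan is to obtain both implications essentially for free from Lemma \ref{Lemma epsilon near admissble yield Cauchy} (and its contrapositive), after recording two trivial hereditary facts: that property (i) passes to every further $\ff$-subsequence (since $[M']^\infty\subseteq[M]^\infty$ for $M'\in[M]^\infty$), and that a further $\ff$-subsequence of a Cauchy $\ff$-subsequence is again Cauchy (a routine index shift: given the threshold $M(m)$ for $(x_s)_{s\in\ff\upharpoonright M}$ and $M'\in[M]^\infty$, choose $m'$ with $M'(m')\geq M(m)$).

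For (i)$\Rightarrow$(ii), I would fix $N\in[M]^\infty$. Every further $\ff$-subsequence of $(x_s)_{s\in\ff\upharpoonright N}$ is a further $\ff$-subsequence of $(x_s)_{s\in\ff\upharpoonright M}$, so by (i) the $\ff$-subsequence $(x_s)_{s\in\ff\upharpoonright N}$ has no further Cauchy subsequence. Hence the hypothesis of Lemma \ref{Lemma epsilon near admissble yield Cauchy}, applied with $N$ in place of $M$, must fail; that is, there exist $\varepsilon>0$ and $L\in[N]^\infty$ such that $\rho(x_{s_1},x_{s_2})\geq\varepsilon$ for every plegma pair $(s_1,s_2)$ in $\ff\upharpoonright L$. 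In particular $(x_s)_{s\in\ff\upharpoonright L}$ is plegma $\varepsilon/2$-separated, giving (ii).

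For (ii)$\Rightarrow$(i), I would argue by contradiction. Assume (ii) holds and that $(x_s)_{s\in\ff\upharpoonright M}$ admits a further Cauchy subsequence $(x_s)_{s\in\ff\upharpoonright N}$ with $N\in[M]^\infty$. Apply (ii) to this $N$ to get $\varepsilon>0$ and $L\in[N]^\infty$ with $(x_s)_{s\in\ff\upharpoonright L}$ plegma $\varepsilon$-separated. Since $(x_s)_{s\in\ff\upharpoonright L}$ is a further subsequence of the Cauchy $\ff$-subsequence $(x_s)_{s\in\ff\upharpoonright N}$, it is itself Cauchy, so there is $m\in\nn$ with $\rho(x_{s_1},x_{s_2})<\varepsilon$ whenever $s_1,s_2\in\ff\upharpoonright L$ satisfy $\min s_1,\min s_2\geq L(m)$. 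It then suffices to produce a single plegma pair in $\ff\upharpoonright L$ with both minima $\geq L(m)$: setting $L_0=\{l\in L:l\geq L(m)\}$, use Corollary \ref{Galvin Pricley for regular thin} to pass to $L_1\in[L_0]^\infty$ with $\ff\upharpoonright L_1$ very large in $L_1$, and take two consecutive terms of an infinite plegma path in $\ff\upharpoonright\upharpoonright L_1\subseteq\ff\upharpoonright L$ (such a path exists because $\ff\upharpoonright L_1$ is very large in $L_1$). This plegma pair is simultaneously at distance $>\varepsilon$ (by $\varepsilon$-separation) and at distance $<\varepsilon$ (by the Cauchy estimate), a contradiction; hence no such $N$ exists, which is (i).

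There is no genuine obstacle here; the statement is indeed, as indicated, a repackaging of Lemma \ref{Lemma epsilon near admissble yield Cauchy}. The only points requiring a little care are the two hereditary remarks above and the bookkeeping of the index shifts $M(m)\mapsto L(m)$, together with the small slack $\varepsilon$ versus $\varepsilon/2$ needed to turn the ``$\geq\varepsilon$'' produced by the failure of the Lemma's hypothesis into the strict inequality in the definition of plegma $\varepsilon$-separated.
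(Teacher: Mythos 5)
Your proof is correct and follows essentially the same route as the paper: both directions are read off from Lemma \ref{Lemma epsilon near admissble yield Cauchy} (one direction uses the lemma itself, the other its contrapositive), together with the observation that the non‑Cauchy and Cauchy properties are hereditary under passing to further $\ff$-subsequences. The only difference is that in the (ii)$\Rightarrow$(i) step you spell out why a Cauchy $\ff$-subsequence cannot be plegma $\varepsilon$-separated by explicitly producing a plegma pair with large minima via Corollary \ref{Galvin Pricley for regular thin}, a point the paper leaves implicit.
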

\begin{proof}
  (i)$\Rightarrow$(ii): Assume that (ii) is not true. Then there is
  $N\in [M]^\infty$ such that for every $\varepsilon>0$ and  $L\in[N]^\infty$ there exists a plegma pair
  $(s_1,s_2)$ in $\ff\upharpoonright L$ such that  $\rho(x_{s_1},
  x_{s_2})<\varepsilon$. By Lemma \ref{Lemma epsilon near admissble yield
  Cauchy} the $\ff$-subsequence
   $(x_s)_{s\in\ff\upharpoonright N}$ has a  further Cauchy subsequence.
    Since $N\subseteq M$ this means that $(x_s)_{s\in\ff\upharpoonright M}$ has a  further Cauchy subsequence
    which is   a contradiction.

(ii)$\Rightarrow $(i): Suppose that (i) does not hold. Then there
exists $N\in [M]^\infty$ such that $(x_s)_{s\in\ff\upharpoonright
N}$ is Cauchy. Let $\varepsilon>0$ and $L\in [N]^\infty$. Then
$(x_s)_{s\in\ff\upharpoonright L}$ is also Cauchy and therefore
$(x_s)_{s\in\ff\upharpoonright L}$ is not   plegma
$\varepsilon$-separated, a contradiction.
\end{proof}

\subsection{Subordinated $\ff$-sequences}
By identifying every subset of $\nn$ with its characteristic
function, a  thin family  $\ff$ becomes a discrete subspace of
$\{0,1\}^\nn$ (under the usual product topology) with
$\widehat{\ff}$ being  its closure. This in particular yields that
every $\phi:\ff\to (X,\ttt)$ is automatically continuous. In this
subsection we show  that for every regular thin family $\ff$, $M\in
[\nn]^\infty$ and $\varphi:\ff\upharpoonright M\to (X,\mathcal{T})$
such that the closure of $\varphi(\ff\upharpoonright M)$ is a
compact metrizable subspace of $X$  there exist $L\in[M]^\infty$ and
a continuous extension
$\widehat{\varphi}:\widehat{\ff}\upharpoonright
L\to(X,\mathcal{T})$. We start with the following definition.
  \begin{defn}\label{defsub}
    Let $(X,\ttt)$ be a topological space, $\ff$ be a regular thin family, $M\in[\nn]^\infty$ and
  $(x_s)_{s\in\ff}$ be an $\ff$-sequence in $X$. We say
    that $(x_s)_{s\in\ff\upharpoonright M}$ is subordinated (with respect to $(X,\mathcal{T})$)
     if there exists a continuous map $\widehat{\varphi}:\widehat{\ff}\upharpoonright M\to (X,\mathcal{T})$
with  $\widehat{\varphi}(s)=x_s$ for every  $s\in\ff\upharpoonright
M$.

 \end{defn}
 Assume that  $(x_s)_{s\in\ff\upharpoonright M}$ is subordinated. Then since
 $\ff$ is dense in $\widehat{\ff}$, there exists a unique  continuous map
     $\widehat{\varphi}:\widehat{\ff}\upharpoonright M\to (X,\mathcal{T})$ witnessing
     this. Moreover, for the same reason we have
    $\overline{\{x_s:s\in\ff\upharpoonright M\}}=\widehat{\varphi}\big(\widehat{\ff}\upharpoonright
    M\big)$,
    where $\overline{\{x_s:s\in\ff\upharpoonright M\}}$ is the $\mathcal{T}$-closure of
    $\{x_s : s\in\ff\upharpoonright M\}$ in $X$. Therefore
    $\overline{\{x_s:s\in\ff\upharpoonright M\}}$ is a countable compact metrizable subspace of $(X,\ttt)$
     with Cantor-Bendixson index at most $o(\ff)+1$. Another property of subordinated $\ff$-sequences
     is stated in the next   proposition.

\begin{prop}\label{subordinating yields convergence}
  Let $(X,\ttt)$ be a topological space, $\ff$ be  a regular thin family and
  $(x_s)_{s\in\ff}$ be an $\ff$-sequence in $X$. Let $M\in[\nn]^\infty$ such that  $(x_s)_{s\in\ff\upharpoonright M}$ is
  subordinated. Then $(x_s)_{s\in\ff\upharpoonright
  M}$ is a convergent $\ff$-subsequence in $X$. In particular, if $\widehat{\varphi}:\widehat{\ff}\upharpoonright M\to
  (X,\mathcal{T})$ is the continuous map witnessing the fact that $(x_s)_{s\in\ff\upharpoonright M}$ is
  subordinated then  $(x_s)_{s\in\ff\upharpoonright
  M}$ is convergent to $\widehat{\varphi}(\varnothing)$.
\end{prop}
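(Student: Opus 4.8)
The plan is to unwind Definition \ref{defn convergence of f-sequences} and reduce the whole statement to the continuity of the witnessing map $\widehat{\varphi}:\widehat{\ff}\upharpoonright M\to(X,\ttt)$ at the single point $\varnothing$, together with the concrete description of the basic neighborhoods of $\varnothing$ in the product topology on $\{0,1\}^\nn$.

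First I would check that $\varnothing$ actually belongs to $\widehat{\ff}\upharpoonright M$, so that $\widehat{\varphi}(\varnothing)$ makes sense: since $\ff$ is a nonempty regular thin family, pick any $s\in\ff$; then $\varnothing\sqsubseteq s$ gives $\varnothing\in\widehat{\ff}$, and $\varnothing\subseteq M$ is trivial, so $\varnothing\in\widehat{\ff}\upharpoonright M$. Put $x_0=\widehat{\varphi}(\varnothing)$.

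Next, fix an open $U\in\ttt$ with $x_0\in U$. By continuity of $\widehat{\varphi}$ the set $\widehat{\varphi}^{-1}(U)$ is open in $\widehat{\ff}\upharpoonright M$ and contains $\varnothing$, so it contains a basic neighborhood $V$ of $\varnothing$ relative to the subspace topology inherited from $\{0,1\}^\nn$. Identifying finite subsets of $\nn$ with their characteristic functions, $\varnothing$ is the constantly-$0$ sequence, so such a $V$ is obtained by fixing finitely many coordinates of $\{0,1\}^\nn$ equal to $0$; letting $N$ be the largest of those coordinates we get $\{t\in\widehat{\ff}\upharpoonright M:\min t>N\}\subseteq V\subseteq\widehat{\varphi}^{-1}(U)$.

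Finally I would choose $m\in\nn$ with $M(m)>N$. Then for every $s\in\ff\upharpoonright M$ with $\min s\geq M(m)$ we have $\min s>N$, hence $s\in V$ and therefore $x_s=\widehat{\varphi}(s)\in U$. By Definition \ref{defn convergence of f-sequences} this is exactly the assertion that $(x_s)_{s\in\ff\upharpoonright M}$ converges to $x_0=\widehat{\varphi}(\varnothing)$, which also yields the first part of the proposition. There is no genuine obstacle in this argument; the only point demanding a little care is writing down explicitly the form of the neighborhoods of $\varnothing$ in the product topology and translating "finitely many coordinates forced to be $0$" into the bound $\min s>N$.
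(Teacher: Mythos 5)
Your argument is correct and amounts to the same underlying idea as the paper's proof: continuity of $\widehat{\varphi}$ at $\varnothing$ plus the explicit description of basic neighborhoods of $\varnothing$ in the product topology. The paper packages this slightly more abstractly, by first observing that the identity $\ff$-sequence $(s)_{s\in\ff}$ in $\widehat{\ff}$ converges to $\varnothing$ and then invoking Proposition \ref{arxi metaforas gia ff sequences} (continuous maps carry convergent $\ff$-subsequences to convergent $\ff$-subsequences); you inline exactly that content.
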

\begin{proof} Via the identity map we may consider the family $\ff$ as an $\ff$-sequence in the
metric space $Y=\widehat{\ff}$ i.e.  let $(y_s)_{s\in \ff}$ be the
$\ff$-sequence in $Y$, with $y_s=s$ for every $s\in \ff$.  As we
have already noticed $(y_s)_{s\in \ff}$ converges to the empty set.
Hence, since $\widehat{\varphi}:\widehat{\ff}\upharpoonright M\to
(X,\mathcal{T})$ is continuous, by Proposition \ref{arxi metaforas
gia ff sequences} we get that
$\big(\widehat{\varphi}(y_s)\big)_{s\in \ff\upharpoonright M}$
converges to $\widehat{\varphi}(\varnothing)$. Since
$\widehat{\varphi}(y_s)=\widehat{\varphi}(s)=x_s$ for every $s\in
\ff\upharpoonright M$, this means that
$(x_s)_{s\in\ff\upharpoonright M}$ is convergent to
$\widehat{\varphi}(\varnothing)$.
\end{proof}
\begin{thm}\label{Create subordinated}
Let $\ff$ be  a  regular thin family and $(x_s)_{s\in\ff}$ be an
$\ff$-sequence in a topological space $(X,\mathcal{T})$. Then for
every $M\in[\nn]^\infty$ such that $\overline{\{x_s:
s\in\ff\upharpoonright M\}}$  is a compact metrizable subspace of
$(X,\ttt)$ there exists $L\in[M]^\infty$ such that
$(x_s)_{s\in\ff\upharpoonright L}$ is subordinated.
\end{thm}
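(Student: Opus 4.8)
The plan is to argue by transfinite induction on the order $o(\ff)$, combining the Ramsey theorem for thin families (Proposition \ref{th2}) with two diagonalizations. Having fixed $M\in[\nn]^\infty$ as in the statement, I fix a metric $\rho$ inducing the topology on the compact metrizable space $K:=\overline{\{x_s:s\in\ff\upharpoonright M\}}$; note that $K$, and every closed subspace of it, is a compact, hence complete, metric space. If $o(\ff)=0$ then $\ff=\widehat\ff=\{\varnothing\}$ and the constant map $\widehat\varphi(\varnothing)=x_\varnothing$ works, so assume the result for all regular thin families of order $<o(\ff)$. Since $\widehat\ff$ is spreading we may first pass to a subset of $M$ on which each $\{n\}$ belongs to $\widehat\ff$, so that every $\ff_{(n)}$, $n\in M$, is a nonempty regular thin family with $o(\ff_{(n)})<o(\ff)$ and $\widehat{\ff_{(n)}}=(\widehat\ff)_{(n)}$.

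Next I would stabilize the root. For each $k$, cover $K$ by finitely many $\rho$-balls of radius $1/k$ and apply Proposition \ref{th2} to the finite coloring of $\ff\upharpoonright M$ recording which ball contains $x_s$; diagonalizing over $k$ produces $M_0\in[M]^\infty$ for which $(x_s)_{s\in\ff\upharpoonright M_0}$ is Cauchy in the sense of Definition \ref{defn Chauchy of f-sequences}, hence, by Proposition \ref{remark on ff Cauchy-convergent sequences and their spreading model}, converges to some $x^*\in K$; this $x^*$ will serve as $\widehat\varphi(\varnothing)$.

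Then I diagonalize over branches. Recursively choose $l_1<l_2<\cdots$ in $M_0$ and infinite sets $M_0\supseteq P_1\supseteq P_2\supseteq\cdots$ with $\{l_{i+1},l_{i+2},\dots\}\subseteq P_i$, so that applying the inductive hypothesis to the $\ff_{(l_i)}$-sequence $u\mapsto x_{\{l_i\}\cup u}$ (whose range lies in $K$) yields that its restriction to $\ff_{(l_i)}\upharpoonright P_i$ is subordinated, witnessed by a continuous $\widehat\varphi_i\colon\widehat{\ff_{(l_i)}}\upharpoonright P_i\to X$. Put $L=\{l_1<l_2<\cdots\}$; then $L\cap(l_i,\infty)\subseteq P_i$, so each $\widehat\varphi_i$ still witnesses subordination of $u\mapsto x_{\{l_i\}\cup u}$ over $\ff_{(l_i)}\upharpoonright(L\cap(l_i,\infty))$. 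Define $\widehat\varphi\colon\widehat\ff\upharpoonright L\to X$ by $\widehat\varphi(\varnothing)=x^*$ and, for $t\neq\varnothing$ with $\min t=l_i$ (necessarily $\min t\in L$), $\widehat\varphi(t)=\widehat\varphi_i(t\setminus\{l_i\})$; one checks this is well defined and that $\widehat\varphi(s)=x_s$ for every $s\in\ff\upharpoonright L$.

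It remains to verify continuity of $\widehat\varphi$. The maximal elements of $\widehat\ff\upharpoonright L$ (the members of $\ff\upharpoonright L$) are isolated, so continuity there is automatic; at a nonempty non-maximal $t$ with $\min t=l_i$, every $t'$ in a small enough basic neighbourhood of $t$ satisfies $\min t'=l_i$ with $t'\setminus\{l_i\}$ in a prescribed neighbourhood of $t\setminus\{l_i\}$, so continuity at $t$ reduces to that of $\widehat\varphi_i$. The main obstacle is continuity at $\varnothing$: here I use that $\widehat\varphi$ takes values in $K$, that $\widehat\varphi_i$ maps $\widehat{\ff_{(l_i)}}\upharpoonright(L\cap(l_i,\infty))$ onto the $\rho$-closure of $\{x_{\{l_i\}\cup u}\}$ (density plus continuity), and that by the root stabilization all points $x_s$ with $\min s\ge l_i$ — hence all values $\widehat\varphi(t)$ with $\min t\ge l_i$ — lie in a closed $\rho$-ball about $x^*$ whose radius tends to $0$ as $i\to\infty$. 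Consequently every neighbourhood of $x^*$ contains $\widehat\varphi(t)$ for all $t$ with $\min t$ large enough, which gives continuity at $\varnothing$ and completes the induction.
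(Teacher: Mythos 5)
Your argument is correct and follows essentially the same inductive scheme as the paper: induct on $o(\ff)$, decompose $\widehat\ff\upharpoonright L$ by the first element $\min t$, apply the inductive hypothesis to each $\ff_{(l_i)}$-sequence $u\mapsto x_{\{l_i\}\cup u}$ (valid since $o(\ff_{(l_i)})<o(\ff)$ and the range stays in $K$), and diagonalize; continuity at the root is where the only work is. The single organizational difference is that you stabilize the root first — extracting $M_0$ on which $(x_s)_{s\in\ff\upharpoonright M_0}$ is $\ff$-Cauchy and hence converges to $x^*$, which then serves as $\widehat\varphi(\varnothing)$ — whereas the paper interleaves the choice of shrinking closed balls $B_n\supseteq\{\widehat\varphi_n(u)\}$ with diam $B_n\to 0$ into the recursion and only extracts the limit point $x_0$ at the very end by compactness of $X_0$ together with a sub-subsequence $(k_n)$; your pre-stabilization avoids that final subsequence step but is otherwise the same idea.
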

\begin{proof}
We will use induction on the order of the regular thin family $\ff$.
If $o(\ff)=0$ (i.e. the family $\ff$ is the singleton
$\ff=\{\varnothing\}$) the result trivially holds. Let
$\xi<\omega_1$ and assume that  the theorem is true when
$o(\ff)<\xi$.

 We fix  a regular thin family  $\ff$ with $o(\ff)=\xi$,
an $\ff$-sequence $(x_s)_{s\in\ff}$   in a topological space
$(X,\ttt)$ and $M\in[\nn]^\infty$ such that $\overline{\{x_s:
s\in\ff\upharpoonright M\}}$  is a compact metrizable subspace of
$(X,\ttt)$. By passing to an infinite subset of $M$ if it is
necessary we may also suppose that $\ff$ is very large in $M$. Let
$\rho$ be a compatible metric for the subspace $X_0=\overline{\{x_s:
s\in\ff\upharpoonright M\}}$. We shall construct (a) a strictly
increasing sequence $(m_n)_n$ in $M$, (b) a decreasing sequence
$M=M_0\supseteq M_1\supseteq ... $ of infinite subsets of $M$ (c) a
sequence $\widehat{\varphi}_n$ of maps with
$\widehat{\varphi}_n:\widehat{\ff_{(m_n)}}\upharpoonright M_n\to X$
and (d) a decreasing sequence of closed balls $(B_n)_n$ in $X_0$
such that for every $n\in\nn$ the following are satisfied:
\begin{enumerate}
\item[(i)] $m_n=\min M_{n-1}$ and $M_n\subseteq
M_{n-1}\setminus\{m_n\}$,
\item[(ii)] $\text{diam}\  B_n<1/n$,
\item[(iii)]  the map  $\widehat{\varphi}_n$ is continuous,
\item[(iv)] $\widehat{\varphi}_n (u)=x_{\{m_n\}\cup u}$, for every
$u\in \ff_{(m_n)}\upharpoonright M_n$ and
\item[(v)] $\{\widehat{\varphi}_n (u):
u\in\widehat{\ff_{(m_n)}}\upharpoonright M_n\}\subseteq B_n$.
\end{enumerate}
We shall present the general inductive step of the above
construction so let us  assume that the construction has been
carried out up to some $n\in\nn$. We set $m_{n+1}=\min M_n$. Since
$\ff$ is very large in $M$ we have that $\g=\ff_{(m_{n+1})}=\{u\in
[\nn]^{<\infty}:m_{n+1}<u \ \text{and} \ \{m_{n+1}\}\cup u\in\ff\}$
is  a regular thin family. For each $u\in \g$ we set
$y_u=x_{\{m_{n+1}\}\cup u}$ and we form the $\g$-sequence
$(y_u)_{u\in\g}$. Let $M'_n=M_n\setminus\{m_{n+1}\}$. Since
$Y=\{y_u: u\in\g\upharpoonright M'_n\}\subseteq \overline{\{x_s:
s\in\ff\upharpoonright M\}}$, we have that the closure of $Y$ in
$(X,\ttt)$  is also a compact metrizable subspace of $(X,\ttt)$.
Thus $Y$ is a totally bounded metric space and therefore by Theorem
\ref{Galvin prikry} and passing to an infinite subset of $M'_n$ if
it is necessary, we may also suppose that there exists a ball
$B_{n+1}$ of $X_0$ with $\text{diam} \ B_{n+1}<(n+1)^{-1}$ and such
that \begin{equation}\label{ball}\{y_u: u\in\g\upharpoonright
M'_n\}\subseteq B_{n+1}.\end{equation} Moreover,
$o(\g)=o\big(\ff_{(m_{n+1})}\big)<o(\ff)=\xi$. Hence, by our
inductive hypothesis, there exists an infinite subset $M_{n+1}$ of
$M'_n=M_n\setminus \{m_{n+1}\}$ such that the $\g$-subsequence
$(y_u)_{u\in\g\upharpoonright M_{n+1}}$ is subordinated. Let
$\widehat{\varphi}_{n+1}:\widehat{\g}\upharpoonright M_{n+1}\to X$
be the continuous  map witnessing this fact. Then
$\widehat{\varphi}_{n+1}(u)=y_u= x_{\{m_{n+1}\}\cup u}$, for every
$u\in\ff_{(m_{n+1})}\upharpoonright M_{n+1}$ and  by the continuity
of $\widehat{\varphi}_{n+1}$ we have
\begin{equation}
\big\{\widehat{\varphi}_{n+1} (u):
u\in\widehat{\ff_{(m_{n+1})}}\upharpoonright
M_{n+1}\big\}\subseteq\overline {\{y_u:
u\in\ff_{(m_{n+1})}\upharpoonright
M_{n+1}\}}\stackrel{(\ref{ball})}{\subseteq} B_{n+1},
\end{equation}
which completes the proof of the inductive step.

We set  $M'=\{m_n:n\in\nn\}$. Since $\lim\text{diam} \ B_n=0$ and
$X_0$ is a compact metric space  there exists a strictly increasing
sequence $(k_n)_n$ and $x_0\in X_0$ such that
\begin{equation}\label{balll}\lim\text{dist} \big(x_0, B_{k_n}\big)= 0.\end{equation} We set
$L=\{m_{k_n}: n\in\nn\}$ and we define
$\widehat{\varphi}:\widehat{\ff}\upharpoonright L\to X$ as follows.
For $s=\varnothing$, we set $\widehat{\varphi}(\varnothing)=x_0$.
Otherwise, if $n$ is the unique positive integer such that
$m_{k_n}=\min s$ we set
$\widehat{\varphi}(s)=\widehat{\varphi}_{k_n}(s\setminus\{\min
s\})=x_t$. It is easy to check that  $\widehat{\varphi}$ is well
defined. To see that $\widehat{\varphi}$ is continuous let $(s_n)_n$
be a sequence in $\widehat{\ff}\upharpoonright L$ and
$s\in\widehat{\ff}\upharpoonright L$ such that $s_n\to s$. If
$s=\varnothing$ we have that $\min s_n\to +\infty$, thus using
condition (v)  and equation (\ref{balll}) we obtain  that
$\widehat{\varphi}(s_n)\to x_0=\widehat{\varphi}(\varnothing)$.
Otherwise, let $m_{k_{n_0}}=\min s$. Then $\min s_n=\min
s=m_{k_{n_0}}$, for all but finitely many $n$. Therefore,
$\widehat{\varphi}(s_n)=\widehat{\varphi}_{k_{n_0}}(s_n)$, for all
but finitely many $n$ and since $\widehat{\varphi}_{k_{n_0}}$ is
continuous,  $\widehat{\varphi}(s_n)\to
\widehat{\varphi}(s)$.\end{proof}
\section{$\ff$-sequences generating spreading models}
Let $(x_n)_n$ be a sequence in a Banach space $X$ generating  a
spreading model $(e_n)_n$. It is well known (see \cite{BL},
\cite{BS}) that if $(x_n)_n$ is norm convergent then the seminorm in
the space $E$ generated by $(e_n)_n$ is not a norm. Furthermore if
$(x_n)_n$ is weakly null and seminormalized then $(e_n)_n$ is
$1$-unconditional. In this section we show that analogues of these
results remain true in the higher order setting of  $\xi$-spreading
models.  We begin with a short review of the basic properties of
spreading sequences.
 \subsection{Spreading sequences}
Let  $(E,\|\cdot\|_*)$  be a seminormed linear space. A sequence
$(e_n)_{n}$ in $E$ is called \emph{spreading} if
$\|\sum_{j=1}^na_je_j\|_*=\|\sum_{j=1}^na_je_{k_j}\|_*$, for every
$n\in\nn$, $a_1,\ldots,a_n\in \rr$ and $k_1<\ldots<k_n$ in $\nn$.
 As we have already mentioned every spreading model
 of any order of a Banach space is a spreading sequence.
 In this subsection we shall briefly  recall some well known results
 on spreading sequences that we shall later  use (for a more detailed exposition see \cite{AK}, \cite{AT}, \cite{BL}, \cite{BS}).
We start with the following elementary lemma.
\begin{lem}\label{sing}
Let $(E,\|\cdot\|_*)$ be a seminormed linear space and $(e_n)_{n}$
be a spreading sequence in $E$. Then the following are equivalent.
\begin{enumerate}
\item[(i)] There exist $k\in\nn$ and $a_1,\ldots,a_k\in\rr$ not
all zero with $\|\sum_{j=1}^ka_ie_i\|_*=0$. \item[(ii)] For
every $n,m\in\nn$, $\|e_n-e_m\|_*=0$.
\end{enumerate}
\end{lem}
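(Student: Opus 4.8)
The plan is to prove the two implications separately; essentially all the content is in $(i)\Rightarrow(ii)$, while $(ii)\Rightarrow(i)$ is immediate. Indeed, if $\|e_n-e_m\|_*=0$ for all $n,m$, then in particular $\|e_1-e_2\|_*=0$, which is exactly condition $(i)$ with $k=2$ and the nonzero coefficient vector $(a_1,a_2)=(1,-1)$.

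For $(i)\Rightarrow(ii)$, suppose $\|\sum_{j=1}^k a_je_j\|_*=0$ with the $a_j$ not all zero. First I would discard the trailing zero coefficients: if $k'$ is the largest index with $a_{k'}\neq 0$, then $\sum_{j=1}^k a_je_j=\sum_{j=1}^{k'}a_je_j$ as vectors of $E$, so $\|\sum_{j=1}^{k'}a_je_j\|_*=0$ as well. The key step is then to manufacture a \emph{second} vector of seminorm zero differing from this one only in its last term. Applying the spreading property to the strictly increasing indices $1<2<\dots<(k'-1)<(k'+1)$ with coefficients $a_1,\dots,a_{k'-1},a_{k'}$ gives $\|\sum_{j=1}^{k'-1}a_je_j+a_{k'}e_{k'+1}\|_*=\|\sum_{j=1}^{k'}a_je_j\|_*=0$. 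Subtracting the two vectors and using the triangle inequality, $|a_{k'}|\,\|e_{k'}-e_{k'+1}\|_*=\|a_{k'}(e_{k'}-e_{k'+1})\|_*\le 0$, and since $a_{k'}\neq 0$ we conclude $\|e_{k'}-e_{k'+1}\|_*=0$.

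To finish, I would upgrade this single consecutive pair to all pairs by the spreading property one more time, now applied to the two-term vector $1\cdot e_1+(-1)\cdot e_2$: for any $n<m$ we get $\|e_n-e_m\|_*=\|e_{k'}-e_{k'+1}\|_*=0$, and the cases $n=m$ and $n>m$ follow from $\|0\|_*=0$ and the symmetry $\|-x\|_*=\|x\|_*$ of a seminorm. I do not expect any genuine obstacle here — the only point requiring a moment's care is the bookkeeping when $k'=1$, where the sum $\sum_{j=1}^{k'-1}$ is empty; but then the spreading property directly yields $\|a_1e_1\|_*=\|a_1e_2\|_*=0$, hence $\|e_1-e_2\|_*=0$, and the rest of the argument is unchanged.
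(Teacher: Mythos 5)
Your proof is correct and follows essentially the same route as the paper: reduce to the case where the last coefficient is nonzero, apply the spreading property to shift the last index from $k'$ to $k'+1$, subtract the two seminorm-zero vectors to extract $\|e_{k'}-e_{k'+1}\|_*=0$, and then spread again to reach all pairs. (The paper discards \emph{all} zero coefficients rather than only trailing ones, but since only the nonvanishing of the last coefficient is ever used, this is an immaterial bookkeeping difference.)
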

\begin{proof} The implication  (ii)$\Rightarrow$(i) is straightforward. To show
the converse let $k\in\nn$ and $a_1,\ldots,a_n\in\rr$ not all zero,
such that $\|\sum_{j=1}^ka_je_j\|_*=0$. Since $(e_n)_{n}$ is
spreading  we may suppose that $a_j\neq 0$ for all $1\leq j\leq  n$.
  Moreover notice that
  \begin{equation}\Big{\|}\sum_{j=1}^{k-1}a_je_j+a_ke_k\Big{\|}_*=\Big{\|}\sum_{j=1}^{k-1}a_je_j+a_ke_{k+1}\Big{\|}_*=0.\end{equation}
Hence,
  \begin{equation}\Big{\|}e_k-e_{k+1}\Big{\|}_*\leq\frac{1}{|a_k|}
  \Bigg{(} \Big{\|}\sum_{j=1}^{k-1}a_je_j+a_ke_k\Big{\|}_*+\Big{\|}\sum_{j=1}^{k-1}a_je_j+a_ne_{k+1}\Big{\|}_* \Bigg{)}=0. \end{equation}
  Since $(e_n)_{n}$ is spreading we get that $\|e_n-e_m\|_*=0$ for every $n,m\in\nn$.
\end{proof}
 Spreading sequences in seminormed linear spaces satisfying
(i) or (ii) of the above lemma will be called \emph{trivial}. As a
consequence  we have that if $(e_n)_{n}$ is non trivial then the
restriction of the seminorm $\|\cdot\|_*$ to the linear subspace
generated by $(e_n)_n$ is actually norm.

As in \cite{AKT}, we classify the non trivial spreading sequences
into the following three categories: (1) the \emph{singular}, i.e.
the non trivial spreading sequences which are not Schauder basic,
(2) the \emph{unconditional} and (3) the \emph{conditional Schauder
basic} spreading sequences, i.e. the spreading sequences which are
Schauder basic but not unconditional.

\begin{prop}\label{equiv forms for 1-subsymmetric weakly null}
Let $(e_n)_{n}$ be a non trivial spreading sequence.
\begin{enumerate}
\item[(i)]  If $(e_n)_n$ is weakly null then it  is
1-unconditional.
\item[(ii)]  If $(e_n)_{n}$ is unconditional then either it is equivalent
to the usual basis of $\ell^1$ or it is norm Ces\`aro summable
to 0 (i.e. $\lim_{n\to\infty}\Big\|\frac{1}{n}\sum_{i=1}^n
e_i\Big\|= 0$).
\end{enumerate}
\end{prop}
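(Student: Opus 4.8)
The plan is to treat the two assertions separately; both are classical facts about spreading sequences (see \cite{BL}, \cite{BS}) and use only the spreading identity together with, respectively, weak nullity and a subadditivity/duality argument. Throughout, since $(e_n)_n$ is non trivial, by Lemma \ref{sing} the restriction of $\|\cdot\|_*$ to the linear span of $(e_n)_n$ is a norm, and after rescaling we may assume $\|e_n\|_*=1$ for every $n$ (the value $\|e_n\|_*$ is independent of $n$ by the spreading property applied to a single vector).

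For (i), I would first prove the estimate $\|\sum_{i\in F}a_ie_i\|_*\le\|\sum_{i=1}^ma_ie_i\|_*$ for every $F\subseteq\{1,\dots,m\}$, which is exactly $1$-unconditionality in the sense used in this paper; deleting one index at a time reduces this to $F=\{1,\dots,m\}\setminus\{j\}$. Fix $\ee>0$. Since $(e_n)_n$ is weakly null, Mazur's theorem yields a convex combination $w=\sum_{l=1}^p\lambda_le_{n_l}$ with $\|w\|_*<\ee$, and by the spreading property we may assume $n_l=j+l-1$. Put
\[v=\sum_{i<j}a_ie_i+a_j\Big(\sum_{l=1}^p\lambda_le_{j+l-1}\Big)+\sum_{i>j}a_ie_{p+i-1}.\]
On the one hand $v=\sum_{l=1}^p\lambda_lv_l$, where each $v_l$ is, by the spreading property, a spread of $\sum_{i=1}^ma_ie_i$; hence by convexity of the norm $\|v\|_*\le\sum_l\lambda_l\|v_l\|_*=\|\sum_{i=1}^ma_ie_i\|_*$. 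On the other hand $v$ differs from a spread of $\sum_{i\ne j}a_ie_i$ by the vector $a_j\sum_l\lambda_le_{j+l-1}$, of norm $|a_j|\|w\|_*<|a_j|\ee$, so $\|v\|_*\ge\|\sum_{i\ne j}a_ie_i\|_*-|a_j|\ee$. Letting $\ee\to0$ gives the claim, and iterating over deleted indices finishes (i). For completeness one should also note that one further layer of the same weak-nullity argument upgrades this to invariance of the norm under sign changes of the coefficients; this sign-change step is the technically most delicate point and I expect it to be the main obstacle.

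For (ii), I would set $a_n:=\|e_1+\dots+e_n\|_*$. The triangle inequality together with the spreading property gives $a_{n+m}\le a_n+a_m$, so by Fekete's subadditivity lemma $a_n/n\to c:=\inf_n a_n/n\ge0$; equivalently $\|\frac1n\sum_{i=1}^ne_i\|_*\to c$. If $c=0$ we are in the Ces\`aro alternative, so assume $c>0$; then $a_n\ge cn$ for all $n$, and I claim $(e_n)_n$ is equivalent to the unit vector basis of $\ell^1$. The estimate $\|\sum a_ie_i\|_*\le\sum|a_i|$ is trivial; for the reverse, let $C$ be an unconditional constant of $(e_n)_n$. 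Since the dual norm is $C$-unconditional, for each large $N$ there is a functional $g_N$ with $\|g_N\|\le1$, $g_N(e_i)\ge0$ for all $i$ and $g_N(e_1+\dots+e_N)\ge a_N/C\ge cN/C$; as each $g_N(e_i)\le1$, a pigeonhole count shows that at least $\delta N$ of the indices $i\le N$ satisfy $g_N(e_i)\ge\delta$, where $\delta=c/(2C)$. Given $d_1,\dots,d_m\ge0$ and $N$ large, the spreading property lets us place $\sum_{i=1}^md_ie_i$ onto such indices $k_1<\dots<k_m$, so that
\[\Big\|\sum_{i=1}^md_ie_i\Big\|_*=\Big\|\sum_{i=1}^md_ie_{k_i}\Big\|_*\ge g_N\Big(\sum_{i=1}^md_ie_{k_i}\Big)\ge\delta\sum_{i=1}^md_i.\]
Unconditionality then removes the sign restriction, yielding $\|\sum a_ie_i\|_*\ge(\delta/C)\sum|a_i|$, and (ii) follows.

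Finally I would remark on the link between the two parts: the $\ell^1$ basis is not weakly null, so once the argument of (i) shows that a non trivial weakly null spreading sequence is unconditional, part (ii) forces $c=0$, i.e. such a sequence is norm Ces\`aro summable to $0$; this can be used to streamline the sign-change refinement of (i). The genuinely delicate point of the whole statement is that sign-change refinement; the $\ell^1$ lower estimate in (ii) is routine once the quantities $a_n$ and their Fekete limit are in hand.
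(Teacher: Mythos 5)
Your proof is correct. For part (i) the paper simply cites \cite{AK}, whereas you supply an explicit convexity argument; note that the deletion estimate $\|\sum_{i\in F}a_ie_i\|_*\le\|\sum_{i=1}^m a_ie_i\|_*$ you prove \emph{is} what the paper calls $1$-unconditionality (see the definition in the preliminaries), so your concern about a further ``sign-change refinement'' being the main obstacle is unfounded --- you have already proved the statement as the paper states it. For part (ii) your argument is essentially the paper's: when Ces\`aro summability fails, one takes norming functionals on the long blocks $e_1+\cdots+e_N$, a threshold/pigeonhole count produces a proportion $\delta N$ of indices $i\le N$ on which the functional is at least $\delta>0$, and the spreading identity lets one transport an arbitrary nonnegative combination onto those indices to obtain the $\ell^1$ lower bound; unconditionality then removes the sign restriction. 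Your two extra ingredients are cosmetic: the Fekete subadditivity lemma is a tidy way to organize the dichotomy $c=0$ versus $c>0$, but the paper works directly with the negated Ces\`aro condition and needs no limit; and your replacement of the norming functional $f_N$ by a coordinate-wise nonnegative $g_N$ (built from the $C$-unconditionality of the dual, at the cost of one extra factor of $C$) is avoided in the paper by thresholding $x_n^*(e_i)>\theta/2$ directly, which works since only the ``good'' indices are ever used. Both routes are sound.
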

\begin{proof}
(i) See \cite{AK}. (ii)  Since $(e_n)_{n}$ is unconditional there
exist $C>0$ such that $\Big\|\sum_{i=1}^n\ee_ia_ie_i\Big\|\leq C
\Big\|\sum_{i=1}^na_ie_i\Big\|$,  for every $n\in\nn$,
$a_1,\ldots,a_n\in\rr$ and  $\ee_1,\ldots,\ee_n\in\{-1,1\}$. Also
since it is spreading and nontrivial there exists $M>0$ such that
$\|e_n\|=M$ for all $n\in\nn$. Suppose that  $(e_n)_{n}$ is not
Ces\`aro summable to zero. Then there exist
  $\theta>0$ and a strictly increasing sequence of natural numbers
  $(p_n)_{n}$ such that
  $\|\frac{1}{p_n}\sum_{i=1}^{p_n}e_i\|>\theta$, for all
  $i\in\{1,\ldots,p_n\}$. Hence for every $n\in\nn$ there exist
  $x^*_n$ with $\|x^*_n\|=1$ such that
  $x^*_n(\frac{1}{p_n}\sum_{i=1}^{p_n}e_i)>\theta$. For every
  $n\in\nn$, we set $I_n=\{1,\ldots,p_n\}$ and let $A_n=\{i\in I_n:\;
  x^*_n(e_i)>\frac{\theta}{2}\}$. Then  we
  have
  \[\begin{split}
    \theta&<x^*_n\Big(\frac{1}{p_n}\sum_{i\in I_n}e_i\Big)
    =\frac{1}{p_n}x^*_n\Big(\sum_{i\in A_n}e_i\Big)
    +\frac{1}{p_n}x^*_n\Big(\sum_{i\in I_n\setminus A_n}e_i\Big)
    \leq\frac{1}{p_n}|A_n|C+\frac{\theta}{2}
  \end{split}\]
  Hence $|A_n|\geq \frac{\theta}{2C}p_n$ which gives that $\lim_{n\to\infty}|A_n|=+\infty$.
  We are now ready to show that $(e_n)_n$ is equivalent to the usual basis of $\ell_1$. Indeed, let
  $n\in\nn$, $a_1,\ldots,a_n\in\rr$ and choose $n_0\in\nn$ such that
  $|A_{n_0}|\geq n$. Then
  \[\begin{split}
     M\sum_{i=1}^n|a_i|\geq\Big\|\sum_{i=1}^n a_i e_i\Big\|&
    \geq \frac{1}{C}\Big\|\sum_{i=1}^n|a_i|e_i\Big\|
    = \frac{1}{C}\Big\|\sum_{i=1}^n|a_i|e_{A_{n_0}(i)}\Big\|\\
    &\geq \frac{1}{C}\cdot
    x^*_n\Big(\sum_{i=1}^n|a_i|e_{A_{n_0}(i)}\Big)
    \geq \frac{\theta}{2C} \sum_{i=1}^n|a_i|.
  \end{split}\]\end{proof}
\begin{prop}\label{properties of the natural decomposition}
Let $(e_n)_{n}$ be a singular sequence and let $E$ be the space
generated by $(e_n)_{n}$. Then there is $e\in E\setminus \{0\}$ such
that $(e_n)_{n}$ is weakly convergent to $e$. Moreover if
$e'_n=e_n-e$ then  $(e'_n)_{n}$ is  spreading, $1$-unconditional and
Ces\`aro summable to zero.
\end{prop}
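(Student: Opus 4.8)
The plan is as follows. Since $(e_n)_n$ is singular it is in particular non‑trivial, so by the discussion following Lemma~\ref{sing} the seminorm $\|\cdot\|_*$ restricts to a genuine norm on $E$; passing to the completion if necessary, I would assume $(E,\|\cdot\|_*)$ is a Banach space. The only nonelementary input is the classical structure theory of spreading sequences (Brunel--Sucheston \cite{BS}, Beauzamy--Lapresté \cite{BL}; see also \cite{AK}, \cite{AT}): a non‑trivial spreading sequence that is \emph{not} Schauder basic converges weakly to some vector $e\in E$. Granting this, such an $e$ exists for our $(e_n)_n$, and $e\neq 0$: otherwise $(e_n)_n$ would be weakly null, hence $1$‑unconditional by Proposition~\ref{equiv forms for 1-subsymmetric weakly null}(i), in particular Schauder basic, contradicting singularity. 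This settles the first assertion, $e\in E\setminus\{0\}$.

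Set $e'_n=e_n-e$. I would next check that $(e'_n)_n$ is spreading. Identify $E$ with its canonical image in $E^{**}$; since $(e_n)_n$ converges weakly to $e$, the vector $e$ is also the weak‑star limit of $(e_n)_n$ in $E^{**}$. Fix $M=\{m_1<m_2<\cdots\}\in[\nn]^\infty$ and let $\theta_M\colon E\to E$ be the linear isometry extending $e_n\mapsto e_{m_n}$ — it exists precisely because $(e_n)_n$ is spreading. Its bitranspose $\theta_M^{**}\colon E^{**}\to E^{**}$ is a weak‑star continuous linear isometry agreeing with $\theta_M$ on $E$, so, as $(e_{m_n})_n$ is a subsequence of $(e_n)_n$ and therefore has the same weak‑star limit $e$,
\[
\theta_M^{**}(e)=\text{w}^{*}\text{-}\lim_n\theta_M^{**}(e_n)=\text{w}^{*}\text{-}\lim_n e_{m_n}=e .
\]
Hence for every $k\in\nn$ and all scalars $a_1,\dots,a_k$, writing $c=\sum_{j=1}^k a_j$,
\[
\Big\|\sum_{j=1}^k a_j e'_j\Big\|=\Big\|\sum_{j=1}^k a_j e_j-c\,e\Big\|=\Big\|\theta_M^{**}\Big(\sum_{j=1}^k a_j e_j-c\,e\Big)\Big\|=\Big\|\sum_{j=1}^k a_j e_{m_j}-c\,e\Big\|=\Big\|\sum_{j=1}^k a_j e'_{m_j}\Big\| ,
\]
all norms being computed in $E^{**}$, where they agree with the $E$‑norm since every displayed vector lies in $E$. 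Letting $M$ vary, this is exactly the statement that $(e'_n)_n$ is spreading. Moreover $(e'_n)_n$ is weakly null, because $e_n\to e$ weakly, and non‑trivial, because $\|e'_n-e'_m\|=\|e_n-e_m\|\neq 0$.

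It then remains to invoke Proposition~\ref{equiv forms for 1-subsymmetric weakly null}. Being spreading, weakly null and non‑trivial, $(e'_n)_n$ is $1$‑unconditional by part (i); by part (ii) it is therefore either equivalent to the usual basis of $\ell^1$ or norm Ces\`aro summable to $0$. The first alternative is impossible: a Banach space spanned by a sequence equivalent to the $\ell^1$‑basis has the Schur property, so a weakly null such sequence would be norm null, whereas $\|e'_n\|$ is a positive constant. Hence $(e'_n)_n$ is norm Ces\`aro summable to $0$, which completes the argument.

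I expect the main obstacle to be the very first step: producing $e$ \emph{inside} $E$ rather than merely in $E^{**}$. Rosenthal's $\ell^1$‑theorem, together with the fact that weak Cauchyness transfers between a spreading sequence and any of its subsequences along the canonical isometric identification, shows quickly that a spreading sequence is either equivalent to the $\ell^1$‑basis or weakly Cauchy; the delicate point is to exclude the ``summing basis'' behaviour (weakly Cauchy but not weakly convergent), and this is exactly where one must appeal to the finer structure theory of \cite{BS}, \cite{BL}, \cite{AK}.
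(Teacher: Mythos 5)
Your proposal is correct in outline but diverges from the paper's argument in two respects. For the spreading property of $(e'_n)_n$ you work in $E^{**}$, using the bitranspose $\theta_M^{**}$ of the canonical spreading isometry and its weak-$*$ continuity to get $\theta_M^{**}(e)=e$. The paper instead stays inside $E$: for tuples with $\sum_i\lambda_i=0$ one has $\sum_i\lambda_i e'_i=\sum_i\lambda_i e_i$ and spreading is immediate; for general tuples, Mazur's lemma gives a convex block sequence $(w_m)_m$ of the weakly null $(e'_n)_n$ norm-converging to $0$, and one compares $\|\sum_i\lambda_i e'_i-\lambda w_m\|$ with $\|\sum_i\lambda_i e'_{k_i}-\lambda w_m\|$ (equal by the zero-sum case) and lets $m\to\infty$. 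Both arguments are valid; yours sidesteps the ad hoc block-sequence construction at the cost of invoking bitransposes and a completion, the paper's is more elementary. Your derivation of Cesàro summability via the Schur property of $\ell^1$ is fine and in fact spells out a step the paper leaves implicit when it applies Proposition~\ref{equiv forms for 1-subsymmetric weakly null}(ii).

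The genuine weakness is the one you flag at the end: producing $e\in E\setminus\{0\}$. You cite this as a black box from the structure theory of spreading models, and explicitly note that the delicate point is ruling out the non-trivial weak-Cauchy (``summing basis'') case, but you do not carry out this step. The paper closes it quickly: a spreading sequence is isometrically equivalent to each of its subsequences, so if $(e_n)_n$ is singular then \emph{no} subsequence is Schauder basic; but a seminormalized weakly null sequence and a non-trivial weak-Cauchy sequence both admit Schauder basic subsequences, and so does a sequence equivalent to the $\ell^1$-basis. Hence the Rosenthal $\ell^1$-dichotomy forces $(e_n)_n$ to be weakly convergent, and the weak limit must be nonzero since otherwise $(e_n)_n$ would be weakly null. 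This short argument should replace the appeal to the literature.
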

\begin{proof} Since $(e_n)_{n}$ is equivalent to all its subsequences and
it is not Schauder basic,  every subsequence of $(e_n)_n$ is not
Schauder basic. In particular,  every subsequence of $(e_n)_n$
cannot be non trivial weak-Cauchy or  weakly null. Hence, by
Rosenthal's $\ell^1$-theorem \cite{Ro} $(e_n)_{n}$ is weakly
convergent to a non zero element $e\in E$.

 Let $e'_n=e_n-e$.  To  show that $(e'_n)_{n}$ is spreading, let $n\in\nn$,
$\lambda_1,...,\lambda_n\in\rr$ and $k_1<...<k_n$ in $\nn$. If
$\sum_{i=1}^n\lambda_i=0$, then notice  that
\begin{equation}\label{lk}\Big\|\sum_{i=1}^n\lambda_ie'_i\Big\|=\Big\|\sum_{i=1}^n\lambda_ie_i\Big\|=
\Big\|\sum_{i=1}^n\lambda_ie_{k_i}\Big\|=\Big\|\sum_{i=1}^n\lambda_ie'_{k_i}\Big\|\end{equation}
Generally let $\sum_{i=1}^n\lambda_i=\lambda$. Since $(e'_n)_{n}$ is
weakly null we may choose a convex block subsequence $(w_m)_{m}$ of
$(e'_n)_{n}$ which norm converges to zero. Let $m_0\in\nn$ be such
that $k_n<\text{supp}(w_m)$ for all $m\geq m_0$. Then by (\ref{lk})
 we have
\begin{equation}\Big\|\sum_{i=1}^n\lambda_ie'_i-\lambda
w_m\Big\|=\Big\|\sum_{i=1}^n\lambda_ie'_{k_i}-\lambda
w_m\Big\|,\end{equation}  for all $m\geq m_0$.  Hence, by taking
limits, we get that
$\Big\|\sum_{i=1}^n\lambda_ie'_i\Big\|=\Big\|\sum_{i=1}^n\lambda_ie'_{k_i}\Big\|$,
that is  the sequence  $(e'_n)_{n}$ is spreading. Moreover, since
$\|e_n-e_m\|=\|e'_n-e'_m\|$, by Lemma  \ref{sing} we have that
$(e'_n)_{n}$ is non trivial. Finally, since $(e'_n)_{n}$ is weakly
null, by  Proposition \ref{equiv forms for 1-subsymmetric weakly
null} it is also $1$-unconditional and norm Ces\'aro summable to
zero.\end{proof} In the following the above decomposition
$e_n=e'_n+e$ of a singular spreading sequence $(e_n)_n$ will be
called \emph{the natural decomposition} of $(e_n)_n$.
\subsection{$\ff$-sequences generating non singular spreading
models}
 We start with a characterization
of the $\ff$-sequences in a Banach space $X$ which generate a
trivial spreading model.
\begin{thm}\label{Theorem equivalent forms for having norm on the spreading model}
  Let $X$ be a Banach space,  $\ff$ be a regular thin family and $(x_s)_{s\in\ff}$ be an $\ff$-sequence in  $X$
and $M\in[\nn]^\infty$. Let $(E,\|\cdot\|_*)$ be an infinite
dimensional seminormed linear space with Hamel basis $(e_n)_{n}$
such that $(x_s)_{s\in\ff\upharpoonright M}$ generates $(e_n)_{n}$
as an $\ff$-spreading model. Then the following are equivalent:
\begin{enumerate}
\item[(i)] The sequence $(e_n)_{n}$ is trivial.
\item[(ii)] For every $\varepsilon>0$ and every $L\in[M]^\infty$, there exists a plegma pair $(s_1,s_2)$
 in $\ff\upharpoonright L$ such that
$\|x_{s_1}-x_{s_2}\|<\varepsilon$.
\item[(iii)] The $\ff$-subsequence $(x_s)_{s\in\ff\upharpoonright M}$ contains a further norm Cauchy subsequence.
\item[(iv)] There exists $x\in X$ such that every subsequence of $(x_s)_{s\in\ff\upharpoonright M}$
 contains a further subsequence convergent to $x$.
    \end{enumerate}
\end{thm}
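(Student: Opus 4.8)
The plan is to run the cycle (i) $\Rightarrow$ (ii) $\Rightarrow$ (iii) $\Rightarrow$ (i) and then close things off with (iv) $\Rightarrow$ (iii) and (ii) $\Rightarrow$ (iv). Throughout one uses that, by hypothesis, $(x_s)_{s\in\ff\upharpoonright M}$ generates $(e_n)_n$ as an $\ff$-spreading model with respect to some null sequence $(\delta_n)_n$, and that $(e_n)_n$ is automatically spreading (Remark \ref{remark on the definition of spreading model}(i)), so by Lemma \ref{sing} triviality of $(e_n)_n$ is equivalent to $\|e_1-e_2\|_*=0$. One also repeatedly uses the elementary fact that for every $L\in[\nn]^\infty$ and $N_0\in\nn$ there is a plegma pair $(s_1,s_2)$ in $\ff\upharpoonright L$ with $\min s_1\geq N_0$: shrink $L\cap[N_0,\infty)$ by Corollary \ref{Galvin Pricley for regular thin} so that $\ff$ is very large on it, and read off a plegma pair from an infinite plegma path there (cf. the discussion preceding Lemma \ref{wq} and Theorem \ref{accessing everything with plegma path of length |s_0|}).

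For (i) $\Rightarrow$ (ii): assume $(e_n)_n$ trivial, so $\|e_1-e_2\|_*=0$. Given $\ee>0$ and $L\in[M]^\infty$, pick $l$ with $\delta_l<\ee$ and a plegma pair $(s_1,s_2)$ in $\ff\upharpoonright L$ with $\min s_1\geq M(l)$; since $(s_1,s_2)\in\text{Plm}_2(\ff\upharpoonright M)$ and $s_1(1)\geq M(l)$, the defining inequality of the $\ff$-spreading model (with $k=2$, $a_1=1$, $a_2=-1$) gives $\|x_{s_1}-x_{s_2}\|=\big|\,\|x_{s_1}-x_{s_2}\|-\|e_1-e_2\|_*\,\big|\leq\delta_l<\ee$, which is (ii). Implication (ii) $\Rightarrow$ (iii) is exactly Lemma \ref{Lemma epsilon near admissble yield Cauchy} with $\rho$ the norm metric. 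For (iii) $\Rightarrow$ (i): if $(x_s)_{s\in\ff\upharpoonright L}$ is norm Cauchy for some $L\in[M]^\infty$, then by Remark \ref{remark on the definition of spreading model}(ii) it still generates $(e_n)_n$ as an $\ff$-spreading model; given $\ee>0$ pick $m$ beyond which all pairs far out in $\ff\upharpoonright L$ are $\ee$-close, then $l$ with $\delta_l<\ee$, then a plegma pair $(s_1,s_2)$ in $\ff\upharpoonright L$ with $\min s_1\geq L(\max\{m,l\})$, to obtain $\|e_1-e_2\|_*\leq\big|\,\|e_1-e_2\|_*-\|x_{s_1}-x_{s_2}\|\,\big|+\|x_{s_1}-x_{s_2}\|<2\ee$; letting $\ee\to0$ and applying Lemma \ref{sing} gives (i). Finally (iv) $\Rightarrow$ (iii) is immediate, since $(x_s)_{s\in\ff\upharpoonright M}$ is a subsequence of itself and a norm-convergent $\ff$-subsequence is Cauchy (Proposition \ref{remark on ff Cauchy-convergent sequences and their spreading model}).

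The substantive implication is (ii) $\Rightarrow$ (iv). Condition (ii) is inherited by every $N\in[M]^\infty$ in place of $M$ (it quantifies over all infinite subsets), so by Lemma \ref{Lemma epsilon near admissble yield Cauchy} and Proposition \ref{remark on ff Cauchy-convergent sequences and their spreading model} (recall $X$ is complete), each $\ff$-subsequence $(x_s)_{s\in\ff\upharpoonright N}$ has a further $\ff$-subsequence $(x_s)_{s\in\ff\upharpoonright L}$ norm-convergent to some $x_L\in X$. It then suffices to show that $x_L$ does not depend on $L\in[M]^\infty$; calling the common value $x$, condition (iv) follows. So let $L,L'\in[M]^\infty$ with $(x_s)_{s\in\ff\upharpoonright L}\to y$ and $(x_s)_{s\in\ff\upharpoonright L'}\to y'$. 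The key construction is: for each $l\in\nn$ produce a plegma pair $(s,t)\in\text{Plm}_2(\ff\upharpoonright M)$ with $s\in\ff\upharpoonright L$, $t\in\ff\upharpoonright L'$ and $\min s,\min t\geq M(l)$. To do this, choose interleaved sequences $a_1<a_2<\cdots$ in $L$ and $b_1<b_2<\cdots$ in $L'$ with $M(l)\leq a_1<b_1<a_2<b_2<\cdots$; pick $s\in\ff$ with $s\subseteq\{a_i:i\in\nn\}$, say $s=\{a_{i_1}<\cdots<a_{i_p}\}$; set $t_0=\{b_{i_1}<\cdots<b_{i_p}\}$, which is pointwise strictly larger than $s$, hence lies in $\widehat{\ff}$ by the spreading property; and let $t\in\ff$ be the initial segment lying in $\ff$ of an infinite extension of $t_0$ inside $\{b_i:i\in\nn\}$ (it exists and extends $t_0$ by compactness of $\widehat{\ff}$). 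From $a_{i_k}<b_{i_k}<a_{i_k+1}\leq a_{i_{k+1}}$ one checks directly that $s(k)<t(k)<s(k+1)$ for $k<p$ and $s(p)<t(p)$, so $(s,t)$ is a plegma pair, with $\min s=a_{i_1}\geq M(l)$ and $\min t=b_{i_1}>a_{i_1}$. Now take $l$ large enough that $\delta_l<\ee$ and that the convergence thresholds force $\|x_s-y\|<\ee$ and $\|x_t-y'\|<\ee$; applying the $\ff$-spreading model inequality to $(s,t)$ and using $\|e_1-e_2\|_*=0$ (valid since (ii) $\Rightarrow$ (iii) $\Rightarrow$ (i) is already established) gives $\|x_s-x_t\|\leq\delta_l<\ee$, whence $\|y-y'\|<3\ee$. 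Letting $\ee\to0$ yields $y=y'$.

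The main obstacle is exactly this last construction: manufacturing a single plegma pair one leg of which lies in $\ff\upharpoonright L$ and the other in $\ff\upharpoonright L'$ for a priori unrelated infinite subsets $L,L'$ of $M$. This is what forces all subsequential limits to coincide and thereby turns (iv) into a genuine statement. Everything else is bookkeeping built on the Ramsey-type results (Theorems \ref{ramseyforplegma}, \ref{accessing everything with plegma path of length |s_0|}) and on Lemmas \ref{sing} and \ref{Lemma epsilon near admissble yield Cauchy}.
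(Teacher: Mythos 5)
Your proof is correct and follows essentially the same route as the paper. The cycle (i)$\Rightarrow$(ii)$\Rightarrow$(iii)$\Rightarrow$(i) and the trivial (iv)$\Rightarrow$(iii) are identical in substance; the paper closes the circle with (i)$\Rightarrow$(iv) while you close it with (ii)$\Rightarrow$(iv), which is a cosmetic difference once the three-way equivalence of (i), (ii), (iii) is in hand. In both cases the only non-routine step is producing a plegma pair $(s,t)$ with $s\in\ff\upharpoonright L$ and $t\in\ff\upharpoonright L'$ for arbitrary $L,L'\in[M]^\infty$, which the paper dismisses with ``it is easy to see'' and you spell out.

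One small warning on that construction: the parenthetical ``it exists and extends $t_0$ by compactness of $\widehat{\ff}$'' is not quite a complete justification. Compactness of $\widehat{\ff}$ gives a $\sqsubseteq$-maximal initial segment of a fixed infinite set in $\widehat{\ff}$, but that maximal initial segment need not lie in $\ff$ (take $\ff=\{\{m,n\}:m<n,\ n\geq 3\}$ and the set $\nn$; the maximal initial segment in $\widehat{\ff}$ is $\{1\}$, which is not in $\ff$). What saves your argument is that you are free to choose the infinite extension of $t_0$ inside $\{b_i\}$ adaptively, extending $t_0$ one coordinate at a time using the spreading property of $\widehat{\ff}$ and well-foundedness until you land in $\ff$; alternatively, first shrink $L$ and $L'$ to subsets on which $\ff$ is very large via Corollary \ref{Galvin Pricley for regular thin}, interleave those, and take the unique $\ff$-initial segments of the two interleaved sets (then $(s,t)$ is plegma directly from the interleaving and one does not need to pass through $t_0$ at all, nor does one need $t\supseteq t_0$). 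Either fix is short, and with it your argument is complete.
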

\begin{proof}

  (i)$\Rightarrow$(ii): Let $\varepsilon>0$ and  $L\in[M]^\infty$.
  Since  the $\ff$-subsequence
  $(x_s)_{s\in\ff\upharpoonright L}$ also generates  $(e_n)_{n}$ as an $\ff$-spreading model
   (see Remark \ref{remark on the definition of spreading model}),  there
  exists $n_0\in\nn$ such that for every plegma pair $(s_1,s_2)$ in $\ff\upharpoonright L$ with $\min s_1\geq L(n_0)$, we have
  \begin{equation}\label{klo}\Bigg| \Big\|x_{s_1}-x_{s_2} \Big\|-\Big\| e_1-e_2 \Big\|_*
  \Bigg|<\varepsilon.\end{equation}
  Let $(s_1,s_2)$ be such a plegma pair. Since $(e_n)_n$ is trivial we have $\|e_1-e_2\|_*=0$ and therefore by (\ref{klo}) we obtain that
$\Big\|x_{s_1}-x_{s_2} \Big\|<\varepsilon$.

  (ii)$\Rightarrow$(iii): This follows by Lemma \ref{Lemma epsilon near admissble yield Cauchy}.

  (iii)$\Rightarrow$(i): Using that $(x_s)_{s\in\ff\upharpoonright M}$ contains
  a further norm Cauchy subsequence, we easily construct a sequence $((s_1^n,s_2^n))_{n}$
   of plegma pairs in $\ff\upharpoonright M$ such that
      $s_1^n(1)\to \infty$ and $\|x_{s_1^n}-x_{s_2^n}\|<1/n$. Then
      we have \begin{equation}\big\|e_1-e_2\|_*=\lim_{n\to\infty}\|x_{s_1^n}-x_{s_2^n}\|=0.\end{equation}
    Thus, by Lemma \ref{sing}, we conclude that $(e_n)_n$ is trivial.

   (iv)$\Rightarrow$(iii):  It is straightforward.

   (i)$\Rightarrow$(iv)
      Since  every subsequence of $(x_s)_{s\in\ff \upharpoonright M}$
      generates $(e_n)_{n}$ as an $\ff$-spreading model we have that for every $L\in[M]^\infty$
      $(x_s)_{s\in\ff \upharpoonright M}$ generates a trivial spreading
      model. By the  implication  (i)$\Rightarrow$(iii)  and Proposition
      \ref{remark on ff Cauchy-convergent sequences and their spreading model},
      we have that  every subsequence of $(x_s)_{s\in\ff\upharpoonright M}$
      contains a further convergent subsequence. It remains to show that
      all the convergent subsequences of $(x_s)_{s\in\ff \upharpoonright M}$ have a common limit.

      To this end, let $L_1,L_2\in[M]^\infty$, $x_1,x_2\in X$ such
      that $(x_s)_{s\in\ff \upharpoonright L_i}$ converges to $x_i$ for
      $i\in\{1,2\}$ and
       let $\varepsilon >0$. Hence  there exists $n_0\in\nn$ such that for every
        $s\in\ff\upharpoonright L_1$ and $t\in\ff\upharpoonright L_2$ with $\min s\geq
        L_1(n_0)$ and  $\min t\geq
        L_2(n_0)$
       we have \begin{equation}\label{pre}\|x_1-x_{s}\|, \|x_2-x_{t}\| <\frac{\varepsilon}{3}
       \end{equation}
       Since $(x_s)_{s\in\ff \upharpoonright M}$  generates  the trivial sequence $(e_n)_n$
       as an $\ff$-spreading model, we may  also assume that
       \begin{equation}\label{pree}\big\| x_{s_1}-x_{s_2} \big\|= \Bigg|\Big\| x_{s_1}-x_{s_2} \big\| -\big\|e_1-e_2\Big\|_* \Bigg|
      <\frac{\varepsilon}{3},\end{equation}
        for every plegma pair $(s_1,s_2)$
      in $\ff\upharpoonright M$ with $\min s_1\geq M(n_0)$.

      It is easy to see that we can choose $s_1\in\ff \upharpoonright L_1$ with $\min s_1\geq L_1(n_0)$ and
      $s_2\in\ff\upharpoonright L_2$ with $\min s_2\geq L_2(n_0)$ such that $(s_1,s_2)$ is a plegma pair. Then
      by (\ref{pre}) and (\ref{pree}) we have \begin{equation}\label{preee}\Big\| x_1-x_2 \Big\|\leq \Big\|x_1-x_{s_1}\Big\|+\Big\|
x_{s_1}-x_{s_2} \Big\|+\Big\|x_2-x_{s_2}\Big\|<\varepsilon.\end{equation}
      Since (\ref{preee}) holds for every $\varepsilon>0$ we get  that $x_1=x_2$. The proof is complete.\end{proof}
We proceed to  present a sufficient condition for an $\ff$-sequence
to generate a Schauder basic spreading model. We need the next
definition.
\begin{defn}\label{Def of SSD}
    Let $A$ be a countable seminormalized subset of a Banach space $X$.
    We say that $A$ admits a \textit{Skipped Schauder Decomposition} (SSD) if there exist $C\geq 1$ and
    a pairwise disjoint sequence $(A_k)_{k}$ of finite subsets of $A$ such
    that the following are satisfied.
    \begin{enumerate}
      \item[(i)] $\cup_{k=1}^\infty A_k=A$.
      \item[(ii)] For every $N\in[\nn]^\infty$ not containing two successive integers and
      every sequence $(x_k)_{k\in N}$ with $x_k\in A_k$ for all
     $n\in N$,  $(x_k)_{k\in N}$ is a Schauder basic sequence of
     constant $C$.
    \end{enumerate}
  \end{defn}
  The following proposition is well known but for the sake of
  completeness we outline its proof.
  \begin{prop}
    Let $(x_n)_{n}$ be a seminormalized weakly null sequence
    in a Banach space $X$. Then for every $\varepsilon>0$ the
    set $A=\{x_n: n\in\nn\}$ admits a SSD with constant
    $C=1+\varepsilon$.
  \end{prop}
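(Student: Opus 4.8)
The plan is to carry out the classical Mazur / Bessaga--Pe\l czy\'nski selection argument, but organised so that the vectors one would normally discard are instead absorbed into the finite blocks $A_k$. First I would reduce to the case that the $x_n$ are pairwise distinct: since $(x_n)_n$ is weakly null and seminormalized, each value in $A=\{x_n:n\in\nn\}$ is attained only finitely often (otherwise some subsequence constant at a nonzero vector would be weakly null), so $A$ is countably infinite; enumerating $A$ produces a sequence which is still seminormalized and still weakly null (for each $f\in X^*$ only finitely many members of $A$ have $|f(\cdot)|\ge\delta$) and which has $A$ as its range. Put $c=\inf_n\|x_n\|>0$, fix positive reals $(\delta_k)_k$ with $\prod_k(1+\delta_k)^2\le 1+\varepsilon$, and for each $k$ fix $\eta_k>0$ small enough (depending only on $\delta_k$ and $c$) that $\tfrac{1+\eta_k/c}{1-(1+\delta_k)\eta_k/c}\le 1+\delta_k$.

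Next I would build the blocks as consecutive segments $A_k=\{x_n:p_k\le n<p_{k+1}\}$, with $1=p_1<p_2<\cdots$ chosen recursively. Given $p_1,\dots,p_{k+1}$, the subspace $V_k=\mathrm{span}(A_1\cup\cdots\cup A_k)$ is finite dimensional, so by compactness of its unit sphere together with Hahn--Banach there is a finite set $G_k\subseteq S_{X^*}$ with $\|v\|\le(1+\delta_k)\max_{f\in G_k}|f(v)|$ for all $v\in V_k$; since $(x_n)_n$ is weakly null and $G_k$ is finite, there is $M_k$ with $|f(x_n)|<\eta_k$ for all $n\ge M_k$ and all $f\in G_k$, and I set $p_{k+2}=\max(p_{k+1}+1,M_k)$. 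Then the $A_k$ are pairwise disjoint, finite and nonempty, $\bigcup_k A_k=A$, and, crucially, every element of $A_j$ with $j\ge k+2$ equals some $x_n$ with $n\ge p_{k+2}\ge M_k$, hence is annihilated to within $\eta_k$ by every $f\in G_k$.

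To verify the SSD property with $C=1+\varepsilon$, take $N=\{m_1<m_2<\cdots\}$ containing no two consecutive integers and $z_i\in A_{m_i}$ (so $\|z_i\|\ge c>0$). Fix scalars $(a_l)$ and $p>i$ and write $S_j=\|\sum_{l\le j}a_lz_l\|$. Since $m_{i+1}\ge m_i+2$ we have $z_{i+1}\in A_{m_{i+1}}\subseteq\bigcup_{j\ge m_i+2}A_j$, so $\max_{f\in G_{m_i}}|f(z_{i+1})|<\eta_{m_i}$, while $\sum_{l\le i}a_lz_l\in V_{m_i}$; applying the norming property of $G_{m_i}$ gives $S_i\le(1+\delta_{m_i})\bigl(S_{i+1}+|a_{i+1}|\,\eta_{m_i}\bigr)$, and $|a_{i+1}|\le c^{-1}\|a_{i+1}z_{i+1}\|\le c^{-1}(S_{i+1}+S_i)$. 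Substituting and solving the resulting linear inequality for $S_i$ (legitimate by the choice of $\eta_{m_i}$) yields $S_i\le(1+\delta_{m_i})^2S_{i+1}$. Telescoping over the pairwise distinct indices $m_i,\dots,m_{p-1}$ gives $S_m\le\prod_k(1+\delta_k)^2\,S_p\le(1+\varepsilon)S_p$ for every $m\le p$, so by the standard (Grunblum) basis criterion $(z_i)_i$ is Schauder basic with constant at most $1+\varepsilon$.

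The step I expect to be the main obstacle is precisely this last estimate: at first sight it looks circular, since the bound used for $|a_{i+1}|$ contains $S_i$, the very quantity being estimated. The resolution is to regard $S_i\le(1+\delta_{m_i})\bigl(S_{i+1}+c^{-1}\eta_{m_i}(S_i+S_{i+1})\bigr)$ as a genuine inequality in the unknown $S_i$ and solve it, which succeeds exactly because $\eta_{m_i}$ was fixed small in advance. A second point requiring care is that the hypothesis that $N$ has no two consecutive integers is used essentially: it is what forces $z_{i+1}$ into a block $A_j$ with $j\ge m_i+2$, i.e. a block whose members were made $G_{m_i}$-small, whereas the members of the intervening block $A_{m_i+1}$ carry no such control — so the decomposition genuinely has to be ``skipped.''
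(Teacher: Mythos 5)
Your proof is correct, but it takes a genuinely different route from the paper. The paper embeds $X$ into $C[0,1]$, which has a monotone Schauder basis, and then uses a sliding-hump argument to perturb each $x_n$ to a finitely supported vector $y_n$ so that the blocks $F_{k_1}$, $F_{k_2}$ with $k_2-k_1>1$ have separated supports; on a skipped index set the $y_n$'s form a block basis (hence basic with constant $1$), and the $x_n$'s, being a small perturbation, are basic with constant $1+\varepsilon$. You instead work intrinsically in $X$: you build the blocks as consecutive segments, fix at each stage a finite set $G_k\subseteq S_{X^*}$ that $(1+\delta_k)$-norms the span of the first $k$ blocks (by compactness of the finite-dimensional sphere plus Hahn--Banach), and use weak nullity to make all vectors two or more blocks ahead nearly annihilated by $G_k$; the Grunblum criterion is then verified directly by a telescoping estimate. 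These are, in essence, the two classical proofs of Mazur's lemma adapted to the skipped setting. The paper's route is shorter to state but relies on the universality and monotone basis of $C[0,1]$ and on the standard small-perturbation stability for block bases; yours avoids the embedding and is more self-contained, at the cost of the more delicate bookkeeping with $\eta_k$ and the self-referential inequality, which you correctly recognize and resolve. One point in your favour: your preliminary reduction to pairwise distinct $x_n$'s addresses a genuine wrinkle in the statement (the $A_k$'s must be pairwise disjoint as subsets of $A$) that the paper's sketch passes over silently.
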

  \begin{proof}
    We may assume that $X$ has a Schauder basis $(e_n)_{n}$ with
    basis constant K=1 (for example we may assume that $X=C[0,1]$).
    By induction and using the sliding hump argument, we define (1) a partition $(F_n)_{n}$ of $\nn$
    into finite pairwise disjoint sets and (2) a sequence
    $(y_n)_{n}$ of finitely supported vectors in $X$ such that
    the following are fulfilled:
    \begin{enumerate}
      \item[(i)]  For every $k\in\nn$ and $n\in F_k$, we have $\|x_n-y_n\|<\varepsilon/2^k$   and
      \item[(ii)] for every $k_2>k_1$ with  $k_2-k_1>1$, $n_1\in F_{k_1}$ and $n_2\in F_{k_2}$,
      we have
      $\max\text{supp}(y_{n_1})<\min\text{supp}(y_{n_2})$.
    \end{enumerate}
    Setting $A_k=\{x_n:n\in F_k\}$, $k\in\nn$, it is
    easy to check that $(A_k)_{k}$ satisfies conditions (i) and (ii) of Definition \ref{Def of SSD}.
  \end{proof}
   \begin{thm}\label{ssd giving Schauder basic}
    Let $A$ be a subset of a Banach space $X$. If $A$ admits a SSD with constant
    $C$, then every non trivial spreading model of any order of $A$
    is Schauder basic of constant $C$.
  \end{thm}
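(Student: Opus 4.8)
The plan is to unwind the definition of a $\xi$-spreading model and, after a Ramsey reduction, to recognize that the generating $\ff$-sequence produces along every plegma $m$-tuple a finite sequence which is a ``skipped'' block in the sense of Definition \ref{Def of SSD}. So let $(e_n)_n$ be a non trivial spreading model of some order $\xi\ge 1$ of $A$, witnessed by a regular thin family $\ff$ with $o(\ff)=\xi$, an $\ff$-sequence $(x_s)_{s\in\ff}$ in $A$, an $M\in[\nn]^\infty$ and a null sequence $(\delta_n)_n$ such that $(x_s)_{s\in\ff\upharpoonright M}$ generates $(e_n)_n$ with respect to $(\delta_n)_n$. Since $A=\bigcup_k A_k$ with the $A_k$ pairwise disjoint, I would define $\psi\colon\ff\to\nn$ by letting $\psi(s)$ be the unique $k$ with $x_s\in A_k$.

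First I would show that $\psi$ is hereditarily nonconstant in $M$. Indeed, if there were $L\in[M]^\infty$ with $\psi$ constant, equal to some $k_0$, on $\ff\upharpoonright L$, then $\{x_s:s\in\ff\upharpoonright L\}\subseteq A_{k_0}$ is finite; since $\ff\upharpoonright L$ is thin (Corollary \ref{fact every regular is large}), Proposition \ref{th2} yields $L'\in[L]^\infty$ on which $s\mapsto x_s$ is constant. By Remark \ref{remark on the definition of spreading model}(ii), $(x_s)_{s\in\ff\upharpoonright L'}$ still generates $(e_n)_n$, so applying the defining inequality to plegma pairs (which exist in a suitable further subset, after passing via Corollary \ref{Galvin Pricley for regular thin} to a set where $\ff$ is very large) forces $\|e_1-e_2\|_*=0$; by Lemma \ref{sing} this makes $(e_n)_n$ trivial, a contradiction. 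Hence $\psi$ is hereditarily nonconstant in $M$, and Corollary \ref{Proposition combinatorial for SSD} provides $N\in[M]^\infty$ such that $\psi(s_2)-\psi(s_1)>1$ for every plegma pair $(s_1,s_2)$ in $\ff\upharpoonright N$.

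Next I would fix $k\le m$ in $\nn$ and $a_1,\dots,a_m\in[-1,1]$, take an arbitrarily large $l\ge m$, and choose a plegma $m$-tuple $(s_1,\dots,s_m)$ in $\ff\upharpoonright N$ with $s_1(1)\ge N(l)$ (such tuples exist, e.g. as $m$ consecutive terms of an infinite plegma path, after shrinking $N$ so that $\ff$ is very large, by Corollary \ref{Galvin Pricley for regular thin}). By Proposition \ref{newq1}(ii) every pair $(s_i,s_j)$ with $i<j$ is plegma, so $\psi(s_1)<\psi(s_2)<\dots<\psi(s_m)$ with consecutive gaps at least $2$; thus $\{\psi(s_1),\dots,\psi(s_m)\}$ contains no two successive integers and $x_{s_i}\in A_{\psi(s_i)}$, whence by Definition \ref{Def of SSD}(ii) the sequence $(x_{s_i})_{i=1}^m$ is Schauder basic of constant $C$, i.e. $\|\sum_{i=1}^k a_i x_{s_i}\|\le C\|\sum_{i=1}^m a_i x_{s_i}\|$. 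Since $N\subseteq M$ (so $N(l)\ge M(l)$), $(x_s)_{s\in\ff\upharpoonright N}$ generates $(e_n)_n$ with respect to $(\delta_n)_n$ as well (Remark \ref{remark on the definition of spreading model}(ii)), and applying the defining inequality of Definition \ref{Definition of spreading model} with both $k$ and $m$ gives
\[
\Big\|\sum_{i=1}^k a_i e_i\Big\|_*\le \Big\|\sum_{i=1}^k a_i x_{s_i}\Big\|+\delta_l\le C\Big\|\sum_{i=1}^m a_i x_{s_i}\Big\|+\delta_l\le C\Big\|\sum_{i=1}^m a_i e_i\Big\|_*+(C+1)\delta_l .
\]
Letting $l\to\infty$ kills the error term, and homogeneity removes the restriction $a_i\in[-1,1]$; hence $(e_n)_n$ is Schauder basic of constant $C$.

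The main obstacle I anticipate is the first step: excluding the constant behaviour of $\psi$, and simultaneously securing the hypothesis of Corollary \ref{Proposition combinatorial for SSD}. Once the set $N$ is produced, the transfer of the basis inequality from $(x_{s_i})_{i=1}^m$ to $(e_i)_{i=1}^m$ is routine bookkeeping with $(\delta_n)_n$. A secondary technical nuisance is the existence of plegma $m$-tuples with arbitrarily large minimum inside $\ff\upharpoonright N$, which is handled by passing via Corollary \ref{Galvin Pricley for regular thin} to a subset in which $\ff$ is very large and then using infinite plegma paths.
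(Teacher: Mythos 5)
Your proof follows essentially the same route as the paper's: define the index map $\psi$ from the SSD partition, show it is hereditarily nonconstant on $M$ (else a constant $\ff$-subsequence would force $(e_n)_n$ to be trivial), invoke Corollary \ref{Proposition combinatorial for SSD} to get a set $N$ on which all plegma pairs jump by more than one in $\psi$-value, and then transfer the SSD basis inequality to $(e_n)_n$ via the defining inequalities. You spell out the final $\delta_l$-bookkeeping step that the paper leaves to the reader as "this easily yields," but there is no substantive difference in approach.
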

\begin{proof} Let $1\leq \xi$ be a countable ordinal and
$(e_n)_n$ be a non trivial spreading model of  $A$ of order $\xi$.
Let $\ff$ be a regular thin family with $o(\ff)=\xi$,
$(x_s)_{s\in\ff}$ be an $\ff$-sequence in $A$ and $M\in[\nn]^\infty$
such that $(x_s)_{s\in\ff\upharpoonright M}$ generates $(e_n)_{n}$
as an $\ff$-spreading model. Let $(A_k)_{k}$ be a partition of $A$
satisfying condition (ii) of Def. \ref{Def of SSD}. Finally, let
    $\varphi:\ff\upharpoonright M\to \nn$,
    defined  by $\varphi(s)=k$ if $x_s\in F_k$.

    Observe that $\varphi$ is hereditarily nonconstant in $M$. Indeed, otherwise
    there exists $L\in[M]^\infty$ and  $k_0\in\nn$ such that  $x_s\in F_{k_0}$ for every  $s\in\ff\upharpoonright L$
    and $s\in\ff\upharpoonright L$.
    Since $F_{k_0}$ is finite, by Proposition \ref{th2} there exists $N\in[L]^\infty$ such that
    $(x_s)_{s\in\ff\upharpoonright N}$ is constant. By part (ii) of Remark \ref{remark on the definition of spreading model},
     the $\ff$-sequence $(x_s)_{s\in\ff\upharpoonright N}$
    also generates $(e_n)_n$ as an $\ff$-spreading model. But then, since $(x_s)_{s\in\ff\upharpoonright N}$ is constant,
    the sequence $(e_n)_n$ should be trivial which is a contradiction.
    Hence $\varphi$ is hereditarily nonconstant in $M$ and therefore by Corollary \ref{Proposition combinatorial for SSD}
    there exists $N\in[M]^\infty$ such that $\varphi(s_2)-\varphi(s_1)>1$ for every plegma pair
    $(s_1,s_2)$ in $\ff\upharpoonright N$. By the SSD property of $A$ we have that for every $1\leq m <l\in\nn$ and for every
    \begin{equation}\|\sum_{j=1}^ma_jx_{s_j}\|\leq C\|\sum_{j=1}^la_jx_{s_j}\|,\end{equation}
    for every plegma $l$-tuple $(s_j)_{j=1}^l$ in $\ff\upharpoonright N$ and $a_1,...,a_l\in\rr$.
    This easily yields that $(e_n)_{n}$ is a Schauder basic sequence with constant $C$.
  \end{proof}

\subsection{Spreading models generated by subordinated
$\ff$-sequences} Let $(x_n)_n$ be a weakly convergent sequence  in a
Banach space $X$ which is not norm Cauchy and assume that it
generates a spreading model $(e_n)_n$. It is well known (see
\cite{BL}, \cite{BS}) that  $(e_n)_n$  is either an unconditional or
a singular spreading sequence. In \cite{AKT} we extended this fact
for subordinated $k$-sequences. Here we will show that similar
results also hold true for $\ff$-sequences where $\ff$ is a regular
thin family.
\subsubsection{Unconditional spreading models}
 Let $n\in\nn$ and for every $1\leq i\leq n$ let
$F_i\subseteq [\nn]^{<\infty}$. We will say that $(F_i)_{i=1}^n$ is
\textit{completely plegma connected} if for every choice of $s_i\in
F_i$, the $n$-tuple $(s_i)_{i=1}^n$ is  a plegma family. Also for a
subset $A$ of a Banach space $X$, $\text{conv} A$ denotes  the
convex hull of $A$.
\begin{lem}\label{Lemma finding convex  means}
  Let $X$ be a Banach space, $n\in\nn$, $\ff_1,\ldots,\ff_n$ be regular thin families,
 and $L\in[\nn]^\infty$.  Assume that   for every $i=1,..., n$,
 there exists  a continuous map  $\widehat{\varphi}_i:\widehat{\ff_i}\upharpoonright L\to (X,w)$.
 Then for every   $\varepsilon>0$ there exists a completely plegma connected family
  $(F_i)_{i=1}^n$  such that    $F_i\subseteq [\ff_i\upharpoonright L]^{<\infty}$
  and
  $\text{dist}\Big(\widehat{\varphi}_i(\varnothing), \text{conv
  }\widehat{\varphi}_i(F_i)\Big)<\varepsilon,$
  for every $i=1,...,n$.
\end{lem}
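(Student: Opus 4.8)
The plan is to build the families $F_i$ by a single finite recursion over ``levels'' (columns), sweeping the $n$ families in the fixed order $1,\dots,n$ inside each level and across levels, and always choosing the new coordinates strictly beyond everything used so far; at each step Mazur's theorem provides a finite convex combination of one-step extensions, and a telescoping estimate along the resulting finite trees yields the approximation of $\widehat{\varphi}_i(\varnothing)$.

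First I would record the basic convergence fact. For each $i$ the assignment $s\mapsto\widehat{\varphi}_i(s)$ on $\ff_i\upharpoonright L$ is subordinated with respect to $(X,w)$ with witness $\widehat{\varphi}_i$ (cf.\ Proposition \ref{subordinating yields convergence}); more precisely, if $t\in\widehat{\ff_i}\upharpoonright L$ is not $\sqsubseteq$-maximal then, $\widehat{\ff_i}$ being spreading, there is $b_0$ with $t\cup\{b\}\in\widehat{\ff_i}$ for all $b\geq b_0$, hence infinitely many $b\in L$ with $t\cup\{b\}\in\widehat{\ff_i}\upharpoonright L$, and by continuity of $\widehat{\varphi}_i$ one has $\widehat{\varphi}_i(t\cup\{b\})\to\widehat{\varphi}_i(t)$ weakly as $b\to\infty$ through such $b$. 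By Mazur's theorem, for every $c\in\nn$ the vector $\widehat{\varphi}_i(t)$ lies in the norm-closure of $\text{conv}\{\widehat{\varphi}_i(t\cup\{b\}):b\in L,\ b>c,\ t\cup\{b\}\in\widehat{\ff_i}\}$, so for every $\delta>0$ some finite convex combination of such vectors is within $\delta$ of $\widehat{\varphi}_i(t)$.

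Next, fixing $\varepsilon>0$ and $\delta_p=\varepsilon\,2^{-p-1}$, I would construct finite trees $T_i\subseteq\widehat{\ff_i}\upharpoonright L$ (each started as $\{\varnothing\}$) with convex weights, keeping a running integer $c$ (initially $0$) that is only increased: at level $p=1,2,\dots$ and for $i=1,\dots,n$ in this order, each node $t\in T_i$ with $|t|=p-1$ that is not $\sqsubseteq$-maximal in $\widehat{\ff_i}$ is expanded by adjoining children $t\cup\{b\}$, where the $b$'s (in $L$, greater than the current $c$, with $t\cup\{b\}\in\widehat{\ff_i}$) are chosen so that a convex combination $\sum_b\mu^t_b\widehat{\varphi}_i(t\cup\{b\})$ is within $\delta_p$ of $\widehat{\varphi}_i(t)$; once family $i$ has been treated at level $p$, $c$ is reset to the largest coordinate just used by family $i$. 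This terminates: $\widehat{\ff_i}$ is compact, hence contains no infinite $\sqsubseteq$-chain, so every branch of the finitely branching tree $T_i$ is finite, whence $T_i$ is finite by König's lemma, and after finitely many levels every node is $\sqsubseteq$-maximal, i.e.\ lies in $\ff_i$. Putting $F_i=$ the set of leaves of $T_i$ (a finite subset of $\ff_i\upharpoonright L$) and $w_s=$ the product of the weights along the branch of a leaf $s$, one has $w_s\geq0$, $\sum_{s\in F_i}w_s=1$, and telescoping the level-by-level estimates the error of replacing $\widehat{\varphi}_i(\varnothing)$ by $\sum_{s\in F_i}w_s\widehat{\varphi}_i(s)$ is at most $\sum_{p\geq1}\delta_p<\varepsilon$; hence $\text{dist}(\widehat{\varphi}_i(\varnothing),\text{conv}\,\widehat{\varphi}_i(F_i))<\varepsilon$. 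For complete plegma connectedness, by construction any coordinate used at level $p$ by family $i$ exceeds every coordinate used at levels $\leq p$ by families $<i$ and at levels $<p$ by all families, and is smaller than every coordinate used at level $p$ by families $>i$ and at all levels $>p$; so for $s_i\in F_i$ one gets $s_i(k)<s_j(k)$ when $i<j$ and $k\leq\min(|s_i|,|s_j|)$, and $s_i(k)<s_j(k+1)$ when $k\leq\min(|s_i|,|s_j|-1)$, which is precisely Definition \ref{defn plegma}.

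The one real point requiring care is the interleaving: one needs \emph{every} choice $s_i\in F_i$ (not merely the ``level-matched'' ones) to form a plegma $n$-tuple, and this forces the order of construction — families in order $1,\dots,n$ within each level, levels in increasing order, new coordinates always past the current frontier. Given that, termination is immediate from well-foundedness of $\widehat{\ff_i}$ together with König's lemma, and the convex-combination bookkeeping — a finite telescoping over the trees $T_i$ with summable errors $\delta_p$ — is routine. (If some $o(\ff_i)=0$ the statement is degenerate, since a plegma family has nonempty members; one tacitly assumes each $o(\ff_i)\geq1$.)
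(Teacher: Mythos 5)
Your proof is correct, and it reaches the lemma by a genuinely different organization of the recursion than the paper's. The paper argues by transfinite induction on $\xi=\max_i o(\ff_i)$: one application of Mazur's theorem handles the first coordinate, producing staircased blocks $\Lambda_1<\cdots<\Lambda_n$ of singletons, and then the inductive hypothesis is invoked simultaneously for the derivative families $(\ff_i)_{(l^i_j)}$ restricted past $\max\Lambda$ --- so the number of families under consideration widens from $n$ to $\sum_i m_i$ at each step --- after which one prepends the chosen first coordinates to glue the pieces back. You instead unroll the recursion into an explicit frontier construction on $n$ finite trees $T_i\subseteq\widehat{\ff_i}\upharpoonright L$, sweeping families $1,\dots,n$ within each level and levels in increasing order, with a single monotone counter $c$ enforcing the interleaving; you replace the transfinite induction by well-foundedness of the compact tree $\widehat{\ff_i}$ together with K\"onig's lemma to get termination, and you control the error by a telescoping sum of $\delta_p$'s weighted by the branch products. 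Both proofs rest on the same two facts --- Mazur's theorem for norm approximation of $\widehat{\varphi}_i(t)$ by convex combinations of one-step extensions $\widehat{\varphi}_i(t\cup\{b\})$, and the spreading property of $\widehat{\ff_i}$ to guarantee such extensions exist past any prescribed frontier --- and your plegma-connectedness verification (checking Definition~\ref{defn plegma}(i) across families at the same level, and (ii) across adjacent levels) is exactly what the paper's ordering $\Lambda_1<\cdots<\Lambda_n<M$ encodes implicitly through the induction. Your version is more concrete, avoids the explicit ordinal induction and the widening of the family collection, and makes the termination mechanism visible; the paper's version is shorter to state because it hides the tree bookkeeping inside the inductive hypothesis. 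Your parenthetical remark on the degenerate case $o(\ff_i)=0$ is well taken and applies equally to the paper's ``trivial'' base case.
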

\begin{proof}
 We will use induction on $o\big((\ff_i)_{i=1}^l\big):=\max\{o(\ff_i):1\leq i\leq n\}$. If
  $o\big((\ff_i)_{i=1}^l\big)=0$, i.e. $\ff_i=\{\varnothing\}$ for all $1\leq i\leq n$ the result follows trivially.
   Let $1\leq \xi<\omega_1$
  and suppose that the lemma holds true if $o\big((\ff_i)_{i=1}^n\big)<\xi$.
  Let $n\in\nn$, $L\in[\nn]^\infty$ and
  $\ff_1,\ldots,\ff_n$ be regular thin families with
  $o\big((\ff_i)_{i=1}^l\big)=\xi$ and
  assume that  for every $1\leq i\leq n$, there exists a continuous map
 $\widehat{\varphi}_i:\widehat{\ff_i}\upharpoonright L\to (X,w)$.

 Fix $i\in\{1,...,n\}$. We may  suppose that $\ff_i $ is very large in $L$ and  therefore every singleton $\{l\}$ with $l\in L$
  belongs to $\widehat{\ff_i}$.  By the continuity of   $\widehat{\varphi}_i$,
  we get  that   $w\text{-}\lim_{l\in L}\widehat{\varphi}_i(\{l\})=\widehat{\varphi}_i(\varnothing)$. By Mazur's
  theorem,
   we may  choose a finite subset $\Lambda_i=\{l^i_1<...<l^i_{m_i}\}$  of $L$
    such that
\begin{equation}\label{bgg}
\text{dist}\Big(\widehat{\varphi}_i(\varnothing), \text{conv}\  \widehat{\varphi}_i(\Lambda_i)\Big)<\varepsilon/2,
\end{equation}
for every $1\leq i\leq n$. We may also assume that
\begin{equation}\label{bg}
\Lambda_1<...<\Lambda_n.
\end{equation}
  Let $\Lambda=\cup_{i=1}^n \Lambda_i$ and let $M=\{l\in L: l>\max\Lambda\}$.
  Fix $1\leq i\leq n$ and $1\leq j\leq m_i$. We set
   $\g^i_j=(\ff_i)_{(l^i_j)}=\{s\in [\nn]^{<\infty}: l^i_j< s \ \text{and} \ \{l^i_j\}\cup s\in\ff_i\}$ and
   let
  $\widehat{\varphi}^i_j:\widehat{\g^i_j}\upharpoonright M\to ( X, w)$, defined by
  $\widehat{\varphi}^i_j(s)=\widehat{\varphi}_{i}(\{l^i_j\}\cup s)$.
  Notice  that $\widehat{\varphi}^i_j$ is a continuous map and
since $o(\g^i_j)<o(\ff_i)$,
$o\Big(\big(\big(\g^i_j\big)_{j=1}^{m_i}\big)_{i=1}^n\Big)
  <o\big((\ff_i)_{i=1}^n\big)=\xi$. Therefore using our inductive assumption we may choose a
  completely plegma connected family
  $\big(\big(G^i_j\big)_{j=1}^{m_i}\big)_{i=1}^n$ such that
  $G^i_j\subseteq [ \g^i_j\upharpoonright M]^{<\infty}$
  and
  \begin{equation}\label{psiy}
\text{dist} \Big(\widehat{\varphi}^i_j(\varnothing), \text{conv}\ \widehat{\varphi}^i_j(G^i_j)\Big)<\varepsilon/2,
  \end{equation}
for every $1\leq i\leq  n$ and $1\leq j\leq m_i$.

  For every $1\leq i\leq n$ and $1\leq j\leq m_i$ we set
  $F^i_j=\big\{\{l^i_j\}\cup s: s\in G^i_j \big\}$.  By
equation (\ref{bg}) and the choice of
$\big(\big(G^i_j\big)_{j=1}^{m_i}\big)_{i=1}^n$, we easily see that
$\big(\big(F^i_j\big)_{j=1}^{m_i}\big)_{i=1}^n$ is  completely
plegma connected. Moreover,
 observe that  $\widehat{\varphi}^i_j(\varnothing)=\widehat{\varphi}_i(\{l^i_j\})$ and
  $\widehat{\varphi}^i_j(G^i_j)=\widehat{\varphi}_i(F^i_j)$. Hence,
equation (\ref{psiy}) translates to
 \begin{equation}\label{gti}
\text{dist} \Big(\widehat{\varphi}_i(\{l^i_j\}), \text{conv}\ \widehat{\varphi}_i(F^i_j)\Big)<\varepsilon/2,
  \end{equation}
for every $1\leq i\leq  n$ and $1\leq j\leq m_i$.

For every $1\leq i\leq n$, let $F_i=\bigcup_{j=1}^{m_i}F^i_j$.
Clearly $(F_i)_{i=1}^n$ is a completely  plegma connected family
with
 $F_i\subseteq [ \ff_i\upharpoonright L]^{<\infty}$, for every $i=1,...,n$.
Finally, fix $1\leq i\leq n$. By  (\ref{gti}) we have
$\text{dist}\big(x,\text{conv} \
\widehat{\varphi}_i(F_i)\big)<\varepsilon/2$, for every
$x\in\text{conv} \ \widehat{\varphi}_i(\Lambda_i)$ and therefore by
(\ref{bgg})  we conclude that $\text{dist}
\Big(\widehat{\varphi}_i(\varnothing), \text{conv}\
\widehat{\varphi}_i(F_i)\Big)<\varepsilon$.
\end{proof}

\begin{thm}\label{unconditional spreading model}
Let $X$ be a Banach space, $\ff$ be a regular thin family and
$L\in[\nn]^\infty$. Let $(x_s)_{s\in\ff\upharpoonright L}$ be an
$\ff$-subsequence in $X$ generating an $\ff$-spreading model
$(e_n)_n$. Also assume that  $(x_s)_{s\in\ff\upharpoonright L}$ is
seminormalized,  subordinated (with respect to the weak topology of
$X$) and  weakly null. Then $(e_n)_n$  is an $1$-unconditional
spreading sequence.
\end{thm}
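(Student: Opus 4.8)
The plan is to read off two facts about the seminorm $\|\cdot\|_*$ of the space $E$ generated by $(e_n)_n$: that $(e_n)_n$ is non-trivial, and that it is $1$-unconditional in the sense of the preliminaries, i.e. $\|\sum_{i\in F}a_ie_i\|_*\le\|\sum_{i=1}^ka_ie_i\|_*$ for all $k$, all $a_1,\dots,a_k\in\rr$ and all $F\subseteq\{1,\dots,k\}$. Throughout, let $\widehat\varphi:\widehat\ff\upharpoonright L\to(X,w)$ be the continuous map witnessing subordination; since the $\ff$-subsequence is weakly null, Proposition \ref{subordinating yields convergence} gives $\widehat\varphi(\varnothing)=0$. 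Fix a null sequence $(\delta_n)_n$ of positive reals with respect to which $(x_s)_{s\in\ff\upharpoonright L}$ generates $(e_n)_n$.

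First I would record non-triviality. Were $(e_n)_n$ trivial, then Theorem \ref{Theorem equivalent forms for having norm on the spreading model}, (i)$\Rightarrow$(iii), together with Proposition \ref{remark on ff Cauchy-convergent sequences and their spreading model}, would produce a further norm-convergent $\ff$-subsequence of $(x_s)_{s\in\ff\upharpoonright L}$; its limit would be simultaneously the norm limit and the weak limit, hence equal to $\widehat\varphi(\varnothing)=0$, contradicting seminormalization. Hence $(e_n)_n$ is non-trivial and $\|\cdot\|_*$ restricts to a norm on $E$.

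The substantive step is the unconditionality estimate, extracted from Lemma \ref{Lemma finding convex means}. Fix $k\le l$, scalars $a_1,\dots,a_k\in[-1,1]$, a set $F\subseteq\{1,\dots,k\}$ and $\varepsilon>0$, and set $L_l=\{L(j):j\ge l\}$. Apply Lemma \ref{Lemma finding convex means} with $n=k$, with every $\ff_i=\ff$ and every $\widehat\varphi_i$ the restriction of $\widehat\varphi$ to $\widehat\ff\upharpoonright L_l$ (continuous, with value $0$ at $\varnothing$). This yields a completely plegma connected family $(F_i)_{i=1}^k$ with each $F_i\subseteq[\ff\upharpoonright L_l]^{<\infty}$ and, for each $i$, a convex combination $y_i=\sum_{s\in F_i}\lambda^i_sx_s$ with $\|y_i\|<\varepsilon$. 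Pick an arbitrary $s_i\in F_i$ for $i\in F$; expanding $\sum_{i\in F}a_ix_{s_i}+\sum_{i\notin F}a_iy_i$ over all selections $(s_i)_{i\notin F}\in\prod_{i\notin F}F_i$ exhibits it as a convex combination of the vectors $\sum_{i=1}^ka_ix_{s_i}$ running over genuine plegma $k$-tuples $(s_i)_{i=1}^k$ in $\ff\upharpoonright L_l$ (complete plegma connectedness) with $s_1(1)\ge L(l)$, so — since $(x_s)_{s\in\ff\upharpoonright L}$ generates $(e_n)_n$ with respect to $(\delta_n)_n$ — each such vector, and hence the convex combination, has norm at most $\|\sum_{i=1}^ka_ie_i\|_*+\delta_l$. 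Since $\|\sum_{i\notin F}a_iy_i\|\le k\varepsilon$, the triangle inequality gives $\|\sum_{i\in F}a_ix_{s_i}\|\le\|\sum_{i=1}^ka_ie_i\|_*+\delta_l+k\varepsilon$; and as $(s_i)_{i\in F}$ is a plegma tuple far out, using that $(e_n)_n$ is spreading (Remark \ref{remark on the definition of spreading model}(i)) and the generating property once more yields $\|\sum_{i\in F}a_ie_i\|_*\le\|\sum_{i=1}^ka_ie_i\|_*+2\delta_l+k\varepsilon$. Letting $l\to\infty$ and $\varepsilon\to0$, and then rescaling away the restriction $a_i\in[-1,1]$, gives the required inequality for all $k$, $F$ and $a_1,\dots,a_k\in\rr$; together with the previous paragraph this proves the theorem.

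Finally, a word on what I expect to be delicate. If one insists on $(e_n)_n$ being $1$-unconditional in the stronger, sign-symmetric sense $\|\sum_i\varepsilon_ia_ie_i\|_*=\|\sum_ia_ie_i\|_*$, the suppression estimate above does not by itself suffice, and the weak nullity of $(x_s)_{s\in\ff\upharpoonright L}$ must be used beyond it — for instance through the weak lower semicontinuity of the norm of $X$, applied along plegma paths in $\ff\upharpoonright\upharpoonright L$ (Theorem \ref{accessing everything with plegma path of length |s_0|}) together with the fact that $x_s\to0$ weakly as $\min s\to\infty$, to see that the limiting seminorm cannot distinguish the two signs of a coefficient. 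Equivalently one shows that the non-trivial unconditional spreading sequence $(e_n)_n$ is weakly null — the Ces\`aro-summable alternative of Proposition \ref{equiv forms for 1-subsymmetric weakly null}(ii) forces this, the $\ell^1$-alternative being excluded by means of the convex combinations produced above — and then invokes Proposition \ref{equiv forms for 1-subsymmetric weakly null}(i). Making the convex combinations of Lemma \ref{Lemma finding convex means} control $\|\sum_i\varepsilon_ia_ie_i\|_*$ rather than only the suppressions $\|\sum_{i\in F}a_ie_i\|_*$ is the one point that genuinely requires care.
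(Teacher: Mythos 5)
Your proof is correct and follows essentially the same route as the paper's: non-triviality via Theorem \ref{Theorem equivalent forms for having norm on the spreading model} and weak nullity, then $1$-unconditionality by using Lemma \ref{Lemma finding convex means} to produce a completely plegma connected family $(F_i)$ whose convex combinations of the $x_s$ are small in norm, substituting these for the deleted coordinates, expanding as a convex combination over genuine plegma tuples, and passing to the seminorm $\|\cdot\|_*$. The only cosmetic difference is that you delete an arbitrary subset $F$ of coordinates in a single pass, whereas the paper deletes one coordinate $p$ at a time (which suffices since the paper's notion of $C$-unconditionality is the suppression one and can be iterated).

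Your closing paragraph is a reasonable caution, but it is not needed here: the paper defines $C$-unconditional in its preliminaries precisely as the suppression estimate $\|\sum_{n\in F}a_nx_n\|\le C\|\sum_{n=1}^m a_nx_n\|$, which is exactly what your main argument (and the paper's) establishes. Your sketch of how one could upgrade to sign-symmetric $1$-unconditionality — showing $(e_n)$ is not $\ell^1$ via the convex combinations, invoking Ces\`aro summability and then Proposition \ref{equiv forms for 1-subsymmetric weakly null}(i) — is plausible, but it goes beyond what the theorem, read against the paper's own definition, actually claims or what the paper's proof supplies.
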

\begin{proof} We first show that $(e_n)_{n}$ is non trivial. Indeed, otherwise by Theorem
\ref{Theorem equivalent forms for having norm on the spreading
  model} there exists $M\in[L]^\infty$ and $x_0\in X$ such that
  the $\ff$-subsequence $(x_s)_{s\in\ff\upharpoonright M}$
is norm   convergent to $x_0$. Since $M\subseteq L$,
$(x_s)_{s\in\ff\upharpoonright M}$ is also weakly null and therefore
$x_0=0$.  But this is a contradiction since
$(x_s)_{s\in\ff\upharpoonright L}$ is seminormalized.

We proceed to show that $(e_n)_n$ is $1$-unconditional. Fix
$n\in\nn$, $1\leq p\leq n$ and $a_1,\ldots,a_n\in [-1,1]$. It
suffices to show  that for every $\varepsilon >0$ we have
\begin{equation}\label{zs}\Big{\|}\sum_{\substack{i=1\\i\neq
p}}^na_ie_i\Big{\|}_*<\Big{\|}\sum_{i=1}^na_ie_i\Big{\|}_*+\varepsilon.\end{equation}
Indeed, fix $\varepsilon>0$. Since $(x_s)_{s\in\ff\upharpoonright
L}$ generates $(e_n)_n$ as an $\ff$-spreading model, by passing to a
final segment of $L$ if it is necessary we may assume that
\begin{equation}\label{eq8}
\Bigg{|} \Big{\|}\sum_{\substack{i=1\\i\neq p}}^na_ix_{s_i}\Big{\|} -\Big{\|}
\sum_{\substack{i=1\\i\neq p}}^na_ie_i\Big{\|}_* \Bigg{|}<\frac{\varepsilon}{3}
\;\;\text{and}\;\; \Bigg{|}\Big{\|}\sum_{i=1}^na_i
x_{s_i}\Big{\|}-\Big{\|}\sum_{i=1}^na_ie_i\Big{\|}_*
\Bigg{|}<\frac{\varepsilon}{3},
\end{equation}
 for every plegma $n$-tuple $(s_i)_{i=1}^n$ in $\ff\upharpoonright
L$.  Since $(x_s)_{s\in\ff\upharpoonright L}$ is subordinated with
respect to the weak topology, there exists a continuous map
$\widehat{\varphi}:\ff\upharpoonright L\to (X,w)$ such that
$\widehat{\varphi}(s)=x_s$ for every $s\in\ff\upharpoonright L$.
Since $(x_s)_{s\in\ff\upharpoonright L}$ is weakly convergent to
$\widehat{\varphi}(\varnothing)$ we have that
$\widehat{\varphi}(\varnothing)=0$. Therefore by Lemma \ref{Lemma
finding convex means} (for $\ff_i=\ff$ and
$\widehat{\varphi}_i=\widehat{\varphi}$, for all $i=1,...,n$),
 there exist a completely plegma connected family $(F_i)_{i=1}^n$
  and a sequence $(x_i)_{i=1}^n$ in $X$
  such that   $F_i\subseteq [\ff\upharpoonright L]^{<\infty}$, $x_i\in \text{conv}\
\widehat{\varphi}(F_i)$ and $\| x_i\|<\varepsilon/3$, for every
$1\leq i\leq n$. Let   $(\mu_s)_{s\in F_p}$ be a sequence in $[0,1]$
such that $\sum_{s\in F_p}\mu_s=1$ and $x_p=\sum_{s\in
F_p}\mu_s\widehat{\varphi}(t)$ and for each $i\neq p$ choose $s_i\in
F_i$. By the above we have   that for every $s\in F_p$ the $n$-tuple
$(s_1,\ldots,s_{p-1},s,s_{p+1},\ldots, s_n)$ is a plegma family and
$\|x_p\|=\|\sum_{s\in F_p}\mu_s x_s\|<\frac{\varepsilon}{3}$.

Therefore by  (\ref{eq8}) we have
  \[\begin{split}\Big{\|}\sum_{\substack{i=1\\i\neq p}}^na_ie_i\Big{\|}_* & \leq
  \Big{\|}\sum_{\substack{i=1\\i\neq p}}^na_ix_{s_i}\Big{\|}+\frac{\varepsilon}{3}
  \leq \Big{\|}\sum_{\substack{i=1\\i\neq p}}^na_ix_{s_i} +a_p\sum_{s\in F_p}\mu_s x_s\Big{\|}+
  |a_p|\frac{\varepsilon}{3}+\frac{\varepsilon}{3}\\
  &\leq\sum_{s\in F_p}\mu_s \Big{\|}\sum_{\substack{i=1\\i\neq p}}^na_ix_{s_i} +a_p x_s\Big{\|}+\frac{2\varepsilon}{3}
  \leq \sum_{s\in F_p}\mu_s \Big{(}\Big{\|}\sum_{i=1}^n a_ie_i \Big{\|}_* +\frac{\varepsilon}{3} \Big{)}+\frac{2\varepsilon}{3} \\
  &=\Big{\|}\sum_{i=1}^n a_ie_i \Big{\|}_* +\varepsilon.
    \end{split} \] The proof is complete.
\end{proof}

\subsubsection{Singular or isomorphic to $\ell_1$ spreading models}
We proceed to show an analogue of Theorem \ref{unconditional
spreading model} for subordinated $\ff$-sequences which are not
weakly null. We will need the following lemma.
\begin{lem}\label{isometric 1-subsym sequences}
  Let $(e_n)_{n}$ and $(\widetilde{e}_n)_{n}$ be two
  non trivial spreading sequences which are
  both  Ces\'aro summable to zero. Suppose that for
  every $n\in\nn$ and $\lambda_1,\ldots,\lambda_n$ with
  $\sum_{i=1}^n\lambda_i=0$, we have that
  \begin{equation}\label{pu}
  \Big\|\sum_{i=1}^n\lambda_ie_i\Big\|_*=\Big\|\sum_{i=1}^n\lambda_i\widetilde{e}_i\Big\|_{**}.\end{equation}
  Then  the map $e_n\to\widetilde{e}_n$ extends to a linear isometry from $<(e_n)_{n}>$ onto  $<(\widetilde{e}_n)_{n}>$.
\end{lem}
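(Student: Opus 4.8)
The plan is to reduce everything to the claim that the identity (\ref{pu}) in fact holds for \emph{all} finite scalar sequences, not only the balanced ones: that is, for every $n\in\nn$ and all $\lambda_1,\dots,\lambda_n\in\rr$,
\[
\Big\|\sum_{i=1}^n\lambda_i e_i\Big\|_*=\Big\|\sum_{i=1}^n\lambda_i\widetilde e_i\Big\|_{**}.
\]
Granting this, I would define $T$ on the linear span of $(e_n)_n$ by $T\big(\sum_i\lambda_i e_i\big)=\sum_i\lambda_i\widetilde e_i$. Since $(e_n)_n$ and $(\widetilde e_n)_n$ are non trivial, a nonzero finite linear combination of either cannot have vanishing seminorm (Lemma \ref{sing}), so both sequences are linearly independent and the displayed seminorms are actually norms on the corresponding spans; hence $T$ is well defined and linear, and the displayed identity makes it an isometry of the span of $(e_n)_n$ onto the span of $(\widetilde e_n)_n$, as desired. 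Thus the only real content is extending (\ref{pu}) to arbitrary coefficients.

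To carry out that extension, fix $n$ and $\lambda_1,\dots,\lambda_n$, set $\lambda=\sum_{i=1}^n\lambda_i$, and for each $m\in\nn$ introduce the corrected vectors
\[
u_m=\sum_{i=1}^n\lambda_i e_i-\frac{\lambda}{m}\sum_{j=n+1}^{n+m}e_j,\qquad
\widetilde u_m=\sum_{i=1}^n\lambda_i\widetilde e_i-\frac{\lambda}{m}\sum_{j=n+1}^{n+m}\widetilde e_j,
\]
whose support begins past index $n$, so that each is a genuine finite combination whose coefficients sum to $\lambda-\lambda=0$. By hypothesis (\ref{pu}) we then have $\|u_m\|_*=\|\widetilde u_m\|_{**}$ for every $m$. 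Next I would invoke the spreading property to write $\big\|\frac1m\sum_{j=n+1}^{n+m}e_j\big\|_*=\big\|\frac1m\sum_{j=1}^{m}e_j\big\|_*$, which tends to $0$ because $(e_n)_n$ is Ces\`aro summable to zero, and similarly on the $(\widetilde e_n)_n$ side. Finally, the triangle inequality gives $\big|\,\|u_m\|_*-\|\sum_i\lambda_i e_i\|_*\big|\le|\lambda|\,\big\|\frac1m\sum_{j=1}^{m}e_j\big\|_*\to 0$, and the analogous estimate on the other side, so that
\[
\Big\|\sum_{i=1}^n\lambda_i e_i\Big\|_*=\lim_{m\to\infty}\|u_m\|_*=\lim_{m\to\infty}\|\widetilde u_m\|_{**}=\Big\|\sum_{i=1}^n\lambda_i\widetilde e_i\Big\|_{**}.
\]

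I do not expect a genuine obstacle here; the one delicate point is the choice of the correction block. It must sit strictly beyond the support $\{1,\dots,n\}$ so that $u_m$ really is the combination $\sum_{i\le n}\lambda_i e_i+\sum_{n<i\le n+m}(-\lambda/m)e_i$ to which (\ref{pu}) applies, and then spreading invariance is precisely what allows one to treat that block as a tail Ces\`aro average which, by hypothesis, disappears in the limit. I would also take care to use non triviality exactly at the end — to guarantee the two seminorms are norms on the respective spans — so that the linear map $T$ constructed above is an honest isometry rather than merely seminorm preserving.
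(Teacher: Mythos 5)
Your proposal is correct and follows essentially the same route as the paper: both proofs extend (\ref{pu}) to arbitrary coefficients by subtracting a tail Ces\`aro block $\frac{\lambda}{m}\sum_{j=1}^{m}e_{n+j}$ (so the corrected coefficients sum to zero), apply (\ref{pu}) to the corrected vectors, and then use spreading invariance plus Ces\`aro summability to zero to remove the correction in the limit. Your added remark about nontriviality ensuring the seminorms are honest norms on the spans is a reasonable tidying of a point the paper leaves implicit.
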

\begin{proof}
  Let $n\in\nn$ and $\lambda_1,\ldots,\lambda_n\in \rr$. Since $(e_n)_{n}$ (resp.
  $(\widetilde{e}_n)_{n}$) is Ces\'aro summable to zero, we
  have
$\lim_{m\to\infty}\frac{1}{m}\sum_{j=1}^me_{n+j}=0$ (resp.
$\lim_{m\to \infty}\frac{1}{m}\sum_{j=1}^m\widetilde{e}_{n+j}=0$).
Let $\lambda=\sum_{i=1}^n\lambda_i$. Then
$\sum_{i=1}^n\lambda_i-\sum_{j=1}^m\frac{\lambda}{m}=0$ and
therefore,
\[\begin{split}\Big\|\sum_{i=1}^n\lambda_ie_i\Big\|_*&=
\lim_{m\to\infty}\Big\|\sum_{i=1}^n\lambda_ie_i-\frac{\lambda}{m}\sum_{j=1}^me_{n+j}\Big\|_*\\
&\stackrel{(\ref{pu})}{=}\lim_{m\to\infty}\Big\|\sum_{i=1}^n\lambda_i\widetilde{e}_i-\frac{\lambda}{m}\sum_{j=1}^m\widetilde{e}_{n+j}\Big\|_{**}
=\Big\|\sum_{i=1}^n\lambda_i\widetilde{e}_i\Big\|_{**}\end{split}\]
\end{proof}

The next lemma is from \cite{AKT}. We reproduce it for the sake of
completeness.
\begin{lem}\label{triv-ell}
Let $X$ be  a Banach space, $\ff$ be a regular thin family and
$(x_s)_{s\in\ff\upharpoonright L}$ be an $\ff$-subsequence in $X$.
Let $x_0\in X$ and set $x'_s=x_s-x_0$, for all
$s\in\ff\upharpoonright L$. Assume that
$(x_s)_{s\in\ff\upharpoonright L}$ and
$(x'_s)_{s\in\ff\upharpoonright L}$ generate $\ff$-spreading models
$(e_n)_n$ and $(\widetilde{e}_n)_n$ respectively. Then the following
hold.
\begin{enumerate}
\item[(i)] $\|\sum_{i=1}^na_ie_{_i}\|=\|\sum_{i=1}^na_i\widetilde{e}_{_i}\|$,  for  every $n\in\nn$ and  $a_1,\ldots,a_n\in\rr$ with
$\sum_{i=1}^na_i=0$.
\item[(ii)] The sequence $(e_n)_n$ is trivial if and only if $(\widetilde{e}_n)_n$ is trivial.
\item[(iii)] The sequence $(e_n)_n$ is equivalent to the usual basis of $\ell^1$
if and only if $(\widetilde{e}_n)_n$ is equivalent to the usual
basis of $\ell^1$.
\end{enumerate}
\end{lem}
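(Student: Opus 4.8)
The plan is to prove (i) directly from the definition of an $\ff$-spreading model, deduce (ii) from (i) via Lemma \ref{sing}, and reduce (iii) to an auxiliary fact about the ``difference sequence'' of a spreading sequence.

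For (i), recall that $(e_n)_n$ and $(\widetilde{e}_n)_n$ are spreading (Remark \ref{remark on the definition of spreading model}(i)), so $\|\cdot\|_*$ and $\|\cdot\|_{**}$ are genuine seminorms and by homogeneity it suffices to treat coefficients in $[-1,1]$. Fix $n$ and $a_1,\dots,a_n$ with $\sum_{i=1}^n a_i=0$ and $\max_i|a_i|\le 1$, and let $(\delta_l)_l,(\delta'_l)_l$ be the null sequences witnessing that $(x_s)_{s\in\ff\upharpoonright L}$ generates $(e_n)_n$ and $(x'_s)_{s\in\ff\upharpoonright L}$ generates $(\widetilde{e}_n)_n$. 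For each $l\ge n$ there is a plegma $n$-tuple $(s_j)_{j=1}^n$ in $\ff\upharpoonright L$ with $s_1(1)\ge L(l)$ (recall $\ff$ is a regular thin family; if needed, pass via Corollary \ref{Galvin Pricley for regular thin} to a subset of $L\cap[L(l),\infty)$ on which $\ff$ is very large and build one there). Since $x_{s_j}=x'_{s_j}+x_0$ and $\sum_j a_j=0$,
$$\sum_{j=1}^n a_j x_{s_j}=\sum_{j=1}^n a_j x'_{s_j},$$
so applying the defining inequality of an $\ff$-spreading model to each of the two $\ff$-subsequences yields $\big|\,\|\sum_j a_j e_j\|_*-\|\sum_j a_j\widetilde{e}_j\|_{**}\,\big|\le\delta_l+\delta'_l$; letting $l\to\infty$ gives (i). Part (ii) is then immediate: by Lemma \ref{sing}, $(e_n)_n$ is trivial iff $\|e_1-e_2\|_*=0$ and $(\widetilde{e}_n)_n$ is trivial iff $\|\widetilde{e}_1-\widetilde{e}_2\|_{**}=0$, and by (i) with $(a_1,a_2)=(1,-1)$ these two quantities coincide.

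For (iii) I would first isolate the following claim: \emph{for every spreading sequence $(f_n)_n$ in a seminormed space, $(f_n)_n$ is equivalent to the usual basis of $\ell^1$ if and only if the sequence $(f_{2n-1}-f_{2n})_n$ is.} The upper $\ell^1$-estimate is automatic for both from the triangle inequality (every $\|f_n\|$ equals $\|f_1\|$), so only the lower estimates matter. If $(f_n)_n$ has lower $\ell^1$-constant $c$, then $\sum_n a_n(f_{2n-1}-f_{2n})$ is a combination of the $f_j$'s whose nonzero coefficients are $a_1,-a_1,a_2,-a_2,\dots$, so its norm is at least $c\cdot 2\sum_n|a_n|$. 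Conversely, if $(f_{2n-1}-f_{2n})_n$ has lower constant $c$, then for any $b_1,\dots,b_m$ the spreading property of $(f_n)_n$ gives $\|\sum_i b_i f_i\|=\|\sum_i b_i f_{2i-1}\|=\|\sum_i b_i f_{2i}\|$, and since $\sum_i b_i(f_{2i-1}-f_{2i})=\sum_i b_i f_{2i-1}-\sum_i b_i f_{2i}$ the triangle inequality yields $2\|\sum_i b_i f_i\|\ge\|\sum_i b_i(f_{2i-1}-f_{2i})\|\ge c\sum_i|b_i|$. With the claim in hand, (iii) follows: $(e_n)_n$ and $(\widetilde{e}_n)_n$ are spreading, and by (i) — every combination $\sum_n a_n(e_{2n-1}-e_{2n})$ is a combination of the $e_j$ with coefficients summing to $0$ — the correspondence $e_{2n-1}-e_{2n}\mapsto\widetilde{e}_{2n-1}-\widetilde{e}_{2n}$ preserves norms; hence $(e_{2n-1}-e_{2n})_n$ is equivalent to the usual basis of $\ell^1$ iff $(\widetilde{e}_{2n-1}-\widetilde{e}_{2n})_n$ is, and applying the claim to $(e_n)_n$ and to $(\widetilde{e}_n)_n$ converts this into the assertion of (iii).

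The only substantive point is the doubling argument in the claim for (iii); (i) and (ii) are bookkeeping, the mild care being the reduction to $[-1,1]$-coefficients by homogeneity and the routine existence of plegma $n$-tuples of arbitrarily large minimum inside $\ff\upharpoonright L$.
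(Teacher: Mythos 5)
Your proposal is correct and follows essentially the same route as the paper: (i) by observing that coefficient sums equal to zero make $\sum a_j x_{s_j}=\sum a_j x'_{s_j}$ and passing to limits, (ii) via Lemma \ref{sing}, and (iii) via the doubling/negation trick that reduces an $\ell^1$-estimate to combinations with coefficients summing to zero. The only cosmetic difference is that you package (iii) through an auxiliary claim about the difference sequence $(f_{2n-1}-f_{2n})_n$, whereas the paper argues inline by explicitly constructing the coefficients $a_i=a'_i/2$, $a_{n+i}=-a'_i/2$; the underlying idea is the same.
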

\begin{proof}
(i) Notice  that for  every $n\in\nn$, $s_1,...,s_n$ in
$\ff\upharpoonright L$ and $a_1,\ldots,a_n\in\rr$ with
$\sum_{i=1}^na_i=0$, we have
$\sum_{i=1}^na_ix_{s_i}=\sum_{i=1}^na_ix'_{s_i}$. Since $(e_n)_n$
and  $(\widetilde{e}_n)_n$ are generated by $(x_s)_{s\in
\ff\upharpoonright L}$
and $(x_s)_{s\in \ff\upharpoonright L}$ the result follows. \\
(ii) It  follows by part (i) and Lemma \ref{sing}.\\
(iii) We fix $\ee>0$. If $(\widetilde{e}_n)_n$ is not equivalent to
the usual basis of $\ell^1$ then there exist $n\in\nn$ and
$a'_1,\ldots,a'_n\in\rr$ such that $\sum_{i=1}^n|a'_i|=1$ and
$\|\sum_{i=1}^na'_i\widetilde{e}_i\|<\ee$. Setting $a_i=a'_i/2$ and
$a_{n+i}=-a'_i/2$, for all $1\leq i\leq n$, we have
$\sum_{i=1}^{2n}a_i=0$ and therefore,
$\|\sum_{i=1}^{2n}a_ie_i\|=\|\sum_{i=1}^{2n}a_i\widetilde{e}_i\|<\ee$.
Since  $\sum_{i=1}^{2n}|a_i|=1$,  $(e_n)_n$ is also not equivalent
to the usual basis of $\ell^1$.
\end{proof}
\begin{thm}\label{lemma either l1 or not Schauder basic}
Let $X$ be a Banach space, $\ff$ be a regular thin family and
$L\in[\nn]^\infty$. Let $(x_s)_{s\in\ff\upharpoonright L}$ be an
$\ff$-subsequence in $X$ generating a non trivial $\ff$-spreading
model $(e_n)_n$. Also assume that  $(x_s)_{s\in\ff\upharpoonright
L}$ is subordinated and let $x_0$ be the weak-limit of
$(x_s)_{s\in\ff\upharpoonright L}$. Finally, let $x'_s=x_s-x_0$, for
every $s\in\ff\upharpoonright L$. If   $x_0\neq 0$ then exactly one
of the following holds.
\begin{enumerate}
\item[(i)] The sequence $(e_n)_n$ as well as  every spreading model of $(x'_s)_{s\in\ff\upharpoonright L}$
 is  equivalent to the usual basis of $\ell^1$. \item[(ii)] The
sequence $(e_n)_n$ is singular and if $e_n=e'_n+e$ is its
natural decomposition  then $\|e\|=\|x_0\|$ and $(e'_n)_n$ is
the unique (up to isometry) $\ff$-spreading model of
$(x'_s)_{s\in \ff\upharpoonright L}$.
\end{enumerate}
\end{thm}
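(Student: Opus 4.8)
The plan is to reduce the dichotomy to the asymptotic behaviour of the Ces\`aro means together with the structure theory of non-trivial spreading sequences, transferring information between the spreading models of $(x_s)_{s\in\ff\upharpoonright L}$ and of $(x'_s)_{s\in\ff\upharpoonright L}$ via Lemma \ref{triv-ell}.

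\emph{Setup.} Since $(x_s)_{s\in\ff\upharpoonright L}$ generates a spreading model its norms tend to $\|e_1\|_*$, so $(x_s)$ and $(x'_s)$ are bounded on a tail; by Lemma \ref{B-Sp} and Remark \ref{remark on the definition of spreading model} I would pass to $L_0\in[L]^\infty$ on which $(x_s)_{s\in\ff\upharpoonright L_0}$ still generates $(e_n)_n$ and $(x'_s)_{s\in\ff\upharpoonright L_0}$ generates an $\ff$-spreading model $(\widetilde e_n)_n$, both with respect to one common null sequence $(\delta_l)_l$, and (shrinking $L_0$) with $(x'_s)_{s\in\ff\upharpoonright L_0}$ seminormalized --- its norms tend to $\|\widetilde e_1\|_{**}>0$, the positivity coming from non-triviality of $(\widetilde e_n)_n$, itself a consequence of Lemma \ref{triv-ell}(ii) and the non-triviality of $(e_n)_n$. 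Being a weak-topology shift of a subordinated family, $(x'_s)_{s\in\ff\upharpoonright L_0}$ is subordinated and weakly null (its weak limit is $x_0-x_0=0$), so Theorem \ref{unconditional spreading model} shows $(\widetilde e_n)_n$ is $1$-unconditional, whence by Proposition \ref{equiv forms for 1-subsymmetric weakly null}(ii) either $(\widetilde e_n)_n$ is equivalent to the usual $\ell^1$-basis, or it is norm Ces\`aro summable to $0$.

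\emph{Case $\ell^1$.} Here Lemma \ref{triv-ell}(iii) on $L_0$ forces $(e_n)_n\sim\ell^1$; and any $\ff$-spreading model $(\widetilde e^{(i)}_n)_n$ of $(x'_s)_{s\in\ff\upharpoonright L}$ is generated by $(x'_s)_{s\in\ff\upharpoonright L''}$ for some $L''\in[L]^\infty$, with $(x_s)_{s\in\ff\upharpoonright L''}$ still generating $(e_n)_n$ by Remark \ref{remark on the definition of spreading model}(ii), so Lemma \ref{triv-ell}(iii) on $L''$ gives $(\widetilde e^{(i)}_n)_n\sim\ell^1$; this is conclusion (i). \emph{Case Ces\`aro null.} The key estimate is that for a plegma $m$-tuple $(s_i)_{i=1}^m$ in $\ff\upharpoonright L_0$ with $\min s_1\geq L_0(m)$ one has $\tfrac1m\sum_{i=1}^m x_{s_i}=\tfrac1m\sum_{i=1}^m x'_{s_i}+x_0$, so the two spreading-model inequalities yield $\big|\|\tfrac1m\sum_{i=1}^m e_i\|_*-\|x_0\|\big|\leq\|\tfrac1m\sum_{i=1}^m\widetilde e_i\|_{**}+2\delta_m\to0$, hence $\|\tfrac1m\sum_{i=1}^m e_i\|_*\to\|x_0\|>0$. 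This forbids $(e_n)_n$ being Schauder basic: if it had basis constant $C$, the vector with coefficients $\tfrac1m$ on the first $m$ and $-\tfrac1m$ on the next $m$ terms has coefficient sum $0$, so by Lemma \ref{triv-ell}(i) its norm is $\|\tfrac1m\sum_{i=1}^m\widetilde e_i-\tfrac1m\sum_{i=m+1}^{2m}\widetilde e_i\|_{**}\leq 2\|\tfrac1m\sum_{i=1}^m\widetilde e_i\|_{**}\to0$ (using that $(\widetilde e_n)_n$ is spreading), while basicity bounds $\|\tfrac1m\sum_{i=1}^m e_i\|_*$ by $C$ times that quantity --- a contradiction. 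Being non-trivial and not Schauder basic, $(e_n)_n$ is singular.

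\emph{Finishing the singular case.} Proposition \ref{properties of the natural decomposition} gives the natural decomposition $e_n=e'_n+e$ with $e\neq0$ and $(e'_n)_n$ spreading, $1$-unconditional and Ces\`aro null; from $\tfrac1m\sum_{i=1}^m e_i=\tfrac1m\sum_{i=1}^m e'_i+e$ we get $\|\tfrac1m\sum_{i=1}^m e_i\|_*\to\|e\|_*$, so $\|e\|_*=\|x_0\|$ by the computation above. For uniqueness, any $\ff$-spreading model $(\widetilde e^{(i)}_n)_n$ of $(x'_s)_{s\in\ff\upharpoonright L}$ is generated on some $L''\in[L]^\infty$ with $(x_s)_{s\in\ff\upharpoonright L''}$ generating $(e_n)_n$; arguing as in the Setup on (a tail of) $L''$ it is non-trivial (Lemma \ref{triv-ell}(ii)), $1$-unconditional, and not $\ell^1$ --- else $(e_n)_n\sim\ell^1$ by Lemma \ref{triv-ell}(iii), contradicting singularity --- hence Ces\`aro null by Proposition \ref{equiv forms for 1-subsymmetric weakly null}(ii); finally Lemma \ref{triv-ell}(i) on $L''$ gives $\|\sum\lambda_i e_i\|_*=\|\sum\lambda_i\widetilde e^{(i)}_i\|$ whenever $\sum\lambda_i=0$, and there $\sum\lambda_i e_i=\sum\lambda_i e'_i$, so $\|\sum\lambda_i e'_i\|_*=\|\sum\lambda_i\widetilde e^{(i)}_i\|$ for $\sum\lambda_i=0$, and Lemma \ref{isometric 1-subsym sequences} produces the linear isometry $e'_n\mapsto\widetilde e^{(i)}_n$; thus $(e'_n)_n$ is the unique up-to-isometry $\ff$-spreading model of $(x'_s)_{s\in\ff\upharpoonright L}$, which is conclusion (ii). The alternatives (i) and (ii) are mutually exclusive since a sequence equivalent to the $\ell^1$-basis is Schauder basic while a singular sequence is not. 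The step I expect to be fiddliest is the book-keeping of which infinite subset each spreading model is generated on, so that the universally quantified assertions in (i) and (ii) genuinely follow from the per-subset arguments; the Ces\`aro estimate itself is short once that is arranged.
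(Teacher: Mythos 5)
Your proof is correct and takes essentially the same route as the paper: reduce to the behaviour of $(\widetilde e_n)_n$ via Lemma \ref{triv-ell}, use Theorem \ref{unconditional spreading model} plus Proposition \ref{equiv forms for 1-subsymmetric weakly null}(ii) to split into the $\ell^1$ and Ces\`aro-null cases, prove $\|\frac{1}{m}\sum_{i=1}^m e_i\|_*\to\|x_0\|>0$ to rule out Schauder basicity, and finish the singular case with Lemma \ref{isometric 1-subsym sequences}. The only differences are cosmetic (you fix $L_0$ first and then revisit a general $L''$, where the paper fixes an arbitrary $(\widetilde e_n)_n$ at the outset; you estimate $\big|\|\frac{1}{m}\sum e_i\|_*-\|x_0\|\big|$ directly instead of first showing $\frac1n\sum x_{s^n_j}\to x_0$ in norm; and you spell out the basis-constant contradiction, which the paper leaves implicit).
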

\begin{proof}
Let  $(\widetilde{e}_n)_n$ be an $\ff$-spreading model of
$(x'_s)_{s\in\ff\upharpoonright L}$. By passing to an infinite
subset of $L$ if it is necessary we may assume that
$(x'_s)_{s\in\ff\upharpoonright L}$ generates $(\widetilde{e}_n)_n$
as an $\ff$-spreading model.

If $(e_n)_n$ is equivalent to the usual basis of $\ell^1$ then by
Lemma \ref{triv-ell}, we have that the same holds for
$(\widetilde{e}_n)_n$ and hence  (i) is satisfied. Otherwise, again
by Lemma \ref{triv-ell}, $(\widetilde{e}_n)_n$ is also non trivial
and not equivalent to the $\ell^1$-basis.  Let us denote by
$\|\cdot\|_*$ (resp. $\|\cdot\|_{**}$) the norm of the space
generated by $(e_n)_n$ (resp. $(\widetilde{e}_n)_n$). Since
$(\widetilde{e}_n)_n$ is non trivial, we have that
$\|\widetilde{e}_n\|_{**}>0$ and therefore (by passing to a final
segment of $L$ if it is necessary) we may assume that
$(x'_s)_{s\in\ff\upharpoonright L}$ is seminormalized. It is also
easy to see that $(x'_s)_{s\in\ff\upharpoonright M}$ is subordinated
and weakly null. Therefore by Theorem \ref{unconditional spreading
model}, $(\widetilde{e}_n)_n$ is 1-unconditional. Moreover, since
$(\widetilde{e}_n)_n$ is not equivalent to the usual basis of
$\ell^1$, by  Proposition \ref{equiv forms for 1-subsymmetric weakly
null} (ii), we conclude that $(\widetilde{e}_n)_n$ is norm Ces\`aro
summable to zero. Hence, by part (i) of Lemma \ref{triv-ell}, we
have
\begin{equation}\label{tv}
\lim_{n\to\infty}\Big\|\frac{1}{n}\sum_{j=1}^ne_j-\frac{1}{n}\sum_{j=n+1}^{2n}e_j\Big\|_*
=\lim_{n\to\infty}\Big\|\frac{1}{n}\sum_{j=1}^n\widetilde{e}_j-\frac{1}{n}\sum_{j=n+1}^{2n}\widetilde{e}_j\Big\|_{**}=0.
\end{equation}
For every $n\in\nn$ choose a sequence
$(s_j^n)_{j=1}^n\in\text{Plm}_n(\ff\upharpoonright L)$ such that $
\min s_1^n\geq L(n)$.  Since $x_s-x_0=x'_s$, for every $s\in
\ff\upharpoonright L$, we have
\begin{equation}
\lim_{n\to\infty}\Big\|x_0-\frac{1}{n}\sum_{j=1}^nx_{s_j^n}\Big\|=\lim_{n\to\infty}\Big\|\frac{1}{n}\sum_{j=1}^nx'_{s_j^n}\Big\|=
\lim_{n\to\infty}\Big\|\frac{1}{n}\sum_{j=1}^n\widetilde{e}_n\Big\|_{**}=0.
\end{equation}
Therefore
\begin{equation}\label{fd}
\lim_{n\to\infty}\Big\|\frac{1}{n}\sum_{j=1}^ne_j\Big\|_*=\lim_{n\to\infty}\Big\|\frac{1}{n}\sum_{j=1}^nx_{s_j^n}\Big\|
=\|x_0\|>0.
\end{equation}
By (\ref{tv}) and (\ref{fd}), we get that $(e_n)_n$ is not Schauder
basic, i.e. it is singular. Let $e_n=e'_n+e$ be the natural
decomposition of $(e_n)_n$. By (\ref{fd}) and the fact that
$(e'_n)_n$ is Ces\`aro summable to zero, we have that
$\|e\|=\|x_0\|$. To complete the proof it remains to show that
$(\widetilde{e}_n)_n$ and $(e'_n)_n$ are isometrically equivalent.
By Lemma \ref{isometric 1-subsym sequences} it suffices to show that
\begin{equation}\Big\|\sum_{i=1}^n\lambda_ie_i'\Big\|_*=\Big\|\sum_{i=1}^n\lambda_i\widetilde{e}_i\Big\|_{**},\end{equation}
for every $n\in\nn$ and $\lambda_1,\ldots\lambda_n\in\rr$ with
$\sum_{i=1}^n\lambda_1=0$. Indeed, fix $n\in\nn$ and
$\lambda_1,\ldots\lambda_n\in\rr$ with $\sum_{i=1}^n\lambda_i=0$.
For each $k\in\nn$ choose
$(s_j^k)_{j=1}^n\in\text{{Plm}}_n(\ff\upharpoonright L)$ such that
$\lim_{k\to\infty}\min s_1^k=+ \infty$. Then
\[\Big\|\sum_{i=1}^n\lambda_ie_i'\Big\|_*=\Big\|\sum_{i=1}^n\lambda_ie_i\Big\|_*=\lim_{k\to\infty}\Big\|\sum_{i=1}^n\lambda_ix_{s^k_i}\Big\|=
\lim_{k\to\infty}\Big\|\sum_{i=1}^n\lambda_ix'_{s^k_i}\Big\|=\Big\|\sum_{i=1}^n\lambda_i\widetilde{e}_i\Big\|_{**}\]
 and the proof is complete.
\end{proof}
\subsubsection{Weakly relatively compact $\ff$-sequences }
 Let $X$ be a Banach space and $\xi<\omega_1$. By
  $\mathcal{SM}_\xi^{wrc}(X)$ we will denote
  the set of all spreading sequences
    $(e_n)_{n}$ such that there exists
     a weakly relatively compact subset $W$ of
    $X$ which  admits $(e_n)_{n}$ as a $\xi$-spreading model.
    We  also set\[\mathcal{SM}^{wrc}(X)=\bigcup_{\xi<\omega_1}\mathcal{SM}_\xi^{wrc}(X).\]
Hence  $(e_n)_n\in \mathcal{SM}_\xi^{wrc}(X)$ if and only if there
exists an  $\ff$-sequence $(x_s)_{s\in\ff}$ such that
$\overline{\{x_s: s\in\ff\}}^{w}$ is a weakly compact subset of $X$
and for some $L\in [\nn]^\infty$, $(x_s)_{s\in\ff\upharpoonright L}$
generates $(e_n)_n$ as an $\ff$-spreading model. The $\ff$-sequences
with weakly relatively compact range will be called \emph{weakly
relatively compact} (``wrc'' in short).  The following proposition
says that every wrc $\ff$-sequence always contains  a subordinated
subsequence.
\begin{prop}\label{cor for subordinating}
  Let $X$ be a Banach space, $\ff$ be a regular thin family and
  $(x_s)_{s\in\ff}$ be a weakly relatively compact $\ff$-sequence in
  $X$. Then for every $M\in[\nn]^\infty$ there exists
  $L\in[M]^\infty$ such that the $\ff$-subsequence $(x_s)_{s\in\ff\upharpoonright L}$ is
  subordinated with respect to the weak topology.
 \end{prop}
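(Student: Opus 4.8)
The plan is to reduce the statement to Theorem \ref{Create subordinated} applied with $\ttt$ the weak topology of $X$; all that is needed is to verify, for a given $M\in[\nn]^\infty$, that $W_M:=\overline{\{x_s:s\in\ff\upharpoonright M\}}^{\,w}$ is a weakly compact and weakly metrizable subspace of $X$.

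First I would observe that, since $(x_s)_{s\in\ff}$ is weakly relatively compact, $\overline{\{x_s:s\in\ff\}}^{\,w}$ is weakly compact, and hence its weakly closed subset $W_M$ is weakly compact too. Next, since $\ff\subseteq[\nn]^{<\infty}$ is countable, the set $\{x_s:s\in\ff\}$ is countable, so $Y:=\overline{\mathrm{span}}\{x_s:s\in\ff\}$ is a separable Banach space; being convex and norm-closed it is weakly closed, and by Hahn--Banach the weak topology of $X$ restricted to $Y$ coincides with the weak topology of $Y$. In particular $W_M\subseteq Y$ and $W_M$ is weakly compact in the separable space $Y$.

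The one genuinely analytic point --- and the step I would treat most carefully, though it is classical --- is the weak metrizability of $W_M$. Using separability of $Y$ I would pick a sequence $(f_n)_n$ in $Y^*$ separating the points of $Y$ (take $(y_n)_n$ dense in $S_Y$ and $f_n\in Y^*$ with $\|f_n\|=1$ and $f_n(y_n)=1$; if $\|y\|=1$ and $\|y-y_n\|<1/2$ then $f_n(y)>1/2$). On the bounded set $W_M$ the formula $d(y,z)=\sum_n 2^{-n}\min\{1,|f_n(y)-f_n(z)|\}$ defines a metric whose topology is Hausdorff and coarser than the weak topology on $W_M$ (each $f_n$ being weakly continuous); since the identity map from the compact space $(W_M,w)$ to the Hausdorff space $(W_M,d)$ is then a continuous bijection, it is a homeomorphism, so the two topologies agree and $W_M$ is weakly metrizable.

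Finally I would invoke Theorem \ref{Create subordinated} for the $\ff$-sequence $(x_s)_{s\in\ff}$ in $(X,w)$ with the given $M$ --- legitimate since $W_M$ is a compact metrizable subspace of $(X,w)$ --- to obtain $L\in[M]^\infty$ with $(x_s)_{s\in\ff\upharpoonright L}$ subordinated with respect to the weak topology. No real obstacle remains beyond the bookkeeping of identifying the weak topologies of $X$ and $Y$ on $W_M$ so that Theorem \ref{Create subordinated} is applicable.
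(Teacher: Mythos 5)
Your proposal is correct and takes essentially the same route as the paper: verify that the weak closure of the range is a weakly compact metrizable subspace and then invoke Theorem \ref{Create subordinated}. The only difference is that you spell out the classical metrizability argument (reducing to the separable subspace $Y$ and constructing the separating sequence of functionals) which the paper simply cites as the well-known fact that the weak topology on a separable weakly compact subset of a Banach space is metrizable.
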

 \begin{proof}
 Let $M\in[\nn]^\infty$.   Since the
weak topology on every separable weakly  compact subset of a Banach
space is metrizable, we have that $\overline{\{x_s: s\in\ff\}}^{w}$
is compact metrizable. By Theorem \ref{Create subordinated} the
result follows.
 \end{proof}

\begin{prop}\label{wrc}
 Let $X$ be a Banach space, $\xi<\omega_1$ and $(e_n)_{n}\in
\mathcal{SM}_\xi^{wrc}(X)$. Then for every regular thin family $\g$
with $o(\g)\geq \xi$ there exist
  a weakly relatively compact $\g$-sequence $(w_t)_{t\in\g}$ in $X$ and   $L\in[\nn]^\infty$
such that $(w_t)_{t\in\g\upharpoonright L}$ is
  subordinated with respect to the weak topology and generates
  $(e_n)_{n}$ as a $\g$-spreading model. Consequently  $\mathcal{SM}_\zeta^{wrc}(X)\subseteq
\mathcal{SM}_\xi^{wrc}(X)$,  for every $1\leq \zeta<\xi<\omega_1$.
\end{prop}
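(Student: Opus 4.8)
The plan is to transport the $\ff$-spreading model witnessing $(e_n)_n\in\mathcal{SM}_\xi^{wrc}(X)$ to a $\g$-spreading model, keeping track of two features along the way: that the relevant range stays weakly relatively compact, and that one may finally pass to a subsequence which is both subordinated and still generates $(e_n)_n$. The three tools used, in order, will be Lemma \ref{propxi}, Proposition \ref{cor for subordinating} and part (ii) of Remark \ref{remark on the definition of spreading model}.

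First I would unpack the hypothesis: by definition of $\mathcal{SM}_\xi^{wrc}(X)$ there are a weakly relatively compact $W\subseteq X$, a regular thin family $\ff$ with $o(\ff)=\xi$, and an $\ff$-sequence $(x_s)_{s\in\ff}$ in $W$ admitting $(e_n)_n$ as an $\ff$-spreading model. Since $\{x_s:s\in\ff\}\subseteq W$, its weak closure is a weakly closed subset of the weakly compact set $\overline{W}^{w}$, hence weakly compact; so $(x_s)_{s\in\ff}$ is itself a weakly relatively compact $\ff$-sequence. Now $o(\ff)=\xi\leq o(\g)$, so Lemma \ref{propxi} provides a $\g$-sequence $(w_t)_{t\in\g}$ with $\{w_t:t\in\g\}\subseteq\{x_s:s\in\ff\}$ which admits $(e_n)_n$ as a $\g$-spreading model; I would then fix $N\in[\nn]^\infty$ with $(w_t)_{t\in\g\upharpoonright N}$ generating $(e_n)_n$. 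Because the range of $(w_t)_{t\in\g}$ is contained in that of $(x_s)_{s\in\ff}$, its weak closure is again a weakly closed subset of a weakly compact set, so $(w_t)_{t\in\g}$ is also weakly relatively compact.

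Next I would apply Proposition \ref{cor for subordinating} to the weakly relatively compact $\g$-sequence $(w_t)_{t\in\g}$ and to $N$, obtaining $L\in[N]^\infty$ along which $(w_t)_{t\in\g\upharpoonright L}$ is subordinated with respect to the weak topology; and since $L\subseteq N$, part (ii) of Remark \ref{remark on the definition of spreading model} guarantees that $(w_t)_{t\in\g\upharpoonright L}$ still generates $(e_n)_n$ as a $\g$-spreading model. This yields the first assertion. For the consequence, given $1\leq\zeta<\xi<\omega_1$ and $(e_n)_n\in\mathcal{SM}_\zeta^{wrc}(X)$, I would pick a regular thin family $\g$ with $o(\g)=\xi$ (such a family exists) and invoke the part just proved with $o(\g)=\xi\geq\zeta$ to get a weakly relatively compact $\g$-sequence $(w_t)_{t\in\g}$ and some $L$ with $(w_t)_{t\in\g\upharpoonright L}$ generating $(e_n)_n$; then $\overline{\{w_t:t\in\g\}}^{w}$ is a weakly compact subset of $X$ admitting $(e_n)_n$ as a $\g$-spreading model, hence as a $\xi$-spreading model, so $(e_n)_n\in\mathcal{SM}_\xi^{wrc}(X)$ and $\mathcal{SM}_\zeta^{wrc}(X)\subseteq\mathcal{SM}_\xi^{wrc}(X)$.

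The argument uses no new combinatorics or functional analysis beyond the quoted results; the only points needing attention are the bookkeeping ones flagged above — that weak relative compactness passes to subsets of the range (this is what lets it survive the application of Lemma \ref{propxi}, since that lemma outputs a $\g$-sequence whose values all lie among the $x_s$), and that thinning $N$ down to $L$ to achieve subordination does not destroy the $\g$-spreading model, which is precisely the content of part (ii) of Remark \ref{remark on the definition of spreading model}. Thus the main ``obstacle'' is really just to chain these steps in the correct order.
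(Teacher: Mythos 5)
Your proof is correct and follows the same route as the paper's: unpack the definition of $\mathcal{SM}_\xi^{wrc}(X)$, transport via Lemma \ref{propxi} (noting the range stays inside the weakly relatively compact set), subordinate via Proposition \ref{cor for subordinating}, and observe that passing to $L\subseteq N$ preserves generation by part (ii) of Remark \ref{remark on the definition of spreading model}. You spell out the ``consequently'' clause a bit more explicitly than the paper, but the argument is identical.
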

\begin{proof}
   Since $(e_n)_{n}\in \mathcal{SM}_\xi^{wrc}(X)$ there
   exists a weakly relatively compact subset $A$ of
    $X$ such that $A$ admits $(e_n)_{n}$ as a $\xi$-spreading model. Hence there exists a regular thin family
    $\ff$ of order $\xi$, an $\ff$-sequence $(x_s)_{s\in\ff}$ in $A$ and $M\in[\nn]^\infty$
   such that $(x_s)_{s\in\ff\upharpoonright M}$ generates
  $(e_n)_{n}$ as an $\ff$-spreading model. By  Lemma
  \ref{propxi} there exist  a $\g$-sequence
  $(w_t)_{t\in\g}$ and $N\in[\nn]^\infty$  such that $(w_t)_{t\in\g\upharpoonright N}$
  generates $(e_n)_{n}$ as a $\g$-spreading model and moreover
  $\{w_t:t\in\g\}\subseteq\{x_s:s\in\ff\}\subseteq A$.
  Hence  $(w_t)_{t\in\g}$
  is a weakly relatively compact $\g$-sequence. By Proposition \ref{cor for subordinating}
  there exists $L\in[N]^\infty$ such that  $(w_t)_{t\in\g\upharpoonright L}$  is
  subordinated with respect to the weak topology. Clearly   $(w_t)_{t\in\g\upharpoonright L}$ also generates
  $(e_n)_{n}$ as a $\g$-spreading model and the proof is
  complete.
\end{proof}
Proposition \ref{wrc} implies that every  $(e_n)_{n}$ in $
\mathcal{SM}^{wrc}(X)$ is generated by a subordinated
$\ff$-subsequence. Hence, by  Theorems \ref{unconditional spreading
model} and  \ref{lemma either l1 or not Schauder basic} we obtain
the following.
\begin{cor}\label{relatively weakly compact sets have
unconditional spreading models}
  Let $X$ be a Banach space, $\ff$ be a regular thin family and $(x_s)_{s\in\ff}$ be a weakly
  relatively compact $\ff$-sequence. Let $(e_n)_n$ be a spreading sequence
  and assume that $(x_s)_{s\in\ff}$ admits  $(e_n)_{n}$ as an $\ff$-spreading model. Then exactly
  one of the following holds:
  \begin{enumerate}
    \item[(i)] The sequence $(e_n)_{n}$ is trivial.
    \item[(ii)] The sequence $(e_n)_{n}$ is singular. In this case there exist
     $L\in[\nn]^\infty$ and $x_0\in X$ such that if $e_n=e'_n+e$
     is the natural decomposition of $(e_n)_{n}$ then the
    $\ff$-subsequence $(x'_s)_{s\in \ff\upharpoonright L}$,
    defined by $x'_s=x_s-x_0$ for all $s\in\ff\upharpoonright
    L$, generates the sequence $(e'_n)_{n}$ as an
    $\ff$-spreading model and $\|x_0\|=\|e\|$.
    \item[(iii)] The sequence $(e_n)_{n}$ is Schauder basic. In this case $(e_n)_{n}$ is unconditional.
  \end{enumerate}
\end{cor}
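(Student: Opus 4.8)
The plan is to reduce the statement to the two theorems just proved, Theorem \ref{unconditional spreading model} and Theorem \ref{lemma either l1 or not Schauder basic}, by first replacing the given weakly relatively compact $\ff$-sequence by a subordinated $\ff$-subsequence that still generates $(e_n)_n$. Concretely, I would fix $M\in[\nn]^\infty$ with $(x_s)_{s\in\ff\upharpoonright M}$ generating $(e_n)_n$ as an $\ff$-spreading model, and then apply Proposition \ref{cor for subordinating} to obtain $L_0\in[M]^\infty$ such that $(x_s)_{s\in\ff\upharpoonright L_0}$ is subordinated with respect to the weak topology; by part (ii) of Remark \ref{remark on the definition of spreading model} this subsequence still generates $(e_n)_n$. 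Since a weakly relatively compact set is norm bounded, all the $\ff$-subsequences involved are bounded. I would also note at the outset that the three alternatives are mutually exclusive for formal reasons: every spreading sequence is trivial, or non trivial and Schauder basic, or non trivial and not Schauder basic, the last being the definition of singular; so the real content is the additional information asserted in (ii) and (iii).

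If $(e_n)_n$ is trivial we are in case (i) and there is nothing more to prove, so the plan is to assume $(e_n)_n$ non trivial. Then $\|e_1\|_*>0$, and taking $k=1$ in the defining inequality of an $\ff$-spreading model shows that $\|x_s\|$ approaches $\|e_1\|_*$ as $\min s\to\infty$; hence after replacing $L_0$ by a tail of itself I may assume that $(x_s)_{s\in\ff\upharpoonright L_0}$ is seminormalized, which costs nothing since restricting to an infinite subset preserves both subordination and the generation of $(e_n)_n$. By Proposition \ref{subordinating yields convergence} the subsequence $(x_s)_{s\in\ff\upharpoonright L_0}$ converges weakly to $x_0:=\widehat{\varphi}(\varnothing)$, where $\widehat{\varphi}$ witnesses subordination. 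Now I would split into the two cases $x_0=0$ and $x_0\neq 0$. If $x_0=0$, the subsequence is seminormalized, subordinated with respect to the weak topology and weakly null, so Theorem \ref{unconditional spreading model} gives that $(e_n)_n$ is $1$-unconditional; in particular it is Schauder basic and unconditional, which is case (iii).

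If $x_0\neq 0$, I would apply Theorem \ref{lemma either l1 or not Schauder basic} directly to $(x_s)_{s\in\ff\upharpoonright L_0}$, which is subordinated and generates the non trivial model $(e_n)_n$, with weak limit $x_0\neq0$. Its first alternative says $(e_n)_n$ is equivalent to the usual basis of $\ell^1$, hence unconditional and Schauder basic, giving case (iii); its second alternative says $(e_n)_n$ is singular with natural decomposition $e_n=e'_n+e$ satisfying $\|e\|=\|x_0\|$, and that $(e'_n)_n$ is the unique (up to isometry) $\ff$-spreading model of $(x'_s)_{s\in\ff\upharpoonright L_0}$, where $x'_s=x_s-x_0$. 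To land precisely in the formulation of case (ii) I would then pass to a further $L\in[L_0]^\infty$ on which the bounded $\ff$-subsequence $(x'_s)_{s\in\ff\upharpoonright L}$ actually generates an $\ff$-spreading model; by the uniqueness just quoted this model is $(e'_n)_n$, so $(x'_s)_{s\in\ff\upharpoonright L}$ generates $(e'_n)_n$ and $\|x_0\|=\|e\|$, which is exactly case (ii). Combining the branches, exactly one of (i), (ii), (iii) holds.

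I do not expect any serious obstacle here: the two invoked theorems do all the substantive work. The only things needing attention are the bookkeeping of the nested subsequences $M\supseteq L_0\supseteq L$ and checking that at each step the relevant $\ff$-subsequence is still bounded, seminormalized, subordinated, and generating the intended spreading model — routine, but worth stating carefully so that the hypotheses of Theorems \ref{unconditional spreading model} and \ref{lemma either l1 or not Schauder basic} are genuinely met.
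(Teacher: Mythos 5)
Your proof is correct and follows essentially the same route as the paper: obtain a subordinated subsequence via Proposition \ref{cor for subordinating} (the paper invokes Proposition \ref{wrc}, which wraps the same fact), then split on whether the weak limit $\widehat{\varphi}(\varnothing)$ is zero or not, feeding the two cases into Theorem \ref{unconditional spreading model} and Theorem \ref{lemma either l1 or not Schauder basic} respectively. The paper leaves these bookkeeping steps (seminormalization after passing to a tail, the $x_0=0$ versus $x_0\neq 0$ dichotomy, and the extra pass to $L\subseteq L_0$ so that $(x'_s)$ actually generates a model picked out by uniqueness) implicit, and you have filled them in carefully.
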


\subsection{$\ff$-sequences generating singular spreading models}
Let $X$ be a Banach space and $(x_n)_{n}$ be a
  sequence in $X$ which generates a singular spreading model
  $(e_n)_{n}$ and let $e_n=e'_n+e$ be the natural decomposition of
  $(e_n)_{n}$. It can be shown that
    there exists $x\in X\setminus \{0\}$
  such that  $\|x\|=\|e\|$ and setting
  $x'_n=x_n-x$, $(e'_n)_{n}$ is the unique spreading model of
  $(x'_n)_{n}$. In the following we will present an extension of this fact for
  $\ff$-sequences in a Banach space $X$. We start with the
  next lemma.

\begin{lem} \label{cvh}  Let $\ff$ be a regular thin family, $M\in[\nn]^\infty$ and
  $(x_s)_{s\in\ff}$ be an $\ff$-sequence in a Banach space $X$
  such that $(x_s)_{s\in\ff\upharpoonright M}$ generates a
  singular $\ff$-spreading model $(e_n)_n$. Then there exists $L\in[M]^\infty$ satisfying the
  next
property.

For every $\ee>0$ there exists $m_0\in\nn$ such that
 \begin{equation}
 \label{eqstar}\Big\|\frac{1}{n}\sum_{j=1}^nx_{s_j}-\frac{1}{m}\sum_{j=1}^mx_{t_j}\Big\|<\ee,
 \end{equation}
for every $n,m\geq m_0$ and
$(s_j)_{j=1}^n,(t_j)_{j=1}^m\in\text{Plm}(\ff\upharpoonright
  L)$ with $s_1(1)\geq L(n)$ and $t_1(1)\geq L(m)$.
\end{lem}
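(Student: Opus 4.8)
The plan is to combine the structure theory of singular spreading sequences with a combinatorial device that allows one to compare two Ces\`aro averages taken over \emph{different} plegma tuples.

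\emph{Preliminary reductions.} By Remark~\ref{remark on the definition of spreading model}(iii) and Corollary~\ref{Galvin Pricley for regular thin}, after replacing $M$ by an infinite subset we may assume that $(x_s)_{s\in\ff\upharpoonright M}$ generates $(e_n)_n$ with respect to a null sequence $(\delta_n)_n$ with $\sum_n\delta_n<\infty$, and that $\ff$ is very large in $M$. Since $(e_n)_n$ is singular, Proposition~\ref{properties of the natural decomposition} supplies its natural decomposition $e_n=e'_n+e$, where $e\neq 0$ and $(e'_n)_n$ is spreading, $1$-unconditional and norm Ces\`aro summable to $0$; in particular $\theta_n:=\tfrac1n\big\|\sum_{j=1}^ne'_j\big\|_*\to 0$.

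\emph{The core norm estimate.} Fix $n,m\in\nn$, put $p=n+m$, and let $(\lambda_i)_{i=1}^{p}$ be any rearrangement of the $n$ weights $\tfrac1n$ together with the $m$ weights $-\tfrac1m$, so that $\sum_{i=1}^p\lambda_i=0$ and $\lambda_i\in[-1,1]$. Splitting into positive and negative parts and using that $(e'_n)_n$ is spreading gives $\big\|\sum_{i=1}^p\lambda_i e'_i\big\|_*\le \theta_n+\theta_m$, and since $\sum_i\lambda_i=0$ we have $\sum_{i=1}^p\lambda_i e_i=\sum_{i=1}^p\lambda_i e'_i$. Hence, by Definition~\ref{Definition of spreading model} applied with $k=p$, for every plegma $p$-tuple $(w_i)_{i=1}^{p}$ in $\ff\upharpoonright M$ with $w_1(1)\ge M(p)$ we obtain
\[
\Big\|\sum_{i=1}^{p}\lambda_i x_{w_i}\Big\|\ \le\ \theta_n+\theta_m+\delta_{p}.
\]
Thus the lemma reduces to the following: given arbitrary plegma tuples $(s_j)_{j=1}^n$ and $(t_j)_{j=1}^m$ in $\ff\upharpoonright L$ with $s_1(1),t_1(1)$ suitably large, produce a single plegma $p$-tuple $(w_i)_{i=1}^{p}$ in $\ff\upharpoonright M$ with $w_1(1)\ge M(p)$ such that the multiset $\{w_i\}$ equals $\{s_j\}\cup\{t_j\}$, the $s_j$'s carrying weight $\tfrac1n$ and the $t_j$'s weight $-\tfrac1m$; for then $\sum_i\lambda_i x_{w_i}=\tfrac1n\sum_j x_{s_j}-\tfrac1m\sum_j x_{t_j}$.

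\emph{The combinatorial merging, and the main obstacle.} When $o(\ff)=1$, i.e.\ $\ff$ is essentially $[\nn]^1$, this is immediate: every increasing enumeration of $\{s_j\}\cup\{t_j\}$ is already a plegma tuple, so one merely sorts. For $o(\ff)\ge 2$ a direct merge of two arbitrary plegma tuples into one is in general impossible, and this is the heart of the proof. I would argue by induction on $o(\ff)$, simultaneously constructing $L\in[M]^\infty$ by diagonalisation: writing $s_j=\{\min s_j\}\cup s'_j$, $t_j=\{\min t_j\}\cup t'_j$, the tails lie in the families $\ff_{(\min s_j)},\ff_{(\min t_j)}$, which are regular thin of order strictly below $o(\ff)$ and, by the spreading property of $\widehat\ff$, all sit inside $\widehat{\ff_{(a)}}$ for $a$ the least first coordinate occurring; for each value $a=L(i)$ one applies the inductive statement to the $\ff_{(a)}$-sequence $\big(x_{\{a\}\cup u}\big)_{u}$ and diagonalises over $i$. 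The two delicate points — where I expect the genuine difficulty to lie — are: (i) after the reduction the tails must themselves be assembled into a single plegma configuration of the lower-order family, which forces one to thin $L$ so that every plegma tuple in $\ff\upharpoonright L$ leaves enough room between successive coordinates, a quantitative strengthening of the $\ff\upharpoonright\upharpoonright L$ construction behind Lemma~\ref{wq}; and (ii) the thinning of $L$ must be chosen sparsely enough that the hypotheses $s_1(1)\ge L(n)$, $t_1(1)\ge L(m)$ force the merged tuple to satisfy $w_1(1)\ge M(n+m)$, so that the error term above is controlled by $\delta_{n+m}$ rather than something larger.

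\emph{Conclusion.} Granting the merging step, the lemma follows routinely: given $\ee>0$ choose $m_0$ with $\theta_n<\ee/3$ and $\delta_{2n}<\ee/3$ for all $n\ge m_0$. Then for $n,m\ge m_0$ and admissible plegma tuples $(s_j)_{j=1}^n,(t_j)_{j=1}^m$ in $\ff\upharpoonright L$ as in the statement, the merging step rewrites $\tfrac1n\sum_j x_{s_j}-\tfrac1m\sum_j x_{t_j}$ as $\sum_i\lambda_i x_{w_i}$ for a plegma $(n+m)$-tuple $(w_i)$ with $w_1(1)\ge M(n+m)$, and the core estimate bounds its norm by $\theta_n+\theta_m+\delta_{n+m}<\ee$, which is precisely (\ref{eqstar}).
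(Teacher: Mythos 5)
Your preliminary reductions and what you call the ``core norm estimate'' match the first half of the paper's argument exactly: the paper also first establishes that for every $\ee>0$ there is $k_0$ so that for $n,m\ge k_0$ and any plegma $(n+m)$-tuple $(s_j)_{j=1}^{n+m}$ in $\ff\upharpoonright M$ with $s_1(1)\ge M(n+m)$ one has $\big\|\tfrac1n\sum_{j=1}^n x_{s_j}-\tfrac1m\sum_{j=1}^m x_{s_{n+j}}\big\|<\ee$. That part is fine.

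The gap is in the reduction that follows. You propose to produce, from two \emph{arbitrary} plegma tuples $(s_j)_{j=1}^n,(t_j)_{j=1}^m$ in $\ff\upharpoonright L$, a single plegma $(n+m)$-tuple $(w_i)$ in $\ff\upharpoonright M$ whose multiset of entries is exactly $\{s_j\}\cup\{t_j\}$. This cannot be done, and no thinning of $L$ will save it: already for $\ff=[\nn]^2$, take $(s_1,s_2)=(\{1,3\},\{2,4\})$ and $(t_1,t_2)=(\{5,7\},\{6,8\})$; in any ordering of these four sets as a quadruple $(w_1,\dots,w_4)$, condition (ii) of Definition~\ref{defn plegma} applied with $i$ the index of $\{6,8\}$ (or $\{5,7\}$) and $j$ the index of $\{1,3\}$ gives $w_i(1)<w_j(2)=3$, which fails. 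You correctly observe that a direct merge is ``in general impossible,'' but the inductive replacement you sketch never explains what takes its place; it still aims at assembling a single tuple from the given pieces, which is the same dead end one level down. As written, the proof stops precisely at the step you yourself flag as ``the heart.''

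The paper's proof avoids merging altogether. After choosing $(\ee_k)_k$ summable and $(n_k)_k$ so that the core estimate holds at scale $\ee_k$ for $n,m\ge n_k$, it sets $L=\{M(2n_k+n_{k+1}):k\in\nn\}$ and builds a map $\Phi$ assigning to each $s\in\ff\upharpoonright L$ a \emph{canonical} plegma $n_{k(s(1))}$-tuple $\Phi(s)=(v^s_j)_j$ in $\ff\upharpoonright M$ (not made of the original $s_j,t_j$), with the two key properties: (P1) $\Phi(s)$ ends at $s$ and its first coordinate is controlled; (P2) for any plegma pair $(s_1,s_2)$ in $\ff\upharpoonright L$, the concatenation $\Phi(s_1)^\frown\Phi(s_2)$ is plegma in $\ff\upharpoonright M$. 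Then, for an arbitrary $(t_j)_{j=1}^m$, one observes that $\Phi(t_1)^\frown(t_1,\dots,t_m)$ is a single long plegma tuple — this is the only place where two averages are put into one tuple, and it works because $\Phi(t_1)$ is \emph{built} to sit to the left of $t_1$. To compare $\Phi(t_1)$ with the fixed reference tuple $\Phi(s)$ (where $s$ is the initial segment of $L$), the paper invokes the plegma-path Theorem~\ref{accessing everything with plegma path of length |s_0|} to produce a path $s=w_0,\dots,w_{|s|}=t_1$ in $\ff\upharpoonright L$, applies (P2) and the core estimate to each consecutive pair, and sums the resulting telescoping errors $\sum_l\ee_l$. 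This ``canonical representative plus plegma path'' device is exactly the missing idea; without it, there is no way to compare two unrelated Ces\`aro averages.
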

  \begin{proof} Initially we notice that a weaker version of the lemma holds true, that
is for every $\ee>0$ there exists $k_0\in\nn$ such
  that for every $n,m\geq k_0$ and every
  $(s_j)_{j=1}^{n+m}\in\text{{Plm}}(\ff\upharpoonright M)$
  with $s_1(1)\geq M(n+m)$, we have
 \begin{equation}\label{pk}\Big\|\frac{1}{n}\sum_{j=1}^nx_{s_j}-\frac{1}{m}\sum_{j=1}^mx_{s_{n+j}}\Big\|<\ee.\end{equation}
Indeed, let $\ee>0$. Since $(e_n)_n$ is singular, it is weakly
convergent to some $e$
 and moreover setting $e'_n=e_n-e$, the sequence   $(e'_n)_{n}$ is
 Ces\'aro summable to zero. Hence we may  choose $n_0\in\nn$
such that  $\Big\|\frac{1}{n}\sum_{i=1}^n e'_i\Big\|_*<\ee/4$, for
all $n\geq n_0$. Hence for every $n,m\geq n_0$, we have
\begin{equation}\Big\|\frac{1}{n}\sum_{i=1}^n e_i-\frac{1}{m}\sum_{i=1}^m
e_{n+i}\Big\|_*= \Big\|\frac{1}{n}\sum_{i=1}^n
e'_i-\frac{1}{m}\sum_{i=1}^m e'_{n+i}\Big\|_*<\ee/2\end{equation}
Since $(x_s)_{s\in\ff\upharpoonright M}$ generates $(e_n)_n$ as an
$\ff$-spreading model we can find $k_0\geq n_0$ such that for
every $n,m\geq k_0$ and every
  $(s_j)_{j=1}^{n+m}\in\text{{Plm}}(\ff\upharpoonright M)$
  with $s_1(1)\geq M(n+m)$ equation
(\ref{pk}) is satisfied.

Let $(\ee_k)_k$ be a sequence of positive real numbers such that
$\sum_k\ee_k<+\infty$.  By the above we can choose an increasing
sequence $(n_k)_{k}$ in $\nn$ such that for every $k\in\nn$,
$n,m\geq n_k$ and
$(s_j)_{j=1}^{n+m}\in\text{Plm}(\ff\upharpoonright M)$ with
$s_1(1)\geq M(n+m)$, it holds  that
\begin{equation}\label{df}\Big\|\frac{1}{n}\sum_{j=1}^nx_{s_j}-\frac{1}{m}\sum_{j=1}^mx_{s_{n+j}}\Big\|< \ee_k.\end{equation}
 We may also assume that $\ff$ is very large in
$M$ and $2n_k<n_{k+1}$, for every $k\in\nn$.

We set  $L=\{M(2n_k+n_{k+1}):k\in\nn\}$ and we shall show that $L$
satisfies the conclusion of the lemma. To this end we shall use an
appropriate map sending each $s\in\ff\upharpoonright L$ to a
plegma family in $\ff\upharpoonright M$. First, for every
$s\in\ff\upharpoonright L$ and $p=1,...,|s|$, let $k(s(p))$ be the
unique positive integer $k$ satisfying
$s(p)=L(k)=M(2n_{k}+n_{k+1})$.  We define $\Phi:
\ff\upharpoonright L\to \text{Plm}(\ff\upharpoonright M)$ as
follows. For every $s\in\ff\upharpoonright L$, we assign the
$n_{k(s(1))}$-tuple $\Phi(s)=\big(v^s_j\big)_{j=1}^{n_{k(s(1))}}$
where  $v^s_j$ is the unique element of $\ff\upharpoonright M$,
satisfying
 \begin{equation}
   v^s_j\sqsubseteq
   \big\{M\big(2n_{k(s(p))}+n_{k(s(p))+1}-n_{k(s(1))}+j\big):p=1,\ldots,
   |s|\big\}.
 \end{equation}
The existence of  $v^s_j$,   $j=1,...,
n_{k(s(1))}$, follows easily from the fact  that $\ff$ is  regular
thin and very large in $M$.

Below we state some useful properties of $\Phi$. Their
verification   is straightforward.
\begin{enumerate}
\item[(P1)] For every $s\in\ff\upharpoonright L$, $\Phi(s)\in
\text{Plm}_{n_k}(\ff\upharpoonright M)$,
$v_1^s(1)>M(n_{k}+n_{k+1})$ and $v^s_{n_{k}}= s$, where
$k=k(s(1))$, \item[(P2)] for every $(s_1,s_2)\in
\text{Plm}_2(\ff\upharpoonright L)$, the concatenation
$\Phi(s_1)^\frown\Phi(s_2)$  belongs to
$\text{Plm}(\ff\upharpoonright M)$.
\end{enumerate}
We are now ready to prove that $L$ is actually the desired set.
Fix  a positive integer $k$ and let us denote by   $s$  the unique
element of $\ff\upharpoonright L$ such that $s\sqsubseteq
\{L(i):i\geq k\}$. Notice that $s(1)=L(k)=M(2n_k+n_{k+1})$ and
therefore $k(s(1))=k$. Also let
 $m_k=\max\{n_k,
k+|s|+1\}$.  We claim that
\begin{equation}\label{df3}\Big\|\frac{1}{n_{k}}
\sum_{j=1}^{n_{k}}x_{v^{s}_j}-\frac{1}{m}\sum_{j=1}^mx_{t_j}\Big\|<\sum_{l=k}^{k+|s|}
\ee_l,\end{equation} for every $m\geq m_k$ and
$(t_j)_{j=1}^m\in\ff\upharpoonright L$ with $t_1(1)\geq L(m)$.

Indeed, let  $m\geq m_k$ and $(t_j)_{j=1}^m\in\ff\upharpoonright
L$ with $t_1(1)\geq L(m)$. Notice that $\max s=L(k+|s|-1)<L(m)\leq
t_1(1)=\min t_1$. Hence, by  Theorem \ref{accessing everything
with plegma path of length |s_0|} there exists a plegma path
$(w_l)_{l=0}^{l_0}$ in $\ff\upharpoonright L$ from $w_0=s$ to
$w_{l_0}=t_1$ of length $l_0=|s|$.  Notice that  $k(w_l(1))\geq
k+l$ which implies that $n_{k(w_l(1))}\geq n_{k+l}$ and therefore
$\big(v^{w_l}_1,\ldots,v^{w_l}_{n_{k+l}}\big)$ is a subfamily of
$\Phi(w_l)$. Thus, by properties (P1) and (P2) above, we have that
$\big(v^{w_l}_1,\ldots,v^{w_l}_{n_{k+l}},v^{w_{l+1}}_1,\ldots,v^{w_{l+1}}_{n_{k+l+1}}\big)$
is a plegma family in $\ff\upharpoonright L$ of length
$n_{k+l}+n_{k+l+1}$ with $v^{w_l}_1(1)>M(n_
{k(w_l(1))}+n_{k(w_l(1)+1)})>M(n_{k+l}+n_{k+l+1})$. Hence by
(\ref{df}) we get
\begin{equation}\Big\|\frac{1}{n_{k+l}}\sum_{j=1}^{n_{k+l}}x_{v^{w_l}_j}-\frac{1}{n_{k+l+1}}
\sum_{j=1}^{n_{k+l+1}}x_{v^{w_{l+1}}_j}\Big\|<\ee_{k+l},
\end{equation} for every $l=0,...,l_0-1$.
 Thus,
\begin{equation}\label{df1}\Big\|\frac{1}{n_{k}}\sum_{j=1}^{n_k}x_{v^{s}_j}-\frac{1}{n_{k+|s|}}
\sum_{j=1}^{n_{k+|s|}}x_{v^{t_1}_j}\Big\|<\sum_{l=k}^{k+|s|-1}\ee_{l}.\end{equation}
Similarly, since $m>k+|s|=k+|l_0|$ we have that $n_m>n_{k+|s|}$.
Also since $t_1(1)\geq L(m)=M(2n_m+ n_{m+1})$ we have that
$k(t_1(1))\geq m $.  Hence $n_{k(t_1(1))}\geq n_m>n_{k+|s|}$ which
implies that $\big(v^{t_1}_1,...,v^{t_1}_{n_{k+|s|}}\big)$ is a
proper subfamily of $\Phi(t_1)$. Therefore,
$\big(v^{t_1}_1,...,v^{t_1}_{n_{k+|s|}}, t_1,...,t_m\big)$ is a
plegma family in $\ff\upharpoonright L$. Moreover  $t_1(1)\geq
M(2n_m+ n_{m+1})\geq M(n_{k+|s|}+m)$ and so again by (\ref{df}),
we have
\begin{equation}\label{df2} \Big\|\frac{1}{n_{k+|s|}}\sum_{j=1}^{n_{k+|s|}}x_{v^{t_1}_j}-
\frac{1}{m}\sum_{j=1}^mx_{t_j}\Big\|<\ee_{k+|s|}.\end{equation}
Now (\ref{df3}) follows by (\ref{df1}) and (\ref{df2}).

Finally, by (\ref{df3}) and a triangle inequality  we obtain that
 \begin{equation}
 \Big\|\frac{1}{n}\sum_{j=1}^nx_{s_j}-\frac{1}{m}\sum_{j=1}^mx_{t_j}\Big\|<2\sum_{l=k}^{k+|s|}\ee_l,
 \end{equation}
for every $k\in\nn$, $n,m\geq m_k$ and
$(s_j)_{j=1}^n,(t_j)_{j=1}^m\in\text{Plm}(\ff\upharpoonright
  L)$ with $s_1(1)\geq L(n)$ and $t_1(1)\geq L(m)$. Since
  $\sum_k\ee_k<+\infty$ the proof is complete.
\end{proof}

\begin{thm}\label{singhighord}
  Let $\ff$ be a regular thin family, $M\in[\nn]^\infty$ and
  $(x_s)_{s\in\ff}$ be an $\ff$-sequence in a Banach space $X$
  such that $(x_s)_{s\in\ff\upharpoonright M}$ generates a
  singular $\ff$-spreading model $(e_n)_{n}$. Let $e_n=e'_n+e$ be the natural decomposition
  of $(e_n)_{n}$. Then there
  exist $x\in X$  with $\|x\|=\|e\|_*$ and $L\in[M]^\infty$ such that setting
  $x'_s=x_s-x$ the
  $\ff$-subsequence
  $(x'_s)_{s\in\ff\upharpoonright L}$ admits $(e'_n)_{n}$ as
  a unique (up to isometry) $\ff$-spreading model.
\end{thm}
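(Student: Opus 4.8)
The plan is to produce $x$ as the common norm limit of the plegma Ces\`aro averages of $(x_s)_{s\in\ff\upharpoonright L}$ furnished by Lemma \ref{cvh}, and then to identify \emph{every} $\ff$-spreading model of $(x'_s)=(x_s-x)$ with $(e'_n)_n$ by means of the isometry criterion of Lemma \ref{isometric 1-subsym sequences}. As usual we may assume that $(x_s)_{s\in\ff}$ is bounded, since the $k=1$ instance of the defining inequality bounds $\|x_s\|$ for all $s$ with $\min s\geq M(1)$, and deleting the small elements of $M$ affects neither the generated spreading model nor the order of $\ff$.

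First I would fix $L\in[M]^\infty$ as in Lemma \ref{cvh}. That lemma says precisely that whenever $\big((s^n_j)_{j=1}^{k_n}\big)_n$ is a sequence of plegma families in $\ff\upharpoonright L$ with $k_n\to\infty$ and $s^n_1(1)\geq L(k_n)$, the sequence $\big(\frac{1}{k_n}\sum_{j=1}^{k_n}x_{s^n_j}\big)_n$ is norm Cauchy; comparing two such sequences through the same estimate shows that its limit does not depend on the chosen families, and — because every plegma family in $\ff\upharpoonright L'$ with $s_1(1)\geq L'(k)$ also qualifies for $L$ whenever $L'\in[L]^\infty$ — this common limit, which we call $x$, is one and the same for all $L'\in[L]^\infty$. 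The equality $\|x\|=\|e\|_*$ then follows: choosing plegma $n$-tuples $(s^n_j)_{j=1}^n$ in $\ff\upharpoonright L$ with $s^n_1(1)\geq L(n)$ we get $\|\frac1n\sum_{j=1}^n x_{s^n_j}\|\to\|x\|$, while the $\ff$-spreading model property gives $\big|\,\|\frac1n\sum_{j=1}^n x_{s^n_j}\|-\|\frac1n\sum_{j=1}^n e_j\|_*\,\big|\to0$, and since $e_j=e'_j+e$ with $(e'_n)_n$ Ces\`aro summable to zero (Proposition \ref{properties of the natural decomposition}) we have $\|\frac1n\sum_{j=1}^n e_j\|_*=\|\frac1n\sum_{j=1}^n e'_j+e\|_*\to\|e\|_*$.

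Now put $x'_s=x_s-x$ and let $L'\in[L]^\infty$ be any set for which $(x'_s)_{s\in\ff\upharpoonright L'}$ generates an $\ff$-spreading model $(\widehat e_n)_n$; such $L'$ exists by Brunel--Sucheston (Lemma \ref{B-Sp}). Since $(x_s)_{s\in\ff\upharpoonright L'}$ generates $(e_n)_n$ (Remark \ref{remark on the definition of spreading model}(ii)) and $x'_s=x_s-x$, Lemma \ref{triv-ell}(ii) and the non-triviality of the singular sequence $(e_n)_n$ force $(\widehat e_n)_n$ to be non-trivial, and Lemma \ref{triv-ell}(i) together with the identity $\|\sum\lambda_i e_i\|_*=\|\sum\lambda_i e'_i\|_*$ (valid when $\sum\lambda_i=0$) gives $\|\sum\lambda_i\widehat e_i\|=\|\sum\lambda_i e'_i\|_*$ for every zero-sum tuple $(\lambda_i)$. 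Running the construction of $x$ inside $\ff\upharpoonright L'$ shows that $\frac1n\sum_{j=1}^n x'_{s^n_j}=\frac1n\sum_{j=1}^n x_{s^n_j}-x\to0$, whence $(\widehat e_n)_n$ is Ces\`aro summable to zero; on the other hand $(e'_n)_n$ is Ces\`aro summable to zero and non-trivial (if it were trivial then $\|e_n-e_m\|_*=\|e'_n-e'_m\|_*=0$ and Lemma \ref{sing} would make $(e_n)_n$ trivial). Hence Lemma \ref{isometric 1-subsym sequences} applies to $(e'_n)_n$ and $(\widehat e_n)_n$, so $e'_n\mapsto\widehat e_n$ extends to a linear isometry, i.e. $(\widehat e_n)_n$ is isometrically equivalent to $(e'_n)_n$. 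Since $L'$ was arbitrary, this simultaneously gives that $(x'_s)_{s\in\ff\upharpoonright L}$ admits $(e'_n)_n$ as an $\ff$-spreading model and that this spreading model is unique up to isometry, which — together with $\|x\|=\|e\|_*$ — is the assertion. The main obstacle beyond invoking Lemma \ref{cvh}, which carries the combinatorial weight, is the bookkeeping in the middle paragraph: one must be sure the vector $x$ is independent of every plegma choice \emph{and} of the passage to $L'\subseteq L$, since the uniqueness clause rests on using the same $x$ for all such $L'$.
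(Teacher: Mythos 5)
Your proof is correct and follows essentially the same strategy as the paper: obtain $x$ as the common norm limit of plegma Ces\`aro averages via Lemma~\ref{cvh}, verify $\|x\|=\|e\|_*$ through the spreading-model estimates and the Ces\`aro decay of $(e'_n)_n$, and then identify every $\ff$-spreading model of $(x'_s)$ with $(e'_n)_n$ by showing both are nontrivial, Ces\`aro summable to zero, and agree on zero-sum combinations, so that Lemma~\ref{isometric 1-subsym sequences} yields the isometry. The only cosmetic differences are that you make explicit two points the paper leaves implicit: that $(\widehat e_n)_n$ is nontrivial (via Lemma~\ref{triv-ell}(ii)), which Lemma~\ref{isometric 1-subsym sequences} requires, and that the limit $x$ does not change when passing from $L$ to any $L'\in[L]^\infty$, which is exactly what the uniqueness clause needs.
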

\begin{proof} We start by determining the element  $x\in X$. Let $L\in[M]^\infty$ satisfying Lemma \ref{cvh}. For every $k\in\nn$
we set\[A_k=\Big\{\frac{1}{n}\sum_{i=1}^n x_{s_i}:
\;(s_i)_{i=1}^n\in\text{{Plm}}(\ff\upharpoonright
L)\;\text{and}\;s_1(1)\geq n\geq k \Big\}.\] Clearly the sequence
$(A_k)_{k}$ is decreasing and  by Lemma \ref{cvh},
$\text{diam}(A_k)\to 0$. Therefore there exists a unique $x\in X$
such that $\cap_{k=1}^\infty \overline{A_k}=\{x\}$.

We continue to   show that $\|e\|=\|x\|$. Notice that by the
choice of $x$, we have that for every $\ee>0$ there exists
$n_0\in\nn$ such that for all $n\geq n_0$
  \begin{equation}\label{eq2star}\Big\|\frac{1}{n}\sum_{j=1}^nx_{s_j}-x\Big\|<\ee.\end{equation}
For each $n\in\nn$ we pick
$(s_i^n)_{i=1}^n\in\text{{Plm}}(\ff\upharpoonright L)$, with
$s_i^n(1)\geq L(n)$. By (\ref{eq2star}), we have
\begin{equation}\label{xd}\lim_n\Big\|\frac{1}{n}\sum_{i=1}^n
x_{s_i^n}-x\Big\|=0.\end{equation} Also, since
$(x_s)_{s\in\ff\upharpoonright L}$   generates $(e_n)_{n}$ as an
$\ff$-spreading model, we get
\begin{equation}\label{nk}\lim_n\Bigg|\Big\|\frac{1}{n}\sum_{i=1}^n
x_{s_i^n}\Big\|-\Big\|\frac{1}{n}\sum_{i=1}^n
e_i\Big\|_*\Bigg|=0.\end{equation}
 Moreover, since
$(e'_n)_{n}$ is Ces\`aro summable to zero, we have
\begin{equation}\label{rd}\lim_n\Big\|\frac{\sum_{i=1}^n e_i}{n}-e\Big\|_*=0.\end{equation}
Hence,
\[\|e\|_*\stackrel{(\ref{rd})}{=}\lim_n\Big\|\frac{1}{n}\sum_{i=1}^n e_i\Big\|_*\stackrel{(\ref{nk})}{=}
\lim_n\Big\|\frac{1}{n}\sum_{i=1}^n
x_{s_i^n}\Big\|\stackrel{(\ref{xd})}{=}\|x\|.\]

 We proceed now to show  that
$(e'_n)_{n}$ is the unique $\ff$-spreading model of
$(x'_s)_{s\in\ff\upharpoonright L}$, where $x'_s=x_s-x$,
$s\in\ff\upharpoonright L$. Let $N\in [L]^\infty$  such that
$(x'_s)_{s\in\ff\upharpoonright N}$ generates an $\ff$-spreading
model $(\widetilde{e}_n)_{n}$. We will show that
$(\widetilde{e}_n)_{n}$ is isometric to $(e'_n)_{n}$. Since,
 \begin{equation}\frac{1}{n}\sum_{j=1}^nx'_{s_j}=\frac{1}{n}\sum_{j=1}^nx_{s_j}-x,\end{equation}
for every $n\in\nn$, by (\ref{eq2star}) we conclude that
$(\widetilde{e}_n)_{n}$ is Ces\'aro summable to zero. Hence  by
Lemma \ref{isometric 1-subsym sequences} it suffices to show that
\begin{equation}\Big\|\sum_{i=1}^n\lambda_ie_i'\Big\|_*=\Big\|\sum_{i=1}^n\lambda_i\widetilde{e}_i\Big\|_{**},\end{equation}
for every $n\in\nn$ and $\lambda_1,\ldots\lambda_n\in\rr$ with
$\sum_{i=1}^n\lambda_1=0$. Indeed, let $n\in\nn$ and
$\lambda_1,\ldots\lambda_n\in\rr$ with $\sum_{i=1}^n\lambda_i=0$.
Also let $((s_j^k)_{j=1}^n)_{k}$ be a sequence in
$\text{{Plm}}_n(\ff\upharpoonright L)$ such that $
\lim_{k\to\infty}s_1^k(1)=+ \infty$. Then
\[\Big\|\sum_{i=1}^n\lambda_ie_i'\Big\|_*=\Big\|\sum_{i=1}^n\lambda_ie_i\Big\|_*=\lim_{k\to\infty}\Big\|\sum_{i=1}^n\lambda_ix_{s^k_i}\Big\|=
\lim_{k\to\infty}\Big\|\sum_{i=1}^n\lambda_ix'_{s^k_i}\Big\|=\Big\|\sum_{i=1}^n\lambda_i\widetilde{e}_i\Big\|_{**}\]
and the proof is complete.
\end{proof}
 We close  by  a strengthening of Theorem \ref{singhighord} for
Banach  spaces with separable dual.
  We will need the following lemma.
 \begin{lem}\label{convex means to zero yields weak null}
  Let $\ff$ be a regular thin family and
  $(y_s)_{s\in\ff}$ be an $\ff$-sequence in a Banach space $X$.
  Let  $L\in[\nn]^\infty$ and suppose that for every $\ee>0$ and $N\in[L]^\infty$  there exist
  $k\in\nn$, $\lambda_1,\ldots,\lambda_k>0$ and
  $(s_j)_{j=1}^k\in\text{{Plm}}(\ff\upharpoonright N)$ such
  that $\sum_{j=1}^k\lambda_j=1$ and $\|\sum_{j=1}^k\lambda_jy_{s_j}\|<\ee$.
Then for every $x^*\in X^*$, $\ee>0$ and $N\in[L]^\infty$ there
exists $M\in[N]^\infty$ such that
 $|x^*(y_s)|<\ee$ for every $s\in\ff\upharpoonright M$.
\end{lem}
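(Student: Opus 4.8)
The plan is to reduce the statement to two successive colour‑stabilizations by the Nash--Williams partition theorem for thin families (Proposition~\ref{th2}), followed by an immediate contradiction with the hypothesis on convex combinations. Fix $x^*\in X^*$, $\ee>0$ and $N\in[L]^\infty$. We may assume $x^*\neq 0$, and after replacing $x^*$ by $x^*/\|x^*\|$ and $\ee$ by $\ee/\|x^*\|$ we may assume $\|x^*\|=1$ (this is legitimate since the hypothesis quantifies over all positive reals). Thus it suffices to produce $M\in[N]^\infty$ with $|x^*(y_s)|<\ee$ for every $s\in\ff\upharpoonright M$.

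First I would partition $\ff$ into $\ff^{\mathrm{big}}=\{s\in\ff:|x^*(y_s)|\ge\ee\}$ and its complement. Since $\ff$ is a regular thin family, Proposition~\ref{th2} applied with starting set $N$ yields $M_1\in[N]^\infty$ with $\ff\upharpoonright M_1$ contained in one of the two pieces. If $\ff\upharpoonright M_1$ lies in the complement of $\ff^{\mathrm{big}}$, then $|x^*(y_s)|<\ee$ for all $s\in\ff\upharpoonright M_1$ and we are done with $M=M_1$. So assume $\ff\upharpoonright M_1\subseteq\ff^{\mathrm{big}}$. A second application of Proposition~\ref{th2}, this time to the partition of $\ff$ according to the sign of $x^*(y_s)$ and with starting set $M_1$, produces $M_2\in[M_1]^\infty$ on which $x^*(y_s)$ is of constant sign; since $M_2\subseteq M_1$, we get either $x^*(y_s)\ge\ee$ for all $s\in\ff\upharpoonright M_2$, or $x^*(y_s)\le-\ee$ for all such $s$. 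Replacing $x^*$ by $-x^*$ if necessary (which preserves $\|x^*\|=1$ and leaves $|x^*(y_s)|$ unchanged), we may assume $x^*(y_s)\ge\ee$ for every $s\in\ff\upharpoonright M_2$.

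Now I would invoke the hypothesis with the infinite set $M_2\in[L]^\infty$ and the very same $\ee$: there exist $k\in\nn$, positive reals $\lambda_1,\dots,\lambda_k$ with $\sum_{j=1}^k\lambda_j=1$, and a plegma family $(s_j)_{j=1}^k\in\text{Plm}(\ff\upharpoonright M_2)$ with $\|\sum_{j=1}^k\lambda_j y_{s_j}\|<\ee$. Since each $s_j\in\ff\upharpoonright M_2$ gives $x^*(y_{s_j})\ge\ee$ and $\|x^*\|=1$, we obtain
\begin{align*}
  \ee &> \Big\|\sum_{j=1}^k\lambda_j y_{s_j}\Big\|
  \ge x^*\Big(\sum_{j=1}^k\lambda_j y_{s_j}\Big) \\
  &= \sum_{j=1}^k\lambda_j\, x^*(y_{s_j})
  \ge \ee\sum_{j=1}^k\lambda_j = \ee,
\end{align*}
which is absurd. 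Hence the alternative $\ff\upharpoonright M_1\subseteq\ff^{\mathrm{big}}$ is impossible, and the complementary alternative produced above furnishes the desired $M$.

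The argument is essentially routine once one observes that only the Ramsey property of thin families is needed here, not the plegma Ramsey theorem. There is no serious obstacle; the one point requiring a little attention is the bookkeeping of the normalization of $x^*$ and of which $\ee$ is fed into the hypothesis. Note also that the statement is genuinely local, in that $M$ depends on both $x^*$ and $\ee$, which is precisely why two colour‑stabilizations suffice and no diagonalization over functionals is attempted.
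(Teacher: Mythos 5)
Your proof is correct and follows essentially the same route as the paper: stabilize the colour of $x^*(y_s)$ via the thin-family Ramsey theorem (Proposition~\ref{th2}) and rule out the non-small alternatives by pairing a positive convex combination against the constant sign, contradicting the hypothesis on small convex means. The only cosmetic differences are that you use two successive two-colour stabilizations instead of one three-way partition, and you explicitly normalize $x^*$ so that $\|\cdot\|\ge x^*(\cdot)$ is licit---a small point the paper glosses over but which your write-up handles cleanly.
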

\begin{proof}
  Let $x^*\in X^*$, $\ee>0$ and $N\in[L]^\infty$. By
  Proposition \ref{th2} there exists $M\in[N]^\infty$ such that exactly one
  of the following holds. (a) $|x^*(y_s)|<\ee$, for every $s\in\ff\upharpoonright
  M$, or (b) $x^*(y_s)\geq\ee$, for every $s\in\ff\upharpoonright
  M$, or (c) $x^*(y_s)\leq-\ee$, for every $s\in\ff\upharpoonright
  M$. It suffices to  show that  cases (b) and (c) cannot occur.  Indeed,
suppose  (b) holds true (the proof for case (c) is similar). By
our assumption there exist $k\in\nn$,
$\lambda_1,\ldots,\lambda_k>0$ and
  $(s_j)_{j=1}^k\in\text{{Plm}}(\ff\upharpoonright M)$ such
  that
$\sum_{j=1}^k\lambda_j=1$ and
    $\|\sum_{j=1}^k\lambda_jy_{s_j}\|<\ee$.
  But then \begin{equation}\ee>\Big\|\sum_{j=1}^k\lambda_jy_{s_j}\Big\|\geq x^*\Big(\sum_{j=1}^k\lambda_jy_{s_j}\Big)=
  \sum_{j=1}^k\lambda_jx^*(y_{s_j})\geq \ee,\end{equation} which is a
  contradiction.
\end{proof}

  \begin{cor}\label{singular in space with separable dual}
  Let $X$ be a Banach space  with separable dual. Let $\ff$ be a regular thin
  family,
  $(x_s)_{s\in\ff}$ be an $\ff$-sequence in  $X$ and $M\in[\nn]^\infty$
  such that $(x_s)_{s\in\ff\upharpoonright M}$ generates a
  singular $\ff$-spreading model $(e_n)_{n}$. Let $e_n=e'_n+e$ be the natural decomposition
  of  $(e_n)_{n}$. Then there
  exist $x\in X$  with $\|x\|=\|e\|_*$ and $N\in[M]^\infty$ such that setting
  $x'_s=x_s-x$,  $(x'_s)_{s\in\ff\upharpoonright N}$ is weakly null and admits $(e'_n)_{n}$ as
  a unique (up to isometry) $\ff$-spreading model.
\end{cor}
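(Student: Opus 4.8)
The plan is to build on Theorem \ref{singhighord}. That theorem already produces $x\in X$ with $\|x\|=\|e\|_*$ and $L\in[M]^\infty$ such that, setting $x'_s=x_s-x$, the $\ff$-subsequence $(x'_s)_{s\in\ff\upharpoonright L}$ admits $(e'_n)_n$ as its unique (up to isometry) $\ff$-spreading model. So it suffices to pass to a further $N\in[L]^\infty$ on which $(x'_s)$ is moreover weakly null: weak nullity is inherited by all further $\ff$-subsequences (the remark after Definition \ref{defn convergence of f-sequences}), and any $\ff$-spreading model of $(x'_s)_{s\in\ff\upharpoonright N}$ is, through a generating subset in $[L]^\infty$, an $\ff$-spreading model of $(x'_s)_{s\in\ff\upharpoonright L}$, hence isometric to $(e'_n)_n$; moreover, since $(x'_s)_{s\in\ff\upharpoonright N}$ is bounded it admits at least one $\ff$-spreading model by Lemma \ref{B-Sp}. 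Thus $(x'_s)_{s\in\ff\upharpoonright N}$ will still admit $(e'_n)_n$ uniquely.

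Two preliminary observations. First, $(x'_s)_{s\in\ff\upharpoonright M}$ is bounded: taking $k=1$ in Definition \ref{Definition of spreading model} gives $\big|\,\|x_s\|-\|e_1\|_*\,\big|\le\delta_1$ for every $s\in\ff\upharpoonright M$, hence $\|x'_s\|\le C$ on $\ff\upharpoonright L$ for some constant $C\ge1$. Second, the $\ff$-sequence $(x'_s)_{s\in\ff\upharpoonright L}$ satisfies the hypothesis of Lemma \ref{convex means to zero yields weak null}: given $\ee>0$ and $N'\in[L]^\infty$, Lemma \ref{B-Sp} yields $N''\in[N']^\infty$ on which $(x'_s)$ generates an $\ff$-spreading model, which by the uniqueness in Theorem \ref{singhighord} is isometric to $(e'_n)_n$; by Proposition \ref{properties of the natural decomposition} the latter is Ces\`aro summable to zero, so for $n$ large and a plegma $n$-tuple $(s_j)_{j=1}^n$ in $\ff\upharpoonright N''$ with $\min s_1$ large enough (such tuples exist by Corollary \ref{Galvin Pricley for regular thin} and the existence of infinite plegma paths) we get $\big\|\frac1n\sum_{j=1}^n x'_{s_j}\big\|<\ee$, which is the required convex combination, with weights $\lambda_j=1/n$.

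Now comes the diagonalization, which is where the only real care is needed. Fix a countable dense subset $(z^*_j)_j$ of $X^*$ and a bijection $m\mapsto(j_m,i_m)$ from $\nn$ onto $\nn\times\nn$; set $\ee_m=1/i_m$, so that for each $j$ and each $q$ there is exactly one $m$ with $(j_m,i_m)=(j,q)$. Applying Lemma \ref{convex means to zero yields weak null} repeatedly, construct a decreasing chain $L=N_0\supseteq N_1\supseteq N_2\supseteq\cdots$ with $|z^*_{j_m}(x'_s)|<\ee_m$ for every $s\in\ff\upharpoonright N_m$, and let $N\in[L]^\infty$ diagonalize, $N(m)\in N_m$. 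The key point is that if $s\in\ff\upharpoonright N$ and $\min s\ge N(m)$, then every element of $s$ has the form $N(j)$ with $j\ge m$, hence lies in $N_j\subseteq N_m$; so $s\in\ff\upharpoonright N_{m'}$, and therefore $|z^*_{j_{m'}}(x'_s)|<\ee_{m'}$ for all $m'\le m$.

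Finally, to see that $(x'_s)_{s\in\ff\upharpoonright N}$ is weakly null, take a basic weak neighbourhood $U=\{y:\ |y^*_i(y)|<\ee,\ 1\le i\le p\}$ of $0$; for each $i$ choose $j(i)$ with $\|y^*_i-z^*_{j(i)}\|<\ee/(2C)$, choose $q$ with $1/q<\ee/2$, let $m_i$ be the index with $(j_{m_i},i_{m_i})=(j(i),q)$, and set $m^*=\max_i m_i$. For $s\in\ff\upharpoonright N$ with $\min s\ge N(m^*)$ the key point gives $|z^*_{j(i)}(x'_s)|<\ee_{m_i}=1/q<\ee/2$, whence $|y^*_i(x'_s)|\le|z^*_{j(i)}(x'_s)|+\|y^*_i-z^*_{j(i)}\|\,\|x'_s\|<\ee/2+(\ee/2C)\,C=\ee$, i.e. $x'_s\in U$. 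Hence $(x'_s)_{s\in\ff\upharpoonright N}$ converges weakly to $0$, which finishes the proof. The main obstacle is precisely the bookkeeping in this last step: a fixed functional from the dense sequence is controlled only to a fixed accuracy $\ee_m$, so one must couple each $z^*_j$ with every accuracy level, which is the role of the bijection with $\nn\times\nn$; everything else is routine.
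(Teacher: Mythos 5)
Your proposal is correct and follows the paper's own route exactly: apply Theorem \ref{singhighord} to obtain $x$ and $L$, verify the hypothesis of Lemma \ref{convex means to zero yields weak null} for $(x'_s)_{s\in\ff\upharpoonright L}$ via the Ces\`aro summability of $(e'_n)_n$, and then perform the diagonalization over a countable dense subset of $X^*$. The paper dismisses the final step as a ``standard diagonalization''; your bookkeeping with the bijection $\nn\to\nn\times\nn$ is precisely what that phrase abbreviates, and it is carried out correctly.
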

\begin{proof}
By Theorem \ref{singhighord}, there   exist $x\in X$ with
$\|x\|=\|e\|_*$ and $L\in[M]^\infty$ such that
$(x'_s)_{s\in\ff\upharpoonright L}$ admits $(e'_n)_{n}$ as a
unique $\ff$-spreading model. By applying Lemma \ref{convex means
to zero yields weak null} (for $x_s$ in place of $y_s$) and a
standard diagonalization for a countable dense subset of $X^*$ we
may choose $N\in[L]^\infty$ such that the $\ff$-subsequence
$(x'_s)_{s\in\ff\upharpoonright N}$ is in addition weakly null.
\end{proof}

\end{document}